\documentclass[oneside,english]{amsart}
\usepackage[T1]{fontenc}
\usepackage[utf8]{inputenc}
\usepackage{amsthm}
\usepackage{amssymb}
\usepackage{esint}
\usepackage[all]{xy}
\usepackage{lmodern}
\usepackage{color}

\makeatletter

\numberwithin{equation}{section}
\numberwithin{figure}{section}
\theoremstyle{plain}
\newtheorem{thm}{\protect\theoremname}[section]
  \theoremstyle{plain}
  \newtheorem{prop}[thm]{\protect\propositionname}
  \theoremstyle{definition}
  \newtheorem{defn}[thm]{\protect\definitionname}
  \theoremstyle{remark}
  \newtheorem{rem}[thm]{\protect\remarkname}
  \theoremstyle{plain}
  \newtheorem{lem}[thm]{\protect\lemmaname}
  \theoremstyle{definition}
  \newtheorem{example}[thm]{\protect\examplename}
  \newtheorem{cor}[thm]{\protect\corollaryname}
  \theoremstyle{plain}

\makeatother

\usepackage{babel}
  \providecommand{\definitionname}{Definition}
  \providecommand{\examplename}{Example}
  \providecommand{\lemmaname}{Lemma}
  \providecommand{\propositionname}{Proposition}
  \providecommand{\remarkname}{Remark}
  \providecommand{\corollaryname}{Corollary}
\providecommand{\theoremname}{Theorem}

\setcounter{tocdepth}{1}

\begin{document}

\title{Moduli stacks of algebraic structures and deformation theory}

\address{University of Luxembourg, Mathematics Research Unit,
6 rue Richard Coudenhove-Kalergi L-1359 Luxembourg}

\email{sinan.yalin@uni.lu}

\author{Sinan Yalin}
\begin{abstract}
We connect the homotopy type of simplicial moduli spaces of algebraic structures to the cohomology of their deformation complexes.
Then we prove that under several assumptions, mapping spaces of algebras over a monad in an appropriate diagram category form
affine stacks in the sense of Toen-Vezzosi's homotopical algebraic geometry. This includes simplicial moduli spaces of algebraic
structures over a given object (for instance a cochain complex). When these algebraic structures are parametrised by properads,
the tangent complexes give the known cohomology theory for such structures and there is an associated obstruction theory
for infinitesimal, higher order and formal deformations. The methods are general enough to be adapted for more general
kinds of algebraic structures.

\noindent
\textit{Keywords.}props, bialgebras, homotopical algebra, Maurer-Cartan simplicial sets, deformation theory, derived algebraic geometry.

\noindent
\textit{AMS.}18D50 ; 18D10 ; 55U10 ; 18G55 ; 14D23
\end{abstract}
\maketitle

\tableofcontents{}

\section*{Introduction}

\subsection*{Motivations and background}

Algebraic structures occurred in a fundamental way in many branches of mathematics during the last decades. For instance,
associative algebras, commutative algebras, Lie algebras, and Poisson algebras play a key role in algebra, topology (cohomology operations on topological spaces), geometry (Lie groups, polyvector fields) and mathematical physics (quantization). They all share the common feature of being defined by operations with several inputs and one single output (the associative product, the Lie bracket, the Poisson bracket). A powerful device to handle such kind of algebraic structures is the notion of operad, which appeared initially in a topological setting to encompass the structure of iterated loop spaces. Since then, operads shifted to the algebraic world and have proven to be a fundamental tool to study
algebras such as the aforementioned examples (and many others), feeding back important outcomes in various fields of mathematics: algebra, topology, deformation theory, category theory, differential, algebraic and enumerative geometry, mathematical physics...
We refer the reader to the books \cite{MSS} and \cite{LV} for a thorough study of the various aspects of this theory.

However, the work of Drinfeld in quantum group theory (\cite{Dri1}, \cite{Dri2}) shed a light on the importance of another sort of algebraic structures generalizing the previous ones, namely Lie bialgebras and non (co)commutative Hopf algebras. In such a situation, one deals not only with products but also with coproducts, as well as composition of products with coproducts. That is, one considers operations with several inputs and several
outputs. A convenient way to handle such kind of structures is to use the formalism of props, which actually goes back to \cite{MLa} (operads appeared a few years later as a special case of props fully determined by the operations with one single output). The formalism of props plays a crucial role in the deformation quantization
process for Lie bialgebras, as shown by Etingof-Kazdhan (\cite{EK1}, \cite{EK2}), and more generally in the theory of quantization functors \cite{EE}. Merkulov developped a propic formulation of deformation quantization of Poisson structures on graded manifolds, using the more general notion of wheeled prop (\cite{Mer3},
\cite{Mer4}). Props also appear naturally in topology. Two leading examples are the Frobenius bialgebra structure on the cohomology of compact oriented manifolds, reflecting
the Poincaré duality phenomenon, and the involutive Lie bialgebra structure on the equivariant homology of loop spaces on manifolds, which lies at the heart of string topology (\cite{CS1},\cite{CS2}). In geometry, some local geometric structures on formal manifolds (e. g. local Poisson structures) carry a structure of algebra over a prop \cite{Mer1}. Props also provide a concise way to encode various field theories such as topological quantum field theories
(symmetric Frobenius bialgebras) and conformal field theories (algebras over the chains of the Segal prop).

A meaningful idea to understand the behavior of such structures and, accordingly, to get more information about the
underlying objects, is to determine how one structure can be deformed (or not) into another, and organize them into
equivalence classes. That is, one wants to build a deformation theory and an associated obstruction theory
to study a given moduli problem.
Such ideas where first organized and written in a systematic way in the pioneering work of Kodaira-Spencer, who set up a theory of deformations of complex manifolds.
Algebraic deformation theory began a few years later with the work of Gerstenhaber on associative algebras and the
Gerstenhaber algebra structure on Hochschild cohomology.
In the eighties, a convergence of ideas coming from Grothendieck, Schlessinger, Deligne and Drinfeld among others,
led to the principle that any moduli problem can be interpreted as the deformation functor of a certain dg Lie algebra.
Equivalence classes of structures are then determined by equivalences classes of Maurer-Cartan elements under
the action of a gauge group.
This approach became particularly successful in the work of Goldman-Millson on representations of fundamental groups
of compact Kähler manifolds, and later in Kontsevich's deformation quantization of Poisson manifolds.
In algebraic deformation theory, the notion of deformation complex of a given associative algebra (its Hochschild complex)
was extended to algebras over operads \cite{LV} and properads(\cite{MV1},\cite{MV2}).

A geometric approach to moduli problems is to build a "space" (algebraic variety, scheme, stack)
parameterizing a given type of structures or objects (famous examples being moduli spaces of algebraic curves of fixed genus
with marked points, or moduli stacks of vector bundles of fixed rank on an algebraic variety).
Infinitesimal deformations of a point are then read on the tangent complex.
However, the usual stacks theory shows its limits when one wants to study families of objects related by an equivalence notion weaker than isomorphisms (for example, quasi-isomorphisms between complexes of vector bundles),
to gather all higher and derived data in a single object or to read the full deformation theory on the tangent spaces.
Derived algebraic geometry, and more generally homotopical algebraic geometry, is a conceptual setting
in which one can solve such problems (\cite{TV1},\cite{TV2},\cite{Toe2}).

A topological approach is to organize the structures we are interested in as points of a topological space
or a simplicial set. This idea was first shaped for algebras over operads in Rezk's thesis \cite{Rez}.
The main ingredients are the following: first, the objects encoding algebraic structures of a certain sort
(operads, properads, props...) form a model category. Second, given an operad/properad/prop... $P$,
a $P$-algebra structure on an object $X$ (a chain complex, a topological space...) can be defined as a morphism
in this model category, from $P$ to an "endomorphism object" $End_X$ built from $X$.
Third, one defines the moduli space of $P$-algebra structures on $X$ as a simplicial mapping space
of maps $P\rightarrow End_X$. Such a construction exists abstractly in any model category.
The points are the algebraic structures, the paths or $1$-simplices are homotopies between these structures, defining
an equivalence relation, and the higher simplices encode the higher homotopical data.
As we can see, such a method can be carried out in various contexts.

We end up with two deformation theories of algebraic structures: an algebraic deformation theory
through deformation complexes and their deformation functors,
and a homotopical deformation theory through homotopies between maps.
The main purpose of this article is to connect these deformation theories and gather them in a derived geometry interpretation of moduli
spaces of algebraic structures.

\subsection*{Presentation of the results}

We work with a ground field $\mathbb{K}$ of characteristic zero. We denote by  $Ch_{\mathbb{K}}$ the category of $\mathbb{Z}$-graded cochain complexes over a field $\mathbb{K}$.
In particular, all differentials are of degree $1$.

\subsubsection*{Cohomology of deformation complexes and higher homotopy groups of mapping spaces}

Properads were introduced in \cite{Val} as an intermediate object between operads and props,
parameterizing a large class of bialgebra structures, and in the same time behaving well
enough to extend several well-known constructions available for operads.
To any cochain complex $X$ one associates a properad $End_X$ called the endomorphism properad of $X$ and defined by $End_X(m,n)=Hom_{\mathbb{K}}(X^{\otimes m},
X^{\otimes n})$, where $Hom_{\mathbb{K}}$ is the differential graded hom of cochain complexes.
A $P$-algebra structure on $X$ is then a properad morphism $P\rightarrow End_X$.
Dg properads form a cofibrantly generated model category (Appendix of \cite{MV2}).
Cofibrant resolutions of a properad $P$ can always be obtained as a cobar construction $\Omega(C)$
on some coproperad $C$ (the bar construction or the Koszul dual).
Given a cofibrant resolution $\Omega(C)\stackrel{\sim}{\rightarrow}P$ of $P$ and another properad $Q$,
one considers the convolution dg Lie algebra $Hom_{\Sigma}(\overline{C},Q)$ consisting in morphisms
of $\Sigma$-biobjects from the augmentation ideal of $C$ to $Q$. The Lie bracket is the antisymmetrization
of the convolution product. This convolution product is defined similarly to the convolution product of
morphisms from a coalgebra to an algebra, using the infinitesimal coproduct of $C$ and the infinitesimal
product of $Q$. This Lie algebra enjoys the following properties:
\begin{thm}
Let $P$ be a properad with cofibrant resolution $P_{\infty}:=\Omega(C)\stackrel{\sim}{\rightarrow}P$
and $Q$ be any properad. Suppose that the augmentation ideal $\overline{C}$ is of finite dimension in each arity.
The total complex $Hom_{\Sigma}(\overline{C},Q)$ is a complete dg Lie algebra whose simplicial Maurer-Cartan set
$MC_{\bullet}(Hom_{\Sigma}(\overline{C},Q))$ is isomorphic to $Map_{\mathcal{P}}(P_{\infty},Q)$.
\end{thm}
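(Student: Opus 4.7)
The plan is to combine the classical twisting-morphism correspondence with Sullivan's simplicial Maurer--Cartan construction. First, I would check that $L := Hom_{\Sigma}(\overline{C},Q)$ is a complete dg Lie algebra: the convolution product is built from the infinitesimal decomposition coproduct of the coproperad $C$ and the infinitesimal composition product of the properad $Q$, its antisymmetrization is the bracket, and the differential is induced by those of $\overline{C}$ and $Q$. Completeness comes from the weight filtration of $C$ coming from the cobar construction $\Omega(C)$, for which the assumption that $\overline{C}$ is finite dimensional in each arity ensures that the successive quotients are finite dimensional, so $L = \lim_n L/F_n L$ and the Baker--Campbell--Hausdorff/gauge-theoretic machinery applies.

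Next, I would establish the degree-zero bijection $MC(L)\cong Hom_{\mathcal{P}}(\Omega(C),Q)$. This is the direct adaptation to properads of the classical cobar/twisting-morphism correspondence: since $\Omega(C)$ is freely generated as a properad by $\overline{C}[-1]$, properad morphisms $\Omega(C)\to Q$ are in bijection with degree $0$ $\Sigma$-biequivariant maps $\alpha:\overline{C}\to Q$, and compatibility with the cobar differential translates precisely into the Maurer--Cartan equation $\partial\alpha + \tfrac{1}{2}[\alpha,\alpha]=0$. One should check that the signs and the infinitesimal nature of the coproduct/product on $C$ and $Q$ give exactly the convolution bracket described above.

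To upgrade this to an isomorphism of simplicial sets, I would pass to standard simplicial framings on both sides. On the Lie algebra side, by definition $MC_\bullet(L) = MC(L \hat\otimes \Omega_\bullet)$ with $\Omega_\bullet$ Sullivan's polynomial de Rham algebra. On the mapping space side, the $\mathbb{K}$-linear enrichment of properads over commutative dg algebras provides a cosimplicial frame $Q\otimes\Omega_\bullet$ of $Q$, so that since $P_\infty$ is cofibrant the $n$-simplices of $Map_{\mathcal{P}}(P_\infty,Q)$ are just properad morphisms $P_\infty\to Q\otimes\Omega_n$. The arity-wise finite dimensionality of $\overline{C}$ then yields a natural isomorphism $Hom_{\Sigma}(\overline{C},Q\otimes\Omega_n)\cong Hom_{\Sigma}(\overline{C},Q)\hat\otimes\Omega_n$, and applying the degree-zero bijection of the previous paragraph levelwise — noting that $L\hat\otimes\Omega_n$ is again a complete convolution dg Lie algebra — produces compatible bijections in every simplicial degree. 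Naturality in $[n]\in\Delta^{op}$ matches face and degeneracy maps on both sides and gives the claimed isomorphism of simplicial sets.

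The main obstacle will be the last step: verifying rigorously that $Q\otimes\Omega_\bullet$ is a valid framing of $Q$ in the model category of dg properads (i.e., Reedy fibrant with the correct weak equivalence to the constant cosimplicial object), and that the convolution Lie algebra construction is functorial enough in the second variable that the isomorphism $Hom_{\Sigma}(\overline{C},Q\otimes\Omega_n)\cong L\hat\otimes\Omega_n$ preserves the bracket, the differential, and the completeness structure. This is where the finite-dimensionality hypothesis plays an essential role; without it the naive identification of the convolution $Hom$ with a completed tensor product fails and the simplicial comparison breaks down, even though the underlying bijection of MC elements persists in each degree.
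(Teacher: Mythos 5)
Your proposal is correct and follows essentially the same route as the paper: convolution pre-Lie/Lie structure on $Hom_{\Sigma}(\overline{C},Q)$, completeness via a filtration, the identification of Maurer--Cartan elements with properad morphisms out of the cobar construction, the simplicial frame $Q\otimes_e\Omega_{\bullet}$ on the target, and the levelwise isomorphism $Hom_{\Sigma}(\overline{C},Q)\hat{\otimes}\Omega_{n}\cong Hom_{\Sigma}(\overline{C},Q\otimes_e\Omega_{n})$ compatible with brackets, differentials and filtrations (the paper's Lemma 2.15). The only quibble is terminological: $Q\otimes\Omega_{\bullet}$ is a \emph{simplicial} frame (resolution) on the target $Q$, not a cosimplicial frame.
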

The fact that Maurer-Cartan elements of $Hom_{\Sigma}(\overline{C},Q)$ are properad morphisms $P_{\infty}\rightarrow Q$ was already proved in \cite{MV2},
but the identification between the simplicial mapping space of such maps and the simplicial Maurer-Cartan set is a new and non trivial extension of this result.

An important construction for complete dg Lie algebras is the twisting by a Maurer-Cartan element.
The twisting of $Hom_{\Sigma}(\overline{C},Q)$ by a properad morphism $\varphi:P_{\infty}\rightarrow Q$
gives a new dg Lie algebra $Hom_{\Sigma}(\overline{C},Q)^{\varphi}$ with the same underlying graded vector space,
the same Lie bracket, but a new differential obtained from the former by adding the bracket $[\varphi,-]$.
This dg Lie algebra is the deformation complex of $\varphi$, and we have an isomorphism
\[
Hom_{\Sigma}(\overline{C},Q)^{\varphi} \cong Der_{\varphi}(\Omega(C),Q)
\]
where the right-hand term is the complex of derivations with respect to $\varphi$.
\begin{cor}
Let $\phi:P_{\infty}\rightarrow Q$ be a Maurer-Cartan element of $Hom_{\Sigma}(\overline{C},Q)$
and $Hom_{\Sigma}(\overline{C},Q)^{\phi}$ the corresponding twisted Lie algebra.
For every integer $n\geq 0$, we have a bijection
\[
H^{-n}(Hom_{\Sigma}(\overline{C},Q)^{\phi})\cong \pi_{n+1}(Map_{\mathcal{P}}(P_{\infty},Q),\phi)
\]
which is an isomorphism of abelian groups for $n\geq 1$, and an isomorphism of groups for $n=0$
where $H^0(Hom_{\Sigma}(\overline{C},Q)^{\phi})$ is equipped with the group structure given by the Hausdorff-Campbell formula.
\end{cor}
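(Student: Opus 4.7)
\emph{Proof proposal.} The strategy is to combine the isomorphism supplied by the preceding theorem with the classical computation of the homotopy groups of the simplicial Maurer--Cartan set of a complete dg Lie algebra. Set $L:=Hom_{\Sigma}(\overline{C},Q)$; the finite-dimensionality hypothesis on $\overline{C}$ guarantees that the natural arity filtration makes $L$ complete. By the preceding theorem we have a pointed isomorphism of simplicial sets
\[
(Map_{\mathcal{P}}(P_{\infty},Q),\phi)\;\cong\;(MC_{\bullet}(L),\phi),
\]
so the corollary reduces to computing $\pi_{n+1}(MC_{\bullet}(L),\phi)$.

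For this I would invoke the standard identification, due in various guises to Hinich, Getzler and (in sharpened form) Berglund: for any complete dg Lie algebra $L$ and any Maurer--Cartan element $\tau$,
\[
\pi_{n}(MC_{\bullet}(L),\tau)\;\cong\;H^{1-n}(L^{\tau}) \qquad (n\geq 1),
\]
where $L^{\tau}$ is the twist of $L$ by $\tau$. The proof proceeds by identifying the iterated based loop space $\Omega^{n}(MC_{\bullet}(L),\tau)$ with the Maurer--Cartan space of the complete dg Lie algebra $L^{\tau}\hat{\otimes}\Omega^{\ast}(\Delta^{n}/\partial\Delta^{n})$ of polynomial differential forms vanishing on the boundary, and then extracting via a simplicial de Rham argument the cohomology class of $L^{\tau}$ in degree $1-n$. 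Taking $\tau=\phi$ and substituting $n+1$ for $n$ yields the bijection claimed.

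It remains to pin down the algebraic structures. For $n\geq 1$ both sides are higher homotopy groups of a pointed simplicial set and are therefore abelian, the identification being manifestly compatible with the Eckmann--Hilton structure. For $n=0$, one identifies $\pi_{1}(MC_{\bullet}(L),\phi)$ with the gauge group of $Z^{0}(L^{\phi})$ modulo the coboundary subgroup; the pronilpotency of $L^{\phi}$ (inherited from the completeness of $L$) lets one integrate the Lie bracket through the Baker--Campbell--Hausdorff formula, endowing $H^{0}(L^{\phi})$ with precisely the advertised group structure. The one substantive, non-formal input in the proof is thus the Hinich--Getzler--Berglund identification of $\pi_{\ast}MC_{\bullet}$ with shifted cohomology of the twisted complex; completeness of $L$, the basepoint shift by twisting, and the BCH structure in degree zero are then immediate consequences of the construction and of the data assembled in the previous theorem.
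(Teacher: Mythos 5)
Your proposal is correct and follows essentially the same route as the paper: the corollary is obtained by combining the isomorphism of simplicial sets $MC_{\bullet}(Hom_{\Sigma}(\overline{C},Q))\cong Map_{\mathcal{P}}(P_{\infty},Q)$ from the preceding theorem with Berglund's identification of $\pi_{n+1}(MC_{\bullet}(g),\tau)$ with $H^{-n}(g^{\tau})$ (including the Baker--Campbell--Hausdorff group structure in degree $0$), which the paper simply cites as Theorem 2.12 rather than re-proving.
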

It turns out that the same results hold more generally if $P$ is a properad with minimal model $(\mathcal{F}(s^{-1}C),\partial)\stackrel{\sim}{\rightarrow}P$ for a certain homotopy coproperad $C$, and $Q$ is any properad. In this case, the complex $Hom_{\Sigma}(\overline{C},Q)$ is a complete dg $L_{\infty}$ algebra.

Moreover, there is also a version of Theorem 0.1 for algebras over operads whose proof proceeds similarly.
For any operad $P$ with cofibrant resolution $\Omega(C)$, and any $P$-algebra $A$, there is a quasi-free $P$-algebra $A_{\infty}=((P\circ C)(A),\partial)$ over a quasi-free
$C$-coalgebra $C(A)$ which gives a cofibrant resolution of $A$ (see \cite{Fre1} for a detailed construction).
\begin{thm} Let $P$ be an operad and $C$ be a cooperad such that $\Omega(C)$ is a cofibrant resolution of $P$. Let $A,B$ be any $P$-algebras.
The total complex $Hom_{dg}(C(A),B)$ is a complete dg Lie algebra whose simplicial Maurer-Cartan set $MC_{\bullet}(Hom_{dg}(C(A),B))$
is isomorphic to $Map(A_{\infty},B)$.
\end{thm}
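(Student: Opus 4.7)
The plan is to mimic, in the operadic setting, the proof scheme of Theorem 0.1, proceeding in three steps: construct a complete dg Lie algebra structure on $Hom_{dg}(C(A),B)$, identify its Maurer-Cartan set with the set of $P$-algebra morphisms $A_\infty\to B$, and then upgrade this bijection to an isomorphism of simplicial sets. The operadic situation is technically lighter than the properadic one since cooperadic decompositions are indexed by trees. For the first step, one endows $Hom_{dg}(C(A),B)$ with the standard convolution dg Lie structure associated to the $C$-coalgebra $C(A)$ and the $P$-algebra $B$: the bracket is the antisymmetrization of the convolution product obtained by decomposing via the $C$-coalgebra structure on $C(A)$ and composing via the $P$-algebra structure on $B$. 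The filtration by the coradical degree of $\overline{C}$ makes it a complete filtered dg Lie algebra, and the differential is that induced from the internal differentials of $A$ and $B$.

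For the second step, the universal property of the quasi-free $P$-algebra $A_\infty=((P\circ C)(A),\partial)$ yields, ignoring differentials, a bijection between $P$-algebra morphisms $A_\infty\to B$ and chain maps $C(A)\to B$. Imposing compatibility with the twisted differential $\partial$ translates, by a direct computation with the bar-cobar formalism for operads, into precisely the Maurer-Cartan equation $d\phi+\tfrac{1}{2}[\phi,\phi]=0$ in $Hom_{dg}(C(A),B)$. This gives the desired bijection at the level of $0$-simplices, and is essentially due to Ginzburg-Kapranov and Fresse; no new input is required here.

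The third and decisive step is the simplicial enhancement. By definition
\[
MC_n(Hom_{dg}(C(A),B))=MC\bigl(Hom_{dg}(C(A),B)\,\hat{\otimes}\,\Omega_n\bigr),
\]
where $\Omega_\bullet$ denotes Sullivan's polynomial de Rham algebras on the simplices. Since each $\Omega_n$ is finite-dimensional in each cohomological degree, this complete dg Lie algebra identifies naturally with $Hom_{dg}(C(A),B\otimes\Omega_n)$, endowed with the convolution bracket associated to the $P$-algebra structure on $B\otimes\Omega_n$ inherited from $B$ and from the commutative algebra structure of $\Omega_n$. Applying step two with $B$ replaced by $B\otimes\Omega_n$ then yields a bijection with $Hom_{P\text{-alg}}(A_\infty,B\otimes\Omega_n)$, which should be the set of $n$-simplices of $Map(A_\infty,B)$ once $B\otimes\Omega_\bullet$ is recognized as a simplicial frame of $B$. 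Naturality in $\Omega_\bullet$ ensures that these bijections assemble into an isomorphism of simplicial sets compatible with faces and degeneracies.

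The main obstacle is precisely this last identification: verifying that $B\otimes\Omega_\bullet$ provides a valid simplicial (or equivalently cosimplicial) frame for $B$ in the model category of $P$-algebras, so that $Hom_{P\text{-alg}}(A_\infty,B\otimes\Omega_\bullet)$ genuinely computes the homotopy function complex $Map(A_\infty,B)$ rather than merely a candidate for it. This is a standard but non-trivial piece of the Hinich-type machinery for dg algebras over operads; one should either invoke an existing comparison theorem from the literature, or verify the Reedy fibrancy of $B\otimes\Omega_\bullet$ directly, exploiting the cofibrancy of $A_\infty$. Once this step is in place, the rest of the argument is the naturality check outlined above.
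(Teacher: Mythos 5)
Your proposal follows essentially the same route as the paper: the paper's own proof of this statement consists precisely in transporting the properadic argument (convolution Lie algebra, identification of Maurer--Cartan elements with morphisms out of the quasi-free resolution, and the filtered isomorphism $Hom_{dg}(C(A),B)\,\hat{\otimes}\,\Omega_{\bullet}\cong Hom_{dg}(C(A),B\otimes\Omega_{\bullet})$), and it explicitly reduces the remaining work to checking that $(-)\otimes\Omega_{\bullet}$ is a functorial simplicial resolution in $P$-algebras --- exactly the step you single out as the main obstacle. Your plan and the paper's proof therefore coincide, down to the identification of the one nontrivial verification.
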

\begin{cor}
Let $\phi:A_{\infty}\rightarrow B$ be a Maurer-Cartan element of $Hom_{dg}(C(A),B)$
and $Hom_{dg}(C(A),B)^{\phi}$ the corresponding twisted Lie algebra.
For every integer $n\geq 0$, we have a bijection
\[
H^{-n}(Hom_{dg}(C(A),B)^{\phi})\cong \pi_{n+1}(Map(A_{\infty},B),\phi)
\]
which is an isomorphism of abelian groups for $n\geq 1$, and an isomorphism of groups for $n=0$
where $H^0(Hom_{dg}(C(A),B)^{\phi})$ is equipped with the group structure given by the Hausdorff-Campbell formula.
\end{cor}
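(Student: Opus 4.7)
The strategy mirrors the deduction of Corollary 0.2 from Theorem 0.1: combine the identification of the mapping space with a Maurer-Cartan simplicial set (Theorem 0.3) with the general computation of homotopy groups of $MC_\bullet$ of a complete dg Lie algebra in terms of cohomology of twists.

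First, by Theorem 0.3 there is an isomorphism of simplicial sets
\[
Map(A_\infty,B) \;\cong\; MC_\bullet(Hom_{dg}(C(A),B))
\]
carrying the basepoint $\phi$ to $\phi$. Hence $\pi_{n+1}(Map(A_\infty,B),\phi)$ is computed by $\pi_{n+1}(MC_\bullet(\mathfrak{g}),\phi)$ for $\mathfrak{g}=Hom_{dg}(C(A),B)$. Next I would verify that $\mathfrak{g}$ is a \emph{complete} dg Lie algebra: the coradical filtration of the cooperad $C$ induces a weight filtration on the quasi-free $C$-coalgebra $C(A)$, which in turn filters $\mathfrak{g}$ by the subspaces of morphisms vanishing on the low-weight part, and $\mathfrak{g}$ is the inverse limit of the associated tower. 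Completeness is precisely what makes $MC_\bullet(\mathfrak{g})$ and its gauge group of $0$-simplices behave well.

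With completeness in hand, I would invoke the theorem of Getzler (also in the form proved by Hinich and refined by Berglund) which asserts that for any complete dg Lie algebra $\mathfrak{g}$ and Maurer-Cartan element $\phi$,
\[
\pi_k(MC_\bullet(\mathfrak{g}),\phi) \;\cong\; H^{1-k}(\mathfrak{g}^\phi), \qquad k\geq 1,
\]
as abelian groups for $k\geq 2$, and as groups for $k=1$ with the Baker--Campbell--Hausdorff structure inherited from the exponential of the pronilpotent Lie algebra in degree $0$ modulo the gauge action. Setting $k=n+1$ yields the desired bijection $\pi_{n+1}(Map(A_\infty,B),\phi)\cong H^{-n}(\mathfrak{g}^\phi)$.

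The only step requiring genuine verification, as opposed to citation, is the completeness of $\mathfrak{g}$ together with the identification of the twisted differential $d+[\phi,-]$ on $\mathfrak{g}^\phi$ with the differential on the deformation complex of $\phi$ (the analogue, in the operadic setting, of the isomorphism $Hom_\Sigma(\overline{C},Q)^\varphi\cong Der_\varphi(\Omega(C),Q)$ recorded after Corollary 0.2). This follows from the explicit description of $A_\infty=((P\circ C)(A),\partial)$ as a quasi-free $P$-algebra: derivations out of it are determined by their restriction to the generators $C(A)$, and the twist by $\phi$ encodes exactly the correction to the differential induced by the $P_\infty$-structure $\phi$. Once this identification and completeness are in place, the corollary is a formal consequence of Theorem 0.3 and Getzler's theorem.
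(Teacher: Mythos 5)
Your proposal is correct and follows essentially the same route as the paper: the corollary is deduced by combining the theorem identifying $Map(A_{\infty},B)$ with $MC_{\bullet}(Hom_{dg}(C(A),B))$ (including the completeness of that dg Lie algebra, which is part (i) of that theorem rather than something to be re-proved here) with Berglund's computation of $\pi_{n+1}(MC_{\bullet}(g),\tau)\cong H^{-n}(g^{\tau})$ for a complete dg Lie algebra, the $n=0$ case carrying the Baker--Campbell--Hausdorff group structure.
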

The twisted complex $Hom_{dg}(A_{\infty},B)^{\phi}$ actually gives the André-Quillen cohomology
of $A_{\infty}$ with coefficients in $B$.
For instance, we obtain
\[
H^n_{AQ}(A_{\infty},A_{\infty})\cong \pi_{n+1}(haut(A_{\infty}),\phi)
\]
where $H^n_{AQ}$ is the André-Quillen cohomology and $A_{\infty}$ is seen as an $A_{\infty}$-module via $\phi$.

\subsubsection*{Deformation functors from properad morphisms and geometric interpretation}

Let $Art_{\mathbb{K}}$ be the category of local commutative artinian $\mathbb{K}$-algebras.
To any complete dg Lie algebra $g$ one associates its deformation functor defined by
\begin{eqnarray*}
Def_g:Art_{\mathbb{K}} & \rightarrow & Set \\
A & \mapsto & \mathcal{MC}(g\otimes A),
\end{eqnarray*}
where $\mathcal{MC}(g\otimes A)$ is the moduli set of Maurer-Cartan elements of the dg Lie algebra
$g\otimes A$ (wich is still complete, see Corollary 2.4 of \cite{Yal3}).
Our main observation here is that we can replace $Def_{Hom_{\Sigma}(\overline{C},Q)}$ by a more convenient
and naturally isomorphic functor
\[
\pi_0P_{\infty}\{Q\}(-):Art_{\mathbb{K}}\rightarrow Set
\]
defined by
\[
\pi_0P_{\infty}\{Q\}(R)=\pi_0P_{\infty}\{Q\otimes_e R\}
\]
where $\otimes_e$ is the external tensor product defined by $(Q\otimes_e R)(m,n)=Q(m,n)\otimes R$.
We prove that the homotopical $R$-deformations of $\varphi$ correspond to its algebraic $R$-deformations:
\begin{thm}
For every artinian algebra $R$ and any morphism $\varphi:P_{\infty}\rightarrow Q$, there are group isomorphisms
\[
\pi_{*+1}(Map_{Prop(R-Mod)}(P_{\infty}\otimes_e R,Q\otimes_e R),\varphi\otimes_e id_R)\cong H^{-*}(Hom_{\Sigma}(\overline{C},Q)^{\varphi}\otimes R).
\]
\end{thm}

These moduli spaces also admit, consequently, an algebraic geometry interpretation.
Indeed, the deformation functor
\[
Def_{Hom_{\Sigma}(\overline{C},Q)}:Art_{\mathbb{K}}\rightarrow Set
\]
extends to a pseudo-functor of groupoids
\[
\underline{Def}_{Hom_{\Sigma}(\overline{C},Q)}:Alg_{\mathbb{K}}\rightarrow Grpd
\]
where $Alg_{\mathbb{K}}$ is the category of commutative $\mathbb{K}$-algebras and $Grpd$
the $2$-category of groupoids. This pseudo-functor is defined by sending any algebra $A$
to the Deligne groupoid of $Hom_{\Sigma}(C,Q)\otimes A$, that is, the groupoid whose objects are Maurer-Cartan
elements of $Hom_{\Sigma}(C,Q)\otimes A$ and morphisms are given by the action of the gauge group
$exp(Hom_{\Sigma}(C,Q)^0\otimes A)$. Such a pseudo-functor forms actually a prestack, whose stackification
gives the quotient stack
\[
[MC(Hom_{\Sigma}(C,Q))/exp(Hom_{\Sigma}(C,Q)^0)]
\]
of the Maurer-Cartan scheme $MC(Hom_{\Sigma}(C,Q))$ by the action of the prounipotent algebraic group $exp(Hom_{\Sigma}(C,Q)^0)]$.
It turns out that the $0^{th}$ cohomology group of the tangent complex of such a stack, encoding equivalences classes of infinitesimal deformations of a $\mathbb{K}$-point of this stack, is exactly
\[
t_{Def_{Hom_{\Sigma}(C,Q)^{\phi}}}=H^{-1}(Hom_{\Sigma}(C,Q)^{\phi}).
\]
We refer the reader to \cite{Yal3} for a proof of these results.
However, this geometric structure does not capture the whole deformation theory of the points.
For this, the next part develops such a geometric interpretation in the context of homotopical algebraic geometry.

\subsubsection*{Higher stacks from mapping spaces of algebras over monads}

Now we give a geometric interpretation of our moduli spaces of algebraic structures.
Homotopical algebraic geometry is a wide subject for which we refer the reader to \cite{TV1} and \cite{TV2}.
In this paper, we just outline in 4.1 the key ideas in the construction of higher stacks.
We fix two Grothendieck universes $\mathbb{U}\in\mathbb{V}$ such that $\mathbb{N}\in\mathbb{V}$,
as well as a HAG context $(\mathcal{C},\mathcal{C}_0,\mathcal{A},\tau,P)$ (see \cite{TV2} for a
definition). In particular, the category
$\mathcal{C}$ is a $\mathbb{V}$-small $\mathbb{U}$-combinatorial symmetric monoidal model category.
We fix also a regular cardinal $\kappa$ so that (acyclic) cofibrations of $\mathcal{C}$ are generated by
$\kappa$-filtered colimits of generating (acyclic) cofibrations.
For technical reasons, we also need to suppose that the tensor product preserves fibrations.
This assumption is satisfied in particular by topological spaces, simplicial sets, simplicial modules over a ring,
cochain complexes over a ring.
We consider a $\mathbb{U}$-small category $I$ and the associated category of diagrams $\mathcal{C}^I$,
as well as a monad $T:\mathcal{C}^I\rightarrow\mathcal{C}^I$ preserving $\kappa$-filtered colimits.
We also assume some technical assumptions about the compatibility between $T$ and modules over commutative monoids in $\mathcal{C}$.
Any homotopy mapping space
$Map(X_{\infty},Y)$ between a cofibrant $T$-algebra $X_{\infty}$ and a fibrant $T$-algebra $Y$ (here these are diagrams
with values in fibrant objects of $\mathcal{C}$) gives rise to the functor
\[
\underline{Map}(X_{\infty},Y): Comm(\mathcal{C}) \rightarrow sSet
\]
which sends any $A\in Comm(\mathcal{C})$ to $Mor_{T-Alg}(X_{\infty},(Y\otimes A^{cf})^{\Delta^{\bullet}})$,
where $(-)^{cf}$ is a functorial fibrant-cofibrant replacement in $\mathcal{C}$ and $(-)^{\Delta^{\bullet}}$
a simplicial resolution in $T$-algebras.
\begin{thm}
Let $X_{\infty}$ be a cofibrant $T$-algebra and $Y$ be a $T$-algebra
with values in fibrations of fibrant perfect objects of $\mathcal{C}$. Then $\underline{Map}(X_{\infty},Y)$ is a representable stack.
\end{thm}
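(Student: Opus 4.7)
The plan is to verify the two conditions defining a representable stack in the given HAG context $(\mathcal{C},\mathcal{C}_0,\mathcal{A},\tau,P)$: first, that $\underline{Map}(X_{\infty},Y)$ is invariant under weak equivalences in $Comm(\mathcal{C})$ and satisfies $\tau$-hyperdescent, so that it defines a stack; and second, that this stack is equivalent to $\mathbb{R}\mathrm{Spec}(B)$ for some commutative monoid $B$ in $\mathcal{C}$, so that it is representable.

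For the first step, I would begin by showing that
\[
A \mapsto Mor_{T-Alg}(X_{\infty},(Y\otimes A^{cf})^{\Delta^{\bullet}})
\]
computes the derived mapping space $Map^h_{T-Alg}(X_{\infty}, Y\otimes A)$. This uses that $X_{\infty}$ is cofibrant, that $Y\otimes A^{cf}$ remains a fibrant $T$-algebra (this is where the hypothesis that $Y$ takes values in fibrations of fibrant objects, the assumption that the tensor preserves fibrations, and the compatibility of $T$ with modules over commutative monoids are jointly invoked), and that $(-)^{\Delta^{\bullet}}$ is a simplicial resolution on the $T$-algebra side. Homotopy invariance in $A$ is then automatic, and $\tau$-hyperdescent for a hypercover $A \to A^{\bullet}$ reduces to the fact that the derived mapping space sends homotopy limits in the target to homotopy limits, combined with the preservation of homotopy limits of commutative monoids by the construction $Y\otimes(-)^{cf}$.

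For the second step, I would exploit perfectness of the values of $Y$ to exhibit a representing object. Since each $Y(i)$ is dualizable in $\mathcal{C}$, one has a natural equivalence $Y\otimes A \simeq \underline{Hom}(Y^{\vee}, A)$ in $\mathcal{C}^I$, where $Y^{\vee}$ is the levelwise dual. Using the internal hom adjunction together with the compatibility between $T$ and modules over commutative monoids, this gives a chain of natural equivalences
\[
Map^h_{T-Alg}(X_{\infty}, Y\otimes A) \simeq Map^h_{\mathcal{C}}\bigl(\Phi(X_{\infty},Y^{\vee}), A\bigr),
\]
where $\Phi(X_{\infty},Y^{\vee})$ is an object of $\mathcal{C}$ built as a (derived) coend over $I$ encoding the $T$-algebra structure of $X_{\infty}$ paired against $Y^{\vee}$. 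Passing to the free commutative monoid on $\Phi(X_{\infty},Y^{\vee})$ through the $Sym \dashv \mathrm{forget}$ adjunction yields a commutative monoid $B \in Comm(\mathcal{C})$ together with a natural equivalence $\underline{Map}(X_{\infty},Y)(A) \simeq Map^h_{Comm(\mathcal{C})}(B, A)$, so $\underline{Map}(X_{\infty},Y) \simeq \mathbb{R}\mathrm{Spec}(B)$.

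The main obstacle I expect is the rigorous construction of $\Phi(X_{\infty},Y^{\vee})$ and the verification that the above chain of equivalences genuinely holds at the level of derived mapping spaces, uniformly in $A$. This is precisely the point at which the technical compatibilities between the monad $T$, the diagram shape $I$, and the tensoring with commutative monoids in $\mathcal{C}$ must be used carefully: one needs to transport $T$-algebra structures across the internal hom adjunction and commute them with fibrant-cofibrant replacement and simplicial resolution in the diagram category $\mathcal{C}^I$. A secondary difficulty is to control the symmetric algebra $Sym$ in the derived sense so that the final equivalence is compatible with the model topology $\tau$, thus producing a stack equivalence rather than only a pointwise one.
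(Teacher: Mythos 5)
There is a genuine gap at the heart of your second step. You claim a natural equivalence $Map^h_{T-Alg}(X_{\infty}, Y\otimes A) \simeq Map^h_{\mathcal{C}}\bigl(\Phi(X_{\infty},Y^{\vee}), A\bigr)$ for a \emph{single object} $\Phi(X_{\infty},Y^{\vee})$ of $\mathcal{C}$ (a coend over $I$), and then set $B=Sym(\Phi)$. This cannot work for a general cofibrant $T$-algebra $X_{\infty}$: a map of $T$-algebras $X_{\infty}\rightarrow Y\otimes_e A$ is a map of underlying diagrams subject to compatibility with the $T$-structure of $Y\otimes_e A$, and that structure map $T(Y\otimes_e A)\rightarrow Y\otimes_e A$ involves $A$ \emph{polynomially} (for props, say, composing two operations multiplies two copies of $A$). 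Hence the conditions cutting out $T$-algebra maps inside diagram maps are not linear in $A$; transported across the duality $Y\otimes A\simeq \underline{Hom}(Y^{\vee},A)$ and the $Sym\dashv \mathrm{forget}$ adjunction, they become relations between \emph{free commutative monoids} that are not induced by maps in $\mathcal{C}$. The representing object is therefore a homotopy colimit in $Comm(\mathcal{C})$ of free commutative monoids along non-free maps, and is not of the form $Sym(\Phi)$. The missing idea is the one the paper's proof is organized around: write the cofibrant $T$-algebra as $X_{\infty}\sim hocolim_i T(X_i)$, a homotopy colimit of \emph{free} $T$-algebras on cofibrant diagrams (Lemma 4.13). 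Each free piece gives, by the free--forgetful adjunction, the end formula over $ob(I)$, dualizability of the $Y(k)$, and the free commutative monoid adjunction, a genuinely affine stack $\mathbb{R}\underline{Spec}_{Com(X_i(k)\otimes^{\mathbb{L}}Y(k)^{\vee})}$; the mapping space then becomes a $\mathbb{U}$-small homotopy limit of representable stacks, which is representable because the derived Yoneda embedding is fully faithful and commutes with such limits.

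A secondary point: your first step, a direct verification of $\tau$-hyperdescent, is both harder than necessary and not justified as written. The assertion that $Y\otimes(-)^{cf}$ preserves homotopy limits of commutative monoids is false without dualizability (tensoring is a colimit-preserving operation), and even granting it, descent for $Map_{T-Alg}(X_{\infty},Y\otimes -)$ does not follow formally. The paper avoids this entirely: since the topology of a HAG context is subcanonical, representable prestacks are automatically stacks, so it suffices to produce the weak equivalence with the homotopy limit of affines; no separate descent argument is needed. I would recommend restructuring your proof along these lines rather than trying to repair the hyperdescent computation.
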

Theorem 0.6 admits the following variation, whose proof is completely similar:
\begin{thm}
Suppose that all the objects of $\mathcal{C}$ are cofibrant.
Let $X_{\infty}$ be a cofibrant $T$-algebra with values in fibrant objects and $Y$ be a $T$-algebra with values in fibrations between
fibrant dualizable objects of $\mathcal{C}$. Then $\underline{Map}(X_{\infty},Y)$ is a representable stack.
\end{thm}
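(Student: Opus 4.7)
The plan is to mirror the proof of Theorem 0.6, making only the adjustments dictated by the two changes in hypotheses: the global cofibrancy of $\mathcal{C}$ taking over the role of the functorial cofibrant replacement, and pointwise dualizability of the values of $Y$ replacing perfection. Since every $A\in Comm(\mathcal{C})$ is automatically cofibrant, the replacement $A^{cf}$ reduces to a mere fibrant replacement $A^f$, and homotopy invariance of $\underline{Map}(X_\infty,Y)$ lets us restrict to fibrant $A$. Combined with the assumption that $X_\infty$ takes values in fibrant objects, this ensures that $(Y\otimes A^f)^{\Delta^\bullet}$ remains a well-behaved simplicial resolution in $T$-algebras, and the underlying mapping spaces are homotopy invariant in both variables.

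For the descent (stack) condition, the argument of Theorem 0.6 applies verbatim: the functor $A\mapsto Mor_{T-Alg}(X_\infty,(Y\otimes A^f)^{\Delta^\bullet})$ sends $\tau$-hypercovers to homotopy limits of simplicial sets, using that $T$ preserves $\kappa$-filtered colimits and is compatible with tensoring by commutative monoids. For representability, one would exploit pointwise dualizability of $Y$: for each $i\in I$ there is a natural isomorphism
\[
Mor_{\mathcal{C}}(X_\infty(i),Y(i)\otimes A)\cong Mor_{\mathcal{C}}(X_\infty(i)\otimes Y(i)^{\vee},A),
\]
which assembles compatibly with the $I$-diagram structure. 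Since every object of $\mathcal{C}$ is cofibrant, no additional cofibrancy correction is needed on the tensor product. Pushing these identifications through the $T$-algebra structure and the simplicial resolution, one identifies $\underline{Map}(X_\infty,Y)$ with a functor of the form $A\mapsto Mor_{Comm(\mathcal{C})}(S,A)$ for a derived commutative algebra $S$ functorially constructed from $X_\infty$ and the pointwise duals of $Y$.

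The main obstacle is precisely this assembly step: one must check that the pointwise duality isomorphisms are natural in the diagram $I$, compatible with the action of the monad $T$, and remain valid after the simplicial resolution. Here the technical hypothesis that $T$ preserves $\kappa$-filtered colimits and interacts correctly with modules over commutative monoids is essential, since it insures that the free--forgetful adjunctions commute with tensoring by $A$ up to weak equivalence. Cofibrancy of $X_\infty$ as a $T$-algebra then controls the homotopy type of $S$, and one concludes that $S$ corepresents $\underline{Map}(X_\infty,Y)$, exhibiting it as a representable stack in the ambient HAG context.
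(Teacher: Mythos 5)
Your proposal follows the same route as the paper, which itself proves this statement by declaring the proof ``completely similar'' to the perfect-object case (Theorem 4.14): decompose $X_{\infty}$ as a homotopy colimit of free $T$-algebras, reduce via the free--forgetful adjunction to an end of mapping spaces in $\mathcal{C}$, use pointwise dualizability to rewrite $Mor_{\mathcal{C}}(X_i(k),Y(k)\otimes A)$ as $Mor_{\mathcal{C}}(X_i(k)\otimes Y(k)^{\vee},A)$ (with the underived tensor product sufficing since all objects are cofibrant), and identify the result with a homotopy limit of $\mathbb{R}\underline{Spec}$'s of free commutative monoids. The only cosmetic difference is that you phrase the stack condition as a direct descent check along hypercovers, whereas the paper obtains it for free once the prestack is shown weakly equivalent to a homotopy limit of representables (the topology being subcanonical) --- the substance is the same.
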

This holds for instance when $\mathcal{C}=sSet$ is the category of simplicial sets.
When $\mathcal{C}=Ch_{\mathbb{K}}$, all the objects are fibrant and cofibrant so we just have to suppose that
$Y$ take its values in dualizable cochain complexes, which are the bounded cochain complexes of finite dimension in each degree.
\begin{cor}
(1) Let $P_{\infty}=\Omega(C)\stackrel{\sim}{\rightarrow} P$ be a cofibrant resolution of a dg properad $P$
and $Q$ be any dg properad such that each $Q(m,n)$ is a bounded complex of finite dimension in each degree.
The functor
\[
\underline{Map}(P_{\infty},Q):A\in CDGA_{\mathbb{K}}\mapsto Map_{Prop}(P_{\infty},Q\otimes A)
\]
is an affine stack in the setting of complicial algebraic geometry of \cite{TV2}.

(2) Let $P_{\infty}=\Omega(C)\stackrel{\sim}{\rightarrow} P$ be a cofibrant resolution of a dg properad $P$ in non positively graded cochain complexes,
and $Q$ be any properad such that each $Q(m,n)$ is a finite dimensional vector space.
The functor
\[
\underline{Map}(P_{\infty},Q):A\in CDGA_{\mathbb{K}}\mapsto Map_{Prop}(P_{\infty},Q\otimes A)
\]
is an affine stack in the setting of derived algebraic geometry of \cite{TV2}, that is, an affine derived scheme.
\end{cor}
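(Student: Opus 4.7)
The plan is to derive both parts directly from Theorem 0.7 by presenting properads as algebras over a suitable monad on a diagram category. First I would identify the monad: properads with values in $Ch_{\mathbb{K}}$ are algebras over the free properad monad $T$ acting on the category $(Ch_{\mathbb{K}})^I$ of $\Sigma$-biobjects, where $I$ is the indexing category of biarities (with the appropriate $\Sigma$-equivariance). The monad $T$ is defined as a sum over isomorphism classes of connected directed graphs, so it visibly preserves filtered colimits, and it is compatible with the external tensor product by commutative algebras because the graph-wise formula is built out of the symmetric monoidal operations of $Ch_{\mathbb{K}}$. Under this identification the functor $\underline{Map}(P_{\infty},Q)$ of Theorem 0.7 coincides with the functor appearing in the corollary.

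Next I would verify the remaining hypotheses of Theorem 0.7 in the base model category $\mathcal{C}=Ch_{\mathbb{K}}$. Over a field of characteristic zero every cochain complex is both fibrant and cofibrant, so all objects of $\mathcal{C}$ are cofibrant as required, $P_{\infty}$ is cofibrant as a $T$-algebra by assumption, and $Q$ automatically takes values in fibrant objects. The dualizable objects of $Ch_{\mathbb{K}}$ are precisely the bounded complexes of finite dimension in each degree, which is exactly the hypothesis placed on $Q(m,n)$ in part (1); hence $Q$ lands in fibrant dualizable objects. Fibrations in $Ch_{\mathbb{K}}$ are surjections, and the condition ``diagram of fibrations'' can be arranged either directly (in characteristic zero, $\Sigma$-equivariance does not obstruct fibrancy since the group algebras are semi-simple) or by a functorial fibrant replacement in the projective model structure on diagrams.

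With these inputs in place, Theorem 0.7 yields that $\underline{Map}(P_{\infty},Q)$ is a representable stack, which in the complicial HAG context of \cite{TV2} is by definition an affine stack; this establishes (1). For (2) I would run exactly the same argument inside the derived HAG context of \cite{TV2}, whose base model category is the non-positively graded part of $Ch_{\mathbb{K}}$ (equivalent via Dold--Kan to simplicial $\mathbb{K}$-modules). The hypothesis that $P_{\infty}$ is non-positively graded keeps the cofibrant resolution inside this sub-context, and the hypothesis that each $Q(m,n)$ is a finite-dimensional vector space concentrated in degree zero ensures that $Q$ is dualizable there as well. A representable stack in the derived algebraic geometry context is precisely an affine derived scheme, yielding (2).

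The main obstacle I anticipate is the verification that properads fit the technical assumptions of Theorem 0.7, namely that the free properad monad $T$ interacts correctly with tensoring by commutative $\mathbb{K}$-algebras (so that $Q \otimes_e A$ is again a properad and the mapping spaces behave as expected under base change), and that the underlying $\Sigma$-biobject of a properad can be presented as a diagram of fibrations in the projective model structure. These are bookkeeping checks rather than conceptual difficulties, but they are the step where the abstract Theorem 0.7 actually needs to be specialized to the concrete category of dg properads.
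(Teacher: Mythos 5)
Your proposal follows essentially the same route as the paper: Section 4.3 of the paper specializes Theorem 0.7 (Theorem 4.18 in the body) to the free prop(erad) monad on $\Sigma$-biobjects over $Ch_{\mathbb{K}}$, checks the compatibility $\mathcal{F}_A(M\otimes_e A)=\mathcal{F}(M)\otimes_e A$, and uses that all objects of $Ch_{\mathbb{K}}$ are fibrant and cofibrant with dualizable objects being exactly the bounded degreewise finite-dimensional complexes. The only small remark is that the ``values in fibrations'' condition is automatic here for a more elementary reason than the one you give: the indexing category $\mathbb{S}$ has only isomorphisms, so every $\Sigma$-biobject sends them to isomorphisms, which are in particular fibrations.
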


\begin{rem}
In the derived algebraic geometry context, the derived stack $\underline{Map}(P_{\infty},Q)$ is not affine anymore wether the $Q(m,n)$ are not finite dimensional vector spaces.
However, we expect these stacks to be derived $n$-Artin stacks for the $Q(m,n)$ being perfect complexes with a given finite amplitude.
We refer the reader to Section 4.4 for an idea of why such a result seems reasonable, based on the characterization of derived $n$-Artin stacks via resolutions by Artin $n$-hypergroupoids
given in \cite{Pri}.
\end{rem}

We obtain in this way moduli stacks arising from mapping spaces of operads, cyclic operads, modular operads, $1/2$-props, dioperads, properads, props, their colored versions
and their wheeled versions. An exhaustive list of examples is impossible to write down here, but we decided in Section 5 to focus on several examples
which are of great interest in topology, geometry and mathematical physics:
\begin{itemize}
\item the moduli stack of realizations of Poincaré duality on the cochains of a compact oriented manifold;
\item the moduli stack of chain-level string topology operations on the $S^1$-equivariant chains on the loop space of a smooth manifold;
\item the moduli stack of Poisson structures on formal graded manifolds (in particular, formal Poisson structures on
$\mathbb{R}^d$);
\item the moduli stack of complex structures on formal manifolds;
\item the moduli stack of deformation quantizations of graded Poisson manifolds;
\item the moduli stack of quantization functors (in particular for Lie bialgebras);
\item the moduli stack of homotopy operad structures on a given dg operad.
\end{itemize}

\subsubsection*{Tangent complexes, higher automorphisms and obstruction theory}

We denote by $\mathbb{T}_{\underline{Map}(P_{\infty},Q),x_{\varphi}}$ the tangent complex of $\underline{Map}(P_{\infty},Q)$ at an $A$-point $x_{\varphi}$ associated
to a properad morphism $\varphi:P_{\infty}\rightarrow Q\otimes_e A$.
\begin{thm}
Let $P_{\infty}=\Omega(C)\stackrel{\sim}{\rightarrow} P$ be a cofibrant resolution of a dg properad $P$
and $Q$ be any dg properad such that each $Q(m,n)$ is a bounded complex of finite dimension in each degree.
Let $A$ be a perfect cdga. We have isomorphisms
\[
H^{-*}(\mathbb{T}_{\underline{Map}(P_{\infty},Q),x_{\varphi}}) \cong
H^{-*+1}(Hom_{\Sigma}(\overline{C},Q)^{\varphi}\otimes A)
\]
for any $*\geq 1$.
\end{thm}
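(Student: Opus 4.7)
The plan is to deduce the theorem from Theorem 0.4 (upgraded to cdga coefficients) by leveraging the affineness of $\underline{Map}(P_\infty,Q)$ provided by Corollary 0.8(1). Throughout, set $L := Hom_\Sigma(\overline{C},Q)$, and write $\varphi$ for the Maurer-Cartan element of $L\otimes A$ corresponding to $x_\varphi$.

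The first step is to extend the Maurer-Cartan identification of Theorem 0.1 to cdga coefficients. Because each $\overline{C}(m,n)$ is finite-dimensional, there is a natural isomorphism $Hom_\Sigma(\overline{C},Q\otimes_e B)\cong L\otimes B$ for every cdga $B$, and rerunning the construction of Theorem 0.1 in the symmetric monoidal model category of $B$-modules yields
$$\underline{Map}(P_\infty,Q)(B)\;\simeq\;MC_\bullet(L\otimes B).$$
Applied to $B=A$, this identifies the space of $A$-points of $\underline{Map}(P_\infty,Q)$ at $x_\varphi$ with the simplicial Maurer-Cartan set of $L\otimes A$ pointed at $\varphi$.

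The second step invokes the representability of $\underline{Map}(P_\infty,Q)$. For a representable (here affine, by Corollary 0.8(1)) derived stack $F$ with $A$-point $x$, the Toen-Vezzosi characterization of the tangent complex identifies its negative cohomology with the higher homotopy of the space of $A$-points,
$$H^{-n}(\mathbb{T}_{F,x})\;\cong\;\pi_n(F(A),x)\quad(n\geq 1),$$
this being the standard Postnikov calculus in the affine derived case. Combined with the first step, the theorem reduces to proving
$$\pi_n\bigl(MC_\bullet(L\otimes A),\varphi\bigr)\;\cong\;H^{-n+1}(L^\varphi\otimes A)\quad(n\geq 1),$$
which is the $A$-linear version of Corollary 0.2 applied to the complete dg Lie algebra $L\otimes A$ twisted by $\varphi$. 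The equality $(L\otimes A)^\varphi=L^\varphi\otimes A$ is immediate since the bracket extends $A$-linearly and twisting only adds the inner derivation $[\varphi,-]$.

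The principal difficulty lies in this last reduction, as Corollary 0.2 is stated for complete dg Lie algebras over $\mathbb{K}$ whereas we now need coefficients in the perfect cdga $A$. The essential input is Hinich's identification $\pi_n MC_\bullet(\mathfrak{g})\cong H^{-n+1}(\mathfrak{g})$, which holds for any pronilpotent dg Lie algebra; completeness of $L\otimes A$ is inherited from that of $L$ together with the perfectness of $A$, so that the weight filtration induced by the coradical filtration of $\overline{C}$ remains exhaustive after tensoring with $A$. A secondary technical point is the Postnikov identification in the second step: here perfectness of $A$ ensures that the tangent complex is itself a perfect complex of $A$-modules whose cohomology computes the higher homotopy groups of $F(A)$ directly, without convergence issues in the comparison spectral sequence.
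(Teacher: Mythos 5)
Your overall architecture coincides with the paper's: identify the space of $A$-points of $\underline{Map}(P_{\infty},Q)$ with the simplicial Maurer--Cartan set of $Hom_{\Sigma}(\overline{C},Q\otimes_e A)$, relate the homotopy groups of that space to the cohomology of the tangent complex via affineness, and conclude with the Berglund-type isomorphism $\pi_{n}(MC_{\bullet}(\mathfrak{g}),\tau)\cong H^{-n+1}(\mathfrak{g}^{\tau})$ together with the identification $(L\otimes A)^{\varphi}\cong L^{\varphi}\otimes A$ (the paper's Lemma 3.2). Your first and third steps are sound and match what the paper does.

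The gap is in your second step. There is no citable ``Toen--Vezzosi characterization'' asserting $H^{-n}(\mathbb{T}_{F,x})\cong\pi_{n}(F(A),x)$ for affine stacks: in \cite{TV2} the (co)tangent complex is \emph{defined} through the derivation spaces $\mathbb{D}er_{F}(\mathbb{R}\underline{Spec}_A,M)=Map_{\mathbb{R}\underline{Spec}_A/St}(\mathbb{R}\underline{Spec}_{A\oplus M},F)$, i.e.\ through maps out of square-zero extensions, and the passage from these to the actual homotopy groups of $F(A)$ is precisely the content that has to be supplied. The paper spends essentially all of Section 6.2 on it: Proposition 6.1 (a Dold--Kan argument giving $\pi_{*}\mathbb{D}er_{F}(\mathbb{R}\underline{Spec}_A,M)\cong H^{-*}(\mathbb{T}_{F,x}\otimes_A M)$ for dualizable $M$) and Proposition 6.6, which identifies $Map_{CDGA_{\mathbb{K}}/A}(C,A\oplus A[n])$ with the $n$-fold loop space $\Omega^{n}_{x}\lvert Map_{CDGA_{\mathbb{K}}}(C,A)\rvert$. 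That identification is proved using the Sullivan adjunction between pointed simplicial sets and augmented cdgas, the formality of $\partial\Delta^{n+1}\simeq S^{n}$ (so that $A_{PL}(\partial\Delta^{n+1})$ may be replaced by $H^{*}S^{n}=\mathbb{K}[t]/(t^{2})$, producing the square-zero extension $A\oplus A[n]$), and the Quillen adjunction $-\otimes A^{\vee}:CDGA_{\mathbb{K}}/A\rightleftarrows CDGA_{\mathbb{K}}^{aug}:-\otimes A$ --- this last adjunction being where the perfectness of $A$ actually enters, rather than through exhaustiveness of a filtration on $L\otimes A$ or convergence of a comparison spectral sequence as you suggest. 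So your ``standard Postnikov calculus'' names the correct statement but supplies no argument for the one step that carries the real weight of the theorem; in the unbounded (complicial) setting this step cannot simply be outsourced to the connective derived-algebraic-geometry literature and must be proved, for instance along the rational-homotopy-theoretic lines the paper follows.
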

To put it in words, non-positive cohomology groups of the deformation complex correspond to negative groups of
the tangent complex, which computes the higher automorphisms of the given point (homotopy groups of its homotopy
automorphisms.
\begin{thm}
(1) If $H^2(\mathbb{T}_{\underline{Map}(X_{\infty},Y),x_{\varphi}}[-1])=H^1(\mathbb{T}_{\underline{Map}(X_{\infty},Y),x_{\varphi}})=0$ then for every integer $n$,
every deformation of order $n$ lifts to a deformation of order $n+1$.
Thus any infinitesimal deformation of $\varphi$ can be extended to a formal deformation.

(2) Suppose that a given deformation of order $n$ lifts to a deformation of order $n+1$. Then the set of such lifts forms a torsor under the action
of the cohomology group $H^1(\mathbb{T}_{\underline{Map}(X_{\infty},Y),x_{\varphi}}[-1])=H^0(\mathbb{T}_{\underline{Map}(X_{\infty},Y),x_{\varphi}})$. In particular, if
$H^0(\mathbb{T}_{\underline{Map}(X_{\infty},Y),x_{\varphi}})=0$ then such a lift is unique up to equivalence.
\end{thm}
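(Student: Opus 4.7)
The plan is to reduce the statement to the general obstruction theory for derived stacks with a perfect cotangent complex, as developed by Toen--Vezzosi in HAG II, and then to match the indexing conventions. The previous theorems identify $\underline{Map}(X_{\infty},Y)$ as a representable stack (in particular, it admits a perfect tangent complex at every point), so the abstract machinery applies, and only the arithmetic of the shifts has to be verified.

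First I would unfold what is meant by an ``order $n$ deformation''. A deformation of $\varphi$ of order $n$ is an $A$-point $x_{\varphi_n}:\mathrm{Spec}(\mathbb{K}[t]/(t^{n+1}))\to\underline{Map}(X_{\infty},Y)$ extending the $\mathbb{K}$-point $x_{\varphi}$. The lifting problem from order $n$ to order $n+1$ is governed by the square-zero extension of cdgas
\[
0\to\mathbb{K}\cdot t^{n+1}\to\mathbb{K}[t]/(t^{n+2})\to\mathbb{K}[t]/(t^{n+1})\to 0,
\]
whose kernel $M\cong\mathbb{K}$ sits in degree $0$. This reduces both assertions to understanding the lifting problem along a single square-zero extension.

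Next I would invoke the standard obstruction theorem for derived stacks with perfect cotangent complex (see HAG II, in particular the discussion of infinitesimal criteria and the formal smoothness/unobstructedness of stacks with a given cotangent complex). For a square-zero extension $A'\to A$ with kernel $M$, and an $A$-point $x$ of a stack $\mathcal{X}$ with tangent complex $\mathbb{T}_{\mathcal{X},x}$, the homotopy fiber of $\mathcal{X}(A')\to\mathcal{X}(A)$ over $x$ is (up to an explicit equivalence) controlled by $\mathbb{R}\mathrm{Map}(\mathbb{L}_{\mathcal{X},x},M[1])$, equivalently by $\mathbb{T}_{\mathcal{X},x}\otimes^{\mathbb{L}} M[1]$. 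The obstruction to lifting lies in $\pi_{-1}$ of this mapping space, i.e.\ in $H^1(\mathbb{T}_{\mathcal{X},x}\otimes^{\mathbb{L}} M)$; when the obstruction vanishes, the set of lifts is a torsor under $\pi_0$, i.e.\ under $H^0(\mathbb{T}_{\mathcal{X},x}\otimes^{\mathbb{L}} M)$.

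Specializing this to $\mathcal{X}=\underline{Map}(X_{\infty},Y)$, $x=x_{\varphi}$ and $M=\mathbb{K}$ concentrated in degree $0$, the obstruction class sits in $H^1(\mathbb{T}_{\underline{Map}(X_{\infty},Y),x_{\varphi}})=H^2(\mathbb{T}_{\underline{Map}(X_{\infty},Y),x_{\varphi}}[-1])$ and the torsor of lifts (once they exist) is acted upon by $H^0(\mathbb{T}_{\underline{Map}(X_{\infty},Y),x_{\varphi}})=H^1(\mathbb{T}_{\underline{Map}(X_{\infty},Y),x_{\varphi}}[-1])$. Part (1) follows by induction on $n$: vanishing of the obstruction group ensures each successive square-zero lift exists, so every infinitesimal deformation assembles into a compatible system $\{\varphi_n\}_{n\geq 1}$, hence a formal deformation. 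Part (2) is the torsor statement, with uniqueness up to equivalence when $H^0(\mathbb{T}_{\underline{Map}(X_{\infty},Y),x_{\varphi}})=0$.

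The only genuine obstacle is checking that the abstract Toen--Vezzosi statement applies as stated: concretely, that $\underline{Map}(X_{\infty},Y)$ admits an obstruction theory in the precise technical sense of HAG II at every such $x_{\varphi}$, with tangent complex in the sense used in Theorem 0.10. This is not difficult given representability (Theorems 0.6--0.7), since representable stacks inherit a perfect cotangent complex from the ambient theory; the remainder of the argument is purely formal bookkeeping of cohomological degrees.
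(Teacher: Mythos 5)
Your proposal is correct and follows essentially the same route as the paper: Section 6.3 realizes the square-zero extension $\mathbb{K}[t]/(t^{n+1})\to\mathbb{K}[t]/(t^n)$ as a homotopy pullback over $\mathbb{K}\to\mathbb{K}\oplus\mathbb{K}[1]$, takes the induced fiber sequence of mapping spaces and its long exact sequence of homotopy groups, identifies the obstruction group with $H^1(\mathbb{T}_{\underline{Map}(X_{\infty},Y),x_{\varphi}})$ via Proposition 6.1 (using that $\mathbb{K}[1]$ is dualizable, so perfectness of the cotangent complex is not needed), and cites Proposition 1.4.2.5 of Toen--Vezzosi for the torsor statement. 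The only difference is that you cite the packaged HAG II obstruction theory where the paper unwinds the fiber sequence explicitly; the degree bookkeeping in both versions agrees.
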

\begin{rem}
These results work as well for $A$-deformations, where $A$ is a cdga which is perfect as a cochain complex.
\end{rem}
Under the assumptions of Theorem 0.11 we have
\[
\underline{Map}_{T-Alg}(X_{\infty},Y) \sim \mathbb{R}\underline{Spec}_{C(X_{\infty},Y)},
\]
and for a given $A$-point $x_{\varphi}:\mathbb{R}\underline{Spec}_A\rightarrow \underline{Map}_{T-Alg}(X_{\infty},Y)$
corresponding to a morphism $\varphi\otimes A:X_{\infty}\otimes_e A\rightarrow Y\otimes_e A$,
the tangent complex is
\[
\mathbb{T}_{\underline{Map}(X_{\infty},Y),x_{\varphi}} \cong Der_A(C(X_{\infty},Y),A)
\]
where $\mathbb{R}\underline{Hom}_{A-Mod}$ is the derived hom of the internal hom in the category of
dg $A$-modules.
The complex $\mathbb{T}_{\underline{Map}(X_{\infty},Y),x_{\varphi}}$ is a dg Lie algebra, hence
the following definition:
\begin{defn}
The $A$-deformation complex of the $T$-algebras morphism $X_{\infty}\rightarrow Y$ is
$\mathbb{T}_{\underline{Map}(X_{\infty},Y),x_{\varphi}}$.
\end{defn}
In the particular case where $I=\mathbb{S}$, we consider $T$-algebras in the category of $\Sigma$-biobjects of
$\mathcal{C}$. When all the objects of $\mathcal{C}$ are fibrant, all the assumptions of Theorem 0.7 are
satisfied. Consequently, we have a well-defined and meaningful notion of deformation complex of any morphism
of $T$-algebras with cofibrant source, with the associated obstruction theory.
This gives a deformation complex, for instance, for morphisms of
cyclic operads, modular operads, wheeled properads...more generally, morphisms of polynomial monads as defined
in \cite{BB}. In the situation where it makes sense to define an algebraic structure via a morphism towards
an "endomorphism object" (operads, properads, props, cyclic and modular operads, wheeled prop...) this gives
a deformation complex of algebraic structures and its obstruction theory.

\subsubsection*{Another description}

When the resolution $P_{\infty}$ satisfies some finiteness assumptions, the function ring of the moduli
stack $\underline{Map}(P_{\infty},Q)$ can be made explicit in both complicial and derived algebraic geometry contexts:
it is the Chevalley-Eilenberg complex of the convolution Lie algebra $Hom_{\Sigma}(\overline{C},Q)$.
Moreover, we can then prove that the whole cohomology of the tangent complex at a $\mathbb{K}$-point $\varphi$
is isomorphic to the cohomology of $Hom_{\Sigma}(\overline{C},Q)^{\varphi}$ up to degree shift, giving
an explicit description of the obstruction groups of this moduli stack (not only the groups computing higher automorphisms):
\begin{thm}
Let $P$ be a dg properad equipped with a cofibrant resolution $P_{\infty}:=\Omega(C)\stackrel{\sim}{\rightarrow}P$,
where $C$ admits a presentation $C=\mathcal{F}(E)/(R)$, and $Q$ be a dg properad such that each $Q(m,n)$ is a bounded
complex of finite dimension in each degree. Let us suppose that each $E(m,n)$ is of finite dimension, and that there exists an integer $N$ such that $E(m,n)=0$ for $m+n>N$. Then

(1) The moduli stack $\underline{Map}(P_{\infty},Q)$ is isomorphic to $\mathbb{R}Spec_{C^*(Hom_{\Sigma}(\overline{C},Q))}$,
where $C^*(Hom_{\Sigma}(\overline{C},Q))$ is the Chevalley-Eilenberg algebra of $Hom_{\Sigma}(\overline{C},Q)$.

(2) The cohomology of the tangent dg Lie algebra at a $\mathbb{K}$-point $\varphi:P_{\infty}\rightarrow Q$
is explicitely determined by
\[
H^*(\mathbb{T}_{\underline{Map}(P_{\infty},Q),x_{\varphi}}[-1]) \cong
H^*(Hom_{\Sigma}(\overline{C},Q)^{\varphi}).
\]
\end{thm}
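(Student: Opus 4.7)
My plan is to prove (1) by a representability argument for the Maurer-Cartan functor of the convolution Lie algebra $g := Hom_{\Sigma}(\overline{C},Q)$, and then to deduce (2) via the standard computation of the tangent complex of an affine derived scheme at a given point.

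First I would check that the finiteness hypotheses make $g$ degreewise finite-dimensional and thereby make the Chevalley-Eilenberg complex $C^*(g) = (Sym(g^{\vee}[-1]), d_{CE})$ a genuine (non-completed) cdga lying in the ambient HAG context. The presentation $C = \mathcal{F}(E)/(R)$ with $E$ finite-dimensional and vanishing for $m+n > N$ bounds, by a combinatorial argument on connected decorated graphs, the biarities carrying nonzero contributions to $\overline{C}$; combined with the degreewise finite-dimensionality and boundedness of each $Q(m,n)$, this gives finite-dimensionality of $g$ in each cohomological degree, so that $g^{\vee}[-1]$ is a well-defined dg vector space and $C^*(g)$ is a legitimate object of $CDGA_{\mathbb{K}}$.

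Next, for part (1), I would compare the functors of points. By construction, $\mathbb{R}Spec_{C^*(g)}(A)$ is modelled by $Map_{CDGA_{\mathbb{K}}}(C^*(g), A)$, which by the classical Koszul duality between dg Lie algebras and augmented cdga's of the above shape is naturally weakly equivalent to $MC_{\bullet}(g \otimes A)$: on the free generators $g^{\vee}[-1]$, a cdga morphism $C^*(g) \to A$ is a degree $1$ element of $g \otimes A$, and compatibility with $d_{CE}$ translates exactly into the Maurer-Cartan equation. On the other hand, Theorem~0.1 together with Theorem~0.5 identifies $\underline{Map}(P_{\infty},Q)(A)$ with $MC_{\bullet}(g \otimes A)$, using that $Hom_{\Sigma}(\overline{C}, Q \otimes_e A) \cong Hom_{\Sigma}(\overline{C},Q) \otimes A$ under the finiteness assumptions. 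Composing these natural equivalences gives the required isomorphism of stacks.

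For part (2), I would appeal to the general formula for the tangent complex of a derived affine scheme of quasi-free form. For $C^*(g) = (Sym(W), d_{CE})$ with $W = g^{\vee}[-1]$, the module of $\mathbb{K}$-linear derivations at a point $\varphi: C^*(g) \to \mathbb{K}$ is $W$ itself, equipped with the differential twisted by $\varphi$; this twisted differential is precisely the dual of the differential of the twisted Lie algebra $g^{\varphi}$. Hence the cotangent complex at $\varphi$ is $(g^{\varphi})^{\vee}[-1]$, and dualising once more (using the degreewise finite-dimensionality established in step one) produces $\mathbb{T}_{\underline{Map}(P_{\infty},Q),x_{\varphi}} \cong g^{\varphi}[1]$, which after shifting by $[-1]$ yields the claimed isomorphism $H^*(\mathbb{T}_{\underline{Map}(P_{\infty},Q),x_{\varphi}}[-1]) \cong H^*(Hom_{\Sigma}(\overline{C},Q)^{\varphi})$. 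The main obstacle will be the rigorous globalisation in step two: while the Maurer-Cartan/Chevalley-Eilenberg correspondence is classical over Artinian bases (Hinich, Pridham, Lurie), in the complicial and derived algebraic geometry contexts of Toen-Vezzosi one must verify that $\mathbb{R}Spec_{C^*(g)}$ satisfies descent for the topology $\tau$ and matches $\underline{Map}(P_{\infty},Q)$ on \emph{all} cdga's rather than only Artinian ones, which is exactly where the boundedness and finite-dimensionality hypotheses on $E$ and $Q$ become essential.
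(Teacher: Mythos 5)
Your overall strategy is the same as the paper's: identify $\underline{Map}(P_{\infty},Q)$ with $\mathbb{R}\underline{Spec}_{C^*(g)}$ by matching functors of points through the Maurer--Cartan/Chevalley--Eilenberg duality, then read off the tangent complex at $\varphi$ as the twisted Lie algebra up to shift. However, your first step contains a genuine error that propagates through the rest of the argument: the hypotheses on $E$ do \emph{not} make $g=Hom_{\Sigma}(\overline{C},Q)$ finite-dimensional in each cohomological degree. Since $C=\mathcal{F}(E)/(R)$ is built from connected directed graphs with an \emph{unbounded} number of internal vertices, a fixed biarity $\overline{C}(m,n)$ is typically infinite-dimensional (already the free properad on one generator of biarity $(2,2)$ contains all its iterated vertical self-compositions in biarity $(2,2)$), so $g$ is an infinite product and is not degreewise finite. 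What the hypotheses actually buy --- and what the paper proves using the vertex-number decomposition $g=\prod_{l\geq 1}g_l$ from Markl --- is that each layer $g_l$ is a \emph{finite} product of finite-dimensional pieces, so that each quotient $g/F_rg\cong\prod_{l<r}g_l$ is a finite-dimensional dg Lie algebra and $g$ is \emph{profinite} complete. This distinction is not cosmetic: your definition $C^*(g)=Sym(g^{\vee}[-1])$ with the full linear dual does not satisfy $Mor_{cdga}(C^*(g),A)\cong MC(g\hat{\otimes}A)$ (you would get double-dual artifacts), and your double dualisation in part (2), which you explicitly justify by "degreewise finite-dimensionality", fails. The correct Chevalley--Eilenberg algebra is the one built from the continuous dual, $\mathrm{colim}_r\,C^*(g/F_rg)$, for which Berglund's duality and the reflexivity needed in part (2) do hold.

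For part (2) itself, your direct computation (derivations of a quasi-free cdga at an augmentation give the generators with the $\varphi$-twisted differential, dual to $g^{\varphi}$) is close in spirit to the alternative sketched in the paper's Remark following Lemma 7.2, but the paper's actual proof takes a different and more robust route: it identifies the cotangent complex at $\varphi$ with the relative cotangent complex $\mathbb{L}_{C^*(g)/\mathbb{K}_{\varphi}}$, computes the latter via Harrison cohomology, and uses Lazarev's Quillen equivalence between connected formal cdgas and dg Lie algebras with pronilpotent homology to produce a quasi-isomorphism $LC^*(g)^{\xi}\stackrel{\sim}{\rightarrow}g^{\varphi}$, whence $H^*(\mathbb{T}[-1])\cong H^*g^{\varphi}$. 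If you want to keep your more elementary computation you must (a) work with the profinite CE algebra so that the generators genuinely recover $g^{\varphi}$ after dualisation, and (b) justify that the naive derivation complex of this quasi-free model computes the derived cotangent complex, which is exactly the cofibrancy/formality input that Lazarev's equivalence supplies in the paper.
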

\begin{rem}
The proof relies actually on a more general characterization of derived Maurer-Cartan stacks
for profinite complete dg Lie algebras and their cotangent complexes, which is of independent interest to study more
general formal moduli problems.
\end{rem}
This theorem applies in particular to the case of a Koszul properad, which includes for instance Frobenius algebras, Lie bialgebras and their variants such as involutive Lie bialgebras in string topology or shifted Lie bialgebras defining
Poisson structures on formal manifolds.

\subsection*{Perspectives}

Shifted symplectic structures on derived Artin stacks have been introduced in \cite{PTVV}.
The main result of the paper asserts the existence of shifted symplectic structures on a wide class of derived mapping
stacks, including the derived moduli stack of perfect complexes on Calabi-Yau varieties and the derived moduli
stack of perfect complexes of local systems on a compact oriented topological manifold for instance.
There is also a notion of shifted Poisson structure, and it is strongly conjectured that a shifted symplectic
structure gives rise to a shifted Poisson structure. The final goal is to set up a general framework for
$n$-quantization of $n$-shifted symplectic structures, hence a deformation quantization of these various derived moduli
stacks. Here the word $n$-quantization means a deformation of the derived category of quasi-coherent sheaves as an
$E_n$-monoidal dg category. This process covers for instance usual representations of quantum groups, the skein algebras
quantizing character varieties of surfaces, and is related to Donaldson-Thomas invariants. It also gives rise to
a bunch of unknown quantizations of moduli stacks.

An interesting perspective for a future work is to go further in the derived geometric study of moduli spaces of algebraic structures, by determining under which conditions moduli stacks of algebraic structures can be
equipped with shifted symplectic structures. An ultimate goal is to set up a general theory of deformation quantization
of such moduli stacks and analyze the outcomes of such quantizations in the various topics where such structures occur
(for instance, formal Poisson and formal complex structures).

\noindent
\textit{Organization of the paper.}
Section 1 is devoted to recollections about algebras over operads, props, properads, and the key notion of homotopy
algebra. Section 2 relates higher homotopy groups of
mapping spaces to cohomology groups of deformation complexes. After explaining how to use simplicial mapping spaces
to define a meaningful notion of moduli space of algebraic structures, we prove this result in 2.2 and 2.3.
Sections 2.4, 2.5 and 2.6 point out links with André-Quillen cohomology, rational homotopy groups of these moduli spaces
and long exact sequences between cohomology theories of algebraic structures on a given complex.
Section 3 details the deformation functors viewpoint.
We explain that the usual Maurer-Cartan deformation functor can be replaced by the connected components of our moduli
spaces and that homotopical deformations over an artinian algebra encodes algebraic deformations. Moreover, such
connected components can be enhanced with the structure of an algebraic stack encoding infinitesimal algebraic
deformations in its tangent complexes. In Section 4, we go further in the geometrical intepretation, by proving in 4.2
that any mapping space of algebras over a monad in diagrams over a suitable symmetric monoidal model category
carries the structure of a representable higher stack in the complicial algebraic geometry setting.
This applies to all the known objects encoding algebraic structures, that is, not only operads, properads and props
but also algebras over polynomial monads (hence modular operads and wheeled props for instance).
We detail in Section 5 several motivating examples coming from topology, geometry and mathematical physics.
In Section 6 we prove that in the case of operads, properads and props, the tangent complexes of such stacks are the well known deformation complexes studied for instance in \cite{MV2}.
We also provide the obstruction theory associated to these stacks for infinitesimal, finite order
and formal deformations.
Section 7 briefly shows how to get explicitely the function ring of our moduli stacks and the associated obstruction groups
under some mild finiteness assumtions.
In Section 8, we emphasize that the methods used could be general enough
to be explicitely transposed to more general algebras over monads (polynomial monads for instance).
We conclude by some perspectives of future work about the existence of shifted symplectic structures on such moduli stacks.

\section{Algebras over operads and prop(erad)s}

We work with a ground field $\mathbb{K}$ of characteristic zero. We denote by  $Ch_{\mathbb{K}}$ the category of $\mathbb{Z}$-graded cochain complexes over a field $\mathbb{K}$.
In particular, all differentials are of degree $1$.

\subsection{On $\Sigma$-bimodules, props and algebras over a prop}

Let $\mathbb{S}$ be the
category having the pairs $(m,n)\in\mathbb{N}^{2}$ as objects
together with morphisms sets such that:
\[
Mor_{\mathbb{S}}((m,n),(p,q))=\begin{cases}
\Sigma_{m}^{op}\times\Sigma_{n}, & \text{if $(p,q)=(m,n)$},\\
\emptyset & \text{otherwise}.
\end{cases}
\]
The (differential graded) $\Sigma$-biobjects in $Ch_{\mathbb{K}}$ are the $\mathbb{S}$-diagrams
in $Ch_{\mathbb{K}}$. So a $\Sigma$-biobject is a double sequence $\{M(m,n)\in Ch_{\mathbb{K}}\}_{(m,n)\in\mathbb{N}^{2}}$
where each $M(m,n)$ is equipped with a right action of $\Sigma_{m}$
and a left action of $\Sigma_{n}$ commuting with each other.
\begin{defn}
A dg prop is a $\Sigma$-biobject endowed with associative horizontal products
\[
\circ_{h}:P(m_{1},n_{1})\otimes P(m_{2},n_{2})\rightarrow P(m_{1}+m_{2},n_{1}+n_{2}),
\]
vertical associative composition products
\[
\circ_{v}:P(k,n)\otimes P(m,k)\rightarrow P(m,n)
\]
and units $1\rightarrow P(n,n)$ neutral for both composition products.
These products satisfy the exchange law
\[
(f_1\circ_h f_2)\circ_v(g_1\circ_h g_2) = (f_1\circ_v g_1)\circ_h(f_2\circ_v g_2)
\]
and are compatible with the actions of symmetric groups.
Morphisms of props are equivariant morphisms of collections compatible with the composition products.
We denote by $\mathcal{P}$ the category of props.
\end{defn}
Let us note that Appendix A of \cite{Fre1} provides a construction of the free prop
on a $\Sigma$-biobject. The free prop functor is left adjoint to
the forgetful functor:
\[
F:Ch_{\mathbb{K}}^{\mathbb{S}}\rightleftarrows\mathcal{P}:U.
\]

The following definition shows how a given prop encodes algebraic operations on the tensor powers
of a complex:
\begin{defn}

(1) The endomorphism prop of a complex $X$ is given by $End_X(m,n)=Hom_{dg}(X^{\otimes m},X^{\otimes n})$
where $Hom_{dg}(-,-)$ is the internal hom bifunctor of $Ch_{\mathbb{K}}$.

(2) Let $P$ be a dg prop. A $P$-algebra
is a complex $X$ equipped with a prop morphism $P\rightarrow End_X$.
\end{defn}
Hence any ``abstract'' operation of $P$ is send to an operation on $X$, and the way abstract operations
compose under the composition products of $P$ tells us the relations satisfied by the corresponding
algebraic operations on $X$.

One can perform similar constructions in the category of colored $\Sigma$-biobjects
in order to define colored props and their algebras:
\begin{defn}
Let $C$ be a non-empty set, called the \emph{set of colors}.

(1) A \emph{$C$-colored $\Sigma$-biobject} $M$ is a double sequence of
complexes $\{M(m,n)\}_{(m,n)\in\mathbb{N}^{2}}$ where
each $M(m,n)$ admits commuting left $\Sigma_{m}$ action and right
$\Sigma_{n}$ action as well as a decomposition
\[
M(m,n)=colim_{c_{i},d_{i}\in C}M(c_{1},...,c_{m};d_{1},...,d_{n})
\]
compatible with these actions. The objects $M(c_{1},...,c_{m};d_{1},...,d_{n})$
should be thought as spaces of operations with colors $c_{1},...,c_{m}$
indexing the $m$ inputs and colors $d_{1},...,d_{n}$ indexing the
$n$ outputs.

(2) A \emph{$C$-colored prop $P$} is a $C$-colored $\Sigma$-biobject
endowed with a horizontal composition
\begin{align*}
\circ_{h}:P(c_{11},...,c_{1m_{1}};d_{11},...,d_{1n_{1}})\otimes...\otimes P(c_{k1},...,c_{km_{k}};d_{k1},...,d_{kn_{1}}) & \rightarrow\\
P(c_{11},...,c_{km_{k}};d_{k1},...,d_{kn_{k}})\subseteq P(m_{1}+...+m_{k},n_{1}+...+n_{k})\\
\end{align*}
and a vertical composition
\[
\circ_{v}:P(c_{1},...,c_{k};d_{1},...,d_{n})\otimes P(a_{1},...,a_{m};b_{1},...,b_{k})\rightarrow P(a_{1},...,a_{m};d_{1},...,d_{n})\subseteq P(m,n)
\]
which is equal to zero unless $b_{i}=c_{i}$ for $1\leq i\leq k$.
These two compositions satisfy associativity axioms (we refer the
reader to \cite{JY} for details).
\end{defn}
\medskip{}

\begin{defn}
(1) Let $\{X_{c}\}_{C}$ be a collection of complexes.
The \emph{$C$-colored endomorphism prop} $End_{\{X_{c}\}_{C}}$ is defined
by
\[
End_{\{X_{c}\}_{C}}(c_{1},...,c_{m};d_{1},...,d_{n})=Hom_{dg}(X_{c_{1}}\otimes...\otimes X_{c_{m}},X_{d_{1}}\otimes...\otimes X_{d_{n}})
\]
with a horizontal composition given by the tensor product of homomorphisms
and a vertical composition given by the composition of homomorphisms
with matching colors.

(2) Let $P$ be a $C$-colored prop. A $P$-algebra is the data of
a collection of complexes $\{X_{c}\}_{C}$ and a $C$-colored prop morphism
$P\rightarrow End_{\{X_{c}\}_{C}}$.
\end{defn}

\begin{example}
Let $I$ be a small category and $P$ a prop. We can
build an $ob(I)$-colored prop $P_{I}$ such that the $P_{I}$-algebras
are the $I$-diagrams of $P$-algebras in the same
way as that of \cite{Mar1}.
\end{example}

The category of $\Sigma$-biobjects $Ch_{\mathbb{K}}^{\mathbb{S}}$
is a diagram category over $Ch_{\mathbb{K}}$, so it inherits a cofibrantly
generated model category structure in which weak equivalences and fibrations are
defined componentwise.
The adjunction $F:Ch_{\mathbb{K}}^{\mathbb{S}}\rightleftarrows\mathcal{P}:U$
transfer this model category structure to the props:
\begin{thm}
(1) (cf. \cite{Fre1}, theorem 4.9) The category of dg props $\mathcal{P}$ equipped with the
classes of componentwise weak equivalences and componentwise fibrations forms a cofibrantly generated model category.

(3) (cf. \cite{JY}, theorem 1.1) Let $C$ be a non-empty set.
Then the category $\mathcal{P}_{C}$ of dg $C$-colored props forms a cofibrantly generated model category with fibrations
and weak equivalences defined componentwise.
\end{thm}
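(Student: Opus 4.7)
The strategy is to apply the standard transfer principle for cofibrantly generated model structures to the free-forgetful adjunction
\[
F:Ch_{\mathbb{K}}^{\mathbb{S}} \rightleftarrows \mathcal{P}:U
\]
constructed in Appendix A of \cite{Fre1}. The category $Ch_{\mathbb{K}}^{\mathbb{S}}$ of $\Sigma$-biobjects is a diagram category over $Ch_{\mathbb{K}}$, hence carries a cofibrantly generated model structure with generating cofibrations $I$ and generating acyclic cofibrations $J$ obtained componentwise. The plan is to declare a morphism of props to be a weak equivalence (resp.\ fibration) iff its image under $U$ is a weak equivalence (resp.\ fibration) of $\Sigma$-biobjects, and to show that $F(I)$ and $F(J)$ are generating sets of (acyclic) cofibrations for this model structure.

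According to the transfer theorem (in the form recorded by Crans, Hirschhorn, and others), two conditions are required. First, the right adjoint $U$ must preserve sufficiently filtered colimits so that the small object argument produces functorial factorizations. This follows from the explicit description of the free prop as a colimit of tensor products indexed by directed graphs: at each biarity only finite tensor products of the input are involved, so $U$ commutes with filtered colimits. Second, and this is the heart of the proof, one must check that every relative $F(J)$-cell complex becomes a weak equivalence after applying $U$.

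The main obstacle is this acyclicity of pushouts of generating acyclic cofibrations, and my plan is to handle it by a path object argument in characteristic zero. For any prop $P$, one sets
\[
P^I(m,n) = P(m,n) \otimes \Omega^*(\Delta^1),
\]
where $\Omega^*(\Delta^1)$ is the commutative dg algebra of polynomial de Rham forms on the $1$-simplex. Commutativity of $\Omega^*(\Delta^1)$ ensures that the prop structure on $P$ extends levelwise to $P^I$, and the canonical factorization $P \to P^I \to P \times P$ of the diagonal is a weak equivalence followed by a fibration in $\mathcal{P}$, i.e.\ a functorial path object. A standard argument of Quillen then deduces from the existence of such a path object, together with the fact that generators of $J$ have the left lifting property with respect to it, that relative $F(J)$-cell complexes are weak equivalences. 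This completes the verification for (1).

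For part (3) on $C$-colored props, the proof is essentially the same: one replaces $F$ by the free $C$-colored prop functor of \cite{JY} and $Ch_{\mathbb{K}}^{\mathbb{S}}$ by the corresponding category of $C$-colored $\Sigma$-biobjects, which is again a cofibrantly generated diagram category. The free colored prop is built from an analogous colimit over graphs decorated by colors, so $U$ still preserves filtered colimits, and the path object $P^I(c_1,\ldots,c_m;d_1,\ldots,d_n)=P(c_1,\ldots,c_m;d_1,\ldots,d_n)\otimes \Omega^*(\Delta^1)$ works without modification. The characteristic zero assumption is used only implicitly to guarantee that symmetric group averaging never obstructs the tensor construction, so no new difficulty appears in the colored case.
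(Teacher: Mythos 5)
The paper does not prove this theorem; it is quoted from the literature (Fresse and Johnson--Yau), and your transfer-plus-path-object argument is essentially the standard proof given in those references, so your approach is correct and matches the intended one. One correction: the characteristic-zero hypothesis is not about ``symmetric group averaging'' in the path object construction (the symmetric groups act only on the $P(m,n)$ factor, untouched by the tensor with forms); it enters because the polynomial de Rham algebra $\Omega^*(\Delta^1)=\mathbb{K}[t,dt]$ is acyclic only in characteristic zero --- in characteristic $p$ the cocycle $t^p$ survives in $H^0$, so $P\to P^I$ fails to be a weak equivalence, which is exactly why one only gets a (restricted) semi-model structure there.
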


\subsection{Properads}

Composing operations of two $\Sigma$-biobjects $M$ and $N$ amounts to consider $2$-levelled directed graphs
(with no loops) with the first level indexed by operations of $M$ and the second level by operations of $N$.
Vertical composition by grafting and horizontal composition by concatenation allows one to define props as before.
The idea of properads is to mimick operads (for operations with several outputs), which are defined as monoids in $\Sigma$-objects,
by restricting the vertical composition product to connected graphs.
We denote by $\boxtimes_c$ this connected composition product of $\Sigma$-biobjects, whose explicit formula is given in \cite{Val}.
The unit for this product is the $\Sigma$-biobject $I$ given by $I(1,1)=\mathbb{K}$ and $I(m,n)=0$ otherwise.
The category of $\Sigma$-biobjects then forms a symmetric monoidal category $(Ch_{\mathbb{K}}^{\mathbb{S}},\boxtimes_c,I)$.
\begin{defn}
A dg properad $(P,\mu,\eta)$ is a monoid in $(Ch_{\mathbb{K}}^{\mathbb{S}}\boxtimes_c,I)$,
where $\mu$ denotes the product and $\eta$ the unit.
It is augmented if there exists a morphism of properads $\epsilon:P\rightarrow I$.
In this case, there is a canonical isomorphism $P\cong I\oplus\overline{P}$
where $\overline{P}=ker(\epsilon)$ is called the augmentation ideal of $P$.
\end{defn}
Properads form a category noted $\mathcal{P}$. Morphisms of properads are collections of equivariant cochain maps compatible with respect to
the monoid structures. Properads have also their dual notion, namely coproperads:
\begin{defn}
A dg coproperad $(C,\Delta,\epsilon)$ is a comonoid in $(Ch_{\mathbb{K}}^{\mathbb{S}},\boxtimes_c,I)$.
\end{defn}
\begin{rem}
When constructing deformations complexes of algebras over properads we will need a weaker notion of
``homotopy coproperad" for which we refer the reader to \cite{MV1}.
\end{rem}
As in the prop case, there exists a free properad functor $\mathcal{F}$ forming an adjunction
\[
\mathcal{F}:Ch_{\mathbb{K}}^{\mathbb{S}}\rightleftarrows\mathcal{P} :U
\]
with the forgetful functor $U$.
There is an explicit construction of the free properad in terms of direct sums of labelled graphs for which
we refer the reader to \cite{Val}. The free properad $\mathcal{F}(M)$ on a $\Sigma$-biobject $M$
admits a filtration
\[
\mathcal{F}(M)=\bigoplus_n \mathcal{F}^{(n)}(M)
\]
by the number of vertices. Precisely, each component $\mathcal{F}^{(n)}(M)$ is formed by graphs with $n$ vertices
decorated by $M$.
Let us note that, according to \cite{BoM}, properads (as well as props and operads) can alternatively be defined as algebras
over the monad $U\circ\mathcal{F}$ induced by the adjunction above.
Dually, there exists a cofree coproperad functor denoted $\mathcal{F}_c(-)$ having the same underlying
$\Sigma$-biobject.
Moreover, according to \cite{MV2}, this adjunction equips dg properads with a cofibrantly generated model category structure with componentwise fibrations and weak equivalences.
There is a whole theory of explicit resolutions in this model category, based on the
bar-cobar construction and the Koszul duality \cite{Val}.

Finally, let us note that we have a notion of algebra over a properad analogous to an algebra over a prop,
since the endomorphism prop restricts to an endomorphism properad.

\subsection{Algebras over operads}

Operads are used to parametrize various kind of algebraic structures consisting of operations with
one single output.
Fundamental examples of operads include the operad $As$ encoding associative algebras,
the operad $Com$ of commutative algebras, the operad $Lie$ of Lie algebras and the operad
$Pois$ of Poisson algebras.
Dg operads form a model category with bar-cobar resolutions and Koszul duality \cite{LV}.
There exists several equivalent approaches for the
definition of an algebra over an operad. We will use the following one which we recall for convenience:
\begin{defn}
Let $(P,\gamma,\iota)$ be a dg operad, where $\gamma$ is the composition product and $\iota$ the unit.
A $P$-algebra is a complex $A$ endowed
with a morphism $\gamma_{A}:P(A)\rightarrow A$ such that
the following diagrams commute
\[
\xymatrix{(P\circ P)(A)\ar[r]^{P(\gamma_{A})}\ar[d]_{\gamma(A)} & P(A)\ar[d]^{\gamma_{A}}\\
P(A)\ar[r]_{\gamma_{A}} & A
}
\]

\[
\xymatrix{A\ar[r]^{\iota(A)}\ar[rd]_{=} & P(A)\ar[d]^{\gamma_{A}}\\
 & A
}
.
\]
\end{defn}
For every complex $V$, we can equip $P(V)$ with a $P$-algebra structure
by setting $\gamma_{P(V)}=\gamma (V):P(P(V)) \rightarrow P(V)$. The $P$-algebra $(P(V),\gamma(V))$ equipped with the map
$\iota(V):I(V)=V \rightarrow P(V)$ is the free $P$-algebra on $V$ (see \cite{LV}, Proposition 5.2.6).

The category of $P$-algebras satisfies good homotopical properties:
\begin{thm}(see \cite{Fre2})
The category of dg $P$-algebras inherits a cofibrantly generated model category structure such that
a morphism $f$ of $P$-algebras is

(i)a weak equivalence if $U(f)$ is a quasi-isomorphism, where $U$ is the forgetful functor;

(ii)a fibration if $U(f)$ is a fibration of cochain complexes, thus a surjection;

(iii)a cofibration if it has the left lifting property with respect to acyclic fibrations.
\end{thm}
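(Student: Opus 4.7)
The plan is to apply Quillen's transfer principle along the free-forgetful adjunction $F:Ch_{\mathbb{K}}\rightleftarrows P\text{-}Alg:U$, where $F(V)=P(V)$ is the free $P$-algebra on a complex $V$. One declares a morphism $f$ of $P$-algebras to be a fibration (resp.\ weak equivalence) if and only if $U(f)$ is one in the projective model structure on $Ch_{\mathbb{K}}$, and takes as candidate generating (acyclic) cofibrations the sets $F(I)$ and $F(J)$, where $I$ and $J$ generate the (acyclic) cofibrations of $Ch_{\mathbb{K}}$. The cofibrations are then automatically characterised by the left lifting property against acyclic fibrations, matching clause (iii) of the statement.

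By the transfer theorem (Crans, Berger--Moerdijk), the model category axioms reduce to two verifications. First, the smallness conditions needed for the small object argument are satisfied because $Ch_{\mathbb{K}}$ is locally presentable and the monad $U\circ F=P(-)$ preserves filtered colimits, so relative cell complexes of $P$-algebras may be analysed at the level of underlying complexes. Second, and this is the main obstacle, every relative $F(J)$-cell must become a quasi-isomorphism after applying $U$. The only efficient way to handle this uniformly is to produce a functorial path object in $P$-algebras, and this is precisely the step that exploits the assumption $\mathrm{char}(\mathbb{K})=0$.

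The path object is built as $A\otimes\Omega^*(\Delta^1)$, where $\Omega^*(\Delta^1)=\mathbb{K}[t,dt]$ is Sullivan's contractible cdga of polynomial forms on the $1$-simplex. Since $\Omega^*(\Delta^1)$ is a commutative dg algebra, the tensor product $A\otimes\Omega^*(\Delta^1)$ inherits a canonical $P$-algebra structure from that of $A$: the $n$-fold commutative product of $\Omega^*(\Delta^1)$ contracts the $n$ copies appearing in $P(n)\otimes(A\otimes\Omega^*(\Delta^1))^{\otimes n}$, and one chases through to an operadic action on $A\otimes\Omega^*(\Delta^1)$. The two evaluation cdga maps at the endpoints of $\Delta^1$ then assemble into a $P$-algebra morphism $A\otimes\Omega^*(\Delta^1)\to A\times A$ which, together with the unit inclusion $A\to A\otimes\Omega^*(\Delta^1)$, factors the diagonal as a weak equivalence followed by a fibration. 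A now classical argument shows that any pushout of a map in $F(J)$ admits a retraction up to path-object homotopy, hence is a quasi-isomorphism; transfinite composition and retracts preserve this property, completing the verification and yielding the model structure of the statement.
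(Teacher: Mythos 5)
Your argument is correct, but note that the paper offers no proof of this statement at all: it is quoted as a background result with a citation to Fresse's \emph{Modules over operads and functors}, followed only by the remark that the generating (acyclic) cofibrations are the images under the free $P$-algebra functor of those of $Ch_{\mathbb{K}}$ --- which is exactly the transfer setup you describe. Your proof (transfer along the free--forgetful adjunction, with the path object $A\otimes\mathbb{K}[t,dt]$ supplying the key verification, the acyclicity of $\mathbb{K}[t,dt]$ being where characteristic zero enters) is the standard argument underlying the cited reference, so there is nothing to compare against in the paper itself.
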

We can also say that cofibrations are relative cell complexes with respect to the generating cofibrations,
where the generating cofibrations and generating acyclic cofibrations are, as expected,
the images of the generating (acyclic) cofibrations
of $Ch_{\mathbb{K}}$ under the free $P$-algebra functor $P$.
\begin{rem}
In general, algebras over properads and props do not inherit such a model category structure, since there is no free algebra functor.
\end{rem}

\subsection{Homotopy (bi)algebras}

Given a prop, properad or operad $P$, a homotopy $P$-algebra, or $P$-algebra up to homotopy, is
an algebra for which the relations are relaxed up to a coherent system of higher homotopies.
More precisely:
\begin{defn}
A homotopy $P$-algebra is an algebra over a cofibrant resolution $P_{\infty}$ of $P$.
\end{defn}
In the properadic setting, the bar-cobar resolution gives a functorial cofibrant resolution,
and Koszul duality is used to built smaller resolutions.
These resolutions are of the form $P_{\infty}=(\mathcal{F}(V),\partial)$ where $\partial$
is a differential obtained by summing the differential induced by $V$ with a certain derivation.
When $P$ is concentrated in degree zero, all the higher homology groups of $P_{\infty}$ vanish.
The system of higher homotopies defining a homotopy $P$-algebra then consists in the generators of $V$,
and the coboundary conditions give the relations between these higher operations.

Such a notion depends a priori on the choice of a resolution, so one naturally wants the categories
of algebras over two resolutions of the same prop to be homotopy equivalent.
This is a well known result in the operadic case, using the model structure on algebras over operads and
the machinery of Quillen equivalences.
The problem is more subtle in the general propic case, which requires different methods, and has been
solved in \cite{Yal1} and \cite{Yal2}.

Homotopy algebras are of crucial interest in deformation theory and appear in various contexts.
In particular, they appear each time one wants to transfer an algebraic structure along a quasi-isomorphism,
or when one realizes an algebraic structure on the homology of a complex into a finer homotopy algebra
structure on this complex.

\begin{rem}
The resolution $P_{\infty}$ is usually considered as a non-negatively graded properad with a differential $\partial$ of degree $-1$.
This is equivalent to consider a non-positively graded resolution with a differential of degree $1$.
\end{rem}

\section{Mapping spaces, moduli spaces and cohomology of (bi)algebras}

Recall that in a model category $M$, one can define homotopy mapping spaces $Map(-,-)$, which are simplicial sets
equipped with a composition law defined up to homotopy. There are two possible definitions:
the expression
\[
Map(X,Y)=Mor(X\otimes\Delta^{\bullet},Y)
\]
where $(-)\otimes\Delta^{\bullet}$ is a cosimplicial resolution,
and
\[Map(X,Y)=Mor(X,Y^{\Delta^{\bullet}})
\]
where $(-)^{\Delta^{\bullet}}$ is a simplicial resolution.
For the sake of brevity and clarity, we refer the reader to the chapter 16 in \cite{Hir} for a complete
treatment of the notions of simplicial resolutions, cosimplicial resolutions and Reedy model categories.
When $X$ is cofibrant and $Y$ is fibrant, these two definitions give the same homotopy type of mapping space
and have also the homotopy type of Dwyer-Kan's hammock localization $L^H(M,wM)(X,Y)$ where $wM$ is the subcategory
of weak equivalences of $M$ (see \cite{DK3}). Moreover, the set of connected components $\pi_0Map(X,Y)$
is the set of homotopy classes $[X,Y]$ in $Ho(M)$.

The space of homotopy automorphisms of an object $X$ is the simplicial sub-monoid
$haut(X^{cf})\subset Map(X^{cf},X^{cf})$ of invertible connected components, where $X^{cf}$ is a cofibrant-fibrant resolution
of $X$,  i.e
\[
haut(X^{cf})=\coprod_{\overline{\phi}\in [X,X]_{Ho(M)}^{\times}} Map(X^{cf},X^{cf})_{\phi}
\]
where the $\overline{\phi}\in [X,X]_{Ho(M)}^{\times}$ are the automorphisms in the homotopy category of $M$
and $Map(X^{cf},X^{cf})_{\phi}$ the connected component of $\phi$ in the standard homotopy mapping space.

Similarly, the space of homotopy isomorphisms from $X$ to $Y$ is the simplicial sub-monoid $hiso(X^{cf},Y^{cf})$
of $Map(X^{cf},Y^{cf})$ of invertible connected components,  i.e
\[
hiso(X^{cf},Y^{cf})=\coprod_{\overline{\phi}\in [X,Y]_{Ho(M)}^{\times}} Map(X^{cf},Y^{cf})_{\phi}
\]
where the $\overline{\phi}\in [X,Y]_{Ho(M)}^{\times}$ are the isomorphisms in the homotopy category of $M$
and $Map(X^{cf},Y^{cf})_{\phi}$ the connected component of $\phi$.

Before going to the heart of the matter, let us provide the following adjunction result, which will
be used at several places in the remaining part of this paper:
\begin{prop}
Let $F:\mathcal{C}\rightleftarrows\mathcal{D}:G$ be a Quillen adjunction. It induces natural isomorphisms
\[
Map_{\mathcal{D}}(F(X),Y) \cong Map_{\mathcal{C}}(X,G(Y))
\]
where $X$ is a cofibrant object of $\mathcal{C}$ and $Y$ a fibrant object of $\mathcal{D}$.
\end{prop}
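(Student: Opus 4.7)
The plan is to use one of the two equivalent definitions of the mapping space recalled just above the statement and reduce the claim to the ordinary $(F,G)$-adjunction at the level of morphism sets, applied degreewise. I would start from the cosimplicial-resolution side, writing
\[
Map_{\mathcal{C}}(X,G(Y)) = Mor_{\mathcal{C}}(X\otimes\Delta^{\bullet},G(Y))
\]
for a chosen cosimplicial frame $X\otimes\Delta^{\bullet}$ of the cofibrant object $X$. Applying the enriched-free adjunction isomorphism $Mor_{\mathcal{C}}(-,G(-))\cong Mor_{\mathcal{D}}(F(-),-)$ in each simplicial degree, and naturally in the cosimplicial variable, yields an isomorphism of simplicial sets
\[
Map_{\mathcal{C}}(X,G(Y))\cong Mor_{\mathcal{D}}(F(X\otimes\Delta^{\bullet}),Y).
\]

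The remaining point, and the real content, is to identify the right-hand side with $Map_{\mathcal{D}}(F(X),Y)$. For this I would show that $F(X\otimes\Delta^{\bullet})$ is a cosimplicial resolution of $F(X)$ in $\mathcal{D}$, so that it can be used in place of any other choice of cosimplicial frame to compute the homotopy mapping space out of $F(X)$. Since $F$ is a left Quillen functor and $X$ is cofibrant, $F$ preserves cofibrations between cofibrant objects and weak equivalences between them (Ken Brown's lemma). Consequently $F$ sends a Reedy cofibrant cosimplicial frame of $X$ to a Reedy cofibrant cosimplicial object whose structure maps (degeneracies, and the maps to the constant frame on $F(X)$) remain weak equivalences; this is exactly what it means to be a cosimplicial frame of $F(X)$. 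Combining this with the fact that $Y$ is fibrant and invoking Chapter 16 of Hirschhorn, both sides compute the same homotopy type, and the isomorphism is natural in $X$ and $Y$.

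The technical step that I expect to be the main (albeit standard) obstacle is precisely the verification that $F$ preserves cosimplicial frames on cofibrant objects, i.e.\ that the Reedy cofibrancy and the degreewise acyclicity conditions survive under $F$. One could instead dualize the argument and use a simplicial frame $Y^{\Delta^{\bullet}}$ on the target together with $G$ preserving simplicial frames on fibrant objects; this is symmetric and chooses whichever side is notationally lighter. In either version, naturality in both variables is immediate because the underlying set-level adjunction is natural, so no extra work beyond the frame-preservation step is required.
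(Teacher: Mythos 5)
Your proposal is correct and follows essentially the same route as the paper: both reduce to the set-level adjunction applied degreewise to a cosimplicial frame, and both identify the key step as showing that a left Quillen functor sends cosimplicial frames on cofibrant objects to cosimplicial frames (the paper carries out the Reedy-cofibrancy verification you flag, via the latching-object construction and the fact that $F$ commutes with colimits).
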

\begin{proof}
We will use the definition of mapping spaces via cosimplicial frames.
The proof works as well with simplicial frames.
The adjunction $(F,G)$ induces an adjunction at the level of diagram categories
\[
F:\mathcal{C}^{\Delta}\rightleftarrows\mathcal{D}^{\Delta}:G.
\]
Now let $\phi:A^{\bullet}\rightarrowtail B^{\bullet}$ be a Reedy cofibration between Reedy cofibrant objects
of $\mathcal{C}^{\Delta}$. This is equivalent, by definition, to say that for every integer $r$ the map
\[
(\lambda,\phi)_r:L^rB\coprod_{L^rA}A^r\rightarrowtail B^r
\]
induced by $\phi$ and the latching object construction $L^{\bullet}A$ is a cofibration in $\mathcal{C}$.
Let us consider the pushout
\[
\xymatrix{
L^rA\ar[r]\ar[d]_{L^r\phi} & A^r\ar[d] \\
L^rB\ar[r] & L^rB\coprod_{L^rA}A^r
}.
\]
The fact that $\phi$ is a Reedy cofibration implies that for every $r$, the map $L^r\phi$ is a cofibration.
Since cofibrations are stable under pushouts, the map $A^r\rightarrow L^rB\coprod_{L^rA}A^r$ is also
a cofibration. By assumption, the cosimplicial object $A^{\bullet}$ is Reedy cofibrant, so it is in particular
pointwise cofibrant. We deduce that $L^rB\coprod_{L^rA}A^r$ is cofibrant. Similarly, each $B^r$ is cofibrant
since $B^{\bullet}$ is Reedy cofibrant. The map $(\lambda,\phi)_r$ is a cofibration between cofibrant objects
and $F$ is a left Quillen functor, so $F((\lambda,\phi)_r)$ is a cofibration of $\mathcal{D}$ between
cofibrant objects. Recall that the $r^{th}$ latching object construction is defined by a colimit.
As a left adjoint, the functor $F$ commutes with colimits so we get a cofibration
\[
L^rF(B^{\bullet})\coprod_{L^rF(A^{\bullet})}F(A^r)\rightarrowtail F(B^r).
\]
This means that $F(\phi)$ is a Reedy cofibration in $\mathcal{D}^{\Delta}$.
Now, given that Reedy weak equivalences are the pointwise equivalences, if $\phi$ is a Reedy weak equivalence
between Reedy cofibrant objects then it is in particular a pointwise weak equivalence between pointwise cofibrant
objects, hence $F(\phi)$ is a Reedy weak equivalence in $\mathcal{D}^{\Delta}$.
We conclude that $F$ induces a left Quillen functor between cosimplicial objects for the Reedy model structures.
In particular, it sends any cosimplicial frame of a cofibrant object $X$ of $\mathcal{C}$ to a cosimplicial
frame of $F(X)$.
\end{proof}
\begin{rem}
The isomorphism above holds if the cosimplicial frame for the left-hand mapping space is chosen to be
the image under $F$ of the cosimplicial frame of the right-hand mapping space.
But recall that cosimplicial frames on a given object are all weakly equivalent, so that for any choice
of cosimplicial frame we get at least weakly equivalent mapping spaces.
\end{rem}

\subsection{Moduli spaces of algebra structures over a prop}

A moduli space of algebra structures over a prop $P$, on a given complex $X$, is a simplicial set whose points are
the prop morphisms $P\rightarrow End_X$. Such a moduli space can
be more generally defined on diagrams of cochain complexes.
\begin{defn}
Let $P$ a cofibrant prop and $X$ be a cochain complex. The moduli space of $P$-algebra structures on $X$
is the simplicial set alternatively defined by
\[
P\{X\}=Mor_{\mathcal{P}}(P\otimes\Delta^{\bullet},End_X)
\]
where $P\otimes\Delta^{\bullet}$ is a cosimplicial resolution of $P$,
or
\[
P\{X\}=Mor_{\mathcal{P}}(P,End_X^{\Delta^{\bullet}}).
\]
where $End_X^{\Delta^{\bullet}}$ is a simplicial resolution of $End_X$.
Since every cochain complex over a field is fibrant and cofibrant, every dg prop is fibrant:
the fact that $P$ is cofibrant and $End_X$ is fibrant implies that these
two formulae give the same moduli space up to homotopy.
\end{defn}
We can already get two interesting properties of these moduli
spaces:
\begin{prop}
(1) The simplicial set $P\{X\}$ is a Kan complex and its connected components give the equivalences classes
of $P$-algebra structures on $X$, i.e
\[
\pi_0P\{X\}\cong [P,End_X]_{Ho(\mathcal{P})}.
\]

(2) Every weak equivalence of cofibrant props $P\stackrel{\sim}{\rightarrow}Q$ gives
rise to a weak equivalence of fibrant simplicial sets $Q\{X\}\stackrel{\sim}{\rightarrow}P\{X\}$.
\end{prop}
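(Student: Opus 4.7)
The plan is to realize $P\{X\}$ as a homotopy mapping space in the model category of props and then invoke standard model-category machinery. By Theorem 1.6, $\mathcal{P}$ is a cofibrantly generated model category with componentwise weak equivalences and fibrations. Since $\mathbb{K}$ is a field every complex is fibrant, so $End_X$ is fibrant in $\mathcal{P}$; combined with the hypothesis that $P$ is cofibrant, the two formulas in Definition 2.2 are genuinely models for $Map_{\mathcal{P}}(P, End_X)$.

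For part (1), I would argue as follows. In any model category, if $Y^{\Delta^\bullet}$ is a simplicial frame on a fibrant object $Y$ and $X$ is cofibrant, then the simplicial set $Mor(X, Y^{\Delta^\bullet})$ is a Kan complex: this is a standard consequence of the right-lifting property of $Y^{\Delta^\bullet}\to Y^{\partial\Delta^\bullet}$ against horn inclusions, which follows from the definition of a simplicial frame combined with the cofibrancy of $X$ (see e.g.\ Chapter 16 of Hirschhorn, as already cited in the text). Applied to $X=P$ and $Y=End_X$, this gives the Kan property of $P\{X\}$. The identification $\pi_0 P\{X\}\cong [P, End_X]_{Ho(\mathcal{P})}$ is then another instance of the general fact recalled at the beginning of Section 2: connected components of the homotopy mapping space between a cofibrant and a fibrant object compute morphisms in the homotopy category, and under the identification of $[P,End_X]_{Ho(\mathcal{P})}$ with equivalence classes of $P$-algebra structures on $X$ (by definition, a $P$-algebra structure is a prop morphism to $End_X$), this yields the claim.

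For part (2), the statement is again an instance of a general fact: the bifunctor $Map_{\mathcal{P}}(-,-)$ sends weak equivalences between cofibrant objects in the first variable to weak equivalences of simplicial sets, provided the second entry is fibrant. Concretely, if $f:P\stackrel{\sim}{\to}Q$ is a weak equivalence between cofibrant props, then applying a cosimplicial frame yields a Reedy weak equivalence $P\otimes\Delta^\bullet\stackrel{\sim}{\to}Q\otimes\Delta^\bullet$ between Reedy cofibrant objects of $\mathcal{P}^\Delta$, and mapping into the fibrant object $End_X$ via $Mor_{\mathcal{P}}(-, End_X)$ produces a weak equivalence $Q\{X\}\stackrel{\sim}{\to}P\{X\}$ by the SM7-type axiom for frames (an application of Ken Brown's lemma in the framed setting).

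The only mildly subtle point is the Reedy cofibrancy step in (2), i.e.\ checking that the induced map of cosimplicial frames is a Reedy weak equivalence between Reedy cofibrant objects so that the SM7 argument applies; this is precisely the kind of latching-object verification carried out in the proof of Proposition 2.1 above, and it transports verbatim to the present setting since cosimplicial frames on cofibrant props can be chosen functorially. With this input both parts reduce to general model-categorical formalism, and no features specific to props beyond the existence of the model structure of Theorem 1.6 are needed.
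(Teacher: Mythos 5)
Your argument is correct and is essentially the paper's own proof: the paper disposes of both parts in one line by citing the standard properties of simplicial mapping spaces with cofibrant source and fibrant target in a model category (Hirschhorn), which is exactly the machinery you invoke; you have simply made explicit the Kan-complex, $\pi_0$, and invariance-in-the-cofibrant-variable facts that the paper leaves to the reference.
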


These properties directly follows from the properties of simplicial mapping spaces in model categories \cite{Hir}.
The higher simplices of these moduli spaces encode higher simplicial homotopies between homotopies.

For dg props we can actually get explicit simplicial resolutions:
\begin{prop}
Let $P$ be a prop in $Ch_{\mathbb{K}}$. Let us define $P^{\Delta^{\bullet}}$ by
\[
P^{\Delta^{\bullet}}(m,n)=P(m,n)\otimes A_{PL}(\Delta^{\bullet}),
\]
where $A_{PL}$ denotes Sullivan's functor of piecewise linear forms (see \cite{Sul}).
Then $P^{\Delta^{\bullet}}$ is a simplicial resolution of $P$ in the category of dg props.
\end{prop}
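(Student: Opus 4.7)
The plan is to recognize this as a propic enhancement of the classical Sullivan-type simplicial resolution of a CDGA, and to verify the three conditions required of a simplicial resolution: that $P^{\Delta^\bullet}$ is a simplicial object in dg props, that the map from the constant simplicial object $P\to P^{\Delta^\bullet}$ is a levelwise weak equivalence, and that $P^{\Delta^\bullet}$ is Reedy fibrant in the Reedy model structure on simplicial dg props.

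For the first point, I would note that $A_{PL}(\Delta^\bullet)$ is a simplicial commutative dg algebra: each $A_{PL}(\Delta^n)$ is a CDGA, and the face and degeneracy maps of $\Delta^\bullet$ are morphisms of simplicial sets, hence induce morphisms of CDGAs under $A_{PL}$. Given a CDGA $A$, the $\Sigma$-biobject $P\otimes_e A$ defined by $(P\otimes_e A)(m,n)=P(m,n)\otimes A$ inherits canonical horizontal and vertical composition products by combining the composition products of $P$ with the multiplication of $A$ (this is the same construction used earlier in the paper for $Q\otimes_e R$); the commutativity of $A$ ensures compatibility with the exchange law. Applying this functorially in $n$ to $A_{PL}(\Delta^\bullet)$ yields a simplicial dg prop $P^{\Delta^\bullet}$, and the unit map $\mathbb{K}\to A_{PL}(\Delta^n)$ provides the map of dg props $P\to P^{\Delta^n}$.

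For the weak equivalence, the point is that the augmentation $A_{PL}(\Delta^n)\to\mathbb{K}$ is a quasi-isomorphism of CDGAs for every $n$, since $\Delta^n$ is contractible (this is the classical fact underlying rational homotopy theory). Consequently the map $P(m,n)\to P(m,n)\otimes A_{PL}(\Delta^n)$ is a quasi-isomorphism of cochain complexes in each arity (we are working over a field, so tensoring is exact and preserves quasi-isomorphisms). By the definition of weak equivalences of dg props as componentwise quasi-isomorphisms of the underlying $\Sigma$-biobjects, $P\to P^{\Delta^n}$ is a weak equivalence of dg props for each $n$, which is precisely what is needed for the constant simplicial object to map by a Reedy weak equivalence into $P^{\Delta^\bullet}$.

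For Reedy fibrancy, I would argue as follows. The matching object at level $n$ of a simplicial object $X_\bullet$ is a finite limit. The contravariant functor $A_{PL}$ turns colimits of simplicial sets into limits, so $M_n A_{PL}(\Delta^\bullet)=A_{PL}(\partial\Delta^n)$, and the matching map is the restriction $A_{PL}(\Delta^n)\to A_{PL}(\partial\Delta^n)$ induced by the cofibration $\partial\Delta^n\hookrightarrow\Delta^n$. This map is a surjection of CDGAs because $A_{PL}$ sends cofibrations of simplicial sets to surjections (the standard Kan extension from polynomial forms on simplices). Since tensoring with $P(m,n)$ over a field is exact, it commutes with the finite matching limit and preserves surjections, so $P(m,n)\otimes A_{PL}(\Delta^n)\twoheadrightarrow P(m,n)\otimes A_{PL}(\partial\Delta^n)$ is a componentwise surjection, i.e.\ a fibration of dg props.

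The main technical nuisance I expect is the commutation of the tensor product $P\otimes_e(-)$ with the matching limit, which I would handle by noting that $A_{PL}(\partial\Delta^n)$ is realized as a finite equalizer/limit of the $A_{PL}(\Delta^k)$ for $k<n$, and that tensoring with a fixed cochain complex over a field commutes with finite limits (since it is exact). Everything else consists of routine checks that the simplicial face and degeneracy maps of $P^{\Delta^\bullet}$, built from $A_{PL}$ applied to simplicial structure maps, are morphisms of dg props, which follows formally from naturality.
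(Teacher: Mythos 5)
Your proof is correct and follows essentially the same route as the paper: build the prop structure on each $P(m,n)\otimes A_{PL}(\Delta^k)$ from the commutative product of $A_{PL}(\Delta^k)$, deduce the levelwise weak equivalence from the acyclicity of $A_{PL}(\Delta^n)$, and reduce the fibration condition to components using that the forgetful functor to collections preserves limits. The only cosmetic difference is that you identify the matching object explicitly as $P(m,n)\otimes A_{PL}(\partial\Delta^n)$ and invoke the surjectivity of restriction of PL forms, whereas the paper delegates this to the Bousfield--Gugenheim fact that $(-)\otimes A_{PL}(\Delta^{\bullet})$ is a simplicial frame on $Ch_{\mathbb{K}}$; since every dg prop is fibrant, your direct check of Reedy fibrancy together with the levelwise equivalence from the constant simplicial object is exactly what is required.
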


\begin{rem}
Since every dg prop is fibrant, according to \cite{Hir}, any simplicial frame on a dg prop is thus a simplicial resolution.
We just have to prove that $(-)^{\Delta^{\bullet}}$ is a simplicial frame.
\end{rem}

\begin{proof}
Let us first show that $P^{\Delta^{\bullet}}$ is a simplicial object in props.
We equip each $P^{\Delta^k}$, $k\in\mathbb{N}$, with the following vertical composition products
\begin{eqnarray*}
\circ_v^{P^{\Delta^k}}:P^{\Delta^k}(l,n)\otimes P^{\Delta^k}(m,l) & \cong & P(l,n)\otimes P(m,l)\otimes A_{PL}(\Delta^k)
\otimes A_{PL}(\Delta^k) \\
 & \stackrel{\circ_v^P\otimes\mu_k}{\rightarrow} & P(m,n)\otimes A_{PL}(\Delta^k)=P^{\Delta^k}(m,n)
\end{eqnarray*}
where $\circ_v^P$ is the vertical composition product of $P$ and $\mu_k$ the product of the dg commutative algebra $A_{PL}(\Delta^k)$,
and the following horizontal composition products
\begin{eqnarray*}
\circ_h^{P^{\Delta^k}}:P^{\Delta^k}(m_1,n_1)\otimes P^{\Delta^k}(m_2,n_2) & \cong & P(m_1,n_1)\otimes P(m_2,n_2)\otimes A_{PL}(\Delta^k)
\otimes A_{PL}(\Delta^k) \\
 & \stackrel{\circ_h^P\otimes\mu_k}{\rightarrow} & P(m_1+m_2,n_1+n_2)\otimes A_{PL}(\Delta^k)=P^{\Delta^k}(m,n)
\end{eqnarray*}
where $\circ_h^P$ is the horizontal composition product of $P$.
The associativity of these products follows from the associativity of $\circ_v^P$, $\circ_h^P$ and $\mu_k$.
The right and left actions of symmetric groups on $P^{\Delta^k}(m,n)$ are induced by those on $P(m,n)$.
The compatibility between these actions and the composition products comes directly from this compatibility in the
prop structure of $P$, since the product $\mu_k$ is commutative.
The interchange law of props is obviously still satisfied.
The units $\mathbb{K}\rightarrow P(n,n)$ are defined by
\[
\mathbb{K}\cong\mathbb{K}\otimes\mathbb{K}\stackrel{\eta_P\otimes\eta_k}{\rightarrow} P(m,n)\otimes A_{PL}(\Delta^k)
\]
where $\eta_P$ is the unit of $P$ and $\eta_k$ the unit of $A_{PL}(\Delta^k)$.
The faces and degeneracies are induced by those of $\Delta^k$ via the functoriality of $A_{PL}$, and form
props morphisms by definition of the prop structure on the $P^{\Delta^k}$
(recall that $A_{PL}$ maps simplicial applications to morphisms of commutative dg algebras, which commute
with products).

Now we have to check the necessary conditions to obtain a simplicial frame on $P$.
Recall that in a model category $\mathcal{M}$, a \emph{simplicial frame} on an object $X$ of $\mathcal{M}$
is a simplicial object $X^{\Delta^{\bullet}}$ satisfying the following properties:

(1) The identity $X^{\Delta^0}=X$;

(2) The morphisms $X^{\Delta^n} \rightarrow X^{\Delta^0}$ induced by the embeddings
${i}\hookrightarrow {0<...<n}$ of the category $\Delta$ form a Reedy fibration
\[
X^{\Delta^{\bullet}} \twoheadrightarrow r^{\bullet}X,
\]
where $r^{\bullet}X$ is a simplicial object such that $r_n X=\prod_{i=0}^n X$;

(3) The morphism $X^{\Delta^n}\rightarrow X^{\Delta^0}$ induced by the constant map
${0<...<n}\rightarrow {0}$ is a weak equivalence of $\mathcal{M}$.

The condition (1) is obvious by definition.
The condition (3) follows from the acyclicity of $A_{PL}(\Delta^k)$ and the Kunneth formula.
To check the condition (2) we have to prove that the associated matching map (see \cite{Hir})
is a fibration of props. By definition of such fibrations, we have to show that we get
a fibration of cochain complexes (i.e a surjection) in each biarity.
The forgetful functor from the category of props
$\mathcal{P}$ to the category of collections of cochain complexes $Ch_{\mathbb{K}}^{\mathbb{N}\times\mathbb{N}}$
is a right adjoint and thus preserves limits. Given that the matching object construction is
defined by a limit, it commutes with the forgetful functor. Moreover, the collections of cochain complexes
$Ch_{\mathbb{K}}^{\mathbb{N}\times\mathbb{N}}$ form a diagram category, and limits in diagram categories
are defined pointwise. Finally, it remains to prove that the matching map associated to
$P(m,n)\otimes A_{PL}(\Delta^{\bullet})\rightarrow r^{\bullet}P(m,n)$ is a fibration in cochain complexes
for every $(m,n)\in \mathbb{N}\times\mathbb{N}$.
This follows from the well known fact that $(-)\otimes A_{PL}(\Delta^{\bullet})$ is a simplicial frame in
$Ch_{\mathbb{K}}$ (see \cite{BG}).
\end{proof}
The same arguments prove that such a formula defines also a functorial simplicial resolution for operads
and properads.

Now let us see how to study homotopy structures with respect to a fixed one.
Let $O\rightarrow P$ be a morphism of props inducing a morphism of cofibrant props
$O_{\infty}\rightarrow P_{\infty}$ between the corresponding cofibrant resolutions.
We can form the homotopy cofiber
\[
O_{\infty}\rightarrow P_{\infty}\rightarrow (P,O)_{\infty}
\]
in the model category of dg props and thus consider the moduli space $(P,O)_{\infty}\{X\}$.
This moduli space encodes the homotopy classes (and higher simplicial homotopies) of the $P_{\infty}$-structures
on $X$ "up to $O_{\infty}$", i.e for a fixed $O_{\infty}$-structure on $X$.
A rigorous way to justify this idea is the following.
\begin{prop}
Let $\mathcal{M}$ be a model category, endowed with a functorial simplicial mapping space
\[
Map(-,-)=Mor_{\mathcal{M}}(-,(-)^{\Delta^{\bullet}})
\]
(which always exists, by existence of functorial simplicial resolutions \cite{Hir}).
Let $Y$ be a fibrant object of $\mathcal{M}$. Then the functor $Map(-,Y)$ sends cofiber sequences
induced by cofibrations between cofibrant objects to fiber sequences of Kan complexes.
\end{prop}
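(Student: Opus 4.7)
The plan is to reduce the proposition to two essentially formal ingredients about the simplicial frame $Y^{\Delta^{\bullet}}$. Write a cofiber sequence in $\mathcal{M}$ as $A \rightarrowtail B \to C$, where $A \rightarrowtail B$ is a cofibration between cofibrant objects and $C = B \cup_A *$ is the pushout against the initial object $*$. Because $A \rightarrowtail B$ is a cofibration between cofibrants, this strict pushout is a genuine model of the homotopy cofiber. The goal is to show that
\[
Map(C,Y) \longrightarrow Map(B,Y) \longrightarrow Map(A,Y)
\]
is a fiber sequence of Kan complexes.

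The first step is to show that $Map(B,Y) \to Map(A,Y)$ is a Kan fibration. Writing $Map(-,Y) = Mor_{\mathcal{M}}(-,Y^{\Delta^{\bullet}})$, this is a standard consequence of Hirschhorn's theory of framed model categories (\cite{Hir}, Chapter 16): because $Y$ is fibrant, a functorial simplicial frame $Y^{\Delta^{\bullet}}$ is Reedy fibrant, and the resulting analog of the $SM7$ pushout-product axiom implies that restriction along a cofibration between cofibrant objects induces a Kan fibration of simplicial sets. In particular both $Map(A,Y)$ and $Map(B,Y)$ are Kan complexes, being mapping spaces from a cofibrant object to a fibrant one.

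The second step is to identify the fiber. For each simplicial degree $n$, the contravariant set-valued functor $Mor_{\mathcal{M}}(-,Y^{\Delta^n})$ carries the pushout $C = B \cup_A *$ to a pullback, yielding
\[
Mor_{\mathcal{M}}(C,Y^{\Delta^n}) \cong Mor_{\mathcal{M}}(B,Y^{\Delta^n}) \times_{Mor_{\mathcal{M}}(A,Y^{\Delta^n})} Mor_{\mathcal{M}}(*,Y^{\Delta^n}).
\]
Assembling these isomorphisms as $n$ varies gives a natural isomorphism of simplicial sets
\[
Map(C,Y) \cong Map(B,Y) \times_{Map(A,Y)} Map(*,Y).
\]
Since $*$ is the initial object of $\mathcal{M}$, the simplicial set $Map(*,Y)$ is a point, so the right-hand side is precisely the strict fiber of $Map(B,Y) \to Map(A,Y)$ over the basepoint determined by $A \to *$; combined with the first step, this strict fiber coincides with the homotopy fiber and the sequence is indeed fibrant.

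The only genuinely non-formal step is the fibration property invoked in the first paragraph, which is where the fibrancy of $Y$ is used in an essential way; I expect this to be the main obstacle insofar as it requires the general machinery of Reedy model structures on cosimplicial objects rather than a direct hands-on check. Everything else is a consequence of the adjunction-style formula $Map(-,Y) = Mor_{\mathcal{M}}(-,Y^{\Delta^{\bullet}})$ turning colimits in the source into limits of simplicial sets, combined with the fact that the pushout model of the cofiber is homotopy-invariant on cofibrations between cofibrants.
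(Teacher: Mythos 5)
Your argument is correct and is, in substance, exactly what the paper leaves to the reader: the paper offers no written proof of this proposition, saying only that ``it follows from general properties of simplicial mapping spaces'' and citing Hirschhorn, and your two ingredients (the fibration property of $Map(-,Y)=Mor_{\mathcal{M}}(-,Y^{\Delta^{\bullet}})$ on cofibrations with fibrant target, from Hirschhorn's Chapter 16, together with the levelwise conversion of the defining pushout into a pullback of morphism sets) are precisely the ``general properties'' being invoked. One small point deserves care: you describe the cofiber as the pushout ``against the initial object,'' but in an unpointed model category there is no canonical map $A\to\emptyset$, so the cofiber sequence must come equipped with such a map as part of its data (in the paper's application to props this is the augmentation $O_{\infty}\to I$ onto the initial prop, which is what makes the corner $Map(I,End_X)=\{X\}$ a single point); with that understood, your identification of the strict pullback with the fiber over the induced basepoint, and its agreement with the homotopy fiber thanks to the Kan fibration from step one, is exactly right.
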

It follows from general properties of simplicial mapping spaces for which we refer the reader to \cite{Hir}.
We apply this result to obtain the particular homotopy fiber
\[
\xymatrix{ (P,O)_{\infty}\{X\} \ar[r] \ar[d] & P_{\infty}\{X\} \ar@{->>}[d] \\
\{X\} \ar@{^{(}->}[r] & O_{\infty}\{X\}
}.
\]
\begin{example}
We know that the $E_{\infty}$-algebra structures on the singular
cochains classify the rational homotopy type of the considered topological space.
For a Poincaré duality space, whose cochains form a unitary and counitary Frobenius bialgebra,
a structure corresponding to the notion of Frobenius algebra and encoded by a prop $ucFrob$,
a way to understand the homotopy Frobenius structures up to the rational homotopy type of this space is
to analyze $\pi_*(ucFrob,E)_{\infty}\{C_*(X;\mathbb{Q})\}$.
\end{example}

\subsection{Higher homotopy groups of mapping spaces of properads}

We first need some preliminary results about complete $L_{\infty}$-algebras.
There are two equivalent definitions of a $L_{\infty}$ algebra:
\begin{defn}
(1) A $L_{\infty}$ algebra is a graded vector space $g=\{g_n\}_{n\in\mathbb{Z}}$ equipped with maps
$l_k:g^{\otimes k}\rightarrow g$ of degree $2-k$, for $k\geq 1$, satisfying the following properties:
\begin{itemize}
\item $[...,x_i,x_{i+1},...]=-(-1)^{|x_i||x_{i+1}|}[...,x_{i+1},x_i,...]$
\item for every $k\geq 1$, the generalized Jacobi identities
\[
\sum_{i=1}^k\sum_{\sigma\in Sh(i,k-i)}(-1)^{\epsilon(i)}[[x_{\sigma(1)},...,x_{\sigma(i)}],x_{\sigma(i+1)},...,x_{\sigma(k)}]=0
\]
where $\sigma$ ranges over the $(i,k-i)$-shuffles and
\[
\epsilon(i) = i+\sum_{j_1<j_2,\sigma(j_1)>\sigma(j_2)}(|x_{j_1}||x_{j_2}|+1).
\]
\end{itemize}

(2) A $L_{\infty}$ algebra structure on a graded vector space $g=\{g_n\}_{n\in\mathbb{Z}}$ is a
coderivation of cofree coalgebras $Q:\Lambda^{\bullet\geq 1}sg\rightarrow \Lambda^{\bullet\geq 1}sg$ of degree $1$ such that $Q^2=0$.
\end{defn}
The bracket $l_1$ is actually the differential of $g$ as a cochain complex. When the brackets $l_k$ vanish
for $k\geq 3$, then one gets a dg Lie algebra.
The dg algebra $C^*(g)$ obtained by dualizing the dg coalgebra of (2) is called the Chevalley-Eilenberg algebra of $g$.

A $L_{\infty}$ algebra $g$ is filtered if it admits a decreasing filtration
\[
g=F_1g\supseteq F_2g\supseteq...\supseteq F_rg\supseteq ...
\]
compatible with the brackets: for every $l\geq 1$,
\[
[F_rg,g,...,g]\in F_rg.
\]
We suppose moreover that for every $r$, there exists an integer $N(r)$ such that $[g^{\wedge l}]\subseteq F_rg$
for every $l>N(r)$.
A filtered $L_{\infty}$ algebra $g$ is complete if the canonical map $g\rightarrow lim_rg/F_rg$ is an isomorphism.

The completeness of a $L_{\infty}$ algebra allows to define properly the notion of Maurer-Cartan element:
\begin{defn}
(1) Let $g$ be a dg $L_{\infty}$-algebra and $\tau\in g^1$, we say that $\tau$ is a Maurer-Cartan element of $g$ if
\[
\sum_{k\geq 1} \frac{1}{k!} [\tau^{\wedge k}]=0.
\]
The set of Maurer-Cartan elements of $g$ is noted $MC(g)$.

(2) The simplicial Maurer-Cartan set is then defined by
\[
MC_{\bullet}(g)=MC(g\hat{\otimes}\Omega_{\bullet}),
\],
where $\Omega_{\bullet}$ is the Sullivan cdga of de Rham polynomial forms on the standard simplex $\Delta^{\bullet}$ (see \cite{Sul})
and $\hat{\otimes}$ is the completed tensor product with respect to the filtration induced by $g$.
\end{defn}
The simplicial Maurer-Cartan set is a Kan complex, functorial in $g$ and preserves quasi-isomorphisms of complete $L_{\infty}$-algebras.
The Maurer-Cartan moduli set of $g$ is $\mathcal{MC}(g)=\pi_0MC_{\bullet}(g)$: it is the quotient of the set
of Maurer-Cartan elements of $g$ by the homotopy relation defined by the $1$-simplices.
When $g$ is a complete dg Lie algebra, it turns out that this homotopy relation is equivalent to the action of the gauge
group $exp(g^0)$ (a prounipotent algebraic group acting on Maurer-Cartan elements), so in this case
this moduli set coincides with the one usually known for Lie algebras.
We refer the reader to \cite{Yal3} for more details about all these results.

We also recall briefly the notion of twisting by a Maurer-Cartan element.
The twisting of a complete $L_{\infty}$ algebra $g$ by a Maurer-Cartan element $\tau$ is the complete $L_{\infty}$ algebra $g^{\tau}$
with the same underlying graded vector space and new brackets $l_k^{\tau}$ defined by
\[
l_k^{\tau}(x_1,...,x_k)=\sum_{i\geq 0}\frac{1}{i!}l_{k+i}(\tau^{\wedge i},x_1,...,x_k)
\]
where the $l_k$ are the brackets of $g$.
Thereafter we will need the following theorem:
\begin{thm}(Berglund \cite{Ber})
Let $g$ be a complete $L_{\infty}$-algebra and $\tau$ be a Maurer-Cartan element of $g$.
There is an isomorphism of abelian groups
\[
H^{-n}(g^{\tau})\cong \pi_{n+1}(MC_{\bullet}(g),\tau)
\]
for $n\geq 1$, and an isomorphism of groups
\[
exp(H^0(g^{\tau}))\cong \pi_1(MC_{\bullet}(g),\tau)
\]
for $n=0$ where $exp(H^0(g^{\tau}))$ is equipped with the group structure given by the Hausdorff-Campbell formula
(it is the prounipotent algebraic group associated to the pronilpotent Lie algebra $H^0(g^{\tau})$).
\end{thm}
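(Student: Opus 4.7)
The plan is to follow Berglund's approach, whose first step is to reduce to the case where the basepoint is the zero Maurer-Cartan element. For any $\tau \in MC(g)$, sending $\alpha \mapsto \alpha - \tau$ defines a natural isomorphism of simplicial sets $MC_{\bullet}(g) \cong MC_{\bullet}(g^{\tau})$ identifying $\tau$ with $0$ (this is a direct computation: the MC equation for $\alpha$ in $g$ is exactly the MC equation for $\alpha - \tau$ in $g^{\tau}$). Hence it suffices to prove $H^{-n}(h) \cong \pi_{n+1}(MC_{\bullet}(h),0)$ for any complete $L_{\infty}$-algebra $h$.

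Second, I would invoke Dupont's explicit contraction of the de Rham algebra of polynomial forms $\Omega_{\bullet}$ onto the normalized simplicial cochain complex, via an SDR triple $(p,i,h)$ with $ip = 1$ and $pi - 1 = dh + hd$. This contraction extends entrywise to a contraction of $g\, \hat\otimes \,\Omega_{\bullet}$ onto $g\, \hat\otimes \, C^*(\Delta^{\bullet})$. Following Getzler, define the gauge-fixed simplicial subset
\[
\gamma_{\bullet}(g) = \{\alpha \in MC(g\, \hat\otimes \,\Omega_{\bullet}) : h\alpha = 0\}.
\]
The technical heart of Getzler's theorem is that the inclusion $\gamma_{\bullet}(g) \hookrightarrow MC_{\bullet}(g)$ is a weak equivalence of Kan complexes, which one proves by constructing, from $h$, a fibrewise deformation retraction using the implicit function theorem in the pronilpotent setting.

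Third, I would compute $\pi_{n+1}(\gamma_{\bullet}(g),0)$ by direct inspection. An $(n+1)$-simplex of $\gamma_{\bullet}(g)$ whose boundary is entirely $0$ is a Maurer-Cartan element of $g\, \hat\otimes \,\Omega_{n+1}$ supported in the relative forms vanishing on $\partial\Delta^{n+1}$ and gauge-fixed; using Dupont's decomposition, such an element is determined by a degree $-n$ element of $g$ coupled to the unique top-degree relative form on $\Delta^{n+1}$. The Maurer-Cartan equation linearises in this setting (the higher brackets $l_k$ contribute products of forms of total degree $\geq k(n+1)$, which vanish for degree reasons), so the datum reduces to a cocycle in $g^{-n}$, and the homotopy relation with the zero simplex identifies two such cocycles exactly when they differ by a coboundary. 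For $n=0$, the concatenation of $1$-simplices, transferred to the level of $H^0(g)$, is governed by the Baker-Campbell-Hausdorff formula, yielding the prounipotent group $\exp(H^0(g))$.

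The main obstacle is the weak equivalence $\gamma_{\bullet}(g) \simeq MC_{\bullet}(g)$: this is the genuinely technical ingredient, requiring careful handling of the transferred $L_{\infty}$-structure along Dupont's contraction and a convergence argument exploiting the completeness of the filtration on $g$. Once this is in hand, the identification of spherical simplices with cocycles is essentially a degree-counting argument, and the abelian group structure on $\pi_{n+1}$ for $n \geq 1$ matches addition of cocycles because the nonlinear corrections in concatenation again vanish for degree reasons.
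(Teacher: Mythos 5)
The paper does not actually prove this statement: it is imported from Berglund's work (the citation in the theorem header), with only a remark adjusting the grading convention, so there is no internal proof to compare yours against. Judged on its own terms, your sketch reconstructs the standard Getzler--Berglund route and its skeleton is sound: the reduction to the basepoint $0$ via $\alpha\mapsto\alpha-\tau$ is a genuine isomorphism of simplicial sets $MC_{\bullet}(g)\cong MC_{\bullet}(g^{\tau})$ (one only needs that $(g\hat{\otimes}\Omega_{n})^{\tau\otimes 1}=g^{\tau}\hat{\otimes}\Omega_{n}$ and that $\tau\otimes 1$ is fixed by the simplicial operators), and the degree count identifying spherical $(n+1)$-simplices of the gauge-fixed subcomplex with cocycles in $g^{-n}$ is correct, including the observation that products of top-degree relative forms vanish so the Maurer--Cartan equation linearizes there. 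What you have written is nonetheless an outline rather than a proof: the two load-bearing ingredients --- that the inclusion $\gamma_{\bullet}(g)\hookrightarrow MC_{\bullet}(g)$ is a weak equivalence for \emph{complete} (not merely nilpotent) $L_{\infty}$-algebras, and that homotopy of spherical simplices in $\gamma_{\bullet}(g)$ is exactly the coboundary relation on cocycles --- are asserted rather than established, and the $n=0$ case, where the linearization argument is not available and the Baker--Campbell--Hausdorff product must be extracted from horn-filling in dimension $2$, is stated as a conclusion without an argument. Since the theorem is itself a citation in this paper, deferring to Getzler and Berglund for these steps is defensible, but you should cite the precise statements being invoked rather than presenting them as steps you would carry out.
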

Recall that we use a cohomological grading convention instead of the homological grading convention \cite{Ber}. This is the reason why homotopy groups of Maurer-Cartan simplicial
sets are related here to non-positive cohomology groups.

Let $P$ be a properad and $C$ a coproperad equipped with a twisting morphism $C\rightarrow P$,
such that $\Omega(C)\stackrel{\sim}{\rightarrow} P$ is a cofibrant resolution of $P$,
where $\Omega(-)$ is the properadic cobar construction (see \cite{Val}).
One can always produce such a resolution by taking for $C$ the bar contruction on $P$, or the Koszul dual of $P$ if $P$ is Koszul.

We consider a $\mathbb{Z}$-graded cochain complex $Hom_{\Sigma}(\overline{C},Q)$, where $Q$ is any dg properad, with the degree defined by
\[
f\in Hom_{\Sigma}(\overline{C},Q)_k\Leftrightarrow \forall n, f(\overline{C}_n)\subset Q_{n+k}
\]
and the differential defined by
\[
\delta(f)=d_Q\circ f - (-1)^{|f|}f\circ (d_C)
\]
where $d_Q$ is the differential of $Q$, $|f|$ the degree of $f$,
and $d_C$ the differential of $C$ restricted to the augmentation ideal $\overline{C}$.
This is actually the external differential graded hom of $\Sigma$-biobjects, given by the infinite product
\[
Hom_{\Sigma}(\overline{C},Q)=\prod_{m,n} Hom_{dg}(\overline{C}(m,n),Q(m,n))^{\Sigma_m\times\Sigma_n}.
\]
Let us note that according to Theorem 12 of \cite{MV2}, for every properad morphism $\varphi:P_{\infty}\rightarrow Q$ we have an isomorphism
\[
Hom_{\Sigma}(\overline{C},Q)^{\varphi} \cong Der_{\varphi}(P_{\infty},Q),
\]
where $Der_{\varphi}(P_{\infty},Q)$ is the complex of properadic derivations from $P_{\infty}$ to $Q$
with $Q$ equipped with the $P_{\infty}$-module structure induced by $\varphi$.

The principal theorem of this section is the following:
\begin{thm} Let $P$ be a properad with cofibrant resolution $P_{\infty}=\Omega(C)\stackrel{\sim}{\rightarrow}P$
and $Q$ be any properad. Suppose that the augmentation ideal $\overline{C}$ is of finite dimension in each arity.

(i) The total complex $Hom_{\Sigma}(\overline{C},Q)$ is a complete dg Lie algebra.

(ii) The Maurer-Cartan elements of $Hom_{\Sigma}(\overline{C},Q)$ are exactly the properad morphisms $P_{\infty}
\rightarrow Q$.

(iii) The simplicial sets $MC_{\bullet}(Hom_{\Sigma}(\overline{C},Q))$
and $Map_{\mathcal{P}}(P_{\infty},Q)$ are isomorphic.
\end{thm}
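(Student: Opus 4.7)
The plan is to establish (i), (ii), (iii) in turn, with (iii) being the essentially new contribution.

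For (i), I would define the convolution product $f \star g$ as the composite of the infinitesimal coproduct $\Delta_{(1,1)}$ of $C$, the pairing $f \boxtimes_{(1,1)} g$ of $\Sigma$-biobject morphisms on the connected composition product, and the infinitesimal product of $Q$; its antisymmetrization is the Lie bracket. The pre-Lie identity, and hence Jacobi, follows formally from coassociativity of $\Delta_{(1,1)}$ and associativity of the infinitesimal product of $Q$, exactly as for the classical convolution algebra of a coalgebra and an algebra; equivariance is automatic. Completeness comes from the product description
$$Hom_{\Sigma}(\overline{C},Q) = \prod_{m,n} Hom_{dg}(\overline{C}(m,n), Q(m,n))^{\Sigma_m \times \Sigma_n}$$
combined with the arity-wise finite-dimensionality of $\overline{C}$ and a decreasing filtration $F_\bullet$ inherited from the weight grading of the (quasi-cofree) coproperad $C$. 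One checks that $[F_r, F_s] \subseteq F_{r+s}$ and that $Hom_{\Sigma}(\overline{C},Q) = \lim_r Hom_{\Sigma}(\overline{C},Q)/F_r$, giving a complete dg Lie algebra.

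For (ii), as indicated in the excerpt, the Maurer-Cartan equation $\delta(f) + \tfrac{1}{2}[f,f] = 0$ unfolds to the twisting morphism equation between the coproperad $C$ and the properad $Q$, and twisting morphisms $C \to Q$ are in bijection with properad morphisms $\Omega(C) \to Q$ by freeness of the cobar construction on $s^{-1}\overline{C}$. I would simply invoke \cite{MV2} here.

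Part (iii) is the heart of the statement. The strategy is to reduce it to (ii) applied to a coefficient-extended properad. Using the simplicial resolution of Proposition 2.5, namely $Q^{\Delta^\bullet} = Q \otimes_e A_{PL}(\Delta^\bullet)$, one has
$$Map_{\mathcal{P}}(P_{\infty},Q)_n = Mor_{\mathcal{P}}(\Omega(C),\, Q \otimes_e \Omega_n),$$
where $\Omega_n = A_{PL}(\Delta^n)$. Since $\Omega_n$ is a commutative dg algebra, $Q \otimes_e \Omega_n$ is a dg properad, and (ii) applied to it gives a natural bijection of this set with $MC(Hom_{\Sigma}(\overline{C}, Q \otimes_e \Omega_n))$. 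By definition of the simplicial Maurer-Cartan set,
$$MC_n(Hom_{\Sigma}(\overline{C},Q)) = MC(Hom_{\Sigma}(\overline{C},Q) \hat{\otimes} \Omega_n),$$
so it suffices to exhibit a natural isomorphism of complete dg Lie algebras
$$Hom_{\Sigma}(\overline{C},Q) \hat{\otimes} \Omega_n \cong Hom_{\Sigma}(\overline{C}, Q \otimes_e \Omega_n).$$
This is the step that uses the finite-dimensionality assumption: for finite-dimensional $V$, the canonical map $Hom_{dg}(V, W) \otimes A \to Hom_{dg}(V, W \otimes A)$ is an isomorphism; taking $\Sigma_m \times \Sigma_n$-invariants, the product over $(m,n)$, and completion with respect to the weight filtration turns this into the required identification. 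Under it, the infinitesimal coproduct of $C$ and the $\Omega_n$-linear extension of the infinitesimal product of $Q$ produce matching convolution brackets. Naturality in $\Omega_\bullet$ then upgrades the levelwise bijection to an isomorphism of simplicial sets.

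The hard step is the distributivity isomorphism in (iii): although intuitively clear, one must carefully track how the infinite product defining $Hom_{\Sigma}$ interacts with the completed tensor product $\hat{\otimes} \Omega_n$ and verify that the two convolution brackets genuinely correspond under the chosen identification. Everything else in the plan is either formal, already in \cite{MV2}, or a clean consequence of the finite-dimensionality of $\overline{C}$ in each biarity.
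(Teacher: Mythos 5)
Your proposal is correct and follows essentially the same route as the paper: identify the $n$-simplices of $Map_{\mathcal{P}}(P_{\infty},Q)$ with properad morphisms into $Q\otimes_e\Omega_n$ via the $A_{PL}$-simplicial resolution, apply the Maurer--Cartan description of such morphisms to the coefficient-extended properad, and reduce everything to the (completed) distributivity isomorphism $Hom_{\Sigma}(\overline{C},Q)\hat{\otimes}\Omega_{\bullet}\cong Hom_{\Sigma}(\overline{C},Q\otimes_e\Omega_{\bullet})$ together with the check that it intertwines the convolution brackets. The paper carries out exactly this plan, constructing the map $f\otimes\omega\mapsto[c\mapsto f(c)\otimes_e\omega]$, verifying it is a filtration-preserving isomorphism of simplicial pre-Lie algebras, and passing to completions (its Lemma 2.15), which is the step you correctly single out as the delicate point.
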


\begin{cor}
Let $\phi:P_{\infty}\rightarrow Q$ be a Maurer-Cartan element of $Hom_{\Sigma}(\overline{C},Q)$
and $Hom_{\Sigma}(\overline{C},Q)^{\phi}$ the corresponding twisted Lie algebra.

(1) For every integer $n\geq 0$, we have a bijection
\[
H^{-n}(Hom_{\Sigma}(\overline{C},Q)^{\phi})\cong \pi_{n+1}(Map_{\mathcal{P}}(P_{\infty},Q),\phi)
\]
which is an isomorphism of abelian groups for $n\geq 1$, and an isomorphism of groups for $n=0$
where $H^0(Hom_{\Sigma}(\overline{C},Q)^{\phi})$ is equipped with the group structure given by the Hausdorff-Campbell formula.

(2) When $\phi$ is a weak equivalence we obtain
\[
H^{-n}(Hom_{\Sigma}(\overline{C},Q)^{\phi})\cong \pi_{n+1}(hiso(P_{\infty},Q),\phi).
\]

(3) When $\phi$ is a weak equivalence and $Q=P_{\infty}$ we obtain
\[
H^{-n}(Hom_{\Sigma}(\overline{C},P_{\infty})^{\phi})\cong \pi_{n+1}(haut(P_{\infty}),\phi).
\]
\end{cor}
\begin{proof}
This corollary follows directly from Theorems 2.12 and 2.13.
Concerning the homotopy automorphisms, we just note that
\[
\pi_{n+1}(Map(A_{\infty},A_{\infty}),\phi)=\pi_{n+1}(Map(A_{\infty},A_{\infty})_{\phi},\phi)
\]
and $Map(A_{\infty},A_{\infty})_{\phi}=haut(A_{\infty})_{\phi}$ when $\phi$ is a weak equivalence because of the two-out-of-three property
(just draw the diagram of a homotopy between a weak equivalence and a map).
\end{proof}

Let us prove Theorem 2.13. We first need the following preliminary lemma:
\begin{lem}
Let
\[
\varphi_{\bullet}:g_{\bullet}\stackrel{\sim}{\rightarrow}h_{\bullet}
\]
be an isomorphism of simplicial dg Lie algebras.
Suppose these Lie algebras are filtered and that for every integer $r$, the map $\varphi_{\bullet}$ induces an isomorphism of simplicial dg Lie algebras
\[
F_r\varphi_{\bullet}:F_rg_{\bullet}\stackrel{\sim}{\rightarrow}F_rh_{\bullet}.
\]
Then its completion
\[
\hat{\varphi}_{\bullet}:\hat{g}_{\bullet}\stackrel{\sim}{\rightarrow}\hat{h}_{\bullet}
\]
is an isomorphism of simplicial complete dg Lie algebras.
\end{lem}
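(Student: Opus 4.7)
The plan is to build $\hat\varphi_\bullet$ as the inverse limit over $r$ of the maps $\bar\varphi_{\bullet,r}:g_\bullet/F_rg_\bullet\rightarrow h_\bullet/F_rh_\bullet$ induced on filtration quotients by $\varphi_\bullet$, and to check that each such quotient map is already an isomorphism of simplicial dg Lie algebras. The statement then follows because inverse limits of isomorphisms between compatible inverse systems are isomorphisms.

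\textbf{Step 1: passing to quotients.} In each simplicial degree $n$ and for every $r\geq 1$, the hypothesis produces a commutative diagram of short exact sequences of dg Lie algebras
\[
\xymatrix{
0\ar[r] & F_rg_n\ar[r]\ar[d]_{F_r\varphi_n}^{\cong} & g_n\ar[r]\ar[d]_{\varphi_n}^{\cong} & g_n/F_rg_n\ar[r]\ar[d]_{\bar\varphi_{n,r}} & 0 \\
0\ar[r] & F_rh_n\ar[r] & h_n\ar[r] & h_n/F_rh_n\ar[r] & 0
}
\]
in which the two leftmost vertical arrows are isomorphisms. A direct diagram chase (or the five lemma), using that $\varphi_n(F_rg_n)=F_rh_n$, forces $\bar\varphi_{n,r}$ to be an isomorphism of dg Lie algebras. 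These isomorphisms are natural in $n$, since the face and degeneracy operators of $g_\bullet$ and $h_\bullet$ preserve the filtrations by the very hypothesis that $F_rg_\bullet$ and $F_rh_\bullet$ are simplicial dg Lie subalgebras, and they are compatible with the tower projections $g_\bullet/F_{r+1}g_\bullet\twoheadrightarrow g_\bullet/F_rg_\bullet$. Therefore $\{\bar\varphi_{\bullet,r}\}_{r\geq1}$ constitutes an isomorphism of inverse systems of simplicial dg Lie algebras.

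\textbf{Step 2: passing to the limit.} Limits in simplicial dg Lie algebras are computed pointwise in simplicial degree and levelwise in cochain complexes, and an inverse limit of compatible isomorphisms is itself an isomorphism. Hence taking the limit over $r$ we obtain
\[
\hat\varphi_\bullet=\lim_r\bar\varphi_{\bullet,r}:\hat g_\bullet=\lim_r g_\bullet/F_rg_\bullet\stackrel{\cong}{\longrightarrow}\lim_r h_\bullet/F_rh_\bullet=\hat h_\bullet.
\]
Brackets, differentials, and face/degeneracy operators all pass to the limit by functoriality of the completion of filtered dg Lie algebras, so $\hat\varphi_\bullet$ is automatically a morphism of simplicial complete dg Lie algebras, and it is a levelwise isomorphism.

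The argument is essentially naturality together with the stability of isomorphisms under inverse limits, so no substantive obstacle arises. The only point demanding some care is the compatibility of the filtrations $F_\bullet g_\bullet$ and $F_\bullet h_\bullet$ with the simplicial structure, but this is already built into the hypotheses via the assumption that $F_r\varphi_\bullet$ is a morphism of \emph{simplicial} dg Lie algebras for every $r$.
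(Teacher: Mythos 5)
Your proof is correct and follows essentially the same route as the paper's: both deduce that the induced maps $g_\bullet/F_rg_\bullet\rightarrow h_\bullet/F_rh_\bullet$ are isomorphisms and then conclude by observing that limits of simplicial objects are computed pointwise, so an inverse limit of isomorphisms is an isomorphism. You merely make the quotient step more explicit (via the five lemma) than the paper does.
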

\begin{proof}
The faces and the degeneracies of $g_{\bullet}$ are compatible with its filtration and the same holds for $h_{\bullet}$.
Moreover, by assumption $\varphi_{\bullet}$ and $F_r\varphi_{\bullet}$ are isomorphisms,
so for every integer $r$ the induced map
\[
g_{\bullet}/F_rg_{\bullet}\stackrel{\cong}{\rightarrow}h_{\bullet}/F_rh_{\bullet}
\]
is also an isomorphism of simplicial algebras.
Simplicial objects in a category form a diagram category, so the limits are determined pointwise. This implies
that $\hat{g}_{\bullet}=lim_r g_{\bullet}/F_rg_{\bullet}$ as a simplicial dg Lie algebra and the same holds for
$h_{\bullet}$, so finally $\hat{\varphi}_{\bullet}$ is an isomorphism.
\end{proof}

\begin{proof}[Proof of Theorem 2.12]
Let us recall that for every properad $Q$, the complex $Hom_{\Sigma}(\overline{C},Q)$ forms a dg Lie
algebra. The bracket is defined by
\[
[f,g]=f\bullet g - (-1)^{|f||g|}g\bullet f
\]
where $g\bullet f$ is the convolution product of $g$ and $f$ obtained by using the infinitesimal coproduct of
the coproperad structure of $C$ and the infinitesimal product of the properad structure of $Q$.
The idea is the following : an element of $\overline{C}$ is represented by a directed graph with one vertex.
The infinitesimal coproduct expands it into a sum of graphs with two vertices, i.e $2$-levelled directed graphs
with one vertex on each level indexed by a certain element of $\overline{C}$. One then applies, for each such graph,
$f$ to the element of the first level and $g$ to the element of the second level, thus obtaining a sum
of $2$-levelled directed graphs of operations of $Q$. One finally applies the composition product of $Q$.
We refer the reader to \cite{DR} and \cite{MV2} for more details.
Its explicit formula is given by
\[
\overline{C}\stackrel{\Delta_{(1)}}{\rightarrow}\overline{C}\boxtimes_{(1)}\overline{C}\stackrel{f\boxtimes_{(1)}g}{\rightarrow}Q\boxtimes_{(1)}Q
\stackrel{\mu_{(1)}}{\rightarrow} Q
\]
where $\Delta_{(1)}$ is the infinitesimal coproduct of the coproperad $\mathcal{C}$ restricted to the augmentation ideal $\overline{C}$ and $\mu_{(1)}$ the infinitesimal
product of the properad $Q$.
This convolution product equips $Hom_{\Sigma}(\overline{C},Q)$
with a pre-Lie algebra structure. There is an associated Lie structure given by
\[
[f,g]=f\bullet g - (-1)^{|f||g|}g\bullet f.
\]

One can then show that this dg Lie algebra is filtered. There are several possible filtrations compatible with
the Lie bracket, in particular one given by the arity in \cite{DR} and one given by the differential of $P_{\infty}$
in \cite{MV2}. Moreover, $Hom_{\Sigma}(\overline{C},Q)$ is complete for both of these filtrations
(for the filtration given in \cite{MV2}, the reader can check that this filtration becomes $0$ from $r=3$).

Now let us consider the cochain complex isomorphism
\[
\varphi_{\bullet}:Hom_{\Sigma}(\overline{C},Q)\otimes\Omega_{\bullet}
\stackrel{\cong}{\rightarrow} Hom_{\Sigma}(\overline{C},Q\otimes_e\Omega_{\bullet})
\]
defined by $\varphi_{\bullet}(f\otimes \omega)=[c\in\overline{C}\mapsto f(c)\otimes_e\omega]$ for every $f\in Hom_{\Sigma}(\overline{C},Q)$ and $\omega\in\Omega_{\bullet}$.
We claim that this is actually an isomorphism of simplicial pre-Lie algebras, that is, the map $\varphi_{\bullet}$ commutes with the convolution product $\bullet$
and preserves the simplicial structure. This implies that $\varphi_{\bullet}$ is an isomorphism of simplicial dg Lie algebras for the Lie structure induced by
the convolution product. To see this, let $f\otimes\omega_1,g\otimes\omega_2\in Hom_{\Sigma}(\overline{C},Q)
\otimes\Omega_{\bullet}$ and $c\in\overline{C}$. By definition of the convolution $\bullet$,
the map $\varphi(f\otimes\omega_1)\bullet\varphi(f\otimes\omega_2)$ is evaluated on $c$ as follows.
First, we apply the infinitesimal coproduct $\Delta_{(1)}$ of the coproperad $C$ (restricted to the augmentation ideal
$\overline{C}$) to get
\[
\Delta_{(1)}(c)=\sum \pm Gr(c',c'')\in\mathcal{F}(\overline{C})^{(2)}
\]
which is a sum of connected $2$-levelled graphs $Gr(c',c'')$ with one vertex on each level. The upper one is indexed by $c'$ and the lower one by $c''$.
Then we apply $\varphi(f\otimes\omega_1)$ to the upper vertex and $\varphi(f\otimes\omega_1)$ to the lower vertex to get
\[
\sum \pm Gr(f(c')\otimes_e\omega_1,g(c'')\otimes_e\omega_2).
\]
Finally we apply the infinitesimal composition product $\mu_{(1)}^{Q\otimes_e\Omega_{\bullet}}$ to get
\begin{eqnarray*}
\mu_{(1)}^{Q\otimes_e\Omega_{\bullet}}(\sum \pm Gr(f(c')\otimes_e\omega_1,g(c'')\otimes_e\omega_2))
& = & \sum \pm \mu_{(1)}^Q(Gr(f(c'),f(c'')))\otimes\omega_1\omega_2 \\
& = & (f\bullet g)(c)\otimes\omega_1\omega_2 \\
& = & \varphi(f\otimes\omega_1\bullet g\otimes\omega_2)(c)
\end{eqnarray*}
by definition of the properad structure on $Q\otimes_e\Omega_{\bullet}$.

The simplicial structure induced by $\Omega_{\bullet}$ is obviously compatible with the filtrations,
since it is defined by the simplicial structure of $\Omega_{\bullet}$ and the filtration does not act on
$\Omega_{\bullet}$.
Moreover, the map $\varphi_{\bullet}$ induces an isomorphism at each stage of the filtrations.
Since $F_r(Hom_{\Sigma}(\overline{C},Q)\otimes\Omega_{\bullet})=F_rHom_{\Sigma}(\overline{C},Q)\otimes\Omega_{\bullet}$,
for any $f\otimes\omega\in F_rHom_{\Sigma}(\overline{C},Q)\otimes\Omega_{\bullet}$,
if $c\in F_{r-1}\overline{C}$ then $f(c)=0$ so
\begin{eqnarray*}
\varphi(f\otimes\omega)(x) & = & f(x)\otimes_e\omega \\
 & = & 0.
\end{eqnarray*}
By Lemma 2.15 we get an isomorphism between the completions
\[
\hat{\varphi}_{\bullet}:Hom_{\Sigma}(\overline{C},Q)\hat{\otimes}\Omega_{\bullet}
\stackrel{\cong}{\rightarrow} \hat{Hom_{\Sigma}(\overline{C},Q\otimes_e\Omega_{\bullet})}.
\]
The dg Lie algebra $Hom_{\Sigma}(\overline{C},Q\otimes_e\Omega_{\bullet})$ is complete, so there is a canonical isomorphism
$Hom_{\Sigma}(\overline{C},Q\otimes_e\Omega_{\bullet}) \cong \hat{Hom_{\Sigma}(\overline{C},Q\otimes_e\Omega_{\bullet})}$.
Given that
\begin{eqnarray*}
MC(Hom_{\Sigma}(\overline{C},Q\otimes\Omega_{\bullet})) & = & Mor_{\mathcal{P}}(P_{\infty},Q\otimes\Omega_{\bullet})\\
 & = & Map(P_{\infty},Q) \\
 \end{eqnarray*}
we finally have an isomorphism of simplicial sets
\begin{eqnarray*}
MC_{\bullet}(Hom_{\Sigma}(\overline{C},Q)) & = & MC(Hom_{\Sigma}(\overline{C},Q)\hat{\otimes}\Omega_{\bullet})\\
 & \cong & Map(P_{\infty},Q).
\end{eqnarray*}

\end{proof}

\begin{rem}
This gives alternatively a way to study the higher homotopy groups of moduli spaces via the derivations bicomplex,
for instance via homological spectral sequences arguments, or a way to study the homology groups of
the deformation complex via Bousfield-Kan spectral sequences arguments.
\end{rem}

\begin{example}
For $Q=End_X$ we get
\begin{eqnarray*}
\pi_{n+1}(P_{\infty}\{X\},\phi) & \cong & H^{-n}Hom_{\Sigma}(C,End_X)^{\phi} \\
 & \cong & H^{-n}Der_{\phi}(P_{\infty},End_X).
\end{eqnarray*}
\end{example}

\subsection{A more general case.}

The reader may have noticed that one can always provide a cofibrant resolution of $P$ formed by the cobar construction of
a certain coproperad (either the Koszul dual or the bar construction of $P$), hence getting a dg Lie deformation complex.
But in general, for properads which are not Koszul, the bar-cobar resolution is huge
and one could be interested in a smaller deformation complex. The price to pay for this size reduction is an
increased complexity of the $L_{\infty}$-structure, encoded by the differential of $P_{\infty}$.
For this, we now consider the more general situation when $P_{\infty}=(\mathcal{F}(s^{-1}C),\partial)$
is a quasi-free properad generated by the desuspension of a homotopy coproperad $C$. A homotopy coproperad is exactly
a $\Sigma$-biobject $C$ such that $\mathcal{F}(s^{-1}C)$ is equipped with a derivation $\partial$ of degree $-1$
such that $\partial^2=0$. Moreover, we suppose that $C$ is equipped with an increasing filtration
$\{C_r\}$ compatible with the differential, that is, for every $r$ we have
\[
\partial(C_r)\subset \mathcal{F}(s^{-1}C_r).
\]
This condition holds for minimal models in the sense of \cite{Mar2}.
A leading example is the properad of non unitary and non-counitary associative and coassociative bialgebras
$BiAss$, which is not Koszul but only homotopy Koszul in the sense of \cite{MV2}.
Explicit computations of certain terms of the differential of the minimal model $BiAss_{\infty}$ have been done in \cite{Mar2}. This non-quadratic and highly non trivial differential induces a fully-fledged filtered $L_{\infty}$-algebra structure on the deformation complex.
This deformation complex has been, in turn, be proven to be quasi-isomorphic to the well known Gerstenhaber-Schack
complex \cite{GS} used in deformation theory of bialgebras (see \cite{MV2} for a proof), proving that the
Gerstenhaber-Schack complex possesses a $L_{\infty}$-structure controlling the deformation theory of bialgebras
up to homotopy.

In \cite{MV2}, the $L_{\infty}$ algebras are supposed to be only filtered and one have to consider Maurer-Cartan elements
in their completion for this filtration. We explain here why these algebras are actually complete for the
appropriate filtration, so that we do not need to complete them.
\begin{thm} Let $P$ be a properad with minimal model $P_{\infty}=(\mathcal{F}(s^{-1}C),\partial)\stackrel{\sim}{\rightarrow}P$
for a certain homotopy coproperad $C$, and $Q$ be any properad.

(i) The total complex $Hom_{\Sigma}(\overline{C},Q)$ is a complete dg $L_{\infty}$ algebra.

(ii) The Maurer-Cartan elements of $Hom_{\Sigma}(\overline{C},Q)$ are exactly the properad morphisms $P_{\infty}
\rightarrow Q$.

(iii) The simplicial sets $MC_{\bullet}(Hom_{\Sigma}(\overline{C},Q))$
and $Map_{\mathcal{P}}(P_{\infty},Q)$ are isomorphic.
\end{thm}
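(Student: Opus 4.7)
The plan is to adapt the proof of Theorem 2.13, with the convolution dg Lie algebra replaced by the complete $L_\infty$-algebra whose higher brackets encode the homotopy coproperad structure on $C$, and with completeness tracked through the filtration $\{C_r\}$. For assertion (i), the differential $\partial$ on $P_\infty=(\mathcal{F}(s^{-1}C),\partial)$ decomposes according to the number of vertices in the target graph as $\partial=\sum_{k\geq 1}\partial_k$, and these components are precisely the data of the homotopy coproperad structure: each $\partial_k$ encodes a higher infinitesimal coproduct $\Delta_{(k)}:\overline{C}\to\mathcal{F}^{(k)}(\overline{C})$ in the sense of \cite{MV1}. I would then define the $L_\infty$-brackets on $Hom_{\Sigma}(\overline{C},Q)$ by the convolution formula
\[
l_k(f_1,\ldots,f_k):\overline{C}\stackrel{\Delta_{(k)}}{\longrightarrow}\mathcal{F}^{(k)}(\overline{C})\stackrel{f_1\boxtimes\cdots\boxtimes f_k}{\longrightarrow}\mathcal{F}^{(k)}(Q)\stackrel{\mu_{(k)}}{\longrightarrow}Q,
\]
symmetrized in the usual way, and verify the generalized Jacobi identities graphwise from $\partial^2=0$. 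Completeness for the filtration induced by $\{C_r\}$ follows because $Hom_{\Sigma}(\overline{C},Q)=\prod_{m,n}Hom_{dg}(\overline{C}(m,n),Q(m,n))^{\Sigma_m\times\Sigma_n}$ is automatically complete as an inverse limit, and compatibility of the $l_k$ with this filtration is the dual statement to $\partial(C_r)\subset\mathcal{F}(s^{-1}C_r)$.

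For assertion (ii), a degree one element $\tau$ is equivalently a morphism of $\Sigma$-biobjects $s^{-1}C\to Q$ which extends uniquely, by the universal property of the free properad, to a morphism $\tilde{\tau}:\mathcal{F}(s^{-1}C)\to Q$. Expanding the Maurer-Cartan equation $\sum_{k\geq 1}\frac{1}{k!}l_k(\tau^{\wedge k})=0$ graphwise using the description of $l_k$ above yields exactly the compatibility $\tilde{\tau}\circ\partial=d_Q\circ\tilde{\tau}$, so (ii) reduces to the standard fact that morphisms out of a quasi-free resolution correspond to twisting morphisms. For assertion (iii), I would repeat verbatim the argument of Theorem 2.13: define $\varphi_\bullet:Hom_{\Sigma}(\overline{C},Q)\otimes\Omega_\bullet\to Hom_{\Sigma}(\overline{C},Q\otimes_e\Omega_\bullet)$ by $f\otimes\omega\mapsto[c\mapsto f(c)\otimes_e\omega]$, check that it is an isomorphism of simplicial cochain complexes preserving filtrations and inducing isomorphisms at each stage, apply Lemma 2.15 after completion, and combine with (ii) applied to $Q\otimes_e\Omega_\bullet$ to obtain an isomorphism $MC_\bullet(Hom_{\Sigma}(\overline{C},Q))\cong Map_{\mathcal{P}}(P_\infty,Q)$.

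The main obstacle will be verifying that $\varphi_\bullet$ intertwines every higher bracket $l_k$, not merely the binary convolution that sufficed in the dg Lie case. This requires, for each $k$, a graphwise calculation showing that evaluation on a $k$-fold tensor of elementary tensors factors through $k$-fold multiplication in the commutative algebra $\Omega_\bullet$, which is essentially the statement that $Q\otimes_e\Omega_\bullet$ inherits its properad structure via componentwise multiplication in $\Omega_\bullet$. A secondary technical point is confirming that the filtration $\{C_r\}$ yields a genuinely complete (not merely filtered) $L_\infty$-algebra, which relies on the minimal model hypothesis in the sense of \cite{Mar2} to ensure the filtration interacts well with each $\partial_k$; this is the step that the paper explicitly advertises as new compared to the treatment in \cite{MV2}, where Maurer-Cartan elements are taken after an extra completion.
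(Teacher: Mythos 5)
Your proposal is correct and follows essentially the same route as the paper: the higher convolution brackets built from the vertex decomposition $\partial=\sum_k\partial^{(k)}$ of the minimal model's differential, completeness via the filtration $F_r=\{f\mid f(\overline{C}_{r-1})=0\}$ dual to $\partial(C_r)\subset\mathcal{F}(s^{-1}C_r)$, and the isomorphism $\varphi_{\bullet}$ combined with Lemma 2.15. The only cosmetic difference is that the paper verifies the compatibility of $\varphi_{\bullet}$ with all higher brackets by exhibiting a strict cdga isomorphism of Chevalley--Eilenberg algebras (i.e.\ intertwining the coderivations $\tilde{Q}$ and $Q_{\Omega}$), which is exactly the bracket-by-bracket graphwise computation you describe.
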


\begin{cor}
Let $\phi:P_{\infty}\rightarrow Q$ be a Maurer-Cartan element of $Hom_{\Sigma}(\overline{C},Q)$
and $Hom_{\Sigma}(\overline{C},Q)^{\phi}$ the corresponding twisted Lie algebra.

(1) For every integer $n\geq 0$, we have a bijection
\[
H^{-n}(Hom_{\Sigma}(\overline{C},Q)^{\phi})\cong \pi_{n+1}(Map_{\mathcal{P}}(P_{\infty},Q),\phi)
\]
which is an isomorphism of abelian groups for $n\geq 1$, and an isomorphism of groups for $n=0$
where $H^0(Hom_{\Sigma}(\overline{C},Q)^{\phi})$ is equipped with the group structure given by the Hausdorff-Campbell formula.

(2) When $\phi$ is a weak equivalence we obtain
\[
H^{-n}(Hom_{\Sigma}(\overline{C},Q)^{\phi})\cong \pi_{n+1}(hiso(P_{\infty},Q),\phi).
\]

(3) When $\phi$ is a weak equivalence and $Q=P_{\infty}$ we obtain
\[
H^{-n}(Hom_{\Sigma}(\overline{C},P_{\infty})^{\phi})\cong \pi_{n+1}(haut(P_{\infty}),\phi).
\]
\end{cor}

To prove Theorem 2.17, let us first check that $Hom_{\Sigma}(\overline{C},Q)$ is a complete
$L_{\infty}$-algebra.
We use the decomposition proposed in the proof of Proposition 15 in \cite{Mar3}
\[
Hom_{\Sigma}(\overline{C},Q) = \prod_{l\geq 1} Hom_{\Sigma}(\overline{C},Q)_l
\]
which satisfies
\[
[Hom_{\Sigma}(\overline{C},Q)^k]_l=0
\]
for $k>l$. With such a decomposition we define a decreasing filtration
\[
F_rHom_{\Sigma}(\overline{C},Q) = \prod_{l\geq r} Hom_{\Sigma}(\overline{C},Q)_l.
\]
For every integer $k$ we have
\[
[Hom_{\Sigma}(\overline{C},Q)^k]\in\prod_{l\geq k}Hom_{\Sigma}(\overline{C},Q)_l=F_k Hom_{\Sigma}(\overline{C},Q)
\]
because $[Hom_{\Sigma}(\overline{C},Q)^k]_l=0$ for $k>l$.
Moreover, the isomorphisms
\[
Hom_{\Sigma}(\overline{C},Q)/F_rHom_{\Sigma}(\overline{C},Q)\cong \prod_{l=1}^{r-1}Hom_{\Sigma}(\overline{C},Q)_l
\]
imply that
\begin{eqnarray*}
lim_r Hom_{\Sigma}(\overline{C},Q)/F_rHom_{\Sigma}(\overline{C},Q) & \cong & \prod_{l\geq 1}Hom_{\Sigma}(\overline{C},Q)_l\\
 & = & Hom_{\Sigma}(\overline{C},Q).
\end{eqnarray*}
We know check that this filtration is compatible with the $L_{\infty}$ structure defined in \cite{MV2}.
We can actually rewrite our filtration as
\begin{eqnarray*}
F_rHom_{\Sigma}(\overline{C},Q)  & = & \prod_{l\geq r} Hom_{\Sigma}(\overline{C},Q)_l \\
 & \cong & \prod_{l\geq 1} Hom_{\Sigma}(\overline{C},Q)_l/\prod_{l\leq r-1} Hom_{\Sigma}(\overline{C},Q)_l \\
 & \cong & Hom_{\Sigma}(\overline{C},Q)/\oplus_{l\leq r-1}Hom_{\Sigma}(\overline{C},Q)_l \\
 & \cong & Hom_{\Sigma}(\overline{C},Q)/Hom_{\Sigma}(\overline{C}_{r-1},Q) \\
 & \cong & \{f\in Hom_{\Sigma}(\overline{C},Q)|f(\overline{C}_{r-1})=0\}
\end{eqnarray*}
where $\{\overline{C}_r\}$ is an exhaustive filtration of $\overline{C}$ defined by
\[
\overline{C}_r=\{\xi\in\overline{C}|\partial_{P_{\infty}}(\xi)\in \mathcal{F}(\overline{C})^{(\leq r)}\}.
\]
Now, given that for any integer $r$ we have $\partial_{P_{\infty}}(\overline{C}_{r-1})\in\mathcal{F}(\overline{C}_{r-1})$,
for any integer $n$ the inclusion
\[
\partial_{P_{\infty}}^{(n)}(\overline{C}_{r-1})\in\mathcal{F}(\overline{C}_{r-1})^{(n)}
\]
and the formula defining $Q^{(n)}$ imply that $Q^{(n)}(f_1\wedge...\wedge f_n)(\xi)=0$ for
$f_1\wedge...\wedge f_n\in\Lambda^nHom_{\Sigma}(\overline{C},Q)$ such that $f_1\in F_rHom_{\Sigma}(\overline{C},Q)$, $\xi\in\overline{C}_{r-1}$
and $n>r-1$. This means that
\[
Q^{(n)}(f_1\wedge...\wedge f_n)\in F_rHom_{\Sigma}(\overline{C},Q)
\]
for $n>r-1$.

The remaining part of the proof relies on
\begin{prop}
The cochain complex isomorphism
\[
\varphi:Hom_{\Sigma}(\overline{C},Q)\otimes\Omega_{\bullet}\stackrel{\cong}{\rightarrow}
Hom_{\Sigma}(\overline{C},Q\otimes_e\Omega_{\bullet})
\]
is an isomorphism of simplicial dg $L_{\infty}$-algebras.
\end{prop}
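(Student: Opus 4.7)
The approach is to extend the strategy of the proof of Theorem 2.12 from the strict pre-Lie convolution (built from the binary infinitesimal coproduct of a coproperad) to the higher $L_\infty$ brackets $Q^{(n)}$ of \cite{MV2}, which are built from the $n$-ary pieces $\partial_{P_\infty}^{(n)}$ of the differential of the minimal model. Recall that for $f_1\wedge\cdots\wedge f_n \in \Lambda^n Hom_{\Sigma}(\overline{C},Q)$ and $\xi\in\overline{C}$, the bracket $Q^{(n)}(f_1\wedge\cdots\wedge f_n)(\xi)$ is computed by applying $\partial_{P_\infty}^{(n)}$ to $\xi$ to obtain a sum of connected $n$-vertex graphs with vertices decorated by elements of $\overline{C}$, decorating the $i$-th vertex with $f_i$ applied to its label, and finally composing the resulting graph of operations of $Q$ via the infinitesimal product $\mu^Q$.

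First, I would verify that $\varphi$ intertwines each bracket $Q^{(n)}$. For elements $f_i\otimes\omega_i \in Hom_{\Sigma}(\overline{C},Q)\otimes\Omega_{\bullet}$ and $\xi\in\overline{C}$, writing $\partial_{P_\infty}^{(n)}(\xi)=\sum \pm Gr(c_1,\ldots,c_n)$, evaluating the $n$-ary bracket on $\varphi(f_1\otimes\omega_1)\wedge\cdots\wedge\varphi(f_n\otimes\omega_n)$ at $\xi$ yields
\[
\sum \pm\, \mu^{Q\otimes_e\Omega_{\bullet}}\bigl(Gr(f_1(c_1)\otimes_e\omega_1,\ldots,f_n(c_n)\otimes_e\omega_n)\bigr).
\]
Since the properad structure on $Q\otimes_e\Omega_{\bullet}$ is obtained by tensoring $\mu^Q$ with the graded commutative product of $\Omega_{\bullet}$, this factors as $\sum \pm\, \mu^Q(Gr(f_1(c_1),\ldots,f_n(c_n)))\otimes \omega_1\cdots\omega_n$, which is precisely $\varphi\bigl(Q^{(n)}(f_1\wedge\cdots\wedge f_n)\otimes \omega_1\cdots\omega_n\bigr)(\xi)$. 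This is exactly the binary computation carried out in the proof of Theorem 2.12, performed in parallel on each component of the graph.

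Compatibility with the simplicial structure is immediate, since on both sides the faces and degeneracies act only on the $\Omega_{\bullet}$ factor via functoriality, and $\varphi$ does not touch that factor. For the completion step, the filtration-compatibility of $\varphi$ was already established in the discussion preceding the proposition (both sides carry the filtration $F_r(-)\otimes\Omega_\bullet$, and $\varphi$ is an isomorphism at each filtration stage). Lemma 2.15 generalises to filtered simplicial $L_\infty$-algebras without modification, since its proof only uses that the faces, degeneracies and brackets preserve the filtration levelwise; applying this generalised form yields the required isomorphism of simplicial complete $L_\infty$-algebras, and then the conclusion of Theorem 2.17 follows exactly as in the proof of Theorem 2.12, using the identification $MC(Hom_\Sigma(\overline{C}, Q\otimes_e \Omega_\bullet))=Mor_{\mathcal{P}}(P_\infty, Q\otimes_e\Omega_\bullet)$.

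The main obstacle is purely bookkeeping rather than conceptual: one must correctly track the Koszul signs arising from permuting the $f_i$ past each other and past the $\omega_i$, and verify that they match the signs produced by the graded commutativity of $\Omega_{\bullet}$ and the antisymmetry of the $L_\infty$-brackets. Once organised in graph notation as above, they reduce, vertex by vertex, to the binary case already handled in Theorem 2.12.
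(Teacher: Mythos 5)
Your proposal is correct and follows essentially the same route as the paper: the paper phrases the verification as a cdga isomorphism of Chevalley--Eilenberg algebras intertwining the coderivations $\tilde{Q}$ and $Q_{\Omega}$, but the actual computation is exactly yours --- evaluate $Q^{(n)}$ on $\varphi(f_1\otimes\omega_1)\wedge\cdots\wedge\varphi(f_n\otimes\omega_n)$ at $\xi$ via the graph expansion of $\partial_{P_{\infty}}^{(n)}(\xi)$ and use that $\mu_{Q\otimes_e\Omega_{\bullet}}$ factors as $\mu_Q$ tensored with the product of $\Omega_{\bullet}$, then handle completion via the filtration and Lemma 2.15. The only cosmetic difference is that the paper also writes out the $n=1$ case (where the extra term $(-1)^{|f|}f(c)\otimes_e d(\omega)$ appears), which in your setup is subsumed by the hypothesis that $\varphi$ is a cochain complex isomorphism.
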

\begin{proof}
We prove a cdga isomorphism between the associated Chevalley-Eilenberg algebras.
This is equivalent to a strict isomorphism of $L_{\infty}$-algebras.
Let us consider
\[
C^*(\varphi):(\Lambda sHom_{\Sigma}(\overline{C},Q)\otimes\Omega_{\bullet},\tilde{Q})\rightarrow
(\Lambda sHom_{\Sigma}(\overline{C},Q\otimes_e\Omega_{\bullet}),Q_{\Omega})
\]
where $Q_{\Omega}$ is the coderivation defined in the proof of Theorem 5 (ii) in \cite{MV2}:
\[
Q_{\Omega}^{(1)}(f)=\partial_{Q\otimes_e\Omega_{\bullet}}\circ f - (-1)^{|f|}f\circ\partial_{P_{\infty}}^{(1)}
\]
and
\[
Q_{\Omega}^{(n)}(f_1\wedge...\wedge f_n)=- (-1)^{|f_1\wedge...\wedge f_n|}\mu_{Q\otimes_e\Omega_{\bullet}}(f_1\wedge...\wedge f_n\circ'\partial_{P_{\infty}}^{(n)})
\]
for $n>1$, where $\partial_{P_{\infty}}^{(n)}$ is the part of the differential of $P_{\infty}$ consisting of
graphs with $n$ vertices, that is
\[
\partial_{P_{\infty}}^{(n)}:s^{-1}\overline{C}\stackrel{\partial_{P_{\infty}}}{\rightarrow}\mathcal{F}(s^{-1}\overline{C})
\twoheadrightarrow \mathcal{F}(s^{-1}\overline{C})^{(n)}.
\]
In the notation $\mu_{Q\otimes_e\Omega_{\bullet}}(f_1\wedge...\wedge f_n\circ'\partial_{P_{\infty}}^{(n)})$, the $\circ'$ stands for a
composition defined by applying, for each graph with $n$ vertices appearing in $\partial_{P_{\infty}}^{(n)}$,
the map $f_i$ to the operation of $s^{-1}\overline{C}$ indexing the $i^{th}$ vertex. Then one applies the
composition product $\mu_{Q\otimes_e\Omega_{\bullet}}$ of the properad $Q\otimes_e\Omega_{\bullet}$.
The coderivation $\tilde{Q}$ is obtained by extending the $L_{\infty}$-algebra structure of
$Hom_{\Sigma}(\overline{C},Q)$ by the cdga $\Omega_{\bullet}$, that is
\[
\tilde{Q}^{(1)}(f\otimes_e \omega)=Q^{(1)}(f)\otimes_e\omega + (-1)^{|f|}f\otimes_e d(\omega)
\]
and
\[
\tilde{Q}^{(n)}(f_1\otimes_e\omega_1\wedge... \wedge f_n\otimes_e\omega_n)
=Q^{(n)}(f_1\wedge...\wedge f_n)\otimes\omega_1 ...\omega_n
\]
for $n>1$.
For $c\in\overline{C}$ and $f\otimes\omega\in Hom_{\Sigma}(\overline{C},Q)\otimes\Omega_{\bullet}$, we have
\begin{eqnarray*}
Q^{(1)}(\varphi(f\otimes\omega))(c) & = & \partial_{Q\otimes_e\Omega_{\bullet}}\circ\varphi(f\otimes\omega)(c)
-(-1)^{|f|}\varphi(f\otimes\omega)\circ\partial_{P_{\infty}}^{(1)}(c) \\
 & = & \partial_{Q\otimes_e\Omega_{\bullet}}(f(c)\otimes_e\omega)-(-1)^{|f|}f(\partial_{P_{\infty}}^{(1)}(c))\otimes_e
 \omega \\
 & = & \partial_Q(f(c))\otimes_e\omega + (-1)^{|f|}f(c)\otimes_e d(\omega)-(-1)^{|f|}f(\partial_{P_{\infty}}^{(1)}(c))\otimes_e
 \omega \\
 & = & ((\partial_Q\circ f)(c)-(-1)^{|f|}(f\circ\partial_{P_{\infty}}^{(1)})(c))\otimes_e\omega + (-1)^{|f|}f(c)\otimes_e d(\omega) \\
 &  = & \varphi(\tilde{Q}^{(1)}(f\otimes\omega))(c).
\end{eqnarray*}
For $n>1$, $c\in\overline{C}$ and $f_1\otimes\omega_1\wedge...\wedge f_n\otimes\omega_n\in
\Lambda^n sHom_{\Sigma}(\overline{C},Q)\otimes\Omega_{\bullet}$, we have
\begin{eqnarray*}
& & Q^{(n)}(C^*(\varphi)(f_1\otimes\omega_1\wedge...\wedge f_n\otimes\omega_n))(c) \\
& = &
-(-1)^{|f_1\otimes\omega_1\wedge...\wedge f_n\otimes\omega_n|}
\mu_{Q\otimes_e\Omega_{\bullet}}(Gr(C^*(\varphi)(f_1\otimes\omega_1\wedge...\wedge f_n\otimes\omega_n))\circ'
\partial_{P_{\infty}}^{(n)}(c)) \\
 & = & -(-1)^{|f_1\otimes\omega_1\wedge...\wedge f_n\otimes\omega_n|}
\mu_{Q\otimes_e\Omega_{\bullet}}(Gr(f_1(-)\otimes\omega_1,...,f_n(-)\otimes\omega_n)\circ'\partial_{P_{\infty}}^{(n)}(c)) \\
 & = & -(-1)^{|f_1\otimes\omega_1\wedge...\wedge f_n\otimes\omega_n|}
\mu_Q(Gr(f_1,...,f_n)\circ'\partial_{P_{\infty}}^{(n)}(c))\otimes_e\omega_1 ...\omega_n \\
 & = & Q^{(n)}(f_1,...,f_n)(c)\otimes_e\omega_1 ...\omega_n \\
 & = & C^*(\varphi)(\tilde{Q}^{(n)}(f_1\otimes\omega_1,...,f_n\otimes\omega_n))(c).
\end{eqnarray*}
\end{proof}
Moreover, for every integer $r$ the induced map $F_r\varphi$ is also an isomorphism.
By Lemma 2.15 we get an isomorphism between the completions and we conclude the proof.

\subsection{Higher homotopy groups of mapping spaces of algebras over operads}

Let $P$ be an operad and $C$ a cooperad equipped with a twisting morphism $C\rightarrow P$,
such that $\Omega(C)\stackrel{\sim}{\rightarrow} P$ is a cofibrant resolution of $P$,
where $\Omega(-)$ is the operadic cobar construction (see \cite{LV} for instance).
One can always produce such a resolution by taking for $C$ the bar contruction on $P$, or the Koszul dual of $P$ if $P$ is Koszul.
Let $A$ be a $P$-algebra. There is a quasi-free $P$-algebra $A_{\infty}=((P\circ C)(A),\partial)$ over a quasi-free
$C$-coalgebra $C(A)$ which gives a cofibrant resolution of $A$. We refer the reader to \cite{Fre1} for a detailed construction of such $C$-coalgebras and cofibrant
resolutions.

We consider a $\mathbb{Z}$-graded cochain complex $Hom(C(A),B)$ with the degree defined by
\[
f\in Hom(C(A),B)_k\Leftrightarrow \forall n, f(C(A)_n)\subset B_{n+k}
\]
and the differential defined by
\[
\delta(f)=d_B\circ f - (-1)^{|f|}f\circ (d_C+d_A)
\]
where $d_B$ is the differential of $B$, $|f|$ the degree of $f$,
and $d_C+d_A$ the differential of the total complex of $C$, where $d_A$ comes
from the differential of $A$ and $d_C$ is the only derivation extending the infinitesimal
coproduct of the cooperad $C$.
This is a complete dg Lie algebra whose Maurer-Cartan elements consist in the set of twisting morphisms $Tw(C(A),B)$.
Recall that we have bijections
\[
Mor_{P-Alg}(A_{\infty},B) \cong Tw(C(A),B) \cong Mor_{C-Coalg}(C(A),C(B)).
\]

The principal theorem of this section is the following:
\begin{thm} Let $P$ be an operad and $A,B$ be any $P$-algebras.

(i) The total complex $Hom_{dg}(C(A),B)$ is a complete dg Lie algebra.

(ii) The Maurer-Cartan elements of $Hom_{dg}(C(A),B)$ are exactly the morphisms of $P$-algebras $A_{\infty}\rightarrow B$.

(iii) There exists an isomorphism between $MC_{\bullet}(Hom_{dg}(C(A),B))$
and $Map(A_{\infty},B)$.
\end{thm}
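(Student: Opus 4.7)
The plan is to mirror the proof of Theorem 2.12 in the operadic setting, replacing the convolution Lie algebra $Hom_{\Sigma}(\overline{C},Q)$ by $Hom_{dg}(C(A),B)$, and the simplicial resolution $Q\otimes_e\Omega_{\bullet}$ of the target properad by the simplicial resolution $B\otimes\Omega_{\bullet}$ of the target $P$-algebra.

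For (i) and (ii), the Lie bracket is the antisymmetrization of the convolution product
\[
f\star g : C(A) \xrightarrow{\Delta_{C(A)}} C(A)\boxtimes_{(1)} C(A) \xrightarrow{f\boxtimes_{(1)} g} B\boxtimes_{(1)} B \xrightarrow{\gamma_B} B,
\]
using the infinitesimal coproduct coming from the cofree $C$-coalgebra structure on $C(A)$ (which combines the infinitesimal coproduct of the cooperad $C$ with the ``cofree'' input in $A$) and the $P$-algebra structure map of $B$. Completeness of the total complex follows from the weight filtration of $C$: decomposing $C=\bigoplus_{r\geq 0}C^{(r)}$ induces $C(A)=\bigoplus_{r\geq 0} C^{(r)}(A)$, and the decreasing filtration $F_r Hom_{dg}(C(A),B)=\{f\,|\,f|_{C^{(<r)}(A)}=0\}$ is compatible with the bracket and satisfies $Hom_{dg}(C(A),B)\cong\lim_r Hom_{dg}(C(A),B)/F_rHom_{dg}(C(A),B)$ because the ambient Hom is the corresponding infinite product. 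The identification of Maurer-Cartan elements with morphisms $A_{\infty}\to B$ is then the classical bar-cobar correspondence for algebras over operads, already recalled just before the theorem via the bijections $Mor_{P\text{-}Alg}(A_{\infty},B)\cong Tw(C(A),B)\cong Mor_{C\text{-}Coalg}(C(A),C(B))$.

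For (iii), the core step is to construct a natural map
\[
\varphi_{\bullet}:Hom_{dg}(C(A),B)\otimes\Omega_{\bullet}\to Hom_{dg}(C(A),B\otimes\Omega_{\bullet}),\qquad \varphi_{\bullet}(f\otimes\omega)(c)=f(c)\otimes\omega,
\]
and verify, exactly as in the properadic case, that it is a morphism of simplicial pre-Lie algebras: the convolution product uses only the cooperad coproduct of $C$, the $P$-algebra product on $B$ (which extends to $B\otimes\Omega_{\bullet}$ by tensoring with the cdga $\Omega_{\bullet}$), and the multiplication in $\Omega_{\bullet}$, and these ingredients assemble in the same way on both sides. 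The map is compatible with the weight filtrations on both sides (acting trivially on the $\Omega_{\bullet}$ factor), so after applying the analogue of Lemma 2.15 one obtains an isomorphism of simplicial complete dg Lie algebras between the completions. Since $Hom_{dg}(C(A),B\otimes\Omega_{\bullet})$ is already complete, this yields
\[
MC_{\bullet}(Hom_{dg}(C(A),B))=MC(Hom_{dg}(C(A),B)\hat{\otimes}\Omega_{\bullet})\cong Mor_{P\text{-}Alg}(A_{\infty},B\otimes\Omega_{\bullet})
\]
using (ii) applied to the target $B\otimes\Omega_{\bullet}$. One then concludes by invoking the $P$-algebra analogue of Proposition 2.7: $B\otimes\Omega_{\bullet}$ is a simplicial frame on $B$ in the model category of $P$-algebras (the $P$-algebra structure on $B\otimes\Omega_n$ is induced by the commutative multiplication of $\Omega_n$, and the Reedy matching map reduces levelwise to a surjection by the acyclicity of $\Omega_{\bullet}$), so the right-hand side is exactly $Map(A_{\infty},B)$.

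The main obstacle I expect is the careful handling of $\varphi_{\bullet}$: unlike in the properadic case, where the finite-dimensionality of $\overline{C}$ in each arity made $\varphi_{\bullet}$ a strict isomorphism, in the operadic case $C(A)$ is typically of infinite total dimension, so $\varphi_{\bullet}$ is in general only a natural injection. The proof must therefore go through the filtrations stage-by-stage, showing that $F_r\varphi_{\bullet}$ becomes an isomorphism after quotienting by $F_r$ (on each stratum only finitely many weight components of $C(A)$ contribute), and then passing to the limit exactly as in the last steps of the proof of Theorem 2.12. The second subtle point is verifying that the filtration on $Hom_{dg}(C(A),B\otimes\Omega_{\bullet})$ matches the one obtained by completing $Hom_{dg}(C(A),B)\otimes\Omega_{\bullet}$; this again follows from the fact that the filtration is indexed by the weight of $C(A)$ and is independent of the coefficient algebra $\Omega_{\bullet}$, so Lemma 2.15 applies verbatim.
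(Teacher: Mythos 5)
Your proposal follows exactly the route the paper takes: the paper's entire proof of this statement is the remark that it is ``completely equivalent to the properadic case, once we have checked that $(-)\otimes\Omega_{\bullet}$ defines a functorial simplicial resolution in the category of $P$-algebras'', and your outline supplies precisely that reduction (convolution bracket and weight filtration for (i)--(ii), the comparison map $\varphi_{\bullet}$ together with the analogue of Lemma 2.15 for (iii), plus the simplicial-frame check on $B\otimes\Omega_{\bullet}$). You are in fact more careful than the paper on the one delicate point --- that $\varphi_{\bullet}$ need not be an isomorphism when $C(A)$ is infinite-dimensional --- though be aware that truncating the weight filtration does not by itself restore surjectivity, since each individual stratum $C^{(l)}(A)$ can still be infinite-dimensional; the paper leaves this point entirely implicit.
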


\begin{cor}
Let $\phi:A_{\infty}\rightarrow B$ be a Maurer-Cartan element of $Hom_{dg}(C(A),B)$
and $Hom_{dg}(C(A),B)^{\phi}$ the corresponding twisted Lie algebra.

(1) For every integer $n\geq 0$, we have a bijection
\[
H^{-n}(Hom_{dg}(C(A),B)^{\phi})\cong \pi_{n+1}(Map(A_{\infty},B),\phi)
\]
which is an isomorphism of abelian groups for $n\geq 1$, and an isomorphism of groups for $n=0$
where $H^0(Hom_{dg}(C(A),B)^{\phi})$ is equipped with the group structure given by the Hausdorff-Campbell formula.

(2) When $\phi$ is a weak equivalence we obtain
\[
H^{-n}(Hom_{dg}(C(A),B)^{\phi})\cong \pi_{n+1}(hiso(A_{\infty},B),\phi).
\]

(3) When $\phi$ is a weak equivalence and $B=A_{\infty}$ we obtain
\[
H^{-n}(Hom_{dg}(C(A),A_{\infty})^{\phi})\cong \pi_{n+1}(haut(A_{\infty}),\phi).
\]
\end{cor}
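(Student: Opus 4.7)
The plan is to deduce Corollary 2.20 as an essentially formal consequence of Theorem 2.19 combined with Berglund's theorem on homotopy groups of simplicial Maurer-Cartan sets (Theorem 2.11), in direct parallel with the proof of Corollary 2.14 in the properadic case.

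For part (1), I would proceed in two steps. First, Theorem 2.19(i) provides that $Hom_{dg}(C(A),B)$ is a complete dg Lie algebra, and Theorem 2.19(iii) gives an isomorphism of simplicial sets
\[
MC_{\bullet}(Hom_{dg}(C(A),B)) \cong Map(A_{\infty},B).
\]
By Theorem 2.19(ii), the Maurer-Cartan element $\phi$ corresponds under this isomorphism precisely to the $P$-algebra morphism $\phi:A_{\infty}\to B$ of the same name, so the identification is one of pointed simplicial sets. Second, applying Berglund's theorem (Theorem 2.11) to the complete dg Lie algebra $Hom_{dg}(C(A),B)$ at the Maurer-Cartan element $\phi$ yields
\[
H^{-n}(Hom_{dg}(C(A),B)^{\phi}) \cong \pi_{n+1}(MC_{\bullet}(Hom_{dg}(C(A),B)),\phi)
\]
as abelian groups for $n\geq 1$ and as groups (with the Hausdorff-Campbell structure on the left-hand side) for $n=0$. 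Transporting this along the simplicial isomorphism above gives statement (1).

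For parts (2) and (3), the point is purely model-categorical and identical to the argument given for Corollary 2.14. If $\phi$ is a weak equivalence and $\psi$ lies in the same connected component of $Map(A_{\infty},B)$, then any $1$-simplex from $\phi$ to $\psi$ provides a simplicial homotopy witnessing the equality $[\phi]=[\psi]$ in $\mathrm{Ho}(P\text{-Alg})$; by two-out-of-three, $\psi$ is itself a weak equivalence. Consequently
\[
Map(A_{\infty},B)_{\phi} = hiso(A_{\infty},B)_{\phi},
\]
and taking homotopy groups at $\phi$ yields
\[
\pi_{n+1}(Map(A_{\infty},B),\phi) = \pi_{n+1}(hiso(A_{\infty},B),\phi),
\]
which combined with (1) gives (2). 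Setting $B=A_{\infty}$ in (2), together with the definition of $haut(A_{\infty})$ as the union of invertible connected components of $Map(A_{\infty},A_{\infty})$, immediately yields (3).

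I do not anticipate any substantial obstacle here: all the nontrivial content (the convolution dg Lie algebra structure, its completeness, the identification of Maurer-Cartan elements with $P$-algebra morphisms, and the isomorphism of simplicial sets) is packaged into Theorem 2.19, and the passage from simplicial Maurer-Cartan sets to cohomology groups is exactly Berglund's theorem. The only modest verification needed is that the simplicial isomorphism of Theorem 2.19(iii) is natural enough to identify the distinguished vertex $\phi$ on both sides, which is built into the construction.
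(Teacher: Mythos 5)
Your proposal is correct and follows essentially the same route as the paper: part (1) is obtained by combining the isomorphism of simplicial sets $MC_{\bullet}(Hom_{dg}(C(A),B))\cong Map(A_{\infty},B)$ from the preceding theorem with Berglund's theorem on homotopy groups of Maurer--Cartan spaces, and parts (2) and (3) follow from the observation that, when $\phi$ is a weak equivalence, the two-out-of-three property forces every vertex in its connected component to be a weak equivalence, so that $Map(A_{\infty},B)_{\phi}=hiso(A_{\infty},B)_{\phi}$ (resp.\ $=haut(A_{\infty})_{\phi}$ when $B=A_{\infty}$). Nothing essential is missing.
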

\begin{proof}
This corollary follows directly from Theorems 2.12 and 2.29.
Concerning the homotopy automorphisms, we just note that
\[
\pi_{n+1}(Map(A_{\infty},A_{\infty}),\phi)=\pi_{n+1}(Map(A_{\infty},A_{\infty})_{\phi},\phi)
\]
and $Map(A_{\infty},A_{\infty})_{\phi}=haut(A_{\infty})_{\phi}$ when $\phi$ is a weak equivalence because of the two-out-of-three property
(just draw the diagram of a homotopy between a weak equivalence and a map).
\end{proof}

The proof is completely equivalent to the properadic case, once we have checked that the tensor product
$(-)\otimes\Omega_{\bullet}$ defines a functorial simplicial resolution in the category of $P$-algebras.

\subsection{Link with André-Quillen homology}

When the operad $P$ is Koszul, we choose $C=P^{\textrm{!`}}$ the Koszul dual of $P$, and the Lie bracket of
the complete dg Lie algebra $Hom_{dg}(C(A),B)$ is given by Theorem 4.1.1 of \cite{Mil}.
In this case there is an isomorphism of cochain complexes
\[
Hom_{dg}(C(A),B)^{\phi}\cong Der_{\phi}(A_{\infty},B)=Der_{A_{\infty}}(A_{\infty},B)
\]
for any morphism of $P$-algebras $A_{\infty}\rightarrow B$,
where the derivations are defined for $A_{\infty}$-modules with the $A_{\infty}$-module structure
on $B$ induced by $\phi$.
This implies that for every $n\in\mathbb{N}$,
\begin{eqnarray*}
H^{-n}Hom_{dg}(A_{\infty},B)^{\phi} & \cong & H^{-n}Hom_{dg}(A_{\infty},B)^{\phi} \\
 & = & H^{-n}Der_{A_{\infty}}(A_{\infty},B)\\
 & = & H^n_{AQ}(A_{\infty},B)
\end{eqnarray*}
where $H^n_{AQ}$ is the André-Quillen cohomology.
For instance, we obtain
\[
H^n_{AQ}(A_{\infty},A_{\infty})\cong \pi_{n+1}(haut(A_{\infty}),\phi)
\]
where the right hand $A_{\infty}$ is seen as an $A_{\infty}$-module via $\phi$.

\begin{rem}
According to Theorem 8.3.1 of \cite{Mil}, we know that this cohomology can be expressed as an $Ext$ functor:
\[
H^n_{AQ}(A_{\infty},A_{\infty}) \cong Ext^n_{A_{\infty}\otimes^{P_{\infty}}\mathbb{K}}(\Omega_{P_{\infty}}A_{\infty},
A_{\infty}).
\]
\end{rem}

\subsection{Rational homotopy groups}

Let us suppose here that $\mathbb{K}=\mathbb{Q}$.
By Corollary 1.2 of \cite{Ber}, for any complete $L_{\infty}$-algebra $g$ of finite type,
the cdga $C^*(g^{\tau}_{\geq 0})$ is a Sullivan model of $MC_{\bullet}(g)_{\tau}$ (the connected component
of $\tau$ in $MC_{\bullet}(g)$). This implies that $C^*(g^{\tau}_{\geq 0})\sim A_{PL}(MC_{\bullet}(g)_{\tau})$
in $CDGA_{\mathbb{Q}}$, hence a rational equivalence
\[
MC_{\bullet}(g)_{\tau} \simeq_{\mathbb{Q}}\langle A_{PL}(MC_{\bullet}(g)_{\tau}) \rangle
\sim_{\mathbb{Q}} \langle C^*(g^{\tau}_{\geq 0}) \rangle
\]
and finally
\[
MC_{\bullet}(g) \simeq_{\mathbb{Q}} \coprod_{[\tau]\in\mathcal{MC}(g)} Mor_{CDGA_{\mathbb{Q}}}
(C^*(g^{\tau}_{\geq 0}),\Omega_{\bullet})
\]
Applying this to $g=Hom_{\Sigma}(C,Q)$ when it is of finite type and $\varphi:P_{\infty}\rightarrow Q$,
we get in particular a group isomorphism
\[
H^*_{CE}(g^{\varphi}_{\leq 0})\cong \pi_*(Map(P_{\infty},Q),\varphi)\otimes\mathbb{Q}
\]
for every $*>0$.
In the setting of algebras over an operad, for $g=Hom_{dg}(C(A),B)$ and $\varphi:A_{\infty}\rightarrow B$ we get
\[
H^*_{CE}(g^{\varphi}_{\leq 0})\cong \pi_*(Map(A_{\infty},B),\varphi)\otimes\mathbb{Q}
\]
for every $*>0$.

\subsection{Long exact sequences for cohomology theories of bialgebras}

An interesting feature of this relationship between homotopical and algebraic sides of deformation theory
is that one can use properties on one side to get new properties on the other side.
An example is the following: any fiber sequence of properads induces a long exact sequence of homotopy groups
of moduli spaces for a given complex $X$,
which in turn gives a long exact sequence of cohomology groups relating the corresponding cohomology theories
on $X$.

\subsubsection{Quasi-free properads}

The reference we use is \cite{Fre1}, to which we refer the reader for a more detailed study.
Though it is written for dg operads, all the notions
and results we need here have their equivalent in the properadic setting.
Let $M$ be a $\Sigma$-biobject in cochain complexes. The free properad $(\mathcal{F}(M),d_M)$
endowed with the differential induced by the one of $M$ is the free dg properad on $M$.
Let $\alpha:M\rightarrow Q$ be a morphism of $\Sigma$-biobjects, where the target $Q$
is a properad. It can be extended uniquely into a derivation
\[
\partial_{\alpha}:\mathcal{F}(M)\rightarrow Q
\]
by the derivation rules. Conversely, any derivation is obtained by this way.
Now let us consider a free properad $(\mathcal{F}(M),d_M)$. The quasi-free properads
are obtained by twisting the differential of free properads. It means that we endow
$\mathcal{F}(M)$ with a new differential $(\mathcal{F}(M),\partial)$ of the form
$\partial=d_M+\partial_{\alpha}$, where $\partial_{\alpha}:\mathcal{F}(M)\rightarrow\mathcal{F}(M)$
is a special derivation of degree $1$ called a twisting morphism. It has to satisfy the equation of
twisting morphisms
\begin{eqnarray*}
(d_M + \partial_{\alpha})^2 = 0 & = & d_M^2 + d_M\circ\partial_{\alpha} + \partial_{\alpha}\circ d_M + \partial_{\alpha}^2 \\
 & = & \partial_{\alpha}^2 + d_M(\partial_{\alpha}).
\end{eqnarray*}
where we use the notation
\[
d_M(f)=d_M\circ f - (-1)^{deg(f)}f\circ d_M.
\]
The proof of proposition 1.4.7 of \cite{Fre1} can be adapted readily to give this reformulation in the properadic context:
\begin{prop}
There is a bijective correspondence between the properad morphisms
\[
\varphi_f:(\mathcal{F}(M),\partial)\rightarrow Q
\]
with a quasi-free source and the degree $0$ homomorphisms of $\Sigma$-biobjects
\[
f\in(Hom_{\Sigma}(M,Q),\delta)
\]
such that $\delta(f)=\varphi_f\circ\alpha$. Here $\delta$ is the usual differential on the external hom
of $\Sigma$-biobjects, given by
\[
\delta(f)=d_Q\circ f - (-1)^{deg(f)}f\circ d_M.
\]
\end{prop}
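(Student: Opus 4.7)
The plan is to adapt Fresse's proof of the analogous operadic statement (Proposition 1.4.7 of \cite{Fre1}) to the properadic setting, using the graph-based description of the free properad from \cite{Val} recalled in Section 1.2.

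First I would invoke the free-forgetful adjunction $\mathcal{F} : Ch_{\mathbb{K}}^{\mathbb{S}} \rightleftarrows \mathcal{P} : U$ at the level of underlying graded objects: graded properad morphisms $\mathcal{F}(M)\to Q$ are in natural bijection with degree $0$ $\Sigma$-biobject maps $M\to Q$, the correspondence sending $f$ to the unique properad map $\varphi_f$ extending it. This gives the bijection on underlying sets and reduces the problem to identifying those $f$ for which $\varphi_f$ moreover commutes with the twisted differential $\partial = d_M + \partial_\alpha$ on the source and $d_Q$ on the target.

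Second I would set up the key auxiliary tool: a $\varphi$-derivation lemma asserting that, for any morphism of graded properads $\varphi : \mathcal{F}(M) \to Q$, restriction to generators yields a bijection between the space of degree $k$ $\varphi$-derivations $\mathcal{F}(M)\to Q$ and the space of degree $k$ $\Sigma$-biobject maps $M\to Q$. This is where the properadic flavour enters: one builds the inverse using the explicit description of $\mathcal{F}(M)$ as a direct sum of connected directed graphs with vertices decorated by elements of $M$, extending a map $\beta : M \to Q$ to a graph $\Gamma$ by summing, over the vertices $v$ of $\Gamma$, the element of $Q$ obtained by decorating $v$ by $\beta$ and all other vertices by $\varphi$ and then composing along $\Gamma$ by the structure maps of $Q$. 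One checks that this is well-defined with respect to the horizontal concatenation and connected vertical composition relations defining $\mathcal{F}(M)$, and that it genuinely produces a $\varphi$-derivation.

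Third, given $f : M \to Q$ of degree $0$ with extension $\varphi_f$, I would consider the degree $1$ map
\[
D := d_Q \circ \varphi_f - \varphi_f \circ \partial : \mathcal{F}(M) \to Q.
\]
Because $d_Q$ is a derivation of $Q$, $\partial$ is a derivation of $\mathcal{F}(M)$, and $\varphi_f$ is a morphism of graded properads, a direct verification shows that $D$ is a $\varphi_f$-derivation. Restricting to $M$, for $m \in M$ we have $\varphi_f(m)=f(m)$ and $\partial(m) = d_M(m) + \alpha(m)$ since $\partial_\alpha$ extends $\alpha : M \to \mathcal{F}(M)$ by construction, so
\[
D|_M = d_Q \circ f - f \circ d_M - \varphi_f \circ \alpha = \delta(f) - \varphi_f \circ \alpha.
\]
By the derivation lemma $D = 0$ iff $D|_M = 0$, i.e.\ iff $\delta(f) = \varphi_f \circ \alpha$, which is exactly the claimed condition for $\varphi_f$ to be a morphism of dg properads.

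The main obstacle is the $\varphi$-derivation lemma of the second step. In the operadic case of \cite{Fre1} it falls out almost automatically from the rooted-tree presentation of the free operad; for properads one must perform the analogous but more intricate combinatorial verification that the vertex-by-vertex extension formula respects both the horizontal and the connected vertical composition relations built into $\mathcal{F}(M)$. Once this lemma is in hand, the remaining steps are formally identical to Fresse's argument.
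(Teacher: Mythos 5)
Your proposal is correct and follows exactly the route the paper intends: the paper gives no written proof, stating only that the proof of Proposition 1.4.7 of Fresse adapts readily to the properadic context, and your three steps (adjunction on underlying graded objects, the $\varphi$-derivation extension lemma via the graph description of $\mathcal{F}(M)$, and the computation that $D=d_Q\circ\varphi_f-\varphi_f\circ\partial$ is a $\varphi_f$-derivation restricting to $\delta(f)-\varphi_f\circ\alpha$ on generators) are precisely that adaptation. The only detail worth adding in a written version is the Koszul signs in the vertex-by-vertex extension formula and the check that it descends to the $Aut(G)$-coinvariants, but these are routine.
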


Cofibrations of quasi-free properads induced by morphisms of $\Sigma$-biobjects at the level of generators
satisfies the following characterization, which is a properadic analogue of proposition 1.4.13 in \cite{Fre2}:
\begin{prop}
Suppose we have a morphism of reduced $\Sigma$-biobjects $f:M\rightarrow N$ which determines a morphism
of quasi-free properads
\[
\mathcal{F}(f):(\mathcal{F}(M),\partial_{\theta})\rightarrow (\mathcal{F}(M),\partial_{\psi}),
\]
where the differentials come from derivations $\theta$ and $\psi$ satisfying $\theta(M)\subset\mathcal{F}^{(\geq 2)}(M)$
and $\psi(N)\subset\mathcal{F}^{(\geq 2)}(N)$ (i.e the differentials are decomposable). The morphism
$\mathcal{F}(f)$ is a (acyclic) cofibration of $\mathcal{P}$
whenever $f$ is a (acyclic) cofibration of $(Ch_{\mathbb{K}}^{\mathbb{S}})$.
\end{prop}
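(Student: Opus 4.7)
The plan is to verify the left lifting property directly, using Proposition~2.23 to convert each lifting square into an inductive sequence of lifting problems in $Ch_{\mathbb{K}}^{\mathbb{S}}$, where the cofibration datum of $f$ and the decomposability of $\theta,\psi$ combine to produce the desired filler.

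First, I would observe that since the model structure on $\mathcal{P}$ is transferred along $\mathcal{F}\dashv U$ and $U$ preserves fibrations and weak equivalences by construction, $\mathcal{F}$ is a left Quillen functor. In particular, for the untwisted differentials ($\theta=\psi=0$), $\mathcal{F}(f):(\mathcal{F}(M),d_M)\to(\mathcal{F}(N),d_N)$ is already a (acyclic) cofibration whenever $f$ is. The substance of the proposition is that this property survives perturbation by a decomposable derivation.

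Next, let $p:X\twoheadrightarrow Y$ be a (acyclic) fibration in $\mathcal{P}$ and consider a commutative square with $\mathcal{F}(f)$ on the left. By Proposition~2.23 the top edge $u$ corresponds to a degree-$0$ homomorphism $u_0:M\to X$ of $\Sigma$-biobjects satisfying $\delta(u_0)=u\circ\theta$, and the bottom edge $v$ to $v_0:N\to Y$ satisfying $\delta(v_0)=v\circ\psi$, with the square condition reducing to $p\circ u_0=v_0\circ f$. Producing a diagonal filler is then equivalent to constructing a degree-$0$ $\Sigma$-biobject map $h_0:N\to X$ with $p\,h_0=v_0$, $h_0\,f=u_0$, and the cocycle identity $\delta(h_0)=\tilde{h}\circ\psi$, where $\tilde{h}$ denotes the extension of $h_0$ to a morphism of properads.

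I would build $h_0$ by transfinite induction along an exhaustive filtration $\{F_r N\}$ of $N$ chosen so that $\psi(F_r N)\subseteq\mathcal{F}(F_{r-1}N)^{(\geq 2)}$, which exists because $\psi$ is decomposable and $N$ is reduced (filter by the minimal depth required to unfold $\psi$). At each step the right-hand side $\tilde{h}\circ\psi$ of the cocycle condition involves only the values of $h_0$ on $F_{r-1}N$, hence is already determined. The remaining extension over $F_r N/F_{r-1}N$ then reduces to a lifting problem in $Ch_{\mathbb{K}}^{\mathbb{S}}$ against the componentwise (acyclic) fibration $p$, with attaching datum coming from $f$; the lifting axiom in $Ch_{\mathbb{K}}^{\mathbb{S}}$ (where $f$ is a (acyclic) cofibration) supplies the solution.

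The main obstacle is verifying at each inductive step that the obstruction to meeting both the lifting constraint and the cocycle condition is exact, so that a lift can indeed be chosen on the cocycle locus. This will follow from $\partial_\psi^2=0$ together with the cocycle identity already established at the previous stage. The argument parallels Fresse's treatment in the operadic case (\cite{Fre2}, Proposition~1.4.13), with trees replaced by directed graphs: the conceptual input is identical, while the combinatorial bookkeeping for composition in the graph-based free properad $\mathcal{F}(-)$ and its interaction with the filtration is the only genuinely new technical labour.
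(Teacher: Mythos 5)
The paper offers no proof of this proposition: it is asserted as the properadic analogue of Fresse's operadic Proposition 1.4.13, so your sketch is supplying exactly the argument the paper delegates to the reference. Your overall route --- reduce the lifting problem via Proposition 2.23 to constructing a degree-$0$ map $h_0\colon N\to X$ subject to $h_0f=u_0$, $ph_0=v_0$ and the cocycle identity $\delta(h_0)=\tilde h\circ\psi$, then solve this inductively along a filtration of $N$, using $\partial_\psi^2=0$ to see that the stagewise obstruction is a cocycle and the (acyclic) fibration/cofibration hypotheses to kill it --- is the standard and intended one, and the reduction to componentwise lifting problems in $Ch_{\mathbb{K}}^{\mathbb{S}}$ is correct in outline.

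The one genuine gap is the claim that a filtration with $\psi(F_rN)\subseteq\mathcal{F}(F_{r-1}N)^{(\geq 2)}$ exists ``because $\psi$ is decomposable and $N$ is reduced.'' The filtration by minimal unfolding depth, $F_rN=\{x\in N\mid\psi(x)\in\mathcal{F}(F_{r-1}N)\}$, satisfies the required recursion by construction, but nothing in the stated hypotheses forces it to be \emph{exhaustive}: decomposability does not prevent $\psi(x)$ from containing graphs whose vertices are decorated by $x$ itself. In the operadic case reducedness does save you, since a tree with $k\geq 2$ vertices, all of arity $\geq 2$ and total arity $n$, has every vertex of arity strictly less than $n$, so the arity filtration works. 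For properads this count fails because connected graphs of positive genus are allowed: a two-vertex graph with vertices of biarity $(1,2)$ and $(2,1)$ joined by two internal edges has biarity $(1,1)$, while its vertices have larger total arity. So the exhaustive filtration must be imported from elsewhere --- in every case the paper actually uses ($P_\infty=\Omega(C)$ for a conilpotent coproperad $C$, or minimal models with their assumed filtration $\{C_r\}$) the coradical filtration of $C$ provides it, but it is an additional hypothesis, not a consequence of decomposability alone. You should either add this conilpotence assumption explicitly or restrict to cobar-type generators; with it in place, the rest of your induction goes through.
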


\subsubsection{Cofiber sequences}

Let $i:O_{\infty}\rightarrow P_{\infty}$ be any fixed prop morphism of cofibrant properads (for the projective
model structure).
The map $i$ admits a factorization $O_{\infty}\stackrel{\tilde{i}}{\rightarrowtail}
\tilde{P_{\infty}}\stackrel{\sim}{\twoheadrightarrow_p}P_{\infty}$ into a cofibration $\tilde{i}$
followed by an acyclic fibration $p$. The pushout of $\tilde{i}$ along the final morphism of $O_{\infty}$
gives the cofiber of $\tilde{i}$, which is a model for the homotopy cofiber of $i$.
Let us note $(P,O)_{\infty}$ the homotopy cofiber of $i$, which represents the operations
of $P_{\infty}$ which are not in $O_{\infty}$. We know that $i$ induces a map of simplicial sets
$i^*:P_{\infty}\{X\}\rightarrow O_{\infty}\{X\}$ which factors through
$P_{\infty}\{X\}\stackrel{\sim}{\rightarrow_{p^*}}\tilde{P_{\infty}}\{X\}\stackrel{\tilde{i}^*}{\twoheadrightarrow}
O_{\infty}\{X\}$. One has the chain of homotopy equivalences
\[
hofib(i^*)\simeq fib(\tilde{i}^*)\simeq cofib(\tilde{i})\{X\}\simeq hocofib(i)\{X\}=(P,O)_{\infty}\{X\}
\]
relating the homotopy fiber of $i^*$ with the moduli space of $(P,O)_{\infty}$-algebra structures on $X$.
We get a long exact sequence of homotopy groups inducing a long exact sequence of cohomology groups.

\section{Deformation functors and quotient stacks}

\subsection{Deformation functors as connected components of moduli spaces}

Let $P$ be a properad admitting a cofibrant resolution of the form
$P_{\infty}=\Omega(C)\stackrel{\sim}{\rightarrow}P$ where $C$ is a dg coproperad. We have seen before
that $Hom_{\Sigma}(C,Q)$ is a complete dg Lie algebra with Maurer-Cartan elements the properad
morphisms $P_{\infty}\rightarrow Q$. Hence there is an associated deformation functor
\[
Def_{Hom_{\Sigma}(C,Q)}:Art_{\mathbb{K}}\rightarrow Set
\]
which associates to any local artinian commutative $\mathbb{K}$-algebra $R$ with residue field $\mathbb{K}$
the set $\mathcal{MC}(Hom_{\Sigma}(C,Q)\otimes m_R)$ of equivalences classes of $R$-extended
properad morphisms $P_{\infty}\otimes R\rightarrow Q\otimes R$.
If we fix a morphism $\varphi:P_{\infty}\rightarrow Q$, then the deformation functor
\[
Def_{Hom_{\Sigma}(C,Q)^{\phi}}:Art_{\mathbb{K}}\rightarrow Set
\]
associates to any $R$ the equivalence classes of $R$-deformations of $\varphi$.
The global tangent space of this deformation functor, in the sense of \cite{Sch}, is
\[
t_{Def_{Hom_{\Sigma}(C,Q)^{\phi}}}=H^{-1}(Hom_{\Sigma}(C,Q)^{\phi})
\]
(see \cite{Man}).

Our main observation here is that we can replace $Def_{Hom_{\Sigma}(C,Q)}$ by a more convenient
and naturally isomorphic functor
\[
P_{\infty}\{Q\}(-):Art_{\mathbb{K}}\rightarrow Set
\]
defined by
\[
P_{\infty}\{Q\}(R)=\pi_0P_{\infty}\{Q\otimes R\}.
\]
To prove this, let us just point out that $P_{\infty}\otimes R$ is still a properad because
of the commutative algebra structure on $R$, and that $P_{\infty}\otimes R=\Omega(C\otimes R)$,
which implies that $P_{\infty}\otimes R$ is still cofibrant and that
\begin{eqnarray*}
\pi_0P_{\infty}\otimes R\{Q\otimes R\} & \cong & \mathcal{MC}(Hom_{\Sigma}(C\otimes R,Q\otimes R))\\
 & \cong & \mathcal{MC}(Hom_{\Sigma}(C,Q)\otimes R).
\end{eqnarray*}
Note that in the first line of this computation, the external hom $Hom_{\Sigma}$
is taken in the category of $\Sigma$-biobjects in dg $R$-modules,
and in the second line it is the external hom of $\Sigma$-biobjects in cochain complexes.
The fact that the tensor product with $R$ commutes with this external hom follows from the two following arguments. First, the fact that $R$ is finitely generated (since it is artinian)
implies that it commutes with the dg hom of cochain complexes. Second, the external hom of $\Sigma$-biobjects is by definition a product of invariants in such dg homs
under the action of symmetric groups, and the action of symmetric groups on $R$ is trivial in the external tensor product of a $\Sigma$-biobject with $R$,
so this commutation holds for the external hom of $\Sigma$-biobjects as well.
Then we just apply Theorem 2.13.
In the special case of $Q=End_X$, we have
\begin{eqnarray*}
End_X\otimes R(m,n) & = & Hom_{dg}(X^{\otimes m},X^{\otimes n})\otimes R \\
 & \cong & Hom_{dg}(X^{\otimes m}\otimes R,X^{\otimes n}\otimes R) \\
 & \cong & Hom_{dg}((X\otimes_{\mathbb{K}}R)^{\otimes_R m},(X\otimes_{\mathbb{K}}R)^{\otimes_R n})\\
 & = & End_{X\otimes R}(m,n)
\end{eqnarray*}
where the last line is the endomorphism prop of $X\otimes R$ as a dg $R$-module.
Consequently, we get
\[
\pi_0P_{\infty}\otimes R\{X\otimes R\}\cong Def_{Hom_{\Sigma}(\overline{C},End_X)}(R)
\]
(the connected components of a mapping space of properads in dg $R$-modules).

\begin{thm}
For every artinian algebra $R$ and any morphism $\varphi:P_{\infty}\rightarrow Q$, there are group isomorphisms
\[
\pi_{*+1}(Map_{Prop(R-Mod)}(P_{\infty}\otimes_e R,Q\otimes_e R),\varphi\otimes_e id_R)\cong H^{-*}(Hom_{\Sigma}(\overline{C},Q)^{\varphi}\otimes R).
\]
\end{thm}
This means that the homotopical $R$-deformations of $\varphi$ correspond to its algebraic $R$-deformations.

This theorem is a corollary of Theorem 2.13 once we have checked the following lemmas:
\begin{lem}
Let $A$ be a cdga and $g$ be a dg Lie algebra. Let $\varphi$ be a Maurer-Cartan element of $g$,
then $\varphi\otimes 1_A$ is a Maurer-Cartan element of $g\otimes A$ and $(g\otimes A)^{\varphi\otimes 1_A}
=g^{\varphi}\otimes A$ as a dg Lie algebra.
\end{lem}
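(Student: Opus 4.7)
The plan is to unwind the definitions on both sides and check they agree by direct computation, so there is no real obstacle here beyond bookkeeping of signs and units.

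First I would fix the conventions: the Lie bracket on $g\otimes A$ is
\[
[x\otimes a,y\otimes b] = (-1)^{|a||y|}[x,y]\otimes ab,
\]
and the differential is $d(x\otimes a) = dx\otimes a + (-1)^{|x|}x\otimes da$. Since $1_A$ lies in degree $0$ and is a cycle, $\varphi\otimes 1_A$ lies in $(g\otimes A)^1$ and
\[
d(\varphi\otimes 1_A) + \tfrac{1}{2}[\varphi\otimes 1_A,\varphi\otimes 1_A]
= d\varphi\otimes 1_A + \tfrac{1}{2}[\varphi,\varphi]\otimes 1_A
= \bigl(d\varphi + \tfrac{1}{2}[\varphi,\varphi]\bigr)\otimes 1_A = 0,
\]
using that $1_A\cdot 1_A = 1_A$ and that $\varphi$ is Maurer--Cartan. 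This gives the first assertion.

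For the second assertion, I would observe that twisting changes only the differential and not the bracket, so both $(g\otimes A)^{\varphi\otimes 1_A}$ and $g^{\varphi}\otimes A$ carry the same underlying graded Lie algebra, namely $g\otimes A$ with the bracket recalled above. It then suffices to compare the two twisted differentials on a generator $x\otimes a$. On the side $(g\otimes A)^{\varphi\otimes 1_A}$ one computes
\[
d(x\otimes a) + [\varphi\otimes 1_A,x\otimes a]
= dx\otimes a + (-1)^{|x|}x\otimes da + [\varphi,x]\otimes a,
\]
where the sign $(-1)^{|1_A||x|}=1$ drops out because $|1_A|=0$. On the side $g^{\varphi}\otimes A$ the differential of $x\otimes a$ is
\[
d^{\varphi}(x)\otimes a + (-1)^{|x|}x\otimes da
= \bigl(dx+[\varphi,x]\bigr)\otimes a + (-1)^{|x|}x\otimes da,
\]
and the two expressions visibly agree.

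The hardest part, if anything, is simply tracking the Koszul signs and noting the role of $|1_A|=0$ and $d(1_A)=0$; the commutativity of $A$ is used implicitly in asserting that $g\otimes A$ is itself a dg Lie algebra, but plays no further role in the identification. Since nothing more than linearity is invoked at each stage, the identification $(g\otimes A)^{\varphi\otimes 1_A} = g^{\varphi}\otimes A$ holds as dg Lie algebras, not merely up to isomorphism.
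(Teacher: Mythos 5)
Your proof is correct and follows essentially the same route as the paper: a direct computation of the Maurer--Cartan equation for $\varphi\otimes 1_A$ using $d_A(1_A)=0$ and $1_A\cdot 1_A=1_A$, followed by the observation that the underlying graded Lie algebras coincide and a term-by-term comparison of the two twisted differentials on $x\otimes a$. No gaps.
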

\begin{proof}
A tensor $x\otimes a\in g\otimes A$ is a Maurer-Cartan element if and only if
\[
d_{g\otimes A}(x\otimes a)+\frac{1}{2}[x\otimes a,x\otimes a]=0,
\]
that is,
\[
d_g(x)\otimes a +(-1)^{|x|}x\otimes d_A(a)+\frac{1}{2}[x,x]\otimes a.a = 0.
\]
For $a=1_A$ we get
\begin{eqnarray*}
d_{g\otimes A}(x\otimes 1_A)+\frac{1}{2}[x\otimes 1_A,x\otimes 1_A] & = &
d_g(x)\otimes 1_A +(-1)^{|x|}x\otimes 0+\frac{1}{2}[x,x]\otimes 1_A \\
 & = & (d_g(x)+ \frac{1}{2}[x,x])\otimes 1_A
\end{eqnarray*}
since $1_A.1_A=1_A$ and $d_A(1_A)=0$.
Hence $x\in MC(g)$ if and only if $x\otimes 1_A\in MC(g\otimes A)$.

Let $\varphi$ be a Maurer-Cartan element of $g$. There is, by definition, an equality
\[
g^{\varphi}\otimes A=(g\otimes A)^{\varphi\otimes 1_A}
\]
as graded Lie algebras. It remains to check that the differentials are the same:
for every $x\otimes a\in g\otimes A$,
\begin{eqnarray*}
d_{(g\otimes A)^{\varphi\otimes 1_A}}(x\otimes a) & = & d_{g\otimes A}(x\otimes a) + [\varphi\otimes 1_A,
x\otimes a] \\
 & = & d_g(x)\otimes a + (-1)^{|x|}x\otimes d_A(a) + [\varphi,x]\otimes 1_A.a \\
 & = & (d_g(x)\otimes a + [\varphi,x]\otimes a) + (-1)^{|x|}x\otimes d_A(a) \\
 & = & d_{g^{\varphi}}(x)\otimes a + (-1)^{|x|}x\otimes d_A(a) \\
 & = & d_{g^{\varphi}\otimes A}(x\otimes a).
\end{eqnarray*}
\end{proof}

\begin{lem}
Let $X$ and $Y$ be two cochain complexes and $A$ be a cdga. The adjunction isomorphism
\[
Mor_{A-Mod}(X\otimes A,Y\otimes A)\stackrel{\cong}{\rightarrow} Mor_{Ch_{\mathbb{K}}}(X,Y\otimes A)
\]
sends every map of the form $f\otimes id_A$ to the map $x\mapsto f(x)\otimes 1_A$.
\end{lem}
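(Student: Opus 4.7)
The plan is to unpack the explicit form of the free–forgetful adjunction between cochain complexes and dg $A$-modules. The left adjoint sends $X$ to $X \otimes A$ (the free $A$-module on $X$), the right adjoint is the forgetful functor, and the unit of the adjunction is the map $\eta_X: X \to X \otimes A$ defined by $x \mapsto x \otimes 1_A$. The adjunction bijection
\[
Mor_{A-Mod}(X\otimes A, Y\otimes A) \stackrel{\cong}{\rightarrow} Mor_{Ch_{\mathbb{K}}}(X, Y\otimes A)
\]
is then given, on $g: X\otimes A\rightarrow Y\otimes A$, by the composite $g\circ \eta_X$, i.e.\ $x\mapsto g(x\otimes 1_A)$. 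This is the standard description of the bijection produced by a free–forgetful adjunction, and all that is needed is to verify it respects the differential, which is immediate since $d_A(1_A)=0$.

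Applying this to the specific map $g = f\otimes id_A$ for $f:X\rightarrow Y$ a morphism of cochain complexes, one directly computes
\[
(f\otimes id_A)\circ \eta_X(x) = (f\otimes id_A)(x\otimes 1_A) = f(x)\otimes 1_A,
\]
which is precisely the claimed formula. There is no real obstacle here; the only thing to be careful about is that the adjunction is the standard one with unit $\eta_X(x) = x\otimes 1_A$ rather than some alternative convention, and that the formula $f\otimes id_A$ genuinely defines an $A$-linear map between the free $A$-modules $X\otimes A$ and $Y\otimes A$ (which follows from the associativity of the tensor product and the fact that $id_A$ is $A$-linear).
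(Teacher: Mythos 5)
Your proposal is correct and follows exactly the paper's own argument: identify the adjunction as the free--forgetful pair with unit $\eta_X(x)=x\otimes 1_A$, note that the bijection is $g\mapsto U(g)\circ\eta_X$, and evaluate on $f\otimes id_A$. Nothing is missing.
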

\begin{proof}
The left adjoint is the tensor product $-\otimes A$ and the right adjoint is the forgetful functor $U$.
By definition of the adjonction isomorphism,
the morphism $f\otimes id_A$ is sent to $U(f\otimes id_A)\circ\eta(X)$ where $\eta$ is the unit of this adjunction,
defined by $\eta(X)(x)=x\otimes 1_A$ for every $X\in Ch_{\mathbb{K}}$ and $x\in X$.
\end{proof}
We deduce that for any artinian cdga $A$, the composed isomorphism
\[
Hom_{A-Mod}(X\otimes A,Y\otimes A)\stackrel{\cong}{\rightarrow} Hom_{Ch_{\mathbb{K}}}(X,Y)\otimes A,
\]
where $Hom$ stands for the differential graded hom bifunctors, respectively for $A$-modules and cochain complexes,
sends any morphism of the form $f\otimes id_A\in Mor_{A-Mod}(X\otimes A,Y\otimes A)$ to $f\otimes 1_A$.
As a corollary, for $\varphi:P_{\infty}\rightarrow Q$, we get the following isomorphisms of dg Lie algebras:
\begin{eqnarray*}
Hom_{A-Mod^{\mathbb{S}}}(\overline{C}\otimes_e A,Q\otimes_e A)^{\varphi\otimes id_A} & \cong &
Hom_{\Sigma}(\overline{C},Q\otimes_e A)^{[x\mapsto \varphi(x)\otimes_e 1_A]} \\
 & \cong & (Hom_{\Sigma}(\overline{C},Q))^{\varphi\otimes 1_A} \\
 & \cong & Hom_{\Sigma}(\overline{C},Q)^{\varphi}\otimes A.
\end{eqnarray*}

Let $A$ be an artinian cdga. The category of $A$-modules is a symmetric monoidal category for the tensor product
defined by the coequalizer
\[
A\otimes M \otimes N \rightrightarrows M\otimes N\rightarrow M\otimes_A N
\]
for any $A$-modules $M$ and $N$, where the two arrows correspond to the $A$-module structures of $M$ and $N$.
There is also a Quillen adjunction defining a model category structure on $A$-modules:
\[
-\otimes A:Ch_{\mathbb{K}}\rightleftarrows A-Mod:U
\]
where $U$ is the forgetful functor. Weak equivalences and fibrations of $A$-modules  are quasi-isomorphisms
and surjections of cochain complexes. The model and tensor structures are compatible so that $A-Mod$ forms
a cofibrantly generated monoidal model category (\cite{SS} Theorem 4.1).

The left adjoint $-\otimes A$ obviously respects the monoidal structure, but this is not the case for $U$.
The forgetful functor is only lax monoidal, with natural maps $U(M)\otimes U(N)\rightarrow U(M\otimes_A N)$
provided by the map $M\otimes N\rightarrow M\otimes_A N$ in the coequalizer defining the tensor product
of $A$-modules. However, an adjunction with a strong symmetric monoidal left adjoint and a lax monoidal
right adjoint is sufficient to induce an adjunction at the level of props
\[
-\otimes A:Prop(Ch_{\mathbb{K}})\rightleftarrows Prop(A-Mod):U.
\]

Both categories of props possess a cofibrantly generated model category structure with fibrations
and weak equivalences defined by forgetting the prop structure, i.e componentwise.
In particular, the forgetful functor $U:Prop(A-Mod)\rightarrow Prop(Ch_{\mathbb{K}})$ preserves fibrations
and weak equivalences, so finally:
\begin{lem}
The componentwise tensor product with $A$ defines a Quillen adjunction
\[
-\otimes_e A:Prop(Ch_{\mathbb{K}})\rightleftarrows Prop(A-Mod):U
\]
where $\otimes_e$ stands for the external tensor product (the componentwise tensor product with a cochain complex).
\end{lem}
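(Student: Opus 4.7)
The plan is to reduce this to the already-established Quillen adjunction $-\otimes A\colon Ch_{\mathbb{K}}\rightleftarrows A\text{-Mod}\colon U$ componentwise, using the componentwise characterization of the model structures on props.

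First, I would verify that the adjunction $-\otimes_e A\colon Prop(Ch_{\mathbb{K}})\rightleftarrows Prop(A\text{-Mod})\colon U$ is well-defined. The functor $-\otimes A$ on underlying complexes is strong symmetric monoidal, so applying it componentwise to a prop $P$ transports all the horizontal and vertical composition products, units, exchange laws, and symmetric group actions to the $A$-module level: $(P\otimes_e A)(m,n)=P(m,n)\otimes A$ inherits the structure of a prop in $A$-modules in a canonical way. Conversely, since $U$ is lax monoidal, there are natural maps $U(M)\otimes U(N)\to U(M\otimes_A N)$ which allow one to produce a prop in $Ch_{\mathbb{K}}$ from a prop in $A$-modules simply by forgetting the $A$-action in each biarity. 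The adjunction identity $Mor_{Prop(A\text{-Mod})}(P\otimes_e A,Q)\cong Mor_{Prop(Ch_{\mathbb{K}})}(P,U(Q))$ then follows componentwise from the cochain-complex adjunction, since a map of props is the data of equivariant maps in each biarity compatible with composition products, and both the equivariance and the compatibility with composition translate back and forth across the adjunction.

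Next, I would check the Quillen property. By the transferred model structures recalled just before the lemma, fibrations and weak equivalences in both $Prop(Ch_{\mathbb{K}})$ and $Prop(A\text{-Mod})$ are defined componentwise. The right adjoint $U\colon Prop(A\text{-Mod})\to Prop(Ch_{\mathbb{K}})$ is, at the level of the underlying $\Sigma$-biobject, just the componentwise forgetful functor $A\text{-Mod}\to Ch_{\mathbb{K}}$, which is right Quillen and hence preserves fibrations (surjections) and acyclic fibrations (surjective quasi-isomorphisms) biarity by biarity. Therefore $U$ preserves fibrations and acyclic fibrations between props, which is the standard criterion for $(-\otimes_e A,U)$ to be a Quillen adjunction.

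There is really no serious obstacle here: once the adjunction is in place at the prop level, the Quillen property is automatic from the componentwise definition of the model structures and the fact that the underlying adjunction on coefficients is already Quillen. The only point requiring a modicum of care is the first step, namely confirming that the strong monoidal/lax monoidal pair on coefficients does induce an honest adjunction between the categories of props; this is precisely the general principle recalled in the paragraph preceding the lemma, so the verification reduces to unwinding definitions.
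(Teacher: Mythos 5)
Your proposal is correct and follows essentially the same route as the paper: the strong monoidal/lax monoidal adjunction on coefficients induces the adjunction at the level of props, and since fibrations and weak equivalences of props are defined componentwise, the componentwise forgetful functor $U$ preserves them, making the adjunction Quillen. Nothing is missing.
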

This Quillen adjunction gives an adjunction relation at the level of homotopy mapping spaces.
For any cofibrant dg prop $P_{\infty}$ and any dg prop $Q$, we get an isomorphism of simplicial sets
\[
Map_{Prop(A-Mod)}(P_{\infty}\otimes_e A,Q\otimes_e A) \cong Map_{Prop(Ch_{\mathbb{K}})}(P_{\infty},Q\otimes_e A).
\]
In particular, for every $\varphi\otimes_e id_A\in Mor_{Prop(A-Mod)}(P_{\infty}\otimes_e A,Q\otimes_e A)$,
we get group isomorphisms
\[
\pi_*(Map_{Prop(A-Mod)}(P_{\infty}\otimes_e A,Q\otimes_e A),\varphi\otimes_e id_A)\cong
\pi_*(Map_{Prop(Ch_{\mathbb{K}})}(P_{\infty},Q\otimes_e A),[x\mapsto \varphi(x)\otimes 1_A]).
\]
we also have a bijection between the connected components, so we can replace the deformation functor
\[
Def_{Hom_{\Sigma}(\overline{C},End_X)}
\]
by the naturally isomorphic deformation functor
\[
P_{\infty}\{X\}(R)=\pi_0P_{\infty}\{X\otimes R\}.
\]

We can do the same constructions as above with the dg Lie algebra $Hom_{dg}(C(A),B)$ with
$A,B$ being algebras over a dg operad $P$ with cofibrant resolution $\Omega(C)$.

\subsection{Quotient stacks}

These moduli spaces also admit, consequently, an algebraic geometry interpretation.
Indeed, the deformation functor
\[
Def_{Hom_{\Sigma}(C,Q)}:Art_{\mathbb{K}}\rightarrow Set
\]
extends to a pseudo-functor of groupoids
\[
\underline{Def}_{Hom_{\Sigma}(C,Q)}:Alg_{\mathbb{K}}\rightarrow Grpd
\]
where $Alg_{\mathbb{K}}$ is the category of commutative $\mathbb{K}$-algebras and $Grpd$
the $2$-category of groupoids. This pseudo-functor is defined by sending any algebra $A$
to the Deligne groupoid of $Hom_{\Sigma}(C,Q)\otimes A$, that is, the groupoid whose objects are Maurer-Cartan
elements of $Hom_{\Sigma}(C,Q)\otimes A$ and morphisms are given by the action of the gauge group
$exp(Hom_{\Sigma}(C,Q)^0\otimes A)$. Such a pseudo-functor forms actually a prestack, whose stackification
gives the quotient stack
\[
[MC(Hom_{\Sigma}(C,Q))/exp(Hom_{\Sigma}(C,Q)^0)]
\]
of the Maurer-Cartan scheme $MC(Hom_{\Sigma}(C,Q))$ by the action of the prounipotent algebraic group $exp(Hom_{\Sigma}(C,Q)^0)]$.
It turns out that the $0^{th}$ cohomology group of the tangent complex of such a stack, encoding equivalences classes of infinitesimal deformations
of a $\mathbb{K}$-point of this stack, is exactly
\[
t_{Def_{Hom_{\Sigma}(C,Q)^{\phi}}}=H^{-1}(Hom_{\Sigma}(C,Q)^{\phi}).
\]
We refer the reader to \cite{Yal3} for a proof of these results.
However, this geometric structure does not capture the whole deformation theory of the points.
For this, the next section will develop such a geometric interpretation in the context of homotopical algebraic geometry.

\section{Moduli spaces as higher stacks}

In order to get a geometrical interpretation of the whole deformation theory of algebras over properads,
we work in the setting of homotopical algebraic geometry as constructed by Toen-Vezzosi in \cite{TV1}
and \cite{TV2}. Our purpose is two-fold: firstly, to construct higher stacks out of moduli spaces.
Secondly, to prove that the tangent spaces of this stack give precisely the cohomology theory of these algebras.
These two goals lead us to:
\begin{thm}
(1) Let $P_{\infty}=\Omega(C)\stackrel{\sim}{\rightarrow} P$ be a cofibrant resolution of a dg properad $P$
and $Q$ be any dg properad such that each $Q(m,n)$ is a bounded complex of finite dimension in each degree.
The functor
\[
\underline{Map}(P_{\infty},Q):A\in CDGA_{\mathbb{K}}\mapsto Map_{Prop}(P_{\infty},Q\otimes A)
\]
is a representable stack in the setting of complicial algebraic geometry of \cite{TV2}.

(2) Let $P_{\infty}=\Omega(C)\stackrel{\sim}{\rightarrow} P$ be a cofibrant resolution of a dg properad $P$ in non positively graded cochain complexes,
and $Q$ be any properad such that each $Q(m,n)$ is a finite dimensional vector space.
The functor
\[
\underline{Map}(P_{\infty},Q):A\in CDGA_{\mathbb{K}}\mapsto Map_{Prop}(P_{\infty},Q\otimes A)
\]
is a representable stack in the setting of derived algebraic geometry of \cite{TV2}, that is, an affine derived scheme.

(3) The homology groups of the tangent complex of $\underline{Map}(P_{\infty},Q)$ at the point
$\varphi:P_{\infty}\rightarrow Q$ are isomorphic to the cohomology groups of the deformation complex
$Der_{\varphi}(P_{\infty},Q)$.
\end{thm}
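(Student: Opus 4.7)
The strategy for parts (1) and (2) is to realize $\underline{Map}(P_\infty, Q)$ as a specific instance of the general representability results of Theorems 0.6 and 0.7, applied to the category of properads. Concretely, dg properads are algebras over the free-properad monad $T = U \circ \mathcal{F}$ on the diagram category $Ch_{\mathbb{K}}^{\mathbb{S}}$ of $\Sigma$-biobjects, which plays the role of $\mathcal{C}^I$ in Section 4. First I would check that $T$ preserves $\kappa$-filtered colimits: this follows from the decomposition $\mathcal{F}(M) = \bigoplus_n \mathcal{F}^{(n)}(M)$ of the free properad by number of vertices, each $\mathcal{F}^{(n)}(M)$ being built from finite tensor products which commute with filtered colimits. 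The compatibility between $T$ and modules over commutative monoids reduces to the observation that the free properad is defined componentwise by tensor products of graph decorations, so that $\mathcal{F}(M \otimes_e A) \cong \mathcal{F}(M) \otimes_e A$ for any cdga $A$.

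For part (1), the assumption that each $Q(m,n)$ is a bounded complex of finite dimension in each degree makes $Q$ a diagram of perfect, hence dualizable, cochain complexes. Since every cochain complex over a field is fibrant and cofibrant, this places $Q$ exactly in the hypotheses of Theorem 0.6 in the complicial algebraic geometry context, yielding a representable stack. For part (2), the extra restriction to non-positively graded $P_\infty$ and to $Q(m,n)$ being a finite-dimensional vector space concentrated in degree zero ensures that $Q \otimes A$ remains non-positively graded for any $A \in CDGA_{\mathbb{K}}^{\leq 0}$, which is the relevant category in the derived algebraic geometry setting of \cite{TV2}; dualizability again reduces to finite-dimensionality, and Theorem 0.7 applies to give an affine derived scheme.

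For part (3), the identification of the tangent complex cohomology with deformation complex cohomology combines two ingredients. The first is the general fact, recorded as Theorem 0.10 in the present setting, that at a $\mathbb{K}$-point $x_\varphi$ of the representable stack one has an isomorphism $H^{-*}(\mathbb{T}_{\underline{Map}(P_\infty, Q), x_\varphi}) \cong H^{-*+1}(Hom_\Sigma(\overline{C}, Q)^\varphi)$ for $* \geq 1$, itself a consequence of Corollary 2.14 identifying homotopy groups of the mapping space with cohomology of the twisted convolution Lie algebra. The second is the identification $Hom_\Sigma(\overline{C}, Q)^\varphi \cong Der_\varphi(\Omega(C), Q)$ recalled in Section 2.2, which rewrites the convolution dg Lie algebra as the complex of properadic derivations with $Q$ viewed as a $P_\infty$-bimodule via $\varphi$. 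Composing these two isomorphisms gives the stated identification with the cohomology of $Der_\varphi(P_\infty, Q)$.

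The main obstacle I anticipate is the verification that the free-properad monad genuinely fits the technical framework of Section 4, specifically the compatibility between $T$ and modules over commutative monoids in $\mathcal{C}$. One must check that a properad morphism $P_\infty \to Q \otimes_e A$ in $Ch_{\mathbb{K}}$ is the same datum as a morphism of $T_A$-algebras in $A$-modules between $P_\infty \otimes_e A$ and $Q \otimes_e A$, and that cosimplicial/simplicial framings interact well with the external tensor product $\otimes_e$ componentwise. Once this bookkeeping is settled, parts (1) and (2) follow formally from Theorems 0.6 and 0.7 upon checking the boundedness and dualizability hypotheses, and part (3) is a direct transcription of Theorem 0.10 under the identification of the convolution and derivation complexes.
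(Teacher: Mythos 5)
Your proposal is correct and follows essentially the same route as the paper: parts (1) and (2) are obtained by specializing the general monadic representability theorems of Section 4.2 to the free-prop(erad) monad on $\Sigma$-biobjects, checking accessibility and the compatibility $\mathcal{F}(M\otimes_e A)\cong\mathcal{F}(M)\otimes_e A$ via the graph formula for the free functor, and part (3) is exactly the combination of the tangent-complex computation of Section 6 with the identification $Hom_{\Sigma}(\overline{C},Q)^{\varphi}\cong Der_{\varphi}(P_{\infty},Q)$ from Section 2.2. The only cosmetic discrepancy is that in the complicial context the paper invokes the variant theorem for categories with all objects cofibrant (Theorem 0.7/4.18) rather than Theorem 0.6, but since every cochain complex over a field is both fibrant and cofibrant this makes no difference.
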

\begin{cor}
(1) Let $X$ be a bounded complex of finite dimension in each degree.
The moduli space functor
\[
\underline{P_{\infty}\{X\}}:A\in CDGA_{\mathbb{K}}\mapsto Map_{Prop}(P_{\infty},End_{X\otimes A})
\]
is a representable stack in complicial algebraic geometry, whose tangent complexes encode the cohomology theory of $P_{\infty}$-algebra structures on $X$.

(2) The same statement holds in derived algebraic geometry if $X$ is a finite dimensional vector space.
\end{cor}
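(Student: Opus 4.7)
The plan is to deduce Corollary 4.2 as a direct specialisation of Theorem 4.1 to the case $Q=End_X$, so the whole argument reduces to checking that the finiteness hypotheses on $Q$ in Theorem 4.1 are inherited from the corresponding finiteness hypotheses on $X$ in the corollary, and that the two descriptions of the stack agree.

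First I would identify $\underline{P_\infty\{X\}}$ with $\underline{Map}(P_\infty,End_X)$. For any cdga $A$, the componentwise identification
\[
End_{X\otimes A}(m,n)=Hom_{A-Mod}((X\otimes A)^{\otimes_A m},(X\otimes A)^{\otimes_A n})\cong Hom_{dg}(X^{\otimes m},X^{\otimes n})\otimes A
\]
already used in Section 3.1 for artinian $A$ extends to an arbitrary cdga $A$ as soon as each $X^{\otimes k}$ is sufficiently finite for $-\otimes A$ to commute with $Hom_{dg}(X^{\otimes m},-)$; this is guaranteed by the boundedness and degreewise finite-dimensionality of $X$ in part (1) and \emph{a fortiori} by $X$ being a finite-dimensional vector space in part (2). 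The resulting isomorphism is natural in $A$ and compatible with the prop structures, hence induces a natural equivalence $\underline{P_\infty\{X\}}\simeq \underline{Map}(P_\infty,End_X)$ of functors $CDGA_{\mathbb{K}}\to sSet$.

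Next I would verify the finiteness of $End_X(m,n)=Hom_{dg}(X^{\otimes m},X^{\otimes n})$ in each case. If $X$ is bounded and of finite dimension in each degree, the same holds for every tensor power $X^{\otimes k}$, and the internal hom between two such complexes is again bounded and of finite dimension in each degree, so the hypothesis of Theorem 4.1 (1) is met. If $X$ is a finite-dimensional vector space concentrated in degree zero, then compatibility with the non-positively graded assumption on $P_\infty$ in Theorem 4.1 (2) is automatic and each $End_X(m,n)$ is a finite-dimensional vector space. Applying Theorem 4.1 (1), respectively (2), then yields a representable stack in the complicial algebraic geometry setting, respectively an affine derived scheme in the derived algebraic geometry setting.

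Finally, the tangent complex statement follows from part (3) of Theorem 4.1 together with the isomorphism $Hom_{\Sigma}(\overline{C},Q)^\varphi\cong Der_{\varphi}(\Omega(C),Q)$ recalled in Section 2: for $Q=End_X$ and $\varphi:P_\infty\to End_X$, the tangent complex at $\varphi$ is quasi-isomorphic to $Der_{\varphi}(P_\infty,End_X)$, which is by definition the cochain complex controlling the cohomology theory of the $P_\infty$-algebra structure $\varphi$ on $X$. The only step I expect to require genuine care is the extension of $End_{X\otimes A}\cong End_X\otimes_e A$ beyond the artinian case to arbitrary cdga $A$; this is exactly what the finiteness hypotheses on $X$ in (1) and (2) are calibrated to provide, and once this identification is in hand the rest of the corollary is a direct specialisation of results already established.
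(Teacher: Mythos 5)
Your proposal is correct and follows essentially the same route as the paper, which simply specialises Theorem 4.1 to $Q=End_X$ after observing that the finiteness hypotheses on $End_X(m,n)$ are inherited from those on $X$ (the paper states this explicitly only for part (2), and relies on the identification $End_{X\otimes A}\cong End_X\otimes_e A$ already computed in Section 3.1). Your extra care about extending that identification from artinian $R$ to arbitrary cdgas $A$ via dualizability of $X^{\otimes m}$ is a useful explicit check that the paper leaves implicit, but it is not a different argument.
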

The second part of Corollary 4.2 follows from the second part of Theorem 4.1 because if $X$ is a finite dimensional vector space, then so is $End_X(m,n)$ for any integers $m,n$.

\subsection{Hotomopical algebraic geometry}

Homotopical algebraic geometry has been introduced in \cite{TV1} and \cite{TV2} as a unified way of doing algebraic
geometry in various homotopical contexts, including in particular usual algebraic geometry, derived
algebraic geometry, complicial algebraic geometry and algebraic geometry over ring spectra.
Derived algebraic geometry over ring spectra has been developed in parallel by Lurie in his series of papers \cite{Lur2}.

For any model site $(M,\tau)$, that is, a model category $M$ equipped with a pretopology $\tau$, there is a notion
of \emph{stacks} over $M$ \cite{TV1}. We briefly describe this general construction.
Let $SPr(M)$ be the category of simplicial  presheaves over $M$, that is,
of functors from $M^{op}$ to the category of simplicial sets $sSet$. We assume that $SPr(M)$ is endowed with
the projective model category structure (i.e pointwise weak equivalences and fibrations).
The \emph{category of prestacks} $M^{\wedge}$ is a left Bousfield localization of this projective structure along the
weak equivalences of $M$, that is, the natural transformations $h_u,u\in wM$ where
\[
h:M\rightarrow Pr(M)\hookrightarrow SPr(M)
\]
is the constant Yoneda embedding.
The \emph{model category of stacks} $M^{\sim,\tau}$ is then obtained as a left Bousfield localization of the model category
of prestacks $M^{\wedge}$ along the \emph{homotopy $\tau$-hypercovers}  \cite{TV1}. These are exactly the equivalences of
homotopy sheaves $\pi_0(-)$ over $(M,\tau)$ and $\pi_i(-),i>0$ over the comma model sites $(M/x,\tau)$ (where $x$ ranges over the fibrant objects of $M$), see \cite{TV1} Theorem 4.6.1.
This Bousfield localization of  $M^{\wedge}$ comes with a Quillen adjunction
\[
Id:M^{\wedge}\rightleftarrows M^{\sim,\tau}:Id
\]
inducing an adjunction between the homotopy categories
\[
\mathbb{L}Id:Ho(M^{\wedge})\rightleftarrows Ho(M^{\sim,\tau}):\mathbb{R}Id.
\]
\emph{Stacks} are then defined as simplicial presheaves whose image in $Ho(M^{\wedge})$ belongs to the essential
image of $\mathbb{R}Id$ (see \cite{TV2} Definitionb 1.3.1.2). This means that they are simplicial presheaves preserving weak equivalences and satisfying a descent condition in terms of hypercovers.
\begin{rem}
Morphisms of stacks are just morphisms in the homotopy category of simplicial presheaves.
Consequently, if there is a natural transformation $\eta:F\rightarrow G$ of simplicial presheaves such that $\eta(A)$ is a weak equivalence for every commutative algebra $A$, and if $F$ and $G$ represent stacks, then they represent the
same stack in $Ho(M^{\wedge})$.
\end{rem}
A first step to do homotopical algebraic geometry is to define a \emph{homotopical algebraic context} $(\mathcal{C},\mathcal{C}_0,\mathcal{A})$ (\cite{TV2} Definition 1.1.0.11).
The category $\mathcal{C}$ is a combinatorial symmetric monoidal model category,
and the categories $\mathcal{C}_0$ and $\mathcal{A}$ are full subcategories of $\mathcal{C}$ stable under weak equivalences.
These categories satisfy several assumptions, required firstly to transpose usual constructions of
linear and commutative algebra,
and secondly to define analogues of derivations, cotangent complexes, etale maps, smooth and infinitesimally smooth maps,
etc. In particular, the category of monoids $Comm(\mathcal{C})$ forms a model category with fibrations and weak equivalences defined in $\mathcal{C}$. The opposite model category $Comm(\mathcal{C})^{op}$ is denoted by $Aff_{\mathcal{C}}$.
A pretopology $\tau$ on $Aff_{\mathcal{C}}$ in the sense of \cite{TV1} (inducing a Grothendieck topology on its homotopy category)
enhances this category into a \emph{model site} $(Aff_{\mathcal{C}},\tau)$ (\cite{TV2} Definition 1.3.1.1).

In this setting, one can express more concretely the descent condition characterizing stacks among simplicial presheaves.
A stack is a simplicial presheaf $F:Comm(\mathcal{C})\rightarrow sSet$ such that
\begin{itemize}
\item $F$ preserves weak equivalences;

\item Let us denote by $\prod^h$ the finite homotopy products. Given a finite family of commutative algebras $\{A_i\}$, the map
\[
F(\prod_i^h A_i)\rightarrow \prod_i F(A_i)
\]
is an isomorphism in $Ho(sSet)$.

\item $F$ satisfies the descent condition: for any $B^{\bullet}\in(A/Comm(\mathcal{C}))^{\Delta}$ such that
$A\rightarrow B^{\bullet}$ defines a $\tau$-hypercover $\mathbb{R}\underline{Spec}_{B^{\bullet}}
\rightarrow\mathbb{R}\underline{Spec}_A$ (that is, satisfies the descent condition of \cite{TV2}, Definition 1.2.12.4), the map
\[
F(A)\rightarrow holim_{\Delta} F(B_n)
\]
is an isomorphism in $Ho(sSet)$.
\end{itemize}

\begin{rem}
A way to tackle the problem of checking the descent condition is to use Corollary B.0.8 of \cite{TV2}.
An interesting example for which one applies this result is to build stacks of algebras.
In \cite{Mur}, Muro builds a stack of algebras over a nonsymmetric operad $\underline{Alg_{\mathcal{C}}}(O)$,
whose evaluation at an algebra $A$ is the classification space of cofibrant $O$-algebras in $A$-modules (that is, the nerve
of the subcategory of weak equivalences between cofibrant objects).
For every algebra $A$, there is a forgetful functor from $O$-algebras
in $A$-modules to $A$-modules, inducing a morphism of simplicial sets $\mathcal{N}wAlg_{Mod(A)}(O)\rightarrow
\mathcal{N}wMod(A)$, hence a morphism of stacks
$\underline{Alg_{\mathcal{C}}}(O)\rightarrow \underline{QCoh}$ (where $\underline{QCoh}$ is the stack of
quasi-coherent modules).
The stack of $O$-algebra structures on an $A$-module $M$ is then defined as the homotopy pullback of this morphism
along the morphism $\mathbb{R}Spec(A)\rightarrow \underline{QCoh}$ representing $M$.
This gives a stack version of Rezk's homotopy pullback theorem \cite{Rez}.
In our context, such a description is not possible because of the absence of model category structure
on algebras over properads.
\end{rem}

In order to define a notion of geometric stacks, which are a certain kind of gluing of representable stacks,
one has to introduce a class of morphisms $\mathbf{P}$ satisfying the adequate properties with respect
to the model pretopology. One obtains the notion of a homotopical algebraic geometry context
$(\mathcal{C},\mathcal{C}_0,\mathcal{A},\tau,\mathbf{P})$, HAG for short, for which
we refer the reader to Definition 1.3.2.13 of \cite{TV2}. This gluing of representable stacks can be realized by the action of a Segal groupoid (see Definition 1.3.1.6 and Proposition 1.3.4.2 of \cite{TV2}).
HAG contexts include in particular:
\begin{itemize}
\item $\mathcal{C}=\mathbb{Z}-Mod$, giving a theory of geometric stacks in classical algebraic geometry;

\item $\mathcal{C}=sMod_{\mathbb{K}}$, the category of simplicial modules over a commutative ring $\mathbb{K}$, giving
a theory of derived or $D^-$-geometric stacks, that is \emph{derived algebraic geometry};

\item $\mathcal{C}=Ch_{\mathbb{K}}$, the category of unbounded cochain complexes of modules over $\mathbb{K}$
with $car(\mathbb{K})=0$, giving a theory of geometric stacks in \emph{complicial algebraic geometry}, or geometric
$D$-stacks;

\item $\mathcal{C}=Sp^{\Sigma}$, the category of symmetric spectra, giving a theory of geometric stacks in
\emph{brave new algebraic geometry}.
\end{itemize}

\subsection{Mapping spaces as higher stacks}

We give here a general representability result for higher stacks arising from mapping spaces in algebras
over monads in diagram categories. This result applies, as special cases, to mapping spaces of operads,
$\frac{1}{2}$-props, dioperads, properads, props, their colored and wheeled variants, as well as to algebras
over operads.

For this, we fix two Grothendieck universes $\mathbb{U}\in\mathbb{V}$ such that $\mathbb{N}\in\mathbb{V}$,
as well as a HAG context $(\mathcal{C},\mathcal{C}_0,\mathcal{A},\tau,P)$. In particular, the category
$\mathcal{C}$ is a $\mathbb{V}$-small $\mathbb{U}$-combinatorial symmetric monoidal model category.
We fix also a regular cardinal $\kappa$ so that (acyclic) cofibrations of $\mathcal{C}$ are generated by
$\kappa$-filtered colimits of generating (acyclic) cofibrations.
For technical reasons, we also need to suppose that the tensor product preserves fibrations.
This assumption is satisfied in particular by simplicial sets, simplicial modules over a ring, and
cochain complexes over a ring. This implies that we can take for instance as HAG contexts the derived algebraic
geometry context and the complicial algebraic geometry context (see \cite{TV2}).
We now consider a $\mathbb{U}$-small category $I$ and the associated category of diagrams $\mathcal{C}^I$,
as well as a monad $T:\mathcal{C}^I\rightarrow\mathcal{C}^I$ satisfying the assumptions of Lemma 2.3 of \cite{SS}
(in particular $T$ preserves $\kappa$-filtered colimits).

\subsubsection{Combinatoriality of $T$-algebras}

We refer the reader to \cite{AR} for a detailed study of presentable and accessible categories.
We just recall here some definitions we need.
\begin{defn}
Let $\kappa$ be a regular cardinal.

(1) An object $X$ in a category $\mathcal{C}$ is $\kappa$-compact in $\mathcal{C}$ if the functor
$Mor_{\mathcal{C}}(X,-)$ preserves $\kappa$-directed colimits. An object is small if it is
$\kappa$-compact for a certain $\kappa$.

(2) A locally small category is said to be $\kappa$-accessible if it admits $\kappa$-directed colimits and
if there exists a set of $\kappa$-compact objects generating the category under these colimits.

(3) A functor $F:\mathcal{C}\rightarrow\mathcal{D}$ is $\kappa$-accessible if $\mathcal{C}$ and $\mathcal{D}$
are $\kappa$-accessible categories and $F$ preserves $\kappa$-directed colimits.

(4) A category is locally presentable if it is an accessible category having all small colimits.
\end{defn}
\begin{defn}
A combinatorial model category is a cofibrantly generated model category which is also locally presentable.
\end{defn}
Such a model category structure can be transfered at the level of diagrams in two Quillent equivalent
ways:
\begin{prop}(Lurie \cite{Lur}, prop. A.2.8.2)
Let $\mathcal{C}$ be a combinatorial model category and $I$ a small category. There exists two Quillen
equivalent combinatorial
model structures on the category of $I$-diagrams $Func(I,\mathcal{C})$:

(i) the projective model structure with weak equivalences and fibrations determined componentwise;

(ii) the injective model structure with weak equivalences and cofibrations determined componentwise.
\end{prop}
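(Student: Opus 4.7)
The plan is to construct the two model structures separately using different techniques, then exhibit the Quillen equivalence via the identity adjunction.

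For the projective model structure, I would proceed by transfer. For each object $i \in I$ one has an evaluation functor $ev_i : \mathrm{Func}(I,\mathcal{C}) \to \mathcal{C}$ with a left adjoint $F_i$ (given by the left Kan extension along the inclusion of the discrete subcategory $\{i\}$). Let $I_{\mathcal{C}}$ and $J_{\mathcal{C}}$ be generating (acyclic) cofibrations of $\mathcal{C}$. One declares the generating (acyclic) cofibrations of $\mathrm{Func}(I,\mathcal{C})$ to be $\{F_i(f)\,:\, f\in I_{\mathcal{C}},\ i\in\mathrm{ob}(I)\}$ and $\{F_i(g)\,:\, g\in J_{\mathcal{C}},\ i\in\mathrm{ob}(I)\}$ respectively. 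Since the $F_i$ are left adjoints, the right lifting property against these sets characterises pointwise acyclic fibrations and pointwise fibrations, so weak equivalences and fibrations are componentwise by construction. Kan's recognition theorem (the small object argument combined with the verification that the generating acyclic cofibrations are pointwise weak equivalences and pointwise cofibrations) gives the model structure; combinatoriality follows because $\mathrm{Func}(I,\mathcal{C})$ is locally presentable whenever $\mathcal{C}$ is and $I$ is $\mathbb{U}$-small.

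The injective model structure is the delicate part. One defines cofibrations and weak equivalences componentwise, so the real work is producing the injective fibrations, for which there is no obvious generating set; this is where combinatoriality is essential rather than merely convenient. I would invoke Jeff Smith's recognition theorem for combinatorial model categories, which requires: (a) the underlying category is locally presentable (clear), (b) weak equivalences are closed under filtered colimits and satisfy $2$-out-of-$3$ (immediate from the componentwise definition), (c) the class of componentwise cofibrations is closed under the usual operations, and (d) there exists a \emph{set} $J_{\mathrm{inj}}$ of maps such that the maps with right lifting property against $J_{\mathrm{inj}}$ are exactly the componentwise acyclic fibrations. The main obstacle, and the heart of the proof, is producing this set $J_{\mathrm{inj}}$ in (d): one chooses a regular cardinal $\lambda$ larger than the presentability rank of $\mathcal{C}$ and than $|I|$, and takes $J_{\mathrm{inj}}$ to be a set of representatives of isomorphism classes of componentwise trivial cofibrations between $\lambda$-presentable diagrams. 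The verification that right lifting against this (small) set detects componentwise acyclic fibrations reduces, by a standard transfinite induction and a Löwenheim--Skolem style ``good subobject'' argument, to showing that any componentwise trivial cofibration is a $\lambda$-filtered colimit of members of $J_{\mathrm{inj}}$ along componentwise cofibrations.

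Finally, the Quillen equivalence is formal once both structures exist. Since every generating projective cofibration $F_i(f)$ is a componentwise cofibration (it is nonzero only in the components $j$ admitting a map $i \to j$, where it is a coproduct of copies of $f$), the projective cofibrations are contained in the injective cofibrations. Hence the identity functor $\mathrm{Id}:\mathrm{Func}(I,\mathcal{C})_{\mathrm{proj}} \to \mathrm{Func}(I,\mathcal{C})_{\mathrm{inj}}$ is left Quillen with right Quillen inverse the identity $\mathrm{Func}(I,\mathcal{C})_{\mathrm{inj}} \to \mathrm{Func}(I,\mathcal{C})_{\mathrm{proj}}$. Because the two structures share the same class of weak equivalences, this Quillen adjunction is automatically a Quillen equivalence. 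The hardest part throughout is unquestionably the existence of $J_{\mathrm{inj}}$ in (d); this is where one genuinely uses the combinatorial (as opposed to merely cofibrantly generated) hypothesis on $\mathcal{C}$.
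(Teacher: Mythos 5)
The paper offers no proof of this statement; it is quoted verbatim from Lurie (\emph{Higher Topos Theory}, Proposition A.2.8.2), and your reconstruction follows exactly the standard argument given there: transfer along the adjunctions $F_i \dashv ev_i$ for the projective structure, Smith's recognition theorem with a cardinality/presentability argument to produce the generating set for the injective structure, and the identity adjunction (automatically a Quillen equivalence since the weak equivalences coincide) relating the two. Your proof is correct and correctly identifies the existence of the generating set $J_{\mathrm{inj}}$ as the one genuinely non-formal step.
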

We thus use:
\begin{thm}(see \cite{Bor}, theorem 5.5.9)
Let $\mathcal{C}$ be a locally presentable category and $T$ be a monad over $\mathcal{C}$. If $T$
is an accessible functor, then the category $\mathcal{C}^T$ of $T$-algebras is locally presentable.
\end{thm}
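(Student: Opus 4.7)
The plan is to verify the two defining properties of a locally presentable category for $\mathcal{C}^T$: accessibility and cocompleteness. The whole argument rests on the free/forgetful adjunction $F:\mathcal{C}\rightleftarrows\mathcal{C}^T:U$, together with the fact that $U$ creates $\kappa$-filtered colimits whenever $T$ preserves them.

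I would first treat cocompleteness. The forgetful functor $U$ creates all limits, so $\mathcal{C}^T$ inherits limits from $\mathcal{C}$. For colimits, I would invoke the classical Linton-type result that $\mathcal{C}^T$ is cocomplete whenever $\mathcal{C}$ is cocomplete and $\mathcal{C}^T$ admits reflexive coequalizers, the general colimits then being built by a coequalizer involving the canonical $F\dashv U$ presentation. The needed reflexive coequalizers of $T$-algebras can be produced by a transfinite small-object-type construction, made possible precisely because $T$ preserves $\kappa$-filtered colimits.

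For accessibility, I would fix a regular cardinal $\kappa$ witnessing simultaneously the $\kappa$-accessibility of $\mathcal{C}$ and the preservation of $\kappa$-filtered colimits by $T$. The candidate set of $\kappa$-compact generators of $\mathcal{C}^T$ consists of the free $T$-algebras $F(X)$ for $X$ ranging over a set of $\kappa$-compact generators of $\mathcal{C}$. Compactness of $F(X)$ in $\mathcal{C}^T$ follows from the adjunction isomorphism
\[
\mathrm{Hom}_{\mathcal{C}^T}(F(X),-) \cong \mathrm{Hom}_{\mathcal{C}}(X,U(-))
\]
together with the fact that $U$ preserves $\kappa$-filtered colimits and that $X$ is $\kappa$-compact. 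To conclude that every $T$-algebra is a $\kappa$-filtered colimit of such free algebras, I would combine the canonical reflexive coequalizer presentation $T^2(A)\rightrightarrows T(A)\to A$ of a $T$-algebra with the fact that every object of $\mathcal{C}$ is itself a $\kappa$-filtered colimit of $\kappa$-compact objects, then transfer this presentation through the left adjoint $F$ using its commutation with colimits.

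The main subtle point I anticipate is coordinating a single cardinal $\kappa$ that witnesses the $\kappa$-accessibility of $\mathcal{C}$, the preservation of $\kappa$-filtered colimits by $T$, and the control of reflexive coequalizers at the monadic level. This is a standard but delicate calibration in the theory of accessible categories: one typically has to enlarge $\kappa$ to align all the relevant accessibility conditions without losing control, and checking that the resulting free-algebra generators still exhaust $\mathcal{C}^T$ under $\kappa$-filtered colimits requires a careful comparison between the diagram presenting $A$ in $\mathcal{C}$ and the reflexive pair presenting $(A,a)$ in $\mathcal{C}^T$.
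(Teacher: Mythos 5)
The paper does not actually prove this statement; it is quoted verbatim from Borceux (\emph{Handbook of Categorical Algebra 2}, Theorem 5.5.9), so there is no internal proof to compare against. Your outline follows the standard textbook route, and most of it is sound: limits are created by $U$; the free algebras $F(X)$ on $\kappa$-compact generators are $\kappa$-presentable in $\mathcal{C}^T$ because $U$ creates (hence preserves) $\kappa$-filtered colimits when $T$ preserves them; and cocompleteness reduces, via Linton's theorem, to reflexive coequalizers of $T$-algebras, which are built by the usual transfinite iteration converging because $T$ has rank.

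There is, however, one genuine gap in the accessibility step: the claim that every $T$-algebra is a $\kappa$-filtered colimit of free algebras $F(X)$ with $X$ $\kappa$-compact is false as stated, and no amount of "transferring the presentation through $F$" will repair it, because a coequalizer of $\kappa$-filtered colimits is not a $\kappa$-filtered colimit of the same terms. Concretely, for the free group monad on $\mathbf{Set}$, the group $\mathbb{Z}/2$ is not a filtered colimit of free groups on finite sets: filtered colimits of torsion-free groups are torsion-free. What the canonical presentation $F(TUA)\rightrightarrows F(UA)\to A$ actually gives you is that the free algebras on $\kappa$-compact objects are \emph{colimit-dense}, hence form a strong generator of $\mathcal{C}^T$ consisting of presentable objects. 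The correct conclusion then comes from the representation theorem (Borceux 5.2.7, or Ad\'amek--Rosick\'y 1.20): a cocomplete category with a strong generator of presentable objects is locally presentable; the $\lambda$-presentable algebras are obtained by closing the free ones under $\lambda$-small colimits and retracts, and are not themselves free in general. Alternatively, you could sidestep the colimit bookkeeping entirely by noting that $\mathcal{C}^T$ is an inserter/equifier construction on accessible functors, hence accessible, and that a complete accessible category is locally presentable — completeness being immediate since $U$ creates limits.
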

We deduce:
\begin{cor}
Let $\mathcal{C}$ be a $\mathbb{U}$-combinatorial model category and $T$ a monad over $\mathcal{C}$ satisfying
the assumptions of lemma 2.3 of \cite{SS}. Then $\mathcal{C}^T$ is a $\mathbb{U}$-combinatorial model category.
\end{cor}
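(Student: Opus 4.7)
The plan is to verify the two defining properties of a combinatorial model category separately: cofibrant generation and local presentability.

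First, I would invoke Lemma 2.3 of \cite{SS} directly. The free--forgetful adjunction $F:\mathcal{C}\rightleftarrows \mathcal{C}^T:U$ together with the hypotheses imposed on $T$ transfers the cofibrantly generated model structure of $\mathcal{C}$ to $\mathcal{C}^T$, where the generating (acyclic) cofibrations of $\mathcal{C}^T$ are the images under $F$ of the generating (acyclic) cofibrations of $\mathcal{C}$, and weak equivalences/fibrations are detected by $U$. This step produces a cofibrantly generated model structure on $\mathcal{C}^T$ with no extra work, since it is literally the content of that lemma.

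Second, I would verify local presentability using Theorem 4.7. Since $\mathcal{C}$ is $\mathbb{U}$-combinatorial it is in particular locally $\mathbb{U}$-presentable. The hypothesis that $T$ preserves $\kappa$-filtered colimits (imposed at the start of Section 4.2 and reiterated via the assumptions of Lemma 2.3 of \cite{SS}) means precisely that $T$ is a $\kappa$-accessible endofunctor on $\mathcal{C}$. Theorem 4.7 then applies and yields that $\mathcal{C}^T$ is locally presentable. Combining the two halves gives that $\mathcal{C}^T$ is a $\mathbb{U}$-combinatorial model category.

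The only genuine point to check is the compatibility of the accessibility rank of $T$ with the presentability rank of $\mathcal{C}$; this is essentially bookkeeping, since one can always enlarge $\kappa$ to a common regular cardinal for which $\mathcal{C}$ is locally $\kappa$-presentable and $T$ is $\kappa$-accessible simultaneously, and the generating sets of (acyclic) cofibrations stay small in the chosen universe. I do not expect any obstacle beyond this routine coherence of cardinals, so the proof is essentially a two-line citation assembly of Lemma 2.3 of \cite{SS} and Theorem 4.7.
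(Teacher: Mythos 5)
Your proposal is correct and is essentially the argument the paper intends: the paper gives no explicit proof beyond ``We deduce,'' the deduction being precisely the combination of the transferred cofibrantly generated model structure from Lemma 2.3 of Schwede--Shipley with Borceux's theorem (the theorem stated immediately before the corollary, on local presentability of algebras over an accessible monad on a locally presentable category) --- note that this is the result you want to cite for the second half, not the Lurie proposition on diagram categories. Your remark on harmonizing the accessibility rank of $T$ with the presentability rank of $\mathcal{C}$ by enlarging $\kappa$ is the only genuine check, and it goes through as you describe.
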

Here we will use the projective model structure for diagrams.
Finally, we will also need the following characterization of cofibrant $T$-algebras:
\begin{lem}
Every cofibrant $T$-algebra is a $\mathbb{U}$-small homotopy colimit of free $T$-algebras which are images of cofibrant $I$-diagrams under the functor $T$.
\end{lem}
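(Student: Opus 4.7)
My plan is to exploit the combinatoriality of $\mathcal{C}^{I,T}$ established in Corollary 4.7 to produce an explicit cellular presentation of any cofibrant $T$-algebra, and then reorganise this presentation as a single homotopy colimit. First I would recall that since the free-forgetful adjunction $T : \mathcal{C}^I \rightleftarrows \mathcal{C}^{I,T}$ transfers a cofibrantly generated model structure, the generating cofibrations of $T$-algebras are precisely the morphisms $T(\iota)$ with $\iota$ a generating cofibration of the projective model structure on $\mathcal{C}^I$. For that projective structure, generating cofibrations have the form $F_i(\iota)$, where $F_i$ is left adjoint to evaluation at $i \in I$ and $\iota$ runs through generating cofibrations of $\mathcal{C}$; since in the HAG contexts we consider the generating cofibrations of $\mathcal{C}$ have cofibrant source and target, the diagrams $F_i(A)$ and $F_i(B)$ are cofibrant objects of $\mathcal{C}^I$.

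Next, by the small object argument, any cofibrant $T$-algebra $X$ is a retract of a cell complex $X^{\mathrm{cell}}$, obtained as a $\mathbb{U}$-small transfinite composition
\[
T(\emptyset_{\mathcal{C}^I}) = X_0 \to X_1 \to \cdots \to X_\alpha \to X_{\alpha+1} \to \cdots \to X_\beta = X^{\mathrm{cell}}
\]
where each step $X_\alpha \to X_{\alpha+1}$ is a pushout of a generating cofibration $T(\iota_\alpha): T(A_\alpha) \rightarrowtail T(B_\alpha)$ with $A_\alpha, B_\alpha$ cofibrant $I$-diagrams. Since each $T(\iota_\alpha)$ is a cofibration between cofibrant objects of $\mathcal{C}^{I,T}$, each pushout square is a homotopy pushout, and the filtered composition along cofibrations agrees with the sequential homotopy colimit. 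Gluing all these squares, $X^{\mathrm{cell}}$ is exhibited as the $\mathbb{U}$-small homotopy colimit of a diagram whose vertices are either the initial free $T$-algebra $T(\emptyset_{\mathcal{C}^I})$ or a free $T$-algebra $T(A_\alpha)$, $T(B_\alpha)$ on a cofibrant $I$-diagram; all arrows in this indexing diagram are images under $T$ of the attaching maps $A_\alpha \to X_\alpha$ or the cofibrations $\iota_\alpha$.

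Finally, to pass from $X^{\mathrm{cell}}$ back to $X$, I use that $X$ is a retract of $X^{\mathrm{cell}}$ and hence a summand in the homotopy category $\mathrm{Ho}(\mathcal{C}^{I,T})$. Since the underlying $\infty$-category of a combinatorial model category is idempotent complete, one can split the retraction idempotent and realise $X$ as a further homotopy colimit of copies of $X^{\mathrm{cell}}$, which after concatenating indexing diagrams yields a $\mathbb{U}$-small homotopy colimit of free $T$-algebras on cofibrant $I$-diagrams. The main technical obstacle I anticipate is precisely this retract step: one has to check that enlarging the cell indexing diagram by the splitting data produces a diagram that is still $\mathbb{U}$-small and whose homotopy colimit still computes $X$, which is where idempotent completeness of the $\infty$-category of $T$-algebras is essential. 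A secondary point to verify is the cofibrancy of the sources and targets of generating cofibrations of the projective structure on $\mathcal{C}^I$, but this reduces to the standing assumption that $\mathcal{C}$ admits generating cofibrations between cofibrant objects, which holds in all the HAG contexts under consideration.
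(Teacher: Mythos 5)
The paper states this lemma without proof, so there is nothing to compare your argument against directly; judged on its own terms, your cellular strategy has one genuine gap at its centre. After writing $X^{\mathrm{cell}}$ as a transfinite composition of pushouts of generating cofibrations $T(\iota_\alpha):T(A_\alpha)\rightarrowtail T(B_\alpha)$, you assert that ``gluing all these squares'' exhibits $X^{\mathrm{cell}}$ as the homotopy colimit of a diagram whose vertices are the free algebras $T(A_\alpha)$, $T(B_\alpha)$ and whose arrows include the attaching maps $T(A_\alpha)\rightarrow X_\alpha$. But the targets $X_\alpha$ of the attaching maps are the partial colimits, which are not free algebras and are not vertices of your diagram, and a map into a colimit need not factor through (or be assembled from) maps between the terms of that colimit: already for $X_2=X_1\cup_{T(A_1)}T(B_1)$ there is no reason the attaching map $T(A_1)\rightarrow X_1$ should factor through $T(\emptyset)$, $T(A_0)$ or $T(B_0)$. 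So the indexing category you describe does not exist as stated, and the reduction of the iterated (transfinite) colimit to a single $\mathbb{U}$-small diagram \emph{of free algebras} --- which is precisely the content of the lemma --- is not established. The same reassembly problem recurs, compounded, in your retract step, where the idempotent $e=i\circ r$ on $X^{\mathrm{cell}}$ is again only a map between homotopy colimits, not a map of diagrams of free algebras; the appeal to idempotent completeness does not by itself produce the required diagram.

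The standard way to prove such a statement bypasses cells entirely: use the monadic bar resolution. For a cofibrant $T$-algebra $X$, the simplicial $T$-algebra $B_\bullet(F,T,UX)$ with $B_n=F(T^n(UX))$ is levelwise free, its augmentation to $X$ becomes a simplicial homotopy equivalence after applying $U$ (extra degeneracy), and its realization is a homotopy colimit over the $\mathbb{U}$-small category $\Delta^{\mathrm{op}}$ once Reedy cofibrancy is checked. One then only needs $UX$ to be cofibrant in $\mathcal{C}^I$ and $T$ to preserve cofibrant objects, so that each $T^n(UX)$ is a cofibrant diagram; these conditions hold in the contexts the paper actually uses (e.g.\ $\mathcal{C}=Ch_{\mathbb{K}}$, where every object is cofibrant) and replace your assumption --- correctly flagged by you, but not part of the paper's standing hypotheses for a general HAG context --- that the generating cofibrations of $\mathcal{C}$ have cofibrant sources. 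If you want to salvage the cellular route instead, you must replace the informal gluing by an actual rewriting of the iterated homotopy colimit over a Grothendieck construction of the tower, which requires realizing each attaching map as a genuine map of diagrams (for instance by factoring $T(A_\alpha)\rightarrow X_\alpha$ through the counit $T(UX_\alpha)\rightarrow X_\alpha$); carrying this out is exactly the technical work your write-up elides, and it leads back to the bar resolution.
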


\subsubsection{Stability of $T$-algebras under the external tensor product}

The diagram category $\mathcal{C}^I$ is a monoidal model category over $\mathcal{C}$.
We refer the reader to \cite{Fre2} or Section 1 of \cite{Yal1} for the notion of symmetric
monoidal category $\mathcal{E}$ over a base symmetric monoidal category $\mathcal{C}$.
The internal tensor product is the pointwise tensor product of functors, and the external tensor product between a $I$-diagram $F$ and an object $C$ of $\mathcal{C}$
is defined by $(F\otimes_eC)(i)=F(i)\otimes C$ for every $i\in ob(I)$.
The external hom $Hom_{\mathcal{C}^{I}}(-,-):\mathcal{C}^{I}\times\mathcal{C}^{I}\rightarrow\mathcal{C}$
is given by
\[
Hom_{\mathcal{C}^{I}}(X,Y)=\int_{i\in I}Hom_{\mathcal{C}}(X(i),Y(i)).
\]
We deal here with a symmetric monoidal category equipped with a model category
structure. The notion of symmetric monoidal model category consists of axioms formalizing the interplay between the tensor and the model structure,
for which we refer to Hirschhorn \cite{Hir} and Hovey \cite{Hov} for a comprehensive
treatment. We refer to \cite{Fre2} for the notion of symmetric monoidal model category over a base category.

We need the following compatibility assumptions between the monad $T$ and the external tensor product $\otimes_e$.
To any commutative algebra $A$ of $\mathcal{C}$ one can associate a category $Mod_A$ of $A$-modules in $\mathcal{C}^I$
defined obviously like usual modules but with the external tensor product.
We suppose that for any commutative algebra $A$, there exists a monad $T_A:Mod_A\rightarrow Mod_A$ such that
for any $A$-module $M$
\begin{itemize}
\item[(i)] $T_A(M\otimes_e A) = T(M)\otimes_e A$
\item[(ii)]$\mu_{T_A}(M\otimes_e A)=\mu_T\otimes_e A$
\item[(iii)]$\eta_{T_A}(M\otimes_e A)=\eta_T\otimes_e A$
\end{itemize}
(i.e. the monad structure of $T_A$ is determined on free $A$-modules by the monad structure of $T$).
\begin{lem}
For any $T$-algebra $X$ and any commutative algebra $A$ of $\mathcal{C}$, the free $A$-module $X\otimes_e A$ is a $T_A$-algebra in $Mod_A$.
\end{lem}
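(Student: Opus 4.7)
The plan is to produce the $T_A$-algebra structure on $X\otimes_e A$ by transporting the $T$-algebra structure of $X$ along the external tensor product with $A$. More precisely, let $\gamma_X:T(X)\to X$ be the structure map of the $T$-algebra $X$. The external tensor product $(-)\otimes_e A$ is functorial on $\mathcal{C}^I$, so tensoring $\gamma_X$ with $\mathrm{id}_A$ yields a morphism of $A$-modules
\[
\gamma_X\otimes_e\mathrm{id}_A:T(X)\otimes_e A\longrightarrow X\otimes_e A.
\]
By assumption (i), the source is canonically identified with $T_A(X\otimes_e A)$, so this defines the candidate structure morphism $\gamma:T_A(X\otimes_e A)\to X\otimes_e A$.

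Next I would verify the two monad algebra axioms. For the unit axiom, I must check that the composite
\[
X\otimes_e A\xrightarrow{\eta_{T_A}(X\otimes_e A)}T_A(X\otimes_e A)\xrightarrow{\gamma}X\otimes_e A
\]
is the identity. By assumption (iii), $\eta_{T_A}(X\otimes_e A)=\eta_T(X)\otimes_e\mathrm{id}_A$, so the composite equals $(\gamma_X\circ\eta_T(X))\otimes_e\mathrm{id}_A$, which is $\mathrm{id}_X\otimes_e\mathrm{id}_A=\mathrm{id}_{X\otimes_e A}$ by the unit axiom for the $T$-algebra $X$ and functoriality of $(-)\otimes_e A$.

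For associativity, I must check that $\gamma\circ T_A(\gamma)=\gamma\circ\mu_{T_A}(X\otimes_e A)$ as morphisms $T_A^2(X\otimes_e A)\to X\otimes_e A$. Using assumption (i) (which I read as a natural identification, so that $T_A$ applied to morphisms of the form $f\otimes_e\mathrm{id}_A$ yields $T(f)\otimes_e\mathrm{id}_A$), one has $T_A(\gamma)=T(\gamma_X)\otimes_e\mathrm{id}_A$; similarly, by assumption (ii), $\mu_{T_A}(X\otimes_e A)=\mu_T(X)\otimes_e\mathrm{id}_A$. Both composites therefore reduce, via functoriality of $(-)\otimes_e A$, to $(\gamma_X\circ T(\gamma_X))\otimes_e\mathrm{id}_A$ and $(\gamma_X\circ\mu_T(X))\otimes_e\mathrm{id}_A$ respectively, and these agree by the associativity axiom for the $T$-algebra $X$.

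The only subtle point, and the main obstacle in formalizing this argument, is making precise the sense in which assumption (i) is a natural identification compatible with the assumptions (ii) and (iii), so that $T_A$ acts on morphisms of free $A$-modules of the form $f\otimes_e\mathrm{id}_A$ as $T(f)\otimes_e\mathrm{id}_A$. Once this is recorded as an immediate consequence of the compatibility axioms (i)--(iii) (which describe the monad structure of $T_A$ on the full subcategory of free $A$-modules), the verifications above are just a diagram chase on the $T$-algebra axioms for $X$ tensored with $A$.
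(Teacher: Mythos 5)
Your proposal is correct and takes essentially the same approach as the paper: the paper's proof consists precisely of defining the structure map as $T_A(X\otimes_e A)=T(X)\otimes_e A\xrightarrow{\gamma_X\otimes_e A}X\otimes_e A$ and stops there, leaving the axiom checks implicit. Your explicit verification of the unit and associativity axioms (and your remark that assumption (i) must be read as a natural identification for this to go through) is a careful elaboration of the same argument rather than a different route.
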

\begin{proof}
The $T_A$-algebra structure is defined by
\[
T_A(X\otimes_eA)=T(X)\otimes_e A\stackrel{\gamma_X\otimes_e A}{\rightarrow} X\otimes_e A.
\]
\end{proof}
\begin{lem}
For any commutative algebra $A$, the functor $T(-)\otimes_e A$ is a monad on $\mathcal{C}^I$ and
and for any $T$-algebra $X$, the free $A$-module $X\otimes_e A$ is a $T(-)\otimes_e A$-algebra in $\mathcal{C}^I$.
\end{lem}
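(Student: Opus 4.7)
\emph{Plan.} My approach is to construct the monad structure on $T(-) \otimes_e A$ by realising it as the monad associated to a composed adjunction. Recall that there is a free-forgetful Quillen adjunction $-\otimes_e A : \mathcal{C}^I \rightleftarrows Mod_A : U$, and that, by hypothesis, $T_A$ is a monad on $Mod_A$ with its own free-forgetful adjunction between $Mod_A$ and the category of $T_A$-algebras. Composing these two adjunctions yields an adjunction $\mathcal{C}^I \rightleftarrows T_A\text{-Alg}$, whose associated monad on $\mathcal{C}^I$ is $U \circ T_A \circ (-\otimes_e A)$. Compatibility~(i) with $M = X$ identifies this functor on objects: $U(T_A(X \otimes_e A)) = U(T(X) \otimes_e A) = T(X) \otimes_e A$. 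General categorical theory for monads associated to adjunctions then endows $T(-) \otimes_e A$ with a canonical monad structure on $\mathcal{C}^I$, which settles the first claim.

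\emph{Explicit unit and multiplication.} Unwinding the composed adjunction, the unit $X \to T(X) \otimes_e A$ factors as
\[
X \stackrel{X \otimes_e \eta_A}{\longrightarrow} X \otimes_e A \stackrel{\eta_{T_A}(X \otimes_e A)}{\longrightarrow} T_A(X \otimes_e A) = T(X) \otimes_e A,
\]
which by compatibility~(iii) equals $\eta_T(X) \otimes_e \eta_A$. The multiplication $T(T(X) \otimes_e A) \otimes_e A \to T(X) \otimes_e A$ is the composite
\[
T_A\bigl((T(X) \otimes_e A) \otimes_e A\bigr) \longrightarrow T_A(T(X) \otimes_e A) \stackrel{\mu_{T_A}}{\longrightarrow} T_A(X \otimes_e A),
\]
where the first arrow is $T_A$ applied to the counit $(T(X) \otimes_e A) \otimes_e A \to T(X) \otimes_e A$ of $-\otimes_e A \dashv U$ (that is, $T(X)$ tensored with the multiplication of $A$, well-defined by commutativity of $A$), and the final arrow is identified with $\mu_T(X) \otimes_e A$ using compatibility~(ii). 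The monad axioms then reduce, via these compatibilities, to the monad axioms for $T_A$ combined with the triangle identities of the free-forgetful adjunction.

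\emph{Algebra structure on $X \otimes_e A$.} The preceding lemma endows $X \otimes_e A$ with a $T_A$-algebra structure $\gamma_X \otimes_e A : T(X) \otimes_e A \to X \otimes_e A$. By the general principle that $T_A$-algebras in $Mod_A$ descend under $U$ to algebras over the composed monad on $\mathcal{C}^I$, we obtain a $T(-) \otimes_e A$-algebra structure on $X \otimes_e A$. Concretely, the structure map $T(X \otimes_e A) \otimes_e A \to X \otimes_e A$ factors as
\[
T(X \otimes_e A) \otimes_e A \longrightarrow T(X) \otimes_e A \stackrel{\gamma_X \otimes_e A}{\longrightarrow} X \otimes_e A,
\]
where the first arrow comes from applying $T_A$ to the counit (i.e.\ the $A$-action) on $X \otimes_e A$ and then identifying via compatibility~(i), and the second is the $T_A$-algebra structure produced by the previous lemma.

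\emph{Main obstacle.} The technical heart of the argument is checking that the compatibilities~(i)-(iii), which are stated only on free $A$-modules, are sufficient to run the composed-adjunction construction coherently: the counits appearing in the definition of $\mu$ are morphisms between free $A$-modules that do \emph{not} come from $\mathcal{C}^I$-morphisms, so one must propagate the identifications of~(i)-(iii) through the $A$-module structure. The commutativity of $A$ is essential here, as it ensures that the $A$-action on free $A$-modules interacts appropriately with $T_A$ so that the pentagon governing associativity of $\mu$ commutes.
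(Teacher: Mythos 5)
Your proof is correct and follows essentially the same route as the paper: the paper likewise transports the monad structure of $T_A$ along the identification $T(-)\otimes_e A = T_A(-\otimes_e A)$ using the multiplication of $A$, and defines the algebra structure on $X\otimes_e A$ by exactly the composite $T_A(X\otimes_e(A\otimes A))\xrightarrow{T_A(X\otimes_e\mu_A)}T_A(X\otimes_e A)\xrightarrow{\gamma_X\otimes_e A}X\otimes_e A$ that you obtain. Your composed-adjunction packaging simply makes explicit the unit, multiplication, and coherence checks that the paper leaves implicit.
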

\begin{proof}
By the equality $T(-)\otimes_e A = T_A(-\otimes_eA)$, the monad structure of $T(-)\otimes_e A$ comes from
the monad structure of $T_A$ and the commutative algebra structure of $A$.
The $T(-)\otimes_e A$-algebra structure on $X\otimes_e A$ is given by
\begin{eqnarray*}
T(X\otimes_e A)\otimes_e A =  & T_A((X\otimes_e A)\otimes_e A) & = T_A(X\otimes_e (A\otimes A)) \\
 & \stackrel{T_A(X\otimes_e (\mu_A))}{\rightarrow} & T_A(X\otimes_e A) = T(X)\otimes_e A \\
 & \stackrel{\gamma_X\otimes_e A}{\rightarrow} & X\otimes_e A
\end{eqnarray*}
where $\mu_A$ is the product of $A$ and $\gamma_X$ the $T$-algebra structure of $X$.
\end{proof}
\begin{lem}
For any commutative algebra $A$, there exists a monad morphism $T\rightarrow T(-)\otimes_e A$.
Consequently, every free $A$-module $X\otimes_e A$ over a $T$-algebra $A$ is a $T$-algebra.
\end{lem}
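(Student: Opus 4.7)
The plan is to build the monad morphism from the unit $\eta_A:\mathbf{1}\to A$ of the commutative algebra structure on $A$, then get the consequence by restriction of algebra structure along this morphism. First, define a natural transformation $\alpha:T\Rightarrow T(-)\otimes_e A$ pointwise by
\[
\alpha_Y: T(Y)\cong T(Y)\otimes_e\mathbf{1}\xrightarrow{\,T(Y)\otimes_e\eta_A\,}T(Y)\otimes_e A
\]
for each $Y\in\mathcal{C}^I$. Naturality of $\alpha$ in $Y$ is immediate from the functoriality of $T$ and of the external tensor $-\otimes_e A$.

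Next, I would verify that $\alpha$ is a morphism of monads. The unit compatibility $\alpha\circ\eta^T=\eta^{T(-)\otimes_e A}$ is straightforward: after identifying $Y\cong Y\otimes_e\mathbf{1}$, both composites equal $\eta^T\otimes_e\eta_A$, using assumption (iii) together with the formula for $\eta^{T(-)\otimes_e A}$ extracted from the construction of the monad $T(-)\otimes_e A$ in the preceding lemma. For the multiplication compatibility, unfold $\mu^{T(-)\otimes_e A}_Y$ from that same lemma as the composite
\[
T(T(Y)\otimes_e A)\otimes_e A=T_A(T(Y)\otimes_e A\otimes_e A)\xrightarrow{T_A(T(Y)\otimes_e\mu_A)}T(T(Y))\otimes_e A\xrightarrow{\mu^T_Y\otimes_e A}T(Y)\otimes_e A,
\]
where the first equality uses (i) and the last step uses (ii). Plugging in $\alpha$ on both sides and applying naturality of $\otimes_e$, the composite $\mu^{T(-)\otimes_e A}\circ(\alpha\ast\alpha)$ collapses to $\mu^T_Y\otimes_e(\mu_A\circ(\eta_A\otimes\eta_A))=\mu^T_Y\otimes_e\eta_A=\alpha_Y\circ\mu^T_Y$, the key step being the unit axiom $\mu_A\circ(\eta_A\otimes\eta_A)=\eta_A$ for the commutative algebra $A$.

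Finally, for the consequence: given a $T$-algebra $X$, the previous lemma equips $X\otimes_e A$ with a $T(-)\otimes_e A$-algebra structure $\gamma:T(X\otimes_e A)\otimes_e A\to X\otimes_e A$. Restriction of scalars along the monad morphism $\alpha$ then yields a $T$-algebra structure on $X\otimes_e A$ given by the composite
\[
T(X\otimes_e A)\xrightarrow{\alpha_{X\otimes_e A}}T(X\otimes_e A)\otimes_e A\xrightarrow{\gamma}X\otimes_e A,
\]
whose associativity and unit axioms are automatic from $\alpha$ being a morphism of monads. The only real obstacle is the multiplication compatibility for $\alpha$ in the second step; everything else is formal coherence and naturality, and once assumptions (i)--(iii) together with the unitality of $\eta_A$ are invoked, the whole argument reduces to diagram chases.
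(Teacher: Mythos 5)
Your proposal is correct and follows essentially the same route as the paper: the monad morphism is the composite $T(X)\cong T(X)\otimes_e 1_{\mathcal{C}}\xrightarrow{T(X)\otimes_e\eta_A}T(X)\otimes_e A$ built from the unit of $A$, and the $T$-algebra structure on $X\otimes_e A$ is obtained by restriction along it. In fact the paper's own proof is terser still (it only writes down this composite and asserts it is a morphism of monads), so your explicit check of the unit and multiplication compatibilities via assumptions (i)--(iii) and the unit axiom of $A$ is a welcome elaboration rather than a divergence.
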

\begin{proof}
This is just the composite of natural transformations given by
\[
T(X)\stackrel{\cong}{\rightarrow} T(X)\otimes_e 1_{\mathcal{C}}\stackrel{T(X)\otimes_e \eta_A}{\rightarrow}T(X)\otimes_e A
\]
where $1_{\mathcal{C}}$ is the unit of $\mathcal{C}$ and $\eta_A$ is the unit of $A$.
\end{proof}

\subsubsection{Representability of higher stacks obtained from mapping spaces}

Now we can state the main result. For this, let us first note that any homotopy mapping space
$Map(X_{\infty},Y)$ between a cofibrant $T$-algebra $X_{\infty}$ and a fibrant $T$-algebra $Y$ (here these are diagrams
with values in fibrant objects of $\mathcal{C}$) gives rise to the functor
\begin{eqnarray*}
\underline{Map}(X_{\infty},Y): & Comm(\mathcal{C}) \rightarrow & sSet\\
 & A \mapsto Mor_{T-Alg}(X_{\infty},(Y\otimes_e A^{cf})^{\Delta^{\bullet}})\\
\end{eqnarray*}
where $(-)^{cf}$ is a functorial fibrant-cofibrant replacement in $\mathcal{C}$ and $(-)^{\Delta^{\bullet}}$
a simplicial resolution in $T$-algebras.
\begin{thm}
Let $X_{\infty}$ be a cofibrant $T$-algebra such that $X_{\infty}\sim hocolim_iT(X_i)$, and $Y$ be a $T$-algebra with values in fibrations of fibrant perfect objects of $\mathcal{C}$. Then
\begin{eqnarray*}
\underline{Map}(X_{\infty},Y) & \simeq & holim_i\int^h_{k\in ob(I)}\mathbb{R}\underline{Spec}_{Com(X_i(k)
\otimes^{\mathbb{L}}Y(k)^{\vee})}\\
 & \simeq & \underline{Spec}_{hocolim_i\int^h_{k\in ob(I)}Com(X_i(k) \otimes^{\mathbb{L}}Y(k)^{\vee})}.
\end{eqnarray*}
In particular, $\underline{Map}(X_{\infty},Y)$ is a representable stack.
\end{thm}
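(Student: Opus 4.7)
The plan is to reduce the problem to a computation involving free $T$-algebras via the hypothesis $X_\infty \simeq \mathrm{hocolim}_i T(X_i)$, then push the computation through a chain of adjunctions (monadic, external-hom/end, dualizability, and the free commutative algebra adjunction) until it lands in the category of commutative algebras, where the resulting expression is visibly a derived Spec.

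First I would use the fact that simplicial mapping spaces carry $\mathrm{hocolim}$ in the source to $\mathrm{holim}$: since $X_\infty \simeq \mathrm{hocolim}_i T(X_i)$ as cofibrant $T$-algebras, we obtain
\[
\underline{Map}(X_\infty,Y)(A) \;\simeq\; \mathrm{holim}_i\, \underline{Map}(T(X_i),Y)(A).
\]
Here it is crucial that $Y\otimes_e A^{cf}$ carries a natural $T$-algebra structure (Lemma 4.12 via the compatibility between $T$ and external tensor with commutative monoids), so that the mapping space makes sense as an $A$-point of a stack on $Comm(\mathcal{C})$. Then I apply the monadic Quillen adjunction $T : \mathcal{C}^I \rightleftarrows T\text{-}Alg : U$ together with Proposition 2.1 to rewrite each term as
\[
\underline{Map}_{T\text{-}Alg}(T(X_i), (Y\otimes_e A^{cf})^{\Delta^\bullet}) \;\simeq\; \underline{Map}_{\mathcal{C}^I}(X_i, Y\otimes_e A^{cf}),
\]
reducing the problem to a mapping space in the diagram category.

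Next I would unfold the diagram-category mapping space via the end formula: for $X_i \in \mathcal{C}^I$ cofibrant and $Y\otimes_e A^{cf}$ pointwise fibrant, one has
\[
\underline{Map}_{\mathcal{C}^I}(X_i, Y\otimes_e A^{cf}) \;\simeq\; \int^h_{k\in \mathrm{ob}(I)} \underline{Map}_{\mathcal{C}}(X_i(k), Y(k)\otimes^{\mathbb{L}} A).
\]
Now the perfectness (dualizability) assumption on each $Y(k)$ lets me use the adjunction $-\otimes^{\mathbb{L}} Y(k)^\vee \dashv -\otimes^{\mathbb{L}} Y(k)$ to obtain
\[
\underline{Map}_{\mathcal{C}}(X_i(k), Y(k)\otimes^{\mathbb{L}} A) \;\simeq\; \underline{Map}_{\mathcal{C}}(X_i(k)\otimes^{\mathbb{L}} Y(k)^\vee, A).
\]
Finally, the free–forgetful Quillen adjunction $Com : \mathcal{C} \rightleftarrows Comm(\mathcal{C}) : U$ (again Proposition 2.1) converts this into a mapping space of commutative monoids, i.e.\ the $A$-points of $\mathbb{R}\underline{\mathrm{Spec}}_{Com(X_i(k)\otimes^{\mathbb{L}} Y(k)^\vee)}$.

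Assembling the pieces gives the first equivalence of the theorem. To pass to the second expression, I would use that $\mathbb{R}\underline{\mathrm{Spec}}$ is a right Quillen functor from $Comm(\mathcal{C})^{op}$ to stacks, so it sends homotopy colimits of commutative algebras to homotopy limits of representable stacks; dually, the homotopy limit indexed by $i$ and the homotopy end over $k$ on the stack side correspond to a homotopy colimit and a homotopy coend on the algebra side. Representability then follows because $Comm(\mathcal{C})$ is closed under $\mathbb{U}$-small homotopy colimits, so the commutative algebra
\[
\mathrm{hocolim}_i\, \int^h_{k\in \mathrm{ob}(I)} Com(X_i(k)\otimes^{\mathbb{L}} Y(k)^\vee)
\]
exists inside our HAG context and represents the stack.

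The main obstacle I expect is managing homotopy coherence throughout: the monadic adjunction works cleanly for the free algebras $T(X_i)$, but one must check that the natural transformations fit together to produce a genuine equivalence of simplicial presheaves after taking $\mathrm{holim}_i$; this requires the cofibrancy of each $X_i$, the pointwise fibrancy of $Y$, and the hypothesis that $\otimes$ preserves fibrations (so that $Y \otimes_e A^{cf}$ is again pointwise fibrant and the mapping space formulas remain homotopy-invariant). A secondary subtlety is the interaction of the tensor $\otimes_e A$ with the monad $T$: here one invokes the compatibility hypotheses recorded in Lemmas 4.10--4.12 to guarantee that $Y \otimes_e A^{cf}$ is a $T$-algebra and that the free-$T$-algebra adjunction is compatible with extension of scalars, without which the whole chain of adjunctions collapses.
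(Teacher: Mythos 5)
Your proposal is correct and follows essentially the same route as the paper's proof: decompose $X_{\infty}$ into a homotopy colimit of free $T$-algebras, pass through the free--forgetful adjunction, the end formula over $I$, the duality adjunction for the perfect objects $Y(k)$, and the free commutative monoid adjunction, then use that $\mathbb{R}\underline{Spec}$ turns homotopy colimits of algebras into homotopy limits of representable stacks. The only point treated more carefully in the paper is the justification that the strict end over $I$ computes a homotopy end (this uses that $Y$ takes values in \emph{fibrations} between fibrant objects, not merely pointwise fibrancy) and that $Hom_{\mathcal{C}}(Y(k)^{\vee},(A^{cf})^{\Delta^{\bullet}})$ is a legitimate simplicial resolution, which is what licenses your derived adjunction step.
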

We refer the reader to Definition 1.2.3.6 and Proposition 1.2.3.7 of \cite{TV2} for the definition and properties
of perfect objects in a given symmetric monoidal model category.
\begin{proof}
The external tensor product of diagram categories preserves weak equivalences between cofibrant objects
of the base category, and homotopy mapping spaces preserve weak equivalences between fibrant targets,
so the functor $\underline{Map}(X_{\infty},Y)$ defines a simplicial presheaf on $Aff_{\mathbb{C}}$
preserving weak equivalences, that is, a prestack. Recall that stacks are the essential image
of the right derived Quillen functor
\[
\mathbb{R}Id:Ho(Aff_{\mathcal{C}}^{\sim,\tau})\rightarrow Ho(Aff_{\mathcal{C}}^{\wedge}),
\]
hence every prestack weakly equivalent to a stack represents a stack.
Moreover, since we are working in a HAG context, Corollary 1.3.2.5 of \cite{TV2} ensures that the topology
$\tau$ is subcanonical, which implies that the model Yoneda embedding gives a fully faithful functor
\[
\mathbb{R}\underline{h}:Ho(Aff_{\mathcal{C}}^{\sim,\tau})\rightarrow Ho(Aff_{\mathcal{C}}^{\sim,\tau})
\]
whose essential image consists of representable stacks.
So we have to prove that the prestack $\underline{Map}(X_{\infty},Y)$ is weakly equivalent to a stack.

In particular, the functor $\mathbb{R}\underline{h}$
commutes with $\mathbb{U}$-small homotopy limits. Given that the model category of $T$-algebras
$T-Alg$ is $\mathbb{U}$-combinatorial, the cofibrant $T$-algebra $X_{\infty}$ is obtained as a $\mathbb{U}$-small
homotopy colimit of free $T$-algebras. This justifies the second line of the formula in the theorem above.

We have to prove the first line of this formula. Homotopy mapping spaces transform homotopy colimits
at the source into homotopy limits, so we have
\[
Map_{T-Alg}(X_{\infty},Y\otimes_e A^{cf}) \sim holim_i Map_{T-Alg}(T(X_i),Y\otimes_e A^{cf}).
\]
For every $i$ we have
\begin{eqnarray*}
Map_{T-Alg}(T(X_i),Y\otimes_e A^{cf}) & = & Mor_{T-Alg}(T(X_i),(Y\otimes_e A^{cf})^{\Delta^{\bullet}}) \\
 & \cong & Mor_{\mathcal{C}^I}(X_i,(Y\otimes_e A^{cf})^{\Delta^{\bullet}}).
\end{eqnarray*}
For any pair of functors $F,G:\mathcal{C}\rightarrow\mathcal{D}$ between two categories $\mathcal{C}$ and
$\mathcal{D}$, the set of natural transformations from $F$ to $G$ can be described by the end
\[
Nat(F,G)=\int_{c\in ob(\mathcal{C})}Mor_{\mathcal{D}}(F(c),G(c)).
\]
We deduce the isomorphism of simplicial sets
\[
Map_{T-Alg}(T(X_i),Y\otimes_e A^{cf}) \cong \int_{k\in ob(I)} Mor_{\mathcal{C}}(X_i(k),(Y(k)\otimes A^{cf})^{\Delta^{\bullet}}).
\]
Let us observe that, since fibrations and limits in diagram categories are defined pointwise,
the fibration condition on matching spaces defining Reedy fibrations (see for instance \cite{GJ}) holds pointwise, thus any Reedy fibration of $(\mathcal{C}^I)^{\Delta^{\bullet}}$ is in particular a Reedy fibration of $\mathcal{C}^{\Delta^{\bullet}}$ at each point. Moreover, weak equivalences of $(\mathcal{C}^I)^{\Delta^{\bullet}}$
are weak equivalences of $(\mathcal{C}^I)$ in each simplicial dimension, which are, in turn, weak equivalences
of $\mathcal{C}$ at each point. This means that weak equivalences of $(\mathcal{C}^I)^{\Delta^{\bullet}}$
are pointwise weak equivalences of $\mathcal{C}^{\Delta^{\bullet}}$. Given that simplicial resolutions
are Reedy fibrant replacements of constant simplicial objects, all of this implies that a simplicial resolution of a diagram defines a simplicial resolution of each of its objects. In particular, the simplicial object
$(Y(k)\otimes A^{cf})^{\Delta^{\bullet}}$ is a simplicial resolution of $Y(k)\otimes A^{cf}$ in $\mathcal{C}$.

Homotopy mapping spaces with cofibrant source and fibrant target does not depend, up to natural weak equivalences, on the choice of a functorial simplicial resolution (See Lemma 4.17).
We can apply this to $Mor_{\mathcal{C}}(X_i(k),(Y(k)\otimes A^{cf})^{\Delta^{\bullet}})$, because by assumption
the $I$-diagram $X_i$ is cofibrant, thus pointwise cofibrant (see Proposition 11.6.3 of \cite{Hir}).
Since $Y(k)$ is a perfect object by assumption, we can choose a more convenient simplicial resolution of
$Y(k)\otimes A^{cf}$ in $\mathcal{C}$ given by $Hom_{\mathcal{C}}(Y(k)^{\vee},(A^{cf})^{\Delta^{\bullet}})$, where
$(A^{cf})^{\Delta^{\bullet}}$ is a simplicial frame on $A^{cf}$.  The proof that this forms indeed a simplicial
resolution is postponed to Lemma 4.15. We have
\begin{eqnarray*}
Map_{T-Alg}(T(X_i),Y\otimes_e A^{cf})  & \cong & \int_{k\in ob(I)} Mor_{\mathcal{C}}(X_i(k),Hom_{\mathcal{C}}(Y(k)^{\vee},(A^{cf})^{\Delta^{\bullet}})) \\
 & \cong & \int_{k\in ob(I)} Mor_{\mathcal{C}}(X_i(k)\otimes^{\mathbb{L}}Y(k)^{\vee}, (A^{cf})^{\Delta^{\bullet}}) \\
 & \cong & \int_{k\in ob(I)} Mor_{Comm(\mathcal{C})}(Com(X_i(k)\otimes^{\mathbb{L}}Y(k)^{\vee}), (A^{cf})^{\Delta^{\bullet}})\\
 & = & \int_{k\in ob(I)} Map_{Comm(\mathcal{C})}(Com(X_i(k)\otimes^{\mathbb{L}}Y(k)^{\vee}), A^{cf}) \\
 & = & \int_{k\in ob(I)}^h Map_{Comm(\mathcal{C})}(Com(X_i(k)\otimes^{\mathbb{L}}Y(k)^{\vee}), A^{cf})
\end{eqnarray*}
where $Com(-)$ is the free commutative monoid in $\mathcal{C}$ and $\int^h$ stands for the homotopy limit.
The free functor preserves cofibrant objects,
so $Com(X_i(k)\otimes^{\mathbb{L}}Y(k)^{\vee})$ is a cofibrant algebra. This implies that the homotopy mapping spaces
$Map_{Comm(\mathcal{C})}(Com(X_i(k)\otimes^{\mathbb{L}}Y(k)^{\vee}), A^{cf})$ are Kan complexes.
The last line follows from Lemma 4.16.
We conclude by observing that the functor
\[
A\mapsto Map_{Comm(\mathcal{C})}(Com(X_i(k)\otimes^{\mathbb{L}}Y(k)^{\vee}), A^{cf})
\]
is nothing but $\mathbb{R}\underline{Spec}_{Com(X_i(k)\otimes^{\mathbb{L}}Y(k)^{\vee})}$.
\end{proof}

Here is the series of lemmas needed in the proof of Theorem 4.14:
\begin{lem}
Let $A$ be a cofibrant perfect object of $\mathcal{C}$ and $B$ be any fibrant object.
Let $B^{\Delta^{\bullet}}$ be a simplicial frame on $B$ (thus a simplicial resolution of $B$).
Then $Hom_{\mathcal{C}}(A^{\vee},B^{\Delta^{\bullet}})$ is a simplicial resolution of $A\otimes B$.
\end{lem}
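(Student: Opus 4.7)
The strategy is to exploit the dualizability of $A$, which follows from $A$ being perfect, and thereby identify $Hom_{\mathcal{C}}(A^{\vee},X)$ with $A \otimes X$ naturally in $X$ (using $A^{\vee\vee} \cong A$). Under this identification, the simplicial object $Hom_{\mathcal{C}}(A^{\vee}, B^{\Delta^{\bullet}})$ becomes (weakly equivalent to) $A \otimes B^{\Delta^{\bullet}}$ with degree-zero piece $A \otimes B$, and it then suffices to verify the three axioms of a simplicial frame spelled out in the proof of Proposition 2.10, since $A \otimes B$ is fibrant (being isomorphic to $Hom_{\mathcal{C}}(A^{\vee}, B)$ with $A^{\vee}$ cofibrant and $B$ fibrant, hence fibrant by SM7).

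The first axiom, $Hom_{\mathcal{C}}(A^{\vee}, B^{\Delta^{0}}) = Hom_{\mathcal{C}}(A^{\vee}, B) \cong A \otimes B$, is immediate from dualizability. For the Reedy fibration axiom, the plan is to use that $Hom_{\mathcal{C}}(A^{\vee}, -)$ is a right Quillen functor in its second variable (as $A^{\vee}$ is cofibrant, which is part of the definition of a perfect object in the HAG formalism of \cite{TV2}), hence preserves fibrations; as a right adjoint it commutes with the limits defining matching objects. Therefore, applying $Hom_{\mathcal{C}}(A^{\vee}, -)$ to the matching map fibrations coming from the simplicial frame $B^{\Delta^{\bullet}}$ yields the required Reedy fibration onto $r^{\bullet}(A \otimes B)$. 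For the weak equivalence axiom, each $B^{\Delta^{n}}$ is fibrant and the collapse $B^{\Delta^{n}} \to B^{\Delta^{0}} = B$ is a weak equivalence between fibrant objects, so applying $Hom_{\mathcal{C}}(A^{\vee}, -)$, which preserves such weak equivalences by Ken Brown's lemma, gives the desired weak equivalence $Hom_{\mathcal{C}}(A^{\vee}, B^{\Delta^{n}}) \to Hom_{\mathcal{C}}(A^{\vee}, B)$.

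The point that needs the most care is making the duality identification $Hom_{\mathcal{C}}(A^{\vee}, X) \simeq A \otimes X$ genuinely natural in $X$ and simplicially coherent along the whole frame $B^{\Delta^{\bullet}}$, so that the three frame axioms transfer unambiguously; here one uses that perfectness of $A$ (cofibrant plus strongly dualizable in the sense of \cite{TV2}) guarantees this comparison map is a natural weak equivalence on fibrant inputs, and that $A^{\vee}$ itself may be taken cofibrant. Once these identifications are in place, the simplicial frame structure is established, and the fibrancy of $A \otimes B$ noted above automatically upgrades the frame to a simplicial resolution of $A \otimes B$, completing the lemma.
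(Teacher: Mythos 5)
Your proposal is correct and follows essentially the same route as the paper: both arguments rest on the observation that $Hom_{\mathcal{C}}(A^{\vee},-)$ is right Quillen (right adjoint to tensoring with the cofibrant object $A^{\vee}$), hence commutes with the limits defining matching objects, preserves the relevant (Reedy) fibrations, and preserves weak equivalences between fibrant objects, so that the defining properties of the simplicial frame/resolution transfer. The only cosmetic difference is that you check the three frame axioms and make the identification $Hom_{\mathcal{C}}(A^{\vee},-)\simeq A\otimes(-)$ explicit, whereas the paper verifies the factorization $cs_{\bullet}Hom_{\mathcal{C}}(A^{\vee},B)\stackrel{\sim}{\rightarrow}Hom_{\mathcal{C}}(A^{\vee},B^{\Delta^{\bullet}})\twoheadrightarrow cs_{\bullet}*$ directly and leaves the comparison with $A\otimes B$ implicit.
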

\begin{proof}
The functor $Hom_{\mathcal{C}}(A^{\vee},-):\mathcal{C}\rightarrow\mathcal{C}$
preserves limits, fibrations between fibrant objects and weak equivalences between fibrant objects.
This is due to the fact that it is a right adjoint to $A^{\vee}\otimes -$, which is a left Quillen functor since
$A^{\vee}$ is cofibrant (recall that $A$ is cofibrant, hence $A^{\vee}=Hom_{\mathcal{C}}(A,R1_{\mathcal{C}})$ is cofibrant too) and $\mathcal{C}$ satisfies the pushout-product axiom.
Applying this functor pointwise defines a functor between simplicial objects
$Hom_{\mathcal{C}}(A^{\vee},-):\mathcal{C}^{\Delta^{op}}\rightarrow\mathcal{C}^{\Delta^{op}}$,
which preserves limits (limits of diagrams are built pointwise), weak equivalences between pointwise fibrant objects
(weak equivalences of diagrams are pointwise weak equivalences), and pointwise fibrations between pointwise
fibrant objects.

Recall that the simplicial resolution $B^{\Delta^{\bullet}}$ is a fibrant replacement in the Reedy model structure
of $\mathcal{C}^{\Delta^{op}}$, that is, we have a factorization
\[
cs_{\bullet}B\stackrel{\sim}{\rightarrow}B^{\Delta^{\bullet}}\twoheadrightarrow cs_{\bullet}*
\]
where $cs_{\bullet}(-)$ is the constant simplicial object, $*$ the terminal object of $\mathcal{C}$,
the first arrow is a weak equivalence and the second a Reedy fibration.
Let $f:X_{\bullet}\twoheadrightarrow Y_{\bullet}$ be any Reedy fibration of $\mathcal{C}^{\Delta^{op}}$
between two Reedy fibrant objects. This gives a simplicial map
\[
Hom_{\mathcal{C}}(A^{\vee},X_{\bullet})\rightarrow Hom_{\mathcal{C}}(A^{\vee},Y_{\bullet}).
\]
Recall that $f$ is a Reedy fibration if and only if the matching maps
\[
X_r\twoheadrightarrow Y_r\times_{M_rY}M_rX
\]
are fibrations of $\mathcal{C}$, where $M_{\bullet}(-)$ is the matching space construction.
The target is given by a pullback
\[
\xymatrix{
Y_r\times_{M_rY}M_rX \ar[r]\ar[d] & M_rX\ar[d]^{M_rf} \\
Y_r\ar[r] & M_rY.
}
\]
The fact that $f$ is a Reedy fibration implies that each $M_rf$ is a fibration (see Proposition 15.3.11 of \cite{Hir}).
Fibrations are stable under pullbacks, so $Y_r\times_{M_rY}M_rX\rightarrow Y_r$
is also a fibration. The fact that $Y_{\bullet}$ is Reedy fibrant implies that it is pointwise fibrant, that is, each $Y_r$ is fibrant. Consequently, $Y_r\times_{M_rY}M_rX$ is fibrant. Since $Hom_{\mathcal{C}}(A^{\vee},-)$ preserves fibrations between
fibrant objects, the map
\[
Hom_{\mathcal{C}}(A^{\vee},X_r)\twoheadrightarrow Hom_{\mathcal{C}}(A^{\vee},Y_r\times_{M_rY}M_rX)
\]
is a fibration. The functor $Hom_{\mathcal{C}}(A^{\vee},-)$ also preserves limits so it commutes with the matching space
construction and we finally get, for every $r\geq 0$, a fibration
\[
Hom_{\mathcal{C}}(A^{\vee},X_r)\twoheadrightarrow Hom_{\mathcal{C}}(A^{\vee},Y_r)\times_{Hom_{\mathcal{C}}(A^{\vee},M_rY)}Hom_{\mathcal{C}}(A^{\vee},M_rX).
\]
This means that
\[
Hom_{\mathcal{C}}(A^{\vee},X_{\bullet})\rightarrow Hom_{\mathcal{C}}(A^{\vee},Y_{\bullet}).
\]
is a Reedy fibration. Applying this property to the particular case of
\[
cs_{\bullet}B\stackrel{\sim}{\rightarrow}B^{\Delta^{\bullet}}\twoheadrightarrow cs_{\bullet}*,
\]
we obtain a factorization
\[
cs_{\bullet}Hom_{\mathcal{C}}(A^{\vee},B)\stackrel{\sim}{\rightarrow}Hom_{\mathcal{C}}(A^{\vee},B^{\Delta^{\bullet}})\twoheadrightarrow cs_{\bullet}*
\]
where the second arrow is a Reedy fibration according to the argument above.
Since Reedy fibrant objects are in particular pointwise fibrant and $Hom_{\mathcal{C}}(A^{\vee},-)$ preserves
weak equivalences between pointwise fibrant objects, the first arrow is a weak equivalence.
We proved that $Hom_{\mathcal{C}}(A^{\vee},B^{\Delta^{\bullet}})$ gives a simplicial resolution of
$Hom_{\mathcal{C}}(A^{\vee},B)$.
\end{proof}

\begin{lem}
Let $\mathcal{M}$ be a cofibrantly generated model category, and let $I$ be a small category.
We suppose that $\mathcal{M}^I$ is equipped with the projective model structure.
Let $X$ be a cofibrant diagram and $Y$ a diagram with values in fibrations between fibrant objects.
Then
\[
Map_{\mathcal{C}^I}(X,Y)= \int_{i\in ob(I)}^h Map_{\mathcal{C}}(X(i),Y(i)).
\]
\end{lem}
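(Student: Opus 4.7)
The plan is to reduce $Map_{\mathcal{M}^I}(X,Y)$ to a strict end of pointwise mapping spaces via a simplicial resolution of $Y$, and then to recognise this strict end as a homotopy end using the fibrancy hypotheses on $Y$.

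First, I pick a simplicial resolution $Y^{\Delta^{\bullet}}$ of $Y$ in the Reedy model structure on $(\mathcal{M}^I)^{\Delta^{op}}$ and apply the classical end formula for natural transformations:
\[
Map_{\mathcal{M}^I}(X,Y) \;=\; Mor_{\mathcal{M}^I}(X,Y^{\Delta^{\bullet}}) \;\cong\; \int_{i \in ob(I)} Mor_{\mathcal{M}}(X(i), Y(i)^{\Delta^{\bullet}}).
\]
By exactly the argument already used in the proof of Theorem 4.14, each $Y(i)^{\Delta^{\bullet}}$ is a simplicial resolution of $Y(i)$ in $\mathcal{M}$: matching objects commute with evaluation at $i$ since limits in diagram categories are pointwise, and Reedy fibrations and weak equivalences in $(\mathcal{M}^I)^{\Delta^{op}}$ are themselves detected pointwise. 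Moreover, projectively cofibrant diagrams are pointwise cofibrant: the generating projective cofibrations have the form $F_i(f): F_i(A)\to F_i(B)$, where $F_i$ is left adjoint to evaluation at $i$ with $F_i(A)(j) = \coprod_{Mor_I(i,j)} A$, and such maps evaluate pointwise to coproducts of copies of $f$; colimits and retracts preserve this pointwise cofibrancy. Thus the integrand identifies with $Map_{\mathcal{M}}(X(i), Y(i))$ and the calculation reduces to a strict end of pointwise homotopy mapping spaces.

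The main obstacle is to identify this strict end with the homotopy end $\int^h_{i \in ob(I)} Map_{\mathcal{M}}(X(i), Y(i))$. Writing the homotopy end as the totalisation of the Bousfield--Kan cosimplicial simplicial set
\[
[n] \;\longmapsto\; \prod_{i_0 \to \cdots \to i_n} Map_{\mathcal{M}}(X(i_0), Y(i_n)),
\]
the hypotheses that each $Y(i)$ is fibrant, that each $Y(f)$ is a fibration, and that each $X(i)$ is cofibrant together ensure that this cosimplicial object is Reedy fibrant: the matching maps unwind into products of Kan fibrations, because $Map_{\mathcal{M}}(X(i), -)$ sends fibrations between fibrant objects to Kan fibrations once $X(i)$ is cofibrant. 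Reedy fibrancy allows the totalisation to be replaced, up to weak equivalence, by the strict limit of the cosimplicial object, which is precisely the end obtained above. Verifying this Reedy fibrancy is the key technical point; without the fibration assumption on $Y$ the comparison between the strict end and the homotopy end would generally fail.
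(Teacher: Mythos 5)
Your reduction of $Map_{\mathcal{M}^I}(X,Y)$ to the strict end $\int_{i\in ob(I)}Map_{\mathcal{M}}(X(i),Y(i))$ --- via a Reedy fibrant simplicial resolution of $Y$, the observation that such a resolution restricts to a simplicial resolution at each object of $I$, and the pointwise cofibrancy of projectively cofibrant diagrams --- is exactly the paper's first step. The two arguments part ways at the comparison of this strict end with the homotopy end, and there your proposal has a genuine gap. You present the homotopy end as the totalisation of the Bousfield--Kan cosimplicial space $[n]\mapsto\prod_{i_0\to\cdots\to i_n}Map_{\mathcal{M}}(X(i_0),Y(i_n))$, verify its Reedy fibrancy, and then assert that Reedy fibrancy lets you replace $Tot$ by the strict limit over $\Delta$ (which is the strict end). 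That last step is false as a general principle: Reedy fibrancy guarantees that $Tot$ is a model for $holim_{\Delta}$, but the canonical map $lim_{\Delta}Z^{\bullet}\to Tot(Z^{\bullet})$ need not be a weak equivalence for Reedy fibrant $Z^{\bullet}$. Concretely, take $I=BG$ for a nontrivial finite group $G$, $\mathcal{M}=sSet$, $X$ the constant one-point diagram and $Y=EG$ with its free action: every $X(i)$ is cofibrant, $Y$ takes values in fibrations (even isomorphisms) between fibrant objects, and the cosimplicial space $[n]\mapsto\prod_{G^{n}}Map(\ast,EG)$ is Reedy fibrant; yet its strict limit is $EG^{G}=\emptyset$ while its totalisation is $EG^{hG}\simeq\ast$.

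The diagnosis is that your final step uses only pointwise information about $X$, whereas the statement really needs $X$ to be projectively cofibrant: in the example above $X$ is pointwise cofibrant but not projectively cofibrant over $BG$, and the conclusion of the lemma fails for it. So projective cofibrancy must enter the comparison of the strict end with the homotopy end itself, not merely the verification that the integrand consists of Kan complexes. One repair is a cell induction over the free diagrams $F_i(A)$ generating the projective cofibrations: for $X=F_i(A)$ both the strict and the homotopy end reduce to $Map_{\mathcal{M}}(A,Y(i))$, and both constructions carry the pushouts and transfinite compositions building a cofibrant $X$ to homotopy pullbacks and towers of fibrations --- which is where the hypothesis that $Y$ lands in fibrations does its work. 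The paper takes a shorter route: it writes the end as the equalizer of
\[
d_0,d_1:\prod_{i}Map_{\mathcal{M}}(X(i),Y(i))\rightrightarrows\prod_{u:i\to j}Map_{\mathcal{M}}(X(i),Y(j))
\]
and uses that $d_1$, a product of postcompositions by the fibrations $Y(u)$, is a Kan fibration, so that the strict equalizer already computes the relevant homotopy limit. (Even that argument is terse --- an equalizer of two maps one of which is a fibration is not automatically a homotopy equalizer, as $d_0=d_1=\mathrm{id}$ shows --- so the specific shape of $d_0$ coming from the cofibrancy of $X$ still has to be used.) In either approach, the missing ingredient in your write-up is the point where projective, rather than merely pointwise, cofibrancy of $X$ is exploited.
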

\begin{proof}
Recall that $Map_{\mathcal{C}^I}(X,Y)=Mor_{\mathcal{C}^I}(X,Y^{\Delta^{\bullet}})$ can be defined by the
coend
\begin{eqnarray*}
Map_{\mathcal{C}^I}(X,Y) & = & \int_{i\in ob(I)}Mor_{\mathcal{C}}(X(i),Y(i)^{\Delta^{\bullet}})\\
 & = & \int_{i\in ob(I)}Map_{\mathcal{C}}(X(i),Y(i))
\end{eqnarray*}
where the second line holds because, as we already noticed in the proof of Theorem 4.14, a simplicial
resolution on a diagram induces a simplicial resolution on each of its objects.
Any cofibrant diagram is pointwise cofibrant (see Proposition 11.6.3 in \cite{Hir}), and $Y$
is pointwise fibrant, so the homotopy mapping spaces $Map_{\mathcal{C}}(X(i),Y(i))$ are Kan complexes.

This end can be expressed as an equalizer in simplicial sets
\[
\int_{i\in ob(I)}Map_{\mathcal{C}}(X(i),Y(i))\rightarrow\prod_{i\in ob(I)}Map_{\mathcal{C}}(X(i),Y(i))
\rightrightarrows^{d_0}_{d_1}\prod_{i\rightarrow j\in mor(I)} Map_{\mathcal{C}}(X(i),Y(j))
\]
where $d_0$ is the product of precomposition maps $(-)\circ X(u),u:i\rightarrow j$,
and $d_1$ is the product of the postcomposition maps $Y(u)\circ (-),u:i\rightarrow j$.
Since $Y$ takes values in fibrations between fibrant objects, and homotopy mapping spaces preserve fibrations
between fibrant targets, each postcomposition map $Y(u)\circ (-)$ is a fibration of Kan complexes.
Since a product of Kan fibrations is still a Kan fibration, we obtain that $d_1$ is a fibration of Kan
complexes. The equalizer of $d_0$ and $d_1$ is a pullback of $d_1$ along $d_0$: this a pullback consisting of
fibrant objects and one of the two maps is a fibration, that is, a model for the homotopy pullback.
Thus we get
\[
Map_{\mathcal{C}^I}(X,Y) = \int_{i\in ob(I)}^h Map_{\mathcal{C}}(X(i),Y(i))
\]
as expected.
\end{proof}

\begin{lem}
Let $\mathcal{M}$ be a model category, $X$ a cofibrant object of $\mathcal{M}$ and $Y$ any fibrant object of
$\mathcal{M}$. Let $(-)^{\Delta^{\bullet}}$ and $\tilde{(-)}^{\Delta^{\bullet}}$ be two functorial simplicial frames
in $\mathcal{M}$. Then $Mor_{\mathcal{M}}(X,Y^{\Delta^{\bullet}})$ and $Mor_{\mathcal{M}}(X,\tilde{Y}^{\Delta^{\bullet}})$
are naturally weakly equivalent.
\end{lem}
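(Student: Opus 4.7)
The plan is to reduce the statement to a general comparison of Reedy fibrant replacements of constant simplicial objects. Since $Y$ is fibrant, a simplicial frame on $Y$ amounts, by definition, to a factorization $cs_{\bullet}Y \stackrel{\sim}{\rightarrowtail} Y^{\Delta^{\bullet}} \twoheadrightarrow cs_{\bullet}\ast$ of the canonical map in $\mathcal{M}^{\Delta^{op}}$ equipped with its Reedy model structure, with the first arrow a Reedy trivial cofibration and the second a Reedy fibration; in other words, $Y^{\Delta^{\bullet}}$ is a Reedy fibrant replacement of $cs_{\bullet}Y$, and the same for $\tilde{Y}^{\Delta^{\bullet}}$.

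First I would construct a natural zig-zag of Reedy weak equivalences between $Y^{\Delta^{\bullet}}$ and $\tilde{Y}^{\Delta^{\bullet}}$. A direct application of the lifting axiom to the trivial cofibration $cs_{\bullet}Y \rightarrowtail Y^{\Delta^{\bullet}}$ against the Reedy fibration $\tilde{Y}^{\Delta^{\bullet}} \twoheadrightarrow cs_{\bullet}\ast$ produces a morphism of simplicial frames $Y^{\Delta^{\bullet}} \to \tilde{Y}^{\Delta^{\bullet}}$ under $cs_{\bullet}Y$, which is a Reedy weak equivalence by two-out-of-three. To make the comparison natural in $Y$, I would instead take a functorial Reedy fibrant replacement of the pullback $Y^{\Delta^{\bullet}} \times_{cs_{\bullet}\ast} \tilde{Y}^{\Delta^{\bullet}}$ under $cs_{\bullet}Y$, obtaining a Reedy fibrant object $Z^{\bullet}$ together with functorial Reedy weak equivalences
\[
Y^{\Delta^{\bullet}} \stackrel{\sim}{\leftarrow} Z^{\bullet} \stackrel{\sim}{\rightarrow} \tilde{Y}^{\Delta^{\bullet}}.
\]

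Second, I would apply $\mathrm{Mor}_{\mathcal{M}}(X, -)$ levelwise to this zig-zag. Since $X$ is cofibrant, the functor $\mathrm{Mor}_{\mathcal{M}}(X, -): \mathcal{M}^{\Delta^{op}} \to sSet$ is right Quillen with respect to the Reedy structure on the source and the standard structure on the target: its left adjoint sends a simplicial set $K$ to the tensor $X \otimes K$ computed by means of a cosimplicial frame on $X$, which is a left Quillen functor precisely by the framing SM7 axiom applied to the cofibrant object $X$. Right Quillen functors preserve weak equivalences between fibrant objects, so the zig-zag above gives a natural zig-zag of weak equivalences of Kan complexes
\[
\mathrm{Mor}_{\mathcal{M}}(X, Y^{\Delta^{\bullet}}) \stackrel{\sim}{\leftarrow} \mathrm{Mor}_{\mathcal{M}}(X, Z^{\bullet}) \stackrel{\sim}{\rightarrow} \mathrm{Mor}_{\mathcal{M}}(X, \tilde{Y}^{\Delta^{\bullet}}),
\]
which is the desired natural weak equivalence.

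The main obstacle is naturality in step one, since the bare lifting axiom produces a map that is not canonical in $Y$; the pullback together with a functorial Reedy factorization circumvents this. Alternatively, one can invoke directly the comparison results for homotopy function complexes in Hirschhorn (Chapter 16), which assert that any two functorial simplicial frames on a fibrant object induce naturally weakly equivalent homotopy mapping spaces out of a cofibrant source.
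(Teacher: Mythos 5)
Your proof is correct and is essentially the paper's own argument unpacked: the paper simply cites Hirschhorn (Prop.\ 16.6.7 to see that simplicial frames on fibrant objects are simplicial resolutions, then Prop.\ 16.1.13 and Cor.\ 16.5.5) for exactly the zig-zag comparison of Reedy fibrant replacements of $cs_{\bullet}Y$ and the preservation of Reedy equivalences by $Mor_{\mathcal{M}}(X,-)$ that you spell out. One small slip: the left adjoint of $Mor_{\mathcal{M}}(X,-)\colon \mathcal{M}^{\Delta^{op}}\to sSet$ is the levelwise copower $K\mapsto \left(\coprod_{K_n}X\right)_{n}$, not a tensor built from a cosimplicial frame on $X$ (which lands in $\mathcal{M}$, not $\mathcal{M}^{\Delta^{op}}$); that copower functor is left Quillen for the Reedy structure precisely because $X$ is cofibrant, so your conclusion is unaffected.
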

\begin{proof}
Simplicial frames on fibrant objects are simplicial resolutions by Proposition 16.6.7 of \cite{Hir}.
Then one just applies Proposition 16.1.13 of \cite{Hir} combined with Corollary 16.5.5.(4) of \cite{Hir}.
\end{proof}

Theorem 4.14 admits the following variation, whose proof is completely similar:
\begin{thm}
Suppose that all the objects of $\mathcal{C}$ are cofibrant.
Let $X_{\infty}$ be a cofibrant $T$-algebra with values in fibrant objects such that $X_{\infty}\sim hocolim_iT(X_i)$, and $Y$ be a $T$-algebra with values in fibrations between fibrant dualizable objects of $\mathcal{C}$. Then
\begin{eqnarray*}
\underline{Map}(X_{\infty},Y) & \simeq & holim_i\int^h_{k\in ob(I)}\mathbb{R}\underline{Spec}_{Com(X_i(k)
\otimes Y(k)^{\vee})}\\
 & \simeq & \underline{Spec}_{hocolim_i\int^h_{k\in ob(I)}Com(X_i(k) \otimes Y(k)^{\vee})}.
\end{eqnarray*}
In particular, $\underline{Map}(X_{\infty},Y)$ is a representable stack.
\end{thm}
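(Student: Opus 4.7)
The plan is to mirror the argument of Theorem 4.14 line by line, using the simplification that under the blanket assumption that all objects of $\mathcal{C}$ are cofibrant, the derived tensor product $\otimes^{\mathbb{L}}$ collapses to $\otimes$ and ``dualizable'' plays the role that ``perfect'' played there. Since the HAG context and the subcanonical topology argument are the same, the entire task reduces to exhibiting the claimed chain of natural weak equivalences; representability will then follow because, by Corollary 1.3.2.5 of \cite{TV2}, the essential image of the model Yoneda embedding consists exactly of representable stacks.

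First I would verify that $\underline{Map}(X_{\infty},Y)$ is a prestack. Since every object is cofibrant, $(-)\otimes_e A^{cf}$ preserves \emph{all} weak equivalences, and homotopy mapping spaces preserve weak equivalences between fibrant targets because $Y\otimes_e A^{cf}$ still takes values in fibrations between fibrant objects. Next, I would use $X_{\infty}\simeq hocolim_i T(X_i)$ to turn the mapping space into a homotopy limit
\[
Map_{T-Alg}(X_{\infty},Y\otimes_e A^{cf}) \simeq holim_i\, Map_{T-Alg}(T(X_i),Y\otimes_e A^{cf}),
\]
and then apply the free--forgetful adjunction to identify each term with $Map_{\mathcal{C}^I}(X_i,Y\otimes_e A^{cf})$. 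Lemma 4.16 applies here since $Y$ takes values in fibrations between fibrant objects and any cofibrant diagram is pointwise cofibrant (Proposition 11.6.3 of \cite{Hir}), so each such mapping space rewrites as the homotopy end $\int^h_{k\in ob(I)}Map_{\mathcal{C}}(X_i(k),Y(k)\otimes A^{cf})$.

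The key step is where ``perfect'' is replaced by ``dualizable''. Because each $Y(k)$ is fibrant and dualizable and its dual $Y(k)^{\vee}$ is automatically cofibrant, the proof of Lemma 4.15 goes through unchanged and exhibits $Hom_{\mathcal{C}}(Y(k)^{\vee},(A^{cf})^{\Delta^{\bullet}})$ as a simplicial resolution of $Y(k)\otimes A^{cf}$. The tensor--hom adjunction followed by the free commutative monoid adjunction then rewrite each factor as
\[
Map_{Comm(\mathcal{C})}(Com(X_i(k)\otimes Y(k)^{\vee}),A^{cf}) \;=\; \mathbb{R}\underline{Spec}_{Com(X_i(k)\otimes Y(k)^{\vee})}(A),
\]
using that $Com(-)$ preserves cofibrancy so that the target is an affine stack and the displayed mapping space is a Kan complex. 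Assembling the homotopy end over $k$ and the homotopy limit over $i$, and invoking that the model Yoneda embedding $\mathbb{R}\underline{h}$ commutes with $\mathbb{U}$-small homotopy limits, gives the first displayed equivalence of the theorem; the second follows because $\mathbb{R}\underline{Spec}$ sends homotopy colimits of cofibrant commutative monoids to homotopy limits of affine stacks.

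The only obstacle I anticipate is bookkeeping: at each step I have to check that the relevant object is cofibrant or fibrant so that the strict tensor--hom adjunction, the simplicial-frame construction, and the $Com\dashv U$ adjunction are homotopically meaningful, and that $Y(k)^{\vee}$ really is cofibrant and well-behaved under the pushout-product axiom. The blanket cofibrancy assumption on $\mathcal{C}$ is exactly what strips away the $\otimes^{\mathbb{L}}$ decorations of Theorem 4.14 and lets the argument proceed verbatim with ``dualizable'' in place of ``perfect''.
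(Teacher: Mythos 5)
Your proposal is correct and follows exactly the route the paper intends: the paper's own ``proof'' of this statement is literally the remark that it is ``completely similar'' to that of Theorem 4.14, with the blanket cofibrancy of $\mathcal{C}$ letting ``dualizable'' stand in for ``perfect'' and stripping the $\otimes^{\mathbb{L}}$ decorations, which is precisely the adaptation you carry out. Your bookkeeping of the fibrancy/cofibrancy hypotheses needed for Lemmas 4.15--4.17 and for the $Com\dashv U$ adjunction matches the original argument step for step.
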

This holds for instance when $\mathcal{C}=sSet$ is the category of simplicial sets.
When $\mathcal{C}=Ch_{\mathbb{K}}$, all the objects are fibrant and cofibrant so we just have to suppose that
$Y$ take its values in dualizable cochain complexes, which are the bounded cochain complexes of finite dimension in each degree.

\subsection{Examples of monads}

We work in the complicial algebraic geometry context (see \cite{TV2}).
Indeed, we will need to dualize cochain complexes, thus to work in an unbounded setting.

Examples of interests for us are the following: operads, $\frac{1}{2}$-props, dioperads,
properads and props. The results readily extends to their colored versions.
Two key features of those examples are the following.
First, the base category is the category of differential graded $\Sigma$-objects for operads, the category
of dg $\Sigma$-biobjects for the others.
Secondly, the free functor can be expressed as a direct sum, over a certain class of graphs,
of tensor products representing indexation of those graphs by operations.
The adjunction between the free functor and the forgetful functor gives rise to a monad on $\Sigma$-objects,
respectively $\Sigma$-biobjects, whose algebras are operads, respectively $\frac{1}{2}$-props, dioperads,
properads and props.

Some care has to be taken concerning the transfer of model category structures.
The category of $\Sigma$-biobjects is a diagram category over cochain complexes, hence a cofibrantly generated model
category. Over a field of caracteristic zero, there is no obstruction to transfer this cofibrantly generated model category structure on operads, $\frac{1}{2}$-props, dioperads, properads and props.
For those results we refer the reader to \cite{BM}, \cite{Fre2}, \cite{Fre3} and the Appendix of \cite{MV2}.
Over a field of positive characteristic, obstructions appear and one gets a cofibrantly generated semi-model structure.
Moreover, one has to consider a restricted class of objects, for instance properads and props with non empty inputs or non empty outputs. See for instance \cite{Fre3}.
\begin{rem}
In a general setting, replacing cochain complexes by a cofibrantly generated symmetric monoidal model category
$\mathcal{C}$, there are conditions on $\mathcal{C}$ such that operads, properads and props inherit
a cofibrantly generated (semi-)model category structure from $\Sigma$-biobjects. We refer the reader to
\cite{BM}, \cite{Fre2}, and \cite{Fre3} for more details.
\end{rem}

Now we are going to explain in details how it works with props.
The method is the same for all the aforementionned structures.
Props can defined as algebras over the monad $U\circ\mathcal{F}$ corresponding to the adjunction
\[
\mathcal{F}:Ch_{\mathbb{K}}^{\mathbb{S}}\rightleftarrows \mathcal{P}:U
\]
between the free prop functor and the forgetful functor.
The free prop functor admits the following description. Let $M$ be a dg $\Sigma$-biobject,
then $\mathcal{F}(M)$ is defined by
\[
\mathcal{F}(M)(m,n)=\bigoplus_{G\in Gr(m,n)}\left( \bigotimes_{v\in Vert(G)}M(|in(v)|,|out(v)|) \right)_{Aut(G)}
\]
where
\begin{itemize}
\item the set $Gr(m,n)$ is the set of directed graphs with $m$ inputs, $n$ outputs and no directed cycles;
\item the set $Vert(G)$ is the set of vertices of $G$;
\item given a vertex $v$ of $G$, the sets $in(v)$ and $out(v)$ are respectively the set of inputs of $v$
and the set of outputs of $v$;
\item he notation $(-)_{Aut(G)}$ stands for the coinvariants under the action of the automorphism group of $G$.
\end{itemize}
Here the tensor product is the usual tensor product of cochain complexes.
For any cdga $A$, we denote by $\mathcal{F}_A$ the free prop functor on $\Sigma$-biobjects in $A$-modules.
It is defined by the same formula, replacing the tensor product of cochain complexes by
the tensor product of $A$-modules.
For any prop $Q$ in $Mod_A$ and any $\Sigma$-biobject $M$ in cochain complexes,we have
\[
\mathcal{F}_A(M\otimes_eA)=\mathcal{F}(M)\otimes_e A
\]
by direct computation with the formula above.
The fact that props are stable under external tensor product with cdgas is actually already proved in Proposition 3.4.
Consequently, we define:
\begin{defn}
The moduli space functor of $Q$-representations of $P_{\infty}$ is given by
\begin{eqnarray*}
\underline{Map}(P_{\infty},Q): & CDGA_{\mathbb{K}} \rightarrow & sSet \\
 & A \mapsto & Mor_{Prop(Ch_{\mathbb{K}})}(P_{\infty},Q\otimes_e (\Omega_{\bullet}\otimes A)).
\end{eqnarray*}
\end{defn}
By Theorem 4.18, this functor forms thus a representable stack if we suppose that each $Q(m,n)$
is a dualizable complex. In particular, for $Q=End_X$ this amounts to suppose that $X$ itself is a
dualizable complex.
We would like to present here the explicit computation.
Any homotopy mapping space with cofibrant source and fibrant target is a Kan complex \cite{Hir},
so the moduli space functor actually takes values in the subcategory of Kan complexes.
Let $A\stackrel{\sim}{\rightarrow} B$ be a weak equivalence of cdgas. The tensor product of cochain complexes preserves
quasi-isomorphisms, and weak equivalences in $CDGA_{\mathbb{K}}$ are defined by the forgetful functor, so the induced
cdga morphism $\Omega_{\bullet}\otimes A\stackrel{\sim}{\rightarrow}\Omega_{\bullet}\otimes B$ is a weak equivalence
of cdgas in each simplicial dimension. The external tensor product
$Q\otimes_e (\Omega_{\bullet}\otimes A)\stackrel{\sim}{\rightarrow}Q\otimes_e (\Omega_{\bullet}\otimes B)$
gives a componentwise quasi-isomorphism of componentwise fibrant dg props, that is, a weak equivalence
of fibrant dg props. Homotopy mapping spaces with cofibrant source preserve weak equivalences between fibrant targets \cite{Hir}, hence a weak equivalence of Kan complexes
\[
\underline{Map}(P_{\infty},Q)(A)\stackrel{\sim}{\rightarrow}\underline{Map}(P_{\infty},Q)(B).
\]
We have a well defined simplicial presheaf preserving weak equivalences, that is, a prestack.

Morphisms of $\Sigma$-biobjects are collections of equivariant morphisms of cochain complexes, that is,
for every $\Sigma$-biobjects $M$ and $N$ we have
\[
Mor_{Ch_{\mathbb{K}}^{\mathbb{S}}}(M,N)=\prod_{m,n}Mor_{Ch_{\mathbb{K}}}(M(m,n),N(m,n))^{\Sigma_m\times\Sigma_n}
\]
where $(-)^{\Sigma_m\times\Sigma_n}$ is the set of invariants under the action of the product of symmetric
groups $\Sigma_m\times\Sigma_n$.
Now let $M$ be a $\Sigma$-biobject and $Q$ be a prop. For every cdga $A$, we have
\begin{eqnarray*}
Mor_{\mathcal{P}}(\mathcal{F}(M),Q\otimes_e(A\otimes\Omega_{\bullet})) & = &
Mor_{Ch_{\mathbb{K}}^{\mathbb{S}}}(M,Q\otimes_e(A\otimes\Omega_{\bullet}))\\
 & = & \prod_{m,n}Mor_{Ch_{\mathbb{K}}}(M(m,n),Q(m,n)\otimes A\otimes\Omega_{\bullet})^{\Sigma_m\times\Sigma_n} \\
 & \cong & \prod_{m,n}Mor_{Ch_{\mathbb{K}}}((M(m,n)\otimes Q(m,n)^{\vee})^{\Sigma_m\times\Sigma_n}, A\otimes\Omega_{\bullet}) \\
 & \cong & \prod_{m,n}Mor_{CDGA_{\mathbb{K}}}(\bigwedge ((M(m,n)\otimes Q(m,n)^{\vee})^{\Sigma_m\times\Sigma_n}), A\otimes\Omega_{\bullet}) \\
 & = & \prod_{m,n}Map_{CDGA_{\mathbb{K}}}(\bigwedge ((M(m,n)\otimes Q(m,n)^{\vee})^{\Sigma_m\times\Sigma_n}), A)\\
 & = & \prod_{m,n}^h\mathbb{R}\underline{Spec}_{\bigwedge ((M(m,n)\otimes Q(m,n)^{\vee})^{\Sigma_m\times\Sigma_n})}(A).
\end{eqnarray*}
Thus for any cofibrant prop $P_{\infty}\simeq hocolim_i \mathcal{F}(M_i)$, we get
\begin{eqnarray*}
\underline{Map}(P_{\infty},Q) & \simeq & holim_i\left( \prod_{m,n}^h\mathbb{R}\underline{Spec}_{\bigwedge ((M_i(m,n)\otimes Q(m,n)^{\vee})^{\Sigma_m\times\Sigma_n})}\right) \\
 & \simeq & \mathbb{R}\underline{Spec}_{C(P_{\infty},Q)}
\end{eqnarray*}
where
\[
C(P_{\infty},Q) = hocolim_i\left( \prod_{m,n}\bigwedge ((M_i(m,n)\otimes Q(m,n)^{\vee})^{\Sigma_m\times\Sigma_n})\right).
\]

We deduce:
\begin{cor}
For any cofibrant quasi-free prop $P_{\infty}=(\mathcal{F}(s^{-1}C),\partial)$, the simplicial Maurer-Cartan
functor
\[
\underline{MC}_{\bullet}(g):CDGA_{\mathbb{K}}\rightarrow sSet
\]
associated to the dg complete $L_{\infty}$-algebra $g=Hom_{\Sigma}(C,Q)$ (see Section 2.2 for the definition of a complete $L_{\infty}$-algebra) and defined by
\[
\underline{MC}_{\bullet}(g)(A)=MC_{\bullet}(g\hat{\otimes}(A\otimes\Omega_{\bullet}))
\]
represents the same stack as $\underline{Map}(P_{\infty},Q)$. In particular, the stack
$\underline{MC}_{\bullet}(g)$ is represented by $C(P_{\infty},Q)$.
\end{cor}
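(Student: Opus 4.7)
The plan is to establish a pointwise weak equivalence of simplicial presheaves $\underline{MC}_{\bullet}(g) \simeq \underline{Map}(P_{\infty},Q)$, natural in the cdga argument $A$, and then invoke Remark 4.6 together with the computation already carried out in Section 4.3 to conclude.

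First I would fix a cdga $A$ and unpack both sides. On the Maurer--Cartan side, by associativity of the tensor and the definition of the completed tensor product with $\Omega_{\bullet}$, we have
\[
\underline{MC}_{\bullet}(g)(A) \;=\; MC\bigl(g \,\hat{\otimes}\, (A\otimes\Omega_{\bullet})\bigr) \;=\; MC\bigl((g\otimes A)\,\hat{\otimes}\,\Omega_{\bullet}\bigr) \;=\; MC_{\bullet}(g\otimes A).
\]
Using the finite-dimensionality assumption on $\overline{C}$ together with the standard identification recalled in Section~3.1 (the external tensor product with $A$ commutes with $Hom_{\Sigma}(\overline{C},-)$), there is an isomorphism of complete dg Lie algebras $g\otimes A \cong Hom_{\Sigma}(\overline{C},Q\otimes_e A)$. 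Next I would apply Theorem 2.13 with the target properad $Q$ replaced by $Q\otimes_e A$: the hypotheses are preserved because tensoring componentwise with the cdga $A$ keeps $Q\otimes_e A$ a dg properad, so we obtain a natural isomorphism of simplicial sets
\[
MC_{\bullet}\bigl(Hom_{\Sigma}(\overline{C},Q\otimes_e A)\bigr) \;\cong\; Map_{\mathcal{P}}(P_{\infty},Q\otimes_e A).
\]
Finally, using the explicit simplicial resolution $(Q\otimes_e A)^{\Delta^{\bullet}} = Q\otimes_e(A\otimes\Omega_{\bullet})$ constructed in Proposition 2.6 (combined with the commutation of $\otimes_e A$ with Sullivan's $A_{PL}$), the right hand side is exactly $\underline{Map}(P_{\infty},Q)(A)$.

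Chaining these three identifications gives a natural weak equivalence of simplicial presheaves $\underline{MC}_{\bullet}(g) \simeq \underline{Map}(P_{\infty},Q)$. By Remark 4.6, two simplicial presheaves related by a pointwise weak equivalence define the same object in $Ho(Aff_{\mathcal{C}}^{\wedge})$, so they represent the same stack. For the second assertion, I would simply invoke the computation of Section 4.3 culminating in $\underline{Map}(P_{\infty},Q)\simeq \mathbb{R}\underline{Spec}_{C(P_{\infty},Q)}$ and transport it across the above equivalence.

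The routine parts are the algebraic manipulations with tensor products. The one step that deserves real care is the naturality in $A$: one must check that the isomorphism of Theorem 2.13, whose proof uses the convolution Lie structure together with a passage through completion via Lemma 2.15, is genuinely functorial in the target properad, so that replacing $Q$ by $Q\otimes_e A$ yields maps compatible with cdga morphisms $A\to A'$. This is where I expect the main (mild) obstacle to lie, but it reduces to tracking the construction of $\varphi_{\bullet}$ in the proof of Theorem 2.13 and observing that all of its constituents, namely the convolution product, the filtration by arity, and the completion, are preserved by componentwise tensoring with morphisms of cdgas.
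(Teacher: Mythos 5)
Your proposal is correct and is essentially the paper's own argument: the paper likewise extends the isomorphism $MC_{\bullet}(Hom_{\Sigma}(\overline{C},Q))\cong Map(P_{\infty},Q)$ to an isomorphism $MC_{\bullet}(Hom_{\Sigma}(\overline{C},Q)\hat{\otimes}A)\cong Map(P_{\infty},Q\otimes_e A)$ natural in $A$, concludes an isomorphism of simplicial presheaves, and imports representability by $C(P_{\infty},Q)$ from the computation carried out just before the corollary. The only caveat is that the corollary is stated for an arbitrary quasi-free $P_{\infty}=(\mathcal{F}(s^{-1}C),\partial)$ with $g$ a complete $L_{\infty}$-algebra, so the correct input is Theorem 2.17 together with Proposition 2.19 and the completion step of Lemma 2.15 (the identification $g\otimes A\cong Hom_{\Sigma}(\overline{C},Q\otimes_e A)$ only holds after completing, since the tensor product does not commute with the infinite product over arities), rather than Theorem 2.13 with its finite-dimensionality hypothesis.
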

The isomorphism between $MC_{\bullet}(Hom_{\Sigma}(C,Q))$ and $Map(P_{\infty},Q)$
extends to an isomorphism between $MC_{\bullet}(Hom_{\Sigma}(C,Q)\hat{\otimes}A)$ and
$Map(P_{\infty},Q\otimes_e A)$ which is natural in $A$, hence defining an isomorphism of simplicial presheaves.
Every simplicial presheaf isomorphic to a stack is a stack.
Let us note that we already knew from the results of \cite{Yal3} that the simplicial Maurer-Cartan functor forms a prestack (it preserves quasi-isomorphisms of cdgas).

\begin{rem}
Algebraic structures with scalar products and traces need more general classes of graphs, which allows loops and
wheels: cyclic operads, modular operads, wheeled operads, wheeled properads, wheeled props.
Cyclic operads have been introduced in \cite{GK1}, modular operads in \cite{GK2}.
Wheeled operads and props have been studied in \cite{MMS}.
The usual transfer of model structure from $\Sigma$-biobjects does not apply in these cases.
However, this difficulty has been recently got around in \cite{BB} via the formalism of polynomial monads.
More precisely, the authors proved a transfer of model category structure for algebras over
a tame polynomial monad in any compactly generated monoidal model category satisfying the monoid axiom.
For polynomial monads which are not tame, one can get a model structure by imposing restrictions on the
base category. For instance, when working in cochain complexes over a field of characteristic zero,
algebras over any polynomial monad possess a transferred model structure. This covers operads, cyclic operads,
modular operads, $\frac{1}{2}$-props, dioperads, properads, props, wheeled operads, wheeled properads, wheeled props.
\end{rem}

\subsection{The derived stack structure in the non-affine case}

In the derived algebraic geometry context, the derived stack $\underline{Map}(P_{\infty},Q)$ is not affine anymore wether the $Q(m,n)$ are not finite dimensional vector spaces.
However, we would like such a stack to be geometric under the appropriate assumptions for $Q$. A way to do this is to resolve it by a derived Artin $n$-hypergroupoid and apply the main result
of \cite{Pri}, which states an equivalence of $\infty$-categories between $n$-geometric derived Artin stacks and derived Artin $n$-hypergroupoids.
A derived Artin $n$-hypergroupoid is a smooth $n$-hypergroupoid object in the category of affine derived schemes. Precisely, it is a Reedy fibrant simplicial object $X_{\bullet}$ in the category $Aff$
of derived affine schemes such that the partial matching maps
\[
Hom_{sSet}(\Delta^m,X)\rightarrow Hom_{sSet}(\Lambda_k^m,X)
\]
are smooth surjections for all integers $k$, $m$, and weak equivalences for all $k$ and $m>n$.

Now let $P_{\infty}=(\mathcal{F}(s^{-1}C),\partial)$ be a cofibrant quasi-free prop in non-positively graded cochain complexes, and $Q$ be a prop such that for every integers $m$, $n$, the $Q(m,n)$ are perfect complexes with a given finite amplitude. Then, by definition of the degree in the external hom $Hom_{\Sigma}(C,Q)$, the dg complete $L_{\infty}$-algebra $g=Hom_{\Sigma}(C,Q)$
is bounded below by a certain integer $-n$. We know that the simplicial presheaves $\underline{MC}(g)$ and $\underline{Map}(P_{\infty},Q)$ are isomorphic. Moreover, according to the results of Getzler \cite{Get}
and Berglund \cite{Ber}, the natural inclusion of Kan complexes $\gamma_{\bullet}(g)\hookrightarrow MC_{\bullet}(g)$ (where $\gamma_{\bullet}(g)$ is the nerve of $g$ in the sense of Getzler) is a weak
equivalence, hence giving rise to a weak equivalence of simplicial presheaves
\[
\underline{\gamma}_{\bullet}(g)\stackrel{\sim}{\hookrightarrow} \underline{MC}_{\bullet}(g)\cong \underline{Map}(P_{\infty},Q)
\]
where $\underline{\gamma}_{\bullet}(g)(A)=\gamma_{\bullet}(g\hat{\otimes}A)$ for any cdga $A$.
According to \cite{Get}, the fact that $g$ is concentrated in degrees $[-n;+\infty[$ implies that its nerve $\gamma_{\bullet}(g)$ is an $n$-hypergroupoid in the sense of Duskin (which implies
to be an $n$-hypergroupoid in the weaker sense described above). So we expect the left-hand term of the weak equivalence above to give rise, under appropriate changes, to a derived
Artin $n$-hypergroupoid resolving $\underline{Map}(P_{\infty},Q)$.

\section{Examples}

In order to clarify our explanations, we will use the presentation of props by generators and relations.
Since there is a free prop functor as well as a notion of ideal in a prop,
such a presentation has a meaning and always exists.

\subsection{Homotopy Frobenius bialgebras and Poincaré Duality}

\begin{defn}
A (differential graded) Frobenius algebra is a unitary dg commutative associative algebra of
finite dimension $A$ endowed with a symmetric non-degenerated bilinear form $<.,.>:A\otimes A\rightarrow \mathbb{K}$
which is invariant with respect to the product, i.e $<xy,z>=<x,yz>$.
\end{defn}
A topological instance of such kind of algebra is the cohomology ring (over a field) of
a Poincaré duality space.
There is also a notion of Frobenius bialgebra:
\begin{defn}
A differential graded Frobenius bialgebra of degree $m$ is a triple $(B,\mu,\Delta)$ such that:

(i) $(B,\mu)$ is a dg commutative associative algebra;

(ii) $(B,\Delta)$ is a dg cocommutative coassociative coalgebra with $deg(\Delta)=m$;

(iii) the map $\Delta:B\rightarrow B\otimes B$ is a morphism of left $B$-module
and right $B$-module, i.e in Sweedler's notations
\begin{eqnarray*}
\sum_{(x.y)}(x.y)_{(1)}\otimes (x.y)_{(2)} & = & \sum_{(y)}x.y_{(1)}\otimes y_{(2)}\\
 & = & \sum_{(x)}(_1)^{m|x|}x_{(1)}\otimes x_{(2)}.y
\end{eqnarray*}.
This is called the Frobenius relation.
\end{defn}
We will describe the properad encoding such bialgebras in the next section.
The two definitions are strongly related. Indeed, if $A$ is a Frobenius algebra, then the pairing
$<.,.>$ induces an isomorphism of $A$-modules $A\cong A^*$, hence a map
\[
\Delta:A\stackrel{\cong}{\rightarrow} A^* \stackrel{\mu^*}{\rightarrow} (A\otimes A)^*\cong A^*\otimes
A^*\cong A\otimes A
\]
which equips $A$ with a structure of Frobenius bialgebra.
Conversely, one can prove that every unitary counitary Frobenius bialgebra gives rise to a Frobenius algebra,
which are finally two equivalent notions.

The properad $Frob^m$ of Frobenius bialgebras of degree $m$ is generated by
\[
\xymatrix @R=0.5em@C=0.75em@M=0em{
\ar@{-}[dr] & & \ar@{-}[dl] & & \ar@{-}[d] & \\
 & \ar@{-}[d] & & & \ar@{-}[dl] \ar@{-}[dr] & \\
 & & & & & }
\]
where the generating operation of arity $(1,2)$ is of degree $m$,
and the two generators are invariant under the action of $\Sigma_2$.
It is quotiented by the ideal generated by the following relations:
\begin{itemize}
\item[]Associativity and coassociativity
\[
\xymatrix @R=0.5em@C=0.75em@M=0em{
 \ar@{-}[dr] & & \ar@{-}[dr] & & \ar@{-}[dl] & &  \ar@{-}[dr] & & \ar@{-}[dl] & & \ar@{-}[dl]\\
 & \ar@{-}[dr] & &\ar@{-}[dl] & & - &  & \ar@{-}[dr] & &\ar@{-}[dl] & \\
 & & \ar@{-}[d] & & & & & & \ar@{-}[d] & & & \\
 & & & & & & & & & & \\
}
\hspace*{1cm}
\xymatrix @R=0.5em@C=0.75em@M=0em{
 & & \ar@{-}[d] & & & & & & \ar@{-}[d] & & & \\
 & &\ar@{-}[dl] \ar@{-}[dr]& & & - &  & &\ar@{-}[dl] \ar@{-}[dr] & & \\
 &\ar@{-}[dl] \ar@{-}[dr] & &\ar@{-}[dr] & & & & \ar@{-}[dr] \ar@{-}[dl]& & \ar@{-}[dr] & \\
 & & & & & & & & & & \\
}
\]

\item[]Frobenius relations
\[
\xymatrix @R=0.5em@C=0.75em@M=0em{
\ar@{-}[dr] & & \ar@{-}[dl] & & \ar@{-}[d] & & \ar@{-}[d] & \\
 & \ar@{-}[d] & & - & \ar@{-}[dr] & & \ar@{-}[dl] \ar@{-}[dr] & \\
 & \ar@{-}[dl] \ar@{-}[dr] & & & & \ar@{-}[d] & & \ar@{-}[d] \\
 & & & & & & & \\
}
\hspace*{1cm}
\xymatrix @R=0.5em@C=0.75em@M=0em{
\ar@{-}[dr] & & \ar@{-}[dl] & & &\ar@{-}[d] & & \ar@{-}[d] \\
 & \ar@{-}[d] & & - & & \ar@{-}[dl] \ar@{-}[dr]& & \ar@{-}[dl] \\
 & \ar@{-}[dl] \ar@{-}[dr] & & & \ar@{-}[d] & & \ar@{-}[d] & \\
 & & & & & & & \\
}
\]
\end{itemize}
In the unitary and counitary case, one adds a generator for the unit, a generator for the counit and the necessary
compatibility relations with the product and the coproduct. We note the corresponding properad $ucFrob^m$.
We refer the reader to \cite{Koc} for a detailed survey about the role of these operations
and relations in the classification of two-dimensional topological quantum field theories.
\begin{example}
Let $M$ be an oriented connected closed manifold of dimension $n$. Let $[M]\in H_n(M;\mathbb{K})\cong H^0(M;\mathbb{K})\cong \mathbb{K}$
be the fundamental class of $[M]$. Then the cohomology ring $H^*(M;\mathbb{K})$ of $M$ inherits a structure of commutative
and cocommutative Frobenius bialgebra of degree $n$ with the following data:

(i)the product is the cup product
\begin{eqnarray*}
\mu: H^kM\otimes H^lM & \rightarrow & H^{k+l}M \\
 x\otimes y & \mapsto & x\cup y
\end{eqnarray*}

(ii)the unit $\eta:\mathbb{K}\rightarrow H^0M\cong H_nM$ sends $1_{\mathbb{K}}$ on the fundamental class $[M]$;

(iii)the non-degenerate pairing is given by the Poincaré duality:
\begin{eqnarray*}
\beta: H^kM\otimes H^{n-k}M & \rightarrow & \mathbb{K}\\
 x\otimes y & \mapsto & <x\cup y,[M]>
\end{eqnarray*}
i.e the evaluation of the cup product on the fundamental class;

(iv) the coproduct $\Delta=(\mu\otimes id)\circ (id\otimes \gamma)$ where
\[
\gamma: \mathbb{K}\rightarrow \bigoplus_{k+l=n}H^kM\otimes H^lM
\]
is the dual copairing of $\beta$, which exists since $\beta$ is non-degenerate;

(v)the counit $\epsilon=<.,[M]>:H^nM\rightarrow \mathbb{K}$ i.e the evaluation on the fundamental class.
\end{example}
An important problem in the study of the Poincaré duality phenomenon is to understand what happens at the cochain level,
where there is a family of finer operations gathered into the structure of a Frobenius bialgebra up to homotopy,
i.e algebras over a resolution $ucFrob_{\infty}^n$ of $ucFrob^n$. These operations induce the Poincaré duality at the
cohomology level. The properads $Frob$ and $ucFrob$ are proved
to be Koszul respectively by Corollary 2.10.1 and Theorem 2.10.2 in \cite{CMW}, hence there are explicit resolutions.
We thus get a moduli stack of "cochain-level Poincaré duality" $ucFrob^n_{\infty}\{C^*M\}$ on an $n$-dimensional
compact oriented manifold $M$.
To conclude, let us note that a variant of Frobenius bialgebras called special Frobenius bialgebra is closely related to open-closed topological field theories \cite{LP} and conformal field theories \cite{FFRS}.

\subsection{Homotopy involutive Lie bialgebras in string topology}

Lie bialgebras originate from mathematical physics. In the study of certain dynamical systems whose phase spaces
form Poisson manifolds, certain transformation groups acting on these phase spaces are not only Lie
groups but Poisson-Lie groups. A Poisson-Lie group is a Poisson manifold with a Lie group structure such that
the group operations are morphisms of Poisson manifolds, that is, compatible with the Poisson bracket on the
ring of smooth functions. The tangent space at the neutral element is then more than a Lie algebra.
It inherits a ``cobracket'' from the Poisson bracket on smooth functions which satisfies some compatibility
relation with the Lie bracket. Lie bialgebras are used to build quantum groups and appeared for the first time
in the seminal work of Drinfeld \cite{Dri1}. We will come back to this quantization process later in 5.6.

The properad $BiLie$ encoding Lie bialgebras is generated by
\[
\xymatrix @R=0.5em@C=0.75em@M=0em{
1\ar@{-}[dr] & & 2\ar@{-}[dl] & & \ar@{-}[d] & \\
 & \ar@{-}[d] & & & \ar@{-}[dl] \ar@{-}[dr] & \\
 & & & 1 & & 2 }
\]
where the two generators comes with the signature action of $\Sigma_2$, that is,
they are antisymmetric.
It is quotiented by the ideal generated by the following relations
\begin{itemize}
\item[]Jacobi
\[
\xymatrix @R=0.5em@C=0.75em@M=0em{
 1\ar@{-}[dr] & & 2\ar@{-}[dr] & & 3\ar@{-}[dl]\\
 & \ar@{-}[dr] & &\ar@{-}[dl] &  \\
 & & \ar@{-}[d] & & \\
 & & & & \\
}
+
\xymatrix @R=0.5em@C=0.75em@M=0em{
 3\ar@{-}[dr] & & 1\ar@{-}[dr] & & 2\ar@{-}[dl]\\
 & \ar@{-}[dr] & &\ar@{-}[dl] &  \\
 & & \ar@{-}[d] & & \\
 & & & & \\
}
+
\xymatrix @R=0.5em@C=0.75em@M=0em{
 2\ar@{-}[dr] & & 3\ar@{-}[dr] & & 1\ar@{-}[dl]\\
 & \ar@{-}[dr] & &\ar@{-}[dl] &  \\
 & & \ar@{-}[d] & & \\
 & & & & \\
}
\]
\item[]co-Jacobi
\[
\xymatrix @R=0.5em@C=0.75em@M=0em{
 & & \ar@{-}[d] & & \\
 & &\ar@{-}[dl] \ar@{-}[dr]& & \\
 &\ar@{-}[dl] \ar@{-}[dr] & &\ar@{-}[dr] & \\
 1 & & 2 & & 3  \\
}
+
\xymatrix @R=0.5em@C=0.75em@M=0em{
 & & \ar@{-}[d] & & \\
 & &\ar@{-}[dl] \ar@{-}[dr]& & \\
 &\ar@{-}[dl] \ar@{-}[dr] & &\ar@{-}[dr] & \\
 3 & & 1 & & 2 \\
}
+
\xymatrix @R=0.5em@C=0.75em@M=0em{
 & & \ar@{-}[d] & & \\
 & &\ar@{-}[dl] \ar@{-}[dr]& & \\
 &\ar@{-}[dl] \ar@{-}[dr] & &\ar@{-}[dr] & \\
 2 & & 3 & & 1 \\
}
\]
\item[]The cocycle relation
\[
\xymatrix @R=0.5em@C=0.75em@M=0em{
1\ar@{-}[dr] & & 2\ar@{-}[dl]  \\
 & \ar@{-}[d] & \\
 & \ar@{-}[dl] \ar@{-}[dr] &  \\
 1 & & 2 \\
}
-
\xymatrix @R=0.5em@C=0.75em@M=0em{
 1\ar@{-}[d] & & 2\ar@{-}[d] & \\
 \ar@{-}[dr] & & \ar@{-}[dl] \ar@{-}[dr] & \\
 & \ar@{-}[d] & & \ar@{-}[d] \\
 & 1 & & 2 \\
}
+
\xymatrix @R=0.5em@C=0.75em@M=0em{
 2\ar@{-}[d] & & 1\ar@{-}[d] & \\
 \ar@{-}[dr] & & \ar@{-}[dl] \ar@{-}[dr] & \\
 & \ar@{-}[d] & & \ar@{-}[d] \\
 & 1 & & 2 \\
}
-
\xymatrix @R=0.5em@C=0.75em@M=0em{
 &1\ar@{-}[d] & & 2\ar@{-}[d] \\
 & \ar@{-}[dl] \ar@{-}[dr]& & \ar@{-}[dl] \\
 \ar@{-}[d] & & \ar@{-}[d] & & \\
 1& &2 & \\
}
+
\xymatrix @R=0.5em@C=0.75em@M=0em{
 &2\ar@{-}[d] & & 1\ar@{-}[d] \\
 & \ar@{-}[dl] \ar@{-}[dr]& & \ar@{-}[dl] \\
 \ar@{-}[d] & & \ar@{-}[d] & & \\
1 & &2 & \\
}
\]
\end{itemize}
The cocycle relation means that the coLie cobracket of a Lie bialgebra $g$ is a cocycle in the Chevalley-Eilenberg
complex $C^*_{CE}(g,\Lambda^2g)$, where $\Lambda^2g$ is equipped with the structure of
$g$-module induced by the adjoint action.

Involutive Lie bialgebras appeared a few years after the birth of string topology \cite{CS1} in \cite{CS2}.
In the properad of involutive Lie bialgebras $BiLie^{\diamond}$, one adds the involutivity relation
\[
\xymatrix @R=0.5em@C=0.75em@M=0em{
 & \ar@{-}[d] & \\
 & \ar@{-}[dl] \ar@{-}[dr] & \\
 \ar@{-}[dr] & & \ar@{-}[dl] \\
 & \ar@{-}[d] & \\
 & & \\
}
\]
The string homology of a smooth manifold  $M$ is defined as the (reduced) equivariant homology of the free loop
space $LM$ of $M$, noted $H_*^{S^1}(LM)$. The word equivariant refers here to the action of the circle on loops
by reparameterization. According to \cite{CS2}, the string homology of a smooth manifold forms an involutive Lie
bialgebra. Let us note that for an $n$-dimensional manifold, the bracket and the cobracket of this
structure are of degree $2-n$. We denote this graded version by $BiLie^{\diamond,2-n}$.
It is conjectured in \cite{CS2} that such a structure is induced in homology by a family of string operations
defined at the chain level, on the complex of $S^1$-equivariant chains $C_*^{S^1}(LM)$.
Such operations should then form a structure of involutive Lie bialgebra up to homotopy, that is,
a $BiLie^{\diamond,2-n}_{\infty}$-algebra structure on $C_*^{S^1}(LM)$. If the moduli space
$BiLie^{\diamond,2-n}_{\infty}\{C_*^{S^1}(LM)\}$ is not empty, then we get a moduli stack of chain-level string operations
on $M$. Moreover, the properad of involutive Lie bialgebras is Koszul by Theorem 2.8 of \cite{CMW}, with the properad
of Frobenius bialgebras as Koszul dual, hence a small explicit model for $BiLie^{\diamond,2-n}_{\infty}$.

\subsection{Geometric structures on graded manifolds}

The category of formal $\mathbb{Z}$-graded pointed manifolds is the category opposite to the category whose objects
are isomorphism classes of completed finitely generated free graded commutative algebras equipped with the adic topology.
The morphisms are isomorphisms classes of continuous morphisms of topological commutative algebras.
The choice of a representant in a given class is a choice of a local coordinates system.
Conversely, given a graded vector space $V$, there is an associated pointed formal graded manifold
$\mathcal{V}$ whose structure sheaf $\mathcal{O}_{\mathcal{V}}$ (actually just a skyscraper with a single stalk
over the distinguished point) is isomorphic to $\hat{S(V^*)}$ (the completed symmetric algebra on the dual of $V$).
The choice of such an isomorphism is a choice of local coordinates system, and if $\{v_{\alpha}\}_{\alpha\in I}$
is a basis of $V$, then the structure sheaf can be identified with the commutative ring of formal power series $\mathbb{K}[[v_{\alpha^*}]]$. The tangent sheaf is the $\mathcal{O}_{\mathcal{V}}$-module of derivations
$\mathcal{T}_V=Der(\mathcal{O}_{\mathcal{V}})$ and the sheaf of polyvector fields is the exterior algebra
$\Lambda^{\bullet}\mathcal{T}_V$.

Important examples of graded manifolds concentrated in degree 0 are the $\mathbb{R}^d$, for any integer $d$.
A famous sort of geometric structure is the notion of Poisson manifold.
A finite dimensional Poisson manifold $M$ is a finite dimensional manifold equipped with a Poisson structure
on its algebra of smooth functions $\mathcal{C}^{\infty}(M)$.
In the case $M=\mathbb{R}^d$, there are several possible Poisson brackets, all of the form
\[
\{f,g\}=\sum_{i,j=1}^d \pi_{ij}(x)\frac{\partial f}{\partial x_i}\frac{\partial g}{\partial x_j}
\]
where the $\pi_{ij}$ are smooth functions satisfying the required relations to get a Poisson bracket.
Let us define formal variables $\psi_i=\frac{\partial}{\partial x_i}$ of degree $1$, then the graded algebra
of polyvector fields on $\mathbb{R}^d$ is defined by
\[
T_{poly}(\mathbb{R}^d)=\mathcal{C}^{\infty}(\mathbb{R}^d)[[\psi_1,...,\psi_d]]
\subset \mathbb{R}[[x_1,...,x_d,\psi_1,...,\psi_d]]
\]
where the $x_i$ are degree $0$ variables forming a basis of $\mathbb{R}^d$.
This graded space is equipped with a Gerstenhaber bracket of degree $-1$, the Schouten bracket, induced by the formulae $[x_i,x_j]=0$,
$[\psi_i,\psi_j]=0$ and $[x_i,\psi_j]=\delta_i^j$ (where $\delta_i^j$ is the Kronecker symbol).

The idea of presenting (formal germs of) geometric structures as algebras over a prop appeared in \cite{Mer1}.
For this, the author introduce Lie $n$-bialgebras, a variant of Lie bialgebras for which the Lie bracket is in degree $n$.
Such bialgebras are parameterized by a properad $BiLie^n$ which is Koszul, hence an explicit resolution.
In the case $n=1$, this resolution $BiLie^1_{\infty}$ is detailed in \cite{Mer1} and the main result is the following:
\begin{thm}(see \cite{Mer3}, Proposition 3.4.1 and Corollary 3.4.2)

(1)The $BiLie^1_{\infty}$-algebra structures on a graded vector space $V$ are in bijection with degree $1$ elements of the graded Lie algebra of polyvector fields
$\Lambda^{\bullet}\mathcal{T}_V$ vanishing at $0$ and such that $[\pi,\pi]=0$ (that is, Poisson structures on the formal graded manifold corresponding to $V$);

(2)Consequently, the $BiLie^1_{\infty}$-algebra structures on $\mathbb{R}^d$ are in bijection with Poisson structures on $\mathbb{R}^d$ vanishing at $0$.
\end{thm}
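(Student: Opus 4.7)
The strategy is to reduce both parts to a computation of Maurer--Cartan elements in a specific convolution dg Lie algebra, and then to identify that Lie algebra explicitly with the polyvector fields $(\Lambda^{\bullet}\mathcal{T}_V,[-,-]_S)$ equipped with the Schouten bracket. Since $BiLie^1$ is Koszul, one has the minimal resolution $BiLie^1_{\infty}=\Omega(C)$ where $C$ is (a degree-shifted version of) the Koszul dual coproperad of $BiLie^1$. By the definition of an algebra over a properad, a $BiLie^1_{\infty}$-structure on $V$ is the same thing as a properad morphism $\Omega(C)\to End_V$, and by Theorem 2.12 of the paper such morphisms are in bijection with Maurer--Cartan elements of the convolution dg Lie algebra
\[
g_V := Hom_{\Sigma}(\overline{C},End_V).
\]
So the problem reduces to constructing a dg Lie isomorphism between $g_V$ and the subalgebra of $\Lambda^{\bullet}\mathcal{T}_V$ consisting of polyvector fields vanishing at the distinguished point.

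The main step is to identify $g_V$ with polyvector fields. First I would use the finite-dimensional computation of the Koszul dual of $BiLie^1$: by Koszul duality the generators of $C$ in arity $(m,n)$ are concentrated in a single internal degree, and the $\Sigma_m\times\Sigma_n$-equivariance combined with the fact that both generators of $BiLie^1$ are (anti)symmetric forces $\overline{C}(m,n)$ to be a one-dimensional sign representation in suitable bidegrees. Consequently,
\[
Hom_{\Sigma}(\overline{C}(m,n),Hom(V^{\otimes m},V^{\otimes n})) \;\cong\; Hom(\Lambda^m V, \Lambda^n V)
\]
after taking the appropriate invariants and shifts. Taking the product over all $(m,n)$ and reorganizing, I would show that $g_V$ is naturally isomorphic as a graded vector space to $\widehat{S(V^*)}\otimes \Lambda^{\bullet}V$, which is exactly the space of (formal, vanishing at $0$) polyvector fields on $\mathcal{V}$. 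The second, harder, step is to check that the convolution Lie bracket coincides with the Schouten bracket under this identification; this requires unwinding the infinitesimal coproduct of $C$ and the infinitesimal product of $End_V$ graph by graph and matching them term by term with the derivation formulas $[x_i,\psi_j]=\delta_i^j$, $[x_i,x_j]=[\psi_i,\psi_j]=0$. This is the core computation and is where all the combinatorics of the $1$-shifted Lie bialgebra Koszul dual is used; I would model it on Merkulov's original argument in \cite{Mer3}.

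Once this isomorphism of dg Lie algebras is in hand, the remaining work is direct. A Maurer--Cartan element $\pi\in g_V^1$ corresponds, via the identification above, to a degree $1$ polyvector field $\pi\in\Lambda^{\bullet}\mathcal{T}_V$ vanishing at $0$, and the Maurer--Cartan equation $d\pi+\tfrac12[\pi,\pi]=0$ becomes $[\pi,\pi]_S=0$ (the differential vanishes since $V$ carries no intrinsic differential, or is absorbed into the bivector if $V$ is a cochain complex), which is precisely the defining equation of a Poisson structure on the formal graded manifold $\mathcal{V}$. This proves part (1). For part (2), specializing to $V=\mathbb{R}^d$ concentrated in degree $0$, the polyvector field identification above becomes the classical description $T_{poly}(\mathbb{R}^d)=\mathbb{R}[[x_1,\dots,x_d,\psi_1,\dots,\psi_d]]$, and degree $1$ Maurer--Cartan elements vanishing at $0$ are exactly bivectors $\pi=\sum\pi_{ij}(x)\psi_i\psi_j$ with $\pi_{ij}(0)=0$ satisfying $[\pi,\pi]_S=0$, i.e.\ Poisson structures on $\mathbb{R}^d$ vanishing at the origin. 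The main obstacle throughout is the bracket-level identification in step two; every other part is formal once the Koszul duality and convolution Lie algebra apparatus of Section 2 is in place.
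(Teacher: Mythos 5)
The paper offers no proof of this statement --- it is imported verbatim from Merkulov (\cite{Mer3}) --- but your sketch reconstructs exactly the argument of that cited source: use Koszulness of $BiLie^1$ to write $BiLie^1_{\infty}=\Omega(C)$, identify $BiLie^1_{\infty}$-structures with Maurer--Cartan elements of the convolution Lie algebra $Hom_{\Sigma}(\overline{C},End_V)$ (the hypotheses of the paper's Theorem 2.13 hold since each $\overline{C}(m,n)$ is one-dimensional), identify that Lie algebra with formal polyvector fields on $\mathcal{V}$ vanishing at the origin, and match the convolution bracket with the Schouten bracket so that the Maurer--Cartan equation becomes $[\pi,\pi]=0$. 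This is the correct and standard route; the one step you defer (the bracket-level identification) is precisely the content of Merkulov's Proposition 3.4.1, so there is nothing in the paper to compare it against.
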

Let us note that this result actually encompass all Poisson structures, because any Poisson structure on $V$ not vanishing at $0$ corresponds to a Poisson structure on
$V\oplus\mathbb{K}$ vanishing at $0$.
We get a moduli stack of Poisson structures on a formal graded manifold $BiLie^1_{\infty}\{V\}$.

\subsection{Deformation quantization of graded Poisson manifolds}

Let $M$ be a finite dimensional Poisson manifold.
In this situation, one considers the complex of polydifferential operators $D_{poly}(M)$, which is a sub-Lie algebra of the Hochschild complex $CH^*(\mathcal{C}^{\infty}(M),\mathcal{C}^{\infty}(M))$,
and the complex of polyvector fields
\[
T_{poly}(M)=\left( \bigoplus_{k\geq 0}\bigwedge^k\Gamma T(M)[-k] \right)[1]
\]
where $\Gamma T(M)$ is the space of sections of the tangent bundle on $M$.
To define a Poisson structure on $M$ is equivalent to define a bivector $\Pi\in\bigwedge^2\Gamma T(M)$, the Poisson bivector, and set $\{f,g\}=\Pi(df,dg)$.

We recall the fundamental formality theorem of Kontsevich \cite{Ko2}:
\begin{thm}(Kontsevich \cite{Ko2})
There exists a $L_{\infty}$-quasi-isomorphism $T_{poly}(M)\stackrel{\sim}{\rightarrow} D_{poly}(M)$
realizing in particular the isomorphism of the Hochschild-Kostant-Rosenberg theorem.
\end{thm}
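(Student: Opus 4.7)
The plan is to construct the $L_{\infty}$-morphism $\mathcal{U}:T_{poly}(M)\to D_{poly}(M)$ by producing its Taylor coefficients $\mathcal{U}_n:\Lambda^n T_{poly}(M)\to D_{poly}(M)[1-n]$ as explicit Kontsevich-style graph integrals, then verify the $L_{\infty}$-relations by a Stokes theorem argument on compactified configuration spaces, and finally observe that $\mathcal{U}_1$ recovers the Hochschild-Kostant-Rosenberg map essentially by construction. A preliminary reduction, via a Fedosov/formal-geometry globalization (or equivalently Gelfand-Fuks descent along the frame bundle), restricts the problem to $M=\mathbb{R}^d$, so the core of the proof is local.

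On $\mathbb{R}^d$, for each $n\geq 1$ and $m\geq 0$ I would consider the set $G_{n,m}$ of admissible directed graphs with $n$ vertices of the first type (to be placed in the open upper half-plane $\mathbb{H}$), $m$ ordered vertices of the second type (placed on $\mathbb{R}$), a fixed edge ordering, no edges originating at second-type vertices, no multiple edges and no loops, and exactly $2n+m-2$ edges. To each such $\Gamma$ I attach two pieces of data: a multilinear operator $B_{\Gamma}:T_{poly}(\mathbb{R}^d)^{\otimes n}\to D_{poly}(\mathbb{R}^d)$ obtained by placing a polyvector field $\gamma_i$ at the $i$-th first-type vertex and contracting along the edges with partial derivatives acting on the $m$ test functions sitting at the second-type vertices; and a weight $w_{\Gamma}\in\mathbb{R}$ defined as the integral over the Fulton-MacPherson/Kontsevich compactified configuration space $\overline{C}_{n,m}(\mathbb{H})$ of the ordered wedge product of closed angle $1$-forms $d\phi_e$, one per edge, where $\phi_e$ is the hyperbolic angle associated to $e$. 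I then set $\mathcal{U}_n=\sum_{m,\Gamma\in G_{n,m}}w_{\Gamma}B_{\Gamma}$.

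To establish the $L_{\infty}$-morphism equations for $\mathcal{U}=\{\mathcal{U}_n\}_{n\geq 1}$, I would apply Stokes' theorem to the integrand on $\overline{C}_{n,m}(\mathbb{H})$: because each angle form is closed, the bulk contribution vanishes and the identity reduces to a sum of boundary integrals over the codimension-one strata of the compactification. These strata are of two geometric types: (a) a subset $S\subsetneq\{1,\dots,n\}$ of first-type points collapses to a single interior point in $\mathbb{H}$, producing a factorization of the integral as a product indexed by the collapse and its complement, which I match against the term of the $L_{\infty}$-relation involving the Schouten-Nijenhuis bracket of polyvector fields; (b) a consecutive subset of second-type points together with some first-type points collapses to a single real point, producing a factorization that I match against the Gerstenhaber-bracket/Hochschild-differential term. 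A careful orientation bookkeeping turns the Stokes identity into exactly the desired $L_{\infty}$-relation.

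The main obstacle is the vanishing of the remaining boundary strata, namely those in which a subset $S$ of first-type points with $|S|\geq 2$ collapses to an interior point of $\mathbb{H}$ and the collapsed cluster is not of the form handled by case (a). These are not paired with any term in the $L_{\infty}$-equation, so the whole scheme fails unless their contribution is zero. This is Kontsevich's vanishing lemma, and I expect it to be by far the hardest step: after modding out translations and rescalings of $S$ one gets a form on a manifold of strictly smaller dimension than the form's degree, which forces the integral to vanish except for a special "Lie" combinatorial type of restricted graph, and for those one invokes a further integration-by-parts argument on the auxiliary configuration space. Once this lemma is secured, inspection at $n=1$ (where only the empty graph with its $m$ second-type vertices contributes) shows that $\mathcal{U}_1(\gamma)(f_1,\dots,f_k)=\gamma(df_1,\dots,df_k)$, i.e.\ the HKR map, and the globalization step upgrades $\mathcal{U}$ from $\mathbb{R}^d$ to a general Poisson manifold $M$, completing the proof.
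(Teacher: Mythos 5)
This statement is quoted by the paper from Kontsevich's work \cite{Ko2} and is not proved in the text, so there is no internal argument to compare yours against. Your proposal is a faithful outline of Kontsevich's original proof: reduction to $\mathbb{R}^d$ by formal-geometry globalization, the Taylor coefficients $\mathcal{U}_n$ as sums of graph operators $B_{\Gamma}$ weighted by angle-form integrals over compactified configuration spaces, the Stokes argument identifying the two families of codimension-one boundary strata with the Schouten and Gerstenhaber/Hochschild terms of the $L_{\infty}$-relations, and the vanishing lemma for collapses of three or more interior points as the genuinely hard step. Two small points of precision: at $n=1$ the contributing graph is not ``empty'' but the corolla with one first-type vertex and $m$ edges to the $m$ second-type vertices (this is what yields the HKR map); and the vanishing for $|S|\geq 3$ is not a bare dimension count --- the relevant stratum contributes only when the number of internal edges equals $2|S|-3=\dim C_S$, and it is precisely in that critical case that Kontsevich's trick (the argument doubling one of the angle functions) is needed. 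With those caveats your sketch is the standard and correct route to the theorem.
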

Note that a $L_{\infty}$-quasi-isomorphism of dg Lie algebras is actually equivalent to a chain of
quasi-isomorphisms of dg Lie algebras.
Quantization of Poisson manifolds is then obtained as a consequence of this formality theorem combined with
the following result:
\begin{thm}
A $L_{\infty}$-quasi-isomorphism of nilpotent dg Lie algebras induces a bijection between the corresponding
moduli sets of Maurer-Cartan elements.
\end{thm}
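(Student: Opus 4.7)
The plan is to promote the $L_\infty$-quasi-isomorphism to a map of simplicial Maurer-Cartan sets, show it is a weak equivalence of Kan complexes, and then extract the bijection on moduli sets by passing to $\pi_0$.

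First, I would recall that the simplicial Maurer-Cartan construction $MC_\bullet(-) = MC(- \hat{\otimes} \Omega_\bullet)$ is functorial with respect to $L_\infty$-morphisms, not just strict morphisms. Given an $L_\infty$-morphism $f = (f_k)_{k \geq 1}: g \to h$ with Taylor components $f_k: g^{\wedge k} \to h$, and a Maurer-Cartan element $\tau \in MC(g \hat{\otimes}\Omega_n)$, one defines
\[
f_*(\tau) = \sum_{k \geq 1}\frac{1}{k!} f_k(\tau^{\wedge k}),
\]
and the nilpotence hypothesis guarantees that this sum is finite (or at worst converges in the complete topology). A direct computation using the defining identities of an $L_\infty$-morphism shows that $f_*(\tau)$ satisfies the Maurer-Cartan equation in $h \hat{\otimes}\Omega_n$, and functoriality in the simplicial variable is automatic, so we obtain a simplicial map
\[
MC_\bullet(f): MC_\bullet(g) \longrightarrow MC_\bullet(h).
\]

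Second, I would invoke the fundamental result of Getzler (and reformulated by Berglund, cited earlier as Theorem 2.10 in the paper) that $MC_\bullet(-)$ is a homotopy invariant functor on complete $L_\infty$-algebras: it sends $L_\infty$-quasi-isomorphisms between complete (in particular nilpotent) $L_\infty$-algebras to weak equivalences of Kan complexes. The key technical input here is the decomposition of $\Omega_\bullet$ and a careful obstruction-theoretic lift argument along the filtration of $g$ and $h$ by the lower central series; nilpotence ensures that the filtration is finite, so the obstruction classes live in homology and vanish under the quasi-isomorphism assumption. This step is the real heart of the matter and the one I would flag as the main obstacle, although here we can simply import it from the literature.

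Third, having established that $MC_\bullet(f)$ is a weak equivalence of Kan complexes, I would take $\pi_0$. By the definition of the Maurer-Cartan moduli set recalled in Section 2.2 of the paper, one has
\[
\mathcal{MC}(g) = \pi_0 MC_\bullet(g), \qquad \mathcal{MC}(h) = \pi_0 MC_\bullet(h),
\]
so the induced map $\mathcal{MC}(f): \mathcal{MC}(g) \to \mathcal{MC}(h)$ is a bijection. This establishes the statement. Note that, under the nilpotence hypothesis, the homotopy relation on $MC(g)$ coincides with the gauge equivalence relation, so the bijection is compatible with the classical description of the moduli set of a nilpotent dg Lie algebra.
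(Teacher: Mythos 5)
Your proposal is correct, but note that the paper does not actually prove this statement: it is recalled as a classical ingredient of Kontsevich's deformation quantization (it is the Goldman--Millson/Kontsevich invariance theorem), so there is no internal proof to compare against. Your route --- push a Maurer--Cartan element forward along the Taylor components by $f_*(\tau)=\sum_k \frac{1}{k!}f_k(\tau^{\wedge k})$, observe that nilpotence makes the sum finite and that the $L_\infty$-morphism identities preserve the Maurer--Cartan equation, upgrade this to a simplicial map $MC_\bullet(g)\to MC_\bullet(h)$, invoke homotopy invariance of $MC_\bullet$ under $L_\infty$-quasi-isomorphisms, and pass to $\pi_0$ --- is the standard modern argument and is consistent with the framework of Section 2.2, where the paper asserts exactly this preservation property for complete $L_\infty$-algebras. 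It also correctly matches the classical moduli set, since for nilpotent (or complete) dg Lie algebras the $1$-simplex homotopy relation agrees with gauge equivalence, as the paper notes.

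One point deserves care in how you cite the literature. Berglund's theorem as quoted in the paper (Theorem 2.10) computes $\pi_{n+1}(MC_\bullet(g),\tau)$ at a chosen basepoint; by itself it says nothing about $\pi_0$, which is precisely what the present statement concerns, so deriving the bijection from that theorem alone would be circular. What you need is the full invariance statement (Getzler's nerve theorem together with the Dolgushev--Rogers/Goldman--Millson-type argument, or equivalently the inductive obstruction-theoretic proof along the lower central series, where nilpotence guarantees the filtration terminates and the five lemma transfers the quasi-isomorphism through each graded quotient). You do invoke the correct result and correctly flag it as the heart of the matter, so the proof is sound as a reduction to a known theorem --- which is exactly the status the paper itself gives to this statement.
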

For a graded Lie algebra $g$, we note $g[[\hbar]]_+=\bigoplus\hbar g^n[[\hbar]]$.
The Maurer-Cartan set $MC(T_{poly}(M)[[\hbar]]_+)$ is the set of Poisson algebra structures on $\mathcal{C}^{\infty}(M)[[\hbar]]$.
The Maurer-Cartan set $MC(D_{poly}(M)[[\hbar]]_+)$ is the set of $*_{\hbar}$-products, which are assocative products on $\mathcal{C}^{\infty}(M)[[\hbar]]$ of the form $a.b + B_1(a,b)t+...$ (i.e these products restrict to the usual commutative associative product on $\mathcal{C}^{\infty}(M)$).
The conclusion follows:
\[
\mathcal{MC}(T_{poly}(M)[[\hbar]]_+)\cong \mathcal{MC}(D_{poly}(M)[[\hbar]]_+),
\]
that is isomorphism classes of Poisson structures on $M$ are in bijection with equivalence classes of $*_{\hbar}$-products.

An important example of Poisson manifold is $M=\mathbb{R}^d$.
Poisson structures on $\mathbb{R}^d$ are precisely the Maurer-Cartan elements of the Schouten Lie algebra of polyvector fields $T_{poly}(\mathbb{R}^d)[1]$.
Moreover, the Lie group $Diff(\mathbb{R}^d)$ of diffeomorphisms of $\mathbb{R}^d$ is the gauge group of
this Lie algebra (and thus acts canonically on Poisson structures).
Kontsevich built explicit star products quantizing these Poisson structures, with formulae involving integrals
on compactification of configuration spaces and deeply related to the theory of multi-zeta functions \cite{Ko2}.

According to \cite{Mer3} and \cite{Mer4}, one can see Kontsevich deformation quantizations, and more generally deformation quantization
of Poisson structures on formal graded manifolds, as morphisms of dg props
\[
DefQ\rightarrow \hat{BiLie^{1,\circlearrowleft}_{\infty}}.
\]
The prop $DefQ$ is built such that the $DefQ$-algebra structures on a finite dimensional graded vector space $V$ correspond to the Maurer-Cartan elements of the dg Lie algebra $D_{poly}(V)$
of polydifferential operators on $\mathcal{O}_V$ (the structure sheaf of the formal graded manifold associated to $V$).
The prop $BiLie^{1,\circlearrowleft}_{\infty}$ is actually a wheeled prop, that is, a generalization of the notion of prop for which one allows graphs with loops and wheels.
We refer the reader to \cite{MMS} for more details. It is a wheeled extension of the prop $BiLie^1_{\infty}$ encoding Poisson structures.
The prop $\hat{BiLie^{1,\circlearrowleft}_{\infty}}$ appearing as the target of the morphism above is the completion of $BiLie^1_{\infty}$ with respect to the filtration
by the number of vertices. To define a composite
\[
DefQ\rightarrow \hat{BiLie^{1,\circlearrowleft}_{\infty}}\rightarrow End_V
\]
amounts then to consider a Maurer-Cartan element of $D_{poly}(V)[[h]]$ associated to a Poisson structure on the formal graded manifold corresponding to $V$, that is,
a deformation quantization of this Poisson structure.
Let us note that a generic $BiLie^{1,\circlearrowleft}_{\infty}$-algebra structure on $V$ corresponds actually to a more general notion of wheeled Poisson structure, whose geometric
meaning is not fully understood yet. Usual Poisson structures are special cases of such algebras.

\subsection{Quantization functors}

\subsubsection{A candidate for the moduli stack of quantization functors}

The prop approach to a general theory of quantization functors has been set up in \cite{EE}.
Certain quantization problems can be formalized in the following way. Consider a "classical"
category $\mathcal{C}_{class} = P_{class} -Alg$ of algebras over a prop $P_{class}$
and a "quantum" category $\mathcal{C}_{quant} = P_{quant} -Alg$ of algebras over a prop $P_{quant}$.
Algebras are defined in $\mathbb{K}$-modules on the classical side and $\mathbb{K}[[\hbar]]$-modules
on the quantum side, where $\hbar$ is a formal parameter. The quantization problem is then to find
a right inverse $Q:\mathcal{C}_{class} \rightarrow \mathcal{C}_{quant}$ for a given
"semi-classical limit" functor $SC:\mathcal{C}_{quant} \rightarrow \mathcal{C}_{class}$.
A classical example of such a situation is the quantization of Lie bialgebras \cite{EK1}.

The functors $SC$ and $Q$ can be induced by props morphisms $\underline{SC}:P_{class} \rightarrow P_{quant}/(\hbar)$
and $\underline{Q}:P_{quant} \rightarrow P_{class}[[\hbar]]$ satisfying
$(\underline{Q}\: mod \: \hbar )\circ \underline{SC}= Id_{P_{class}}$.
The morphism $\underline{Q}$ is called a quantization morphism. If you suppose $\underline{SC}$
surjective, then $\underline{SC}$ and $\underline{Q} \: mod \: \hbar$ are isomorphisms,
and $\underline{Q}$ becomes an isomorphism too by Hensel's lemma.
This implies an equivalence of categories between $\mathcal{C}_{quant}$ and $\mathcal{C}_{class}[[\hbar]]$
called a dequantization result. The set of quantization morphisms forms a torsor
for the subgroups of props automorphisms in $Aut(P_{quant})$ and $Aut(P_{class}[[\hbar]])$
reducing to the identity modulo $\hbar$.

Since such props are not cofibrant, we do not get a well-behaved moduli space just by taking
the simplicial set of maps $Map(P_{quant},P_{class}[[\hbar]])$.
Instead of props isomorphisms, we consider isomorphisms between a cofibrant replacement of
the source and a fibrant replacement of the target in the homotopy category of props.
This incarnates in our particular situation a general principle inspired by Kontsevich's derived geometry program,
aiming to work out singularities of moduli spaces by passing to the derived category.
More precisely, we work with a cofibrant resolution $(P_{class})_{\infty}$.
Our homotopy quantization problem is the data of two morphisms
$\underline{SC}:P_{class} \rightarrow (P_{quant})_{\infty}/(\hbar)$
and $\underline{Q}:(P_{quant})_{\infty} \rightarrow P_{class}[[\hbar]]$ satisfying
$(\underline{Q}\: mod \: \hbar )\circ \underline{SC}\sim Id_{P_{class}}$
and $(\underline{SC}[[\hbar]]\circ \underline{Q}\sim Id_{(P_{quant})_{\infty}}$.
So the set of homotopy classes of quantization morphisms is a subset of
the set $\pi_0 Map((P_{quant})_{\infty},P_{class}[[\hbar]])$,
on which act the groups $\pi_0 haut((P_{quant})_{\infty})$ and $\pi_0 haut(P_{class}[[\hbar]])$.

\subsubsection{The case of Lie bialgebras}

In the special case of Lie bialgebras and homotopy Lie bialgebras, another point of view is the prop profile approach of \cite{Mer2}.
We recall the main result of \cite{Mer2}. Merkulov constructed a quasi-free prop, the dg prop of quantum homotopy bialgebra
structures $DefQ$. Let us note $BiLie$ the prop encoding Lie bialgebras and $\hat{BiLie}$ its completion with respect to
the filtration by the number of vertices. Let $\pi:BiLie_{\infty}\rightarrow BiLie$ be the minimal model of $BiLie$.
Merkulov proved in \cite{Mer2} that there exists a quantization morphism
of dg props $F:DefQ\rightarrow \hat{BiLie_{\infty}}$ such that the composite
\[
DefQ\stackrel{F}{\rightarrow} \hat{BiLie_{\infty}} \stackrel{\hat{\pi}}{\rightarrow} \hat{BiLie}
\]
is the quantization morphism of Etingof and Kazhdan \cite{EK2}.
When working over a field of characteristic zero, the prop $DefQ$ is already cofibrant
and there is no need to take a resolution of it.
Another advantage of this approach is that we can stay in $\mathbb{K}$-modules instead of working with
$\mathbb{K}[[\hbar]]$-modules.
The moduli stack of quantization functors of Lie bialgebras is then the one associated to the mapping space
$DefQ\{BiLie\}$.

\begin{rem}
The Grothendieck-Teichmüller group encodes geometric actions of the Galois
groups on curves, and are aimed to understand Galois groups through these actions.
It is a subgroup of the group of Lie formal series in two variables satisfying certain relations,
that is, a subgroup of the profinite completion of the etale fundamental group of the projective line
minus three points, in which the absolute Galois group of $\mathbb{Q}$ embeds.
In \cite{Ko1}, Kontsevich pointed out that a prounipotent version of the Grothendieck-Teichmüller
group defined by Drinfeld in \cite{Dri2} should be in some sense a universal group encoding the different
deformation quantizations of a Poisson algebra. Tamarkin's approach of the Deligne conjecture \cite{Tam}
via formality morphisms of the little disks operad, as well as the quantization of Lie bialgebra
by Etingof and Kazhdan \cite{EK1}, rely on Drinfeld's theory of associators \cite{Dri2}.
The prounipotent Grothendieck-Teichmüller group  $GT$ acts in a non trivial way on these quantizations
which is not fully understood yet.
\end{rem}

\subsection{Homotopy operads}

Operads can be seen as algebras over an $\mathbb{N}$-colored non-symmetric operad $O^{univ}$ described in Example 1.5.6
of \cite{BM3}. More generally, for any set $C$, the category of $C$-colored operads is a category of algebras over a
certain colored non-symmetric operad. This construction works in any symmetric monoidal category. The category of cochain complexes satisfies the assumptions of Theorem 2.1 of \cite{BM} and admits a coalgebra interval, which is given by
the normalization of the standard $1$-simplex, hence a model category structure on the category of dg $\mathbb{N}$-colored
operads. We thus define a meaningful notion of homotopy dg operad as an algebra over a cofibrant resolution
$O^{univ}_{\infty}$ of $O^{univ}$ in the category of $\mathbb{N}$-indexed collections of cochain complexes.
Such a resolution is provided for instance by the Boardman-Vogt style resolution of \cite{BM3}.
There is a variant of $O^{univ}$ encoding non-symmetric dg operads, for which a smaller resolution exists via
the Koszul duality for colored operads developed in \cite{vdL}.

Deformation theory of operads can then be studied via the moduli stack of homotopy operad structures, its tangent complexes
and obstruction theory.

\section{Tangent complexes, higher automorphisms and obstruction theory}

\subsection{Tangent and cotangent complexes}

Recall that we work in the complicial algebraic geometry context, whose base category is the category of unbounded cochain complexes.
We refer the reader to Chapter 1.2 of \cite{TV2} for a more general treatment of these notions in a given HAG context.

Let $F$ be a stack, $A$ a cdga and $M$ a $A$-module. Let us fix a $A$-point $x:\mathbb{R}\underline{Spec}_A
\rightarrow F$. One defines a derivation space
\[
\mathbb{D}er_F(\mathbb{R}\underline{Spec}_A,M)=Map_{\mathbb{R}\underline{Spec}_A/St}(\mathbb{R}\underline{Spec}_{A\oplus M},
F)
\]
where $\mathbb{R}\underline{Spec}_A/St$ is the model category of stacks under $\mathbb{R}\underline{Spec}_A$
and $A\oplus M$ is the square-zero extension of $A$ by $M$.
The cotangent complex $\mathbb{L}_{F,x}$ of $F$ at $x$, if it exists, satisfies
\[
\mathbb{D}er_F(\mathbb{R}\underline{Spec}_A,M)\simeq Map_{A-Mod}(\mathbb{L}_{F,x},M)
\]
where $A-Mod$ is the category of differential graded $A$-modules with the model structure induced
by the adjunction between the free $A$-module functor and the forgetful functor (see \cite{SS}).
The cotangent complex is well defined only up to quasi-isomorphism, so one usually considers it as an object
in the homotopy category of $A$-modules $Ho(A-Mod)$. The tangent complex is then the dual $A$-module
\[
\mathbb{T}_{F,x}=\underline{Hom}_A(\mathbb{L}_{F,x},A)\in Ho(A-Mod)
\]
where $\underline{Hom}_A$ is the internal hom of $A$-modules.
\begin{prop}
Let $A$ be a cdga, $M$ a $A$-module, $F$ a stack with a $A$-point $\mathbb{R}\underline{Spec}_A\rightarrow F$.
Let us suppose that either the cotangent complex of $F$ at this point is dualizable, or $M$ is a dualizable $A$-module.
Then there are group isomorphisms
\[
\pi_*\mathbb{D}er_F(\mathbb{R}\underline{Spec}_A,M)\cong H^{-*}(\mathbb{T}_{F,x}\otimes_A M).
\]
\end{prop}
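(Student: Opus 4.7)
The plan is to chain together three standard identifications and reduce the statement to a property of closed symmetric monoidal structures.

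First I would invoke the defining property of the cotangent complex recorded just above the statement, namely the weak equivalence
\[
\mathbb{D}er_F(\mathbb{R}\underline{Spec}_A,M)\simeq Map_{A-Mod}(\mathbb{L}_{F,x},M).
\]
So the claim reduces to computing the homotopy groups of the simplicial mapping space on the right.

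Next I would use the fact that the model category $A-Mod$ is a $Ch_{\mathbb{K}}$\nobreakdash-enriched combinatorial model category, and that for any cofibrant $X$ and fibrant $Y$ in $A-Mod$, the simplicial mapping space $Map_{A-Mod}(X,Y)$ has the homotopy type of the Dold--Kan nerve of the non-negative truncation of $\mathbb{R}\underline{Hom}_A(X,Y)$. This gives natural isomorphisms
\[
\pi_n Map_{A-Mod}(X,Y)\cong H^{-n}(\mathbb{R}\underline{Hom}_A(X,Y))
\]
for $n\geq 0$ (and the connected components recover $\mathrm{Hom}$ in the homotopy category). Applying this to $X=\mathbb{L}_{F,x}$ and $Y=M$ (after implicit cofibrant/fibrant replacement in $A-Mod$) yields
\[
\pi_* Map_{A-Mod}(\mathbb{L}_{F,x},M)\cong H^{-*}(\mathbb{R}\underline{Hom}_A(\mathbb{L}_{F,x},M)).
\]

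Finally, I would use the dualizability hypothesis to rewrite the derived internal hom as a tensor product. In any closed symmetric monoidal category, if $P$ is dualizable with dual $P^{\vee}$, then there are natural isomorphisms
\[
\underline{Hom}(X,Y\otimes P)\simeq \underline{Hom}(X,Y)\otimes P \quad\text{and}\quad \underline{Hom}(P,Y)\simeq P^{\vee}\otimes Y,
\]
proved by the evaluation/coevaluation formalism. Applied in the derived setting on $A-Mod$, the first isomorphism with $P=M$ dualizable and $Y=A$ gives $\mathbb{R}\underline{Hom}_A(\mathbb{L}_{F,x},M)\simeq \mathbb{R}\underline{Hom}_A(\mathbb{L}_{F,x},A)\otimes_A M=\mathbb{T}_{F,x}\otimes_A M$, while the second with $P=\mathbb{L}_{F,x}$ dualizable (so that $\mathbb{L}_{F,x}^{\vee}=\mathbb{T}_{F,x}$ by definition of the tangent complex) and $Y=M$ gives the same conclusion. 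Chaining the three identifications produces the desired isomorphism.

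The main obstacle I anticipate is the second step: carefully linking the simplicial framing used in the definition of $Map_{A-Mod}$ (and hence in the definition of $\mathbb{D}er_F$ via mapping spaces of stacks) with the $Ch_{\mathbb{K}}$\nobreakdash-enriched hom whose cohomology is $\mathrm{Ext}^{*}_A$. This requires checking that, after cofibrant/fibrant replacement, a simplicial frame on $M$ in $A-Mod$ computes the same homotopy type as the Dold--Kan image of $\tau_{\leq 0}\mathbb{R}\underline{Hom}_A(\mathbb{L}_{F,x},M)$, which is a classical but somewhat technical compatibility between the two enrichments of $A-Mod$. Once this is in place, the remaining verifications are formal.
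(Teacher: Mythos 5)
Your proposal follows essentially the same route as the paper's proof: identify $\mathbb{D}er_F$ with $Map_{A-Mod}(\mathbb{L}_{F,x},M)$, pass through the Dold--Kan correspondence to identify $\pi_*$ of that mapping space with the cohomology of the (truncated, hence in non-negative degrees unchanged) internal hom $\underline{Hom}_A(\mathbb{L}_{F,x},M)$, and finally use dualizability to trade the hom for $\mathbb{T}_{F,x}\otimes_A M$. The only difference is cosmetic: you spell out the evaluation/coevaluation argument for both dualizability hypotheses where the paper simply cites ``the adjunction relation between the internal hom and the tensor product,'' and you flag the enrichment-compatibility issue that the paper resolves by the same Dold--Kan normalization identification.
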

\begin{proof}
The Dold-Kan correspondence between non-positively graded cochain complexes and simplicial modules induces a Quillen equivalence
between simplicial $A$-modules (simplicial objects in $A$-modules) and non-positively graded dg $A$-modules.
The mapping space $Map_{A-Mod}(M,N)$ of dg $A$-modules forms a simplicial $A$-module whose normalization $N_*Map_{A-Mod}(M,N)$
can be identified, via the aforementioned equivalence, with the smooth truncation $\tau_{\geq 0}\underline{Hom}_A(M,N)$ of the internal
hom of dg $A$-modules with reverse grading defined by
\[
\tau_{\geq 0}\underline{Hom}_A(M,N)^*= \begin{cases}
\underline{Hom}_A(M,N)^*, & \text{if $*>0$},\\
\underline{Hom}_A(M,N)/Im(\delta) & \text{if *=0} \\
0, & \text{if $*<0$}.
\end{cases}
\]
where $\delta$ is the differential of $\underline{Hom}_A(M,N)$. This kind of truncation does not change the cohomology for $*=0$.
Consequently
\begin{eqnarray*}
\pi_*\mathbb{D}er_F(\mathbb{R}\underline{Spec}_A,M) & \cong & \pi_*Map_{A-Mod}(\mathbb{L}_{F,x},M) \\
 & \cong & H_*N_*Map_{A-Mod}(\mathbb{L}_{F,x},M) \\
 & \cong & H_*\tau_{\geq 0}\underline{Hom}_A(\mathbb{L}_{F,x},M) \\
 & \cong & H_*\underline{Hom}_A(\mathbb{L}_{F,x},M) \\
 & \cong & H^{-*}(\mathbb{T}_{F,x}\otimes_A M).
\end{eqnarray*}
The third line is a consequence of the aforementioned Dold-Kan
correspondence. The fourth line holds because $*$ is non negative. The last line follows from the definition of the tangent complex as the dual of the
cotangent complex inside dg $A$-modules, and from the adjunction relation between the internal hom and the tensor
product.
\end{proof}

In the sequel we work with representable stacks, which all admit a cotangent complex and an obstruction theory.
The cotangent complex of the derived spectrum of a given cdga coincides with the usual notion of cotangent complex
of this cdga. We actually only need the usual notions of derivations,
square-zero extensions, tangent and cotangent complexes in the category of commutative differential graded algebras $CDGA_{\mathbb{K}}$.
Moreover, the modules we will use for obstruction theory are dualizable, so that Proposition 6.1 applies.

\subsection{Tangent complexes and higher automorphisms}

First, since homotopy classes of stack morphisms between
$\mathbb{R}\underline{Spec}_A$ and $\mathbb{R}\underline{Spec}_{C(P_{\infty},Q)}$ are in bijection with $[C,A]_{CDGA_{\mathbb{K}}}$, to fix a $A$-point of $\mathbb{R}\underline{Spec}_{C(P_{\infty},Q)}$ is equivalent
to fix a cdga morphism $x:C(P_{\infty},Q)\rightarrow A$.

Now let us fix a prop morphism $\varphi:P_{\infty}\rightarrow Q$. It gives a morphism of props in $A$-modules
$\varphi\otimes_e id_A:P_{\infty}\otimes_e id_A\rightarrow Q\otimes_e id_A$, which fixes in turn a dg prop morphism
$[x\mapsto \varphi(x)\otimes_e 1_A]:P_{\infty}\rightarrow Q\otimes_e A$. This is a vertex in the Kan complex
$\underline{Map}(P_{\infty},Q)(A)$, and we know that
\[
\underline{Map}(P_{\infty},Q)(A) \simeq \mathbb{R}\underline{Spec}_{C(P_{\infty},Q)}(A),
\]
which implies
\[
[P_{\infty},Q\otimes_eA]_{Ho(Prop)}\cong [C(P_{\infty},Q),A]_{CDGA_{\mathbb{K}}}.
\]
This bijection sends the homotopy class of $[x\mapsto \varphi(x)\otimes_e 1_A]$ to a homotopy class
of cgdas morphisms from which we pick up a $A$-point $x_{\varphi}:\mathbb{R}\underline{Spec}_A\rightarrow
\mathbb{R}\underline{Spec}_{C(P_{\infty},Q)}$.

We then state the main theorem of this section:
\begin{thm}
Let $A$ be a perfect cdga (a cdga which is perfect as a cochain complex) and $n$ be a natural integer.

(1) We have isomorphisms
\[
\pi_*\mathbb{D}er_{\underline{Map}(P_{\infty},Q)}(\mathbb{R}\underline{Spec}_A,A[n]) \cong
H^{-*-n+1}(Hom_{\Sigma}(\overline{C},Q)^{\varphi}\otimes A)
\]
for every natural integer $*$ such that $*+n\geq 1$.

(2) We have isomorphisms
\[
H^{-*}(\mathbb{T}_{\underline{Map}(P_{\infty},Q),x_{\varphi}}) \cong
H^{-*+1}(Hom_{\Sigma}(\overline{C},Q)^{\varphi}\otimes A)
\]
for every $*\geq 1$.
\end{thm}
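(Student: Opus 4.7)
The plan is to compute both parts by realizing the derivation space as a homotopy fiber of simplicial Maurer--Cartan sets and applying Berglund's theorem (Theorem 2.11) in the abelian case. The strategy parallels and refines the artinian argument of Section 3.

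For Part (1), I would first unpack the definition: $\mathbb{D}er_{\underline{Map}(P_\infty,Q)}(\mathbb{R}\underline{Spec}_A,A[n])$ is the homotopy fiber at $x_\varphi$ of the map of Kan complexes $\underline{Map}(P_\infty,Q)(A\oplus A[n])\to \underline{Map}(P_\infty,Q)(A)$ induced by the square-zero projection $A\oplus A[n]\twoheadrightarrow A$. Using Theorem 2.13 with coefficients in a cdga (the required extension combines the $\pi_0$-argument of Theorem 3.1 with Lemma 3.5 at the level of simplicial mapping spaces), this fiber becomes the homotopy fiber at $\psi=[x\mapsto\varphi(x)\otimes 1_A]$ of $MC_\bullet(Hom_\Sigma(\overline{C},Q\otimes_e(A\oplus A[n])))\longrightarrow MC_\bullet(Hom_\Sigma(\overline{C},Q\otimes_e A))$.

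Next I would observe that the kernel $Hom_\Sigma(\overline{C},Q\otimes_e A[n])$ is a square-zero ideal of the ambient complete dg Lie algebra. A direct Maurer--Cartan computation (writing $\tilde\tau=\psi+x$ with $x$ in the kernel and using $[x,x]=0$) shows that the above homotopy fiber is the simplicial Maurer--Cartan set of the \emph{abelian} complete dg Lie algebra $Hom_\Sigma(\overline{C},Q\otimes_e A[n])^\psi$, equipped with the twisted differential $d+[\psi,-]$. Lemma 3.2 applied to $\psi=\varphi\otimes 1_A$ together with the commutation of the external $\Sigma$-hom with cohomological shift identify this abelian dg Lie algebra with $\bigl(Hom_\Sigma(\overline{C},Q)^\varphi\otimes A\bigr)[n]$. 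A direct application of Theorem 2.11 in the abelian case (where $\pi_k(MC_\bullet(V))\cong H^{1-k}(V)$ holds uniformly for $k\geq 0$, since the exponential map reduces to the identity), combined with the shift identity on cohomology, then yields Part (1). The range condition $*+n\geq 1$ ensures one stays where these computations apply.

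For Part (2), I would apply Proposition 6.1 with $M=A[n]$, which is a dualizable $A$-module since $A$ is perfect, to obtain $\pi_*\mathbb{D}er_{\underline{Map}(P_\infty,Q)}(\mathbb{R}\underline{Spec}_A,A[n])\cong H^{-*}(\mathbb{T}_{\underline{Map}(P_\infty,Q),x_\varphi}\otimes_A A[n])$. Combining with Part (1) and re-indexing in $n$ yields the isomorphism $H^{-*}(\mathbb{T}_{\underline{Map}(P_\infty,Q),x_\varphi})\cong H^{-*+1}(Hom_\Sigma(\overline{C},Q)^\varphi\otimes A)$ for all $*\geq 1$, proving Part (2).

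The main obstacle I anticipate is the identification of the homotopy fiber of the simplicial Maurer--Cartan sets associated to a square-zero surjection of complete dg Lie algebras. The fact that this fiber is $MC_\bullet$ of the twisted kernel is a standard deformation-theoretic principle, but a clean justification requires either checking that $MC_\bullet$ sends such a surjection to a Kan fibration or producing the identification by direct inspection simplex by simplex using the explicit form of $MC(g\,\hat{\otimes}\,\Omega_\bullet)$. Once this general lemma is in place, the rest of the argument reduces to bookkeeping with shifts and applications of results already proved in the paper.
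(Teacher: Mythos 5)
Your part (2) is exactly the paper's argument: Proposition 6.1 applied with $M=A[n]$, a degree shift, and the case $n=0$. For part (1), however, you take a genuinely different route. The paper never returns to the Maurer--Cartan presentation at this stage: it works through the affine model $\underline{Map}(P_{\infty},Q)\simeq\mathbb{R}\underline{Spec}_{C(P_{\infty},Q)}$ and rational homotopy theory --- formality of $\partial\Delta^{n+1}$, the pointed Quillen adjunction between $A_{PL}$ and Sullivan's realization, and the adjunction $-\otimes A^{\vee}\dashv-\otimes A$ (this is where perfectness of $A$ enters for the paper) --- to prove that $Map_{CDGA_{\mathbb{K}}/A}(C(P_{\infty},Q),A\oplus A[n])$ is an $n$-fold loop space of $Map_{CDGA_{\mathbb{K}}}(C(P_{\infty},Q),A)$ at $x_{\varphi}$ (Proposition 6.6); this converts $\pi_*\mathbb{D}er$ into $\pi_{*+n}$ of the moduli space itself, which is then computed by Corollary 2.14 together with Lemma 3.2 and the isomorphisms of Section 3. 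You instead compute the derivation space directly as the homotopy fiber of $MC_{\bullet}$ along the square-zero surjection $Q\otimes_e(A\oplus A[n])\to Q\otimes_e A$ and identify that fiber with the Maurer--Cartan space of the shifted, $\psi$-twisted abelian kernel; the degree bookkeeping then agrees with the paper's. Your route is closer to the formal-moduli-problem picture and bypasses the Sullivan-model detour entirely; its one nontrivial external input, which you correctly flag, is that $MC_{\bullet}$ sends a filtration-compatible surjection of complete dg Lie algebras to a Kan fibration, so that the strict fiber (which your computation with $\tilde\tau=\psi+x$, $[x,x]=0$ identifies correctly, using the dg Lie splitting coming from $A\to A\oplus A[n]$) computes the homotopy fiber. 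That fact is available in the sources the paper already cites (Getzler, Berglund), so it is not a gap. Do note that perfectness of $A$ is still required in your argument, not only in the paper's adjunction step: the identification $Hom_{\Sigma}(\overline{C},Q\otimes_e A[n])^{\psi}\cong(Hom_{\Sigma}(\overline{C},Q)^{\varphi}\otimes A)[n]$ needs the external hom (an infinite product over biarities) to commute with $-\otimes A$, which holds because $A$ is bounded with finite-dimensional components.
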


To prove this theorem, we need the following preliminary results:
\begin{lem}
Let $F:\mathcal{C}\rightleftarrows\mathcal{G}:G$ be a Quillen adjunction (respectively a Quillen equivalence).
Then for every object $C$
of $\mathcal{C}$, the induced adjunction between the comma categories
\[
F:C/\mathcal{C}\rightleftarrows F(C)/\mathcal{D}:G
\]
is also a Quillen adjunction (respectively a Quillen equivalence).
\end{lem}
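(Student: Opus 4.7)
The plan is to lift the original adjunction to the slice categories, verify the Quillen adjunction using the fact that the model structure on $C/\mathcal{C}$ is detected by the forgetful functor, and then transfer the Quillen equivalence assertion via the characterization in terms of adjoint pairs of maps.

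First, I would set up the induced adjunction. Denote by $\widetilde{F}:C/\mathcal{C}\to F(C)/\mathcal{D}$ the functor sending $(C\xrightarrow{i}X)$ to $(F(C)\xrightarrow{F(i)}F(X))$, and by $\widetilde{G}:F(C)/\mathcal{D}\to C/\mathcal{C}$ the functor sending $(F(C)\xrightarrow{j}Y)$ to the composite $C\xrightarrow{\eta_C}GF(C)\xrightarrow{G(j)}G(Y)$, where $\eta$ is the unit of the original adjunction. The adjunction $(\widetilde{F},\widetilde{G})$ is formal from $(F,G)$ and the universal property of the slice: a morphism $F(X)\to Y$ over $F(C)$ corresponds under the original adjunction to a morphism $X\to G(Y)$ in $\mathcal{C}$, and the over-$F(C)$ compatibility translates into the under-$C$ compatibility via naturality of the unit.

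Next, I would verify the Quillen adjunction. The model structure on $C/\mathcal{C}$ is characterized by the fact that a morphism is a fibration, cofibration, or weak equivalence exactly when its underlying morphism in $\mathcal{C}$ is one, and similarly for $F(C)/\mathcal{D}$. Since $F$ preserves cofibrations and trivial cofibrations, so does $\widetilde{F}$, which gives the Quillen adjunction (equivalently, $\widetilde{G}$ preserves fibrations and trivial fibrations).

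Finally, for the Quillen equivalence assertion, I would invoke the standard characterization: given a cofibrant $\widetilde{X}=(C\xrightarrow{i}X)$ in $C/\mathcal{C}$ and a fibrant $\widetilde{Y}=(F(C)\xrightarrow{j}Y)$ in $F(C)/\mathcal{D}$, a morphism $\widetilde{F}(\widetilde{X})\to\widetilde{Y}$ is a weak equivalence iff its adjoint $\widetilde{X}\to\widetilde{G}(\widetilde{Y})$ is. Because weak equivalences in both slices are detected below, this reduces to showing $F(X)\to Y$ is a weak equivalence in $\mathcal{D}$ iff $X\to G(Y)$ is one in $\mathcal{C}$. The main obstacle is that cofibrancy of $\widetilde{X}$ in $C/\mathcal{C}$ only forces $i:C\to X$ to be a cofibration, not $X$ itself to be cofibrant in $\mathcal{C}$, so one cannot directly apply the Quillen equivalence property to $X$ and $Y$. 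I would handle this by taking a cofibrant replacement $qX:QX\to X$ of $X$ in $\mathcal{C}$ and running the Quillen equivalence of $(F,G)$ on the cofibrant $QX$ and the fibrant $Y$, then passing back to $X$ via a two-out-of-three argument applied simultaneously to $QX\to X\to G(Y)$ and to $F(QX)\to F(X)\to Y$. To close the remaining gap (that $F$ a priori only preserves weak equivalences between cofibrant objects, while $X$ need not be cofibrant), I would first reduce to the case where $C$ itself is cofibrant by choosing a cofibrant replacement $QC\to C$ and using that the resulting base change functor $QC/\mathcal{C}\rightleftarrows C/\mathcal{C}$ is a Quillen equivalence under mild assumptions on $\mathcal{C}$: once $C$ is cofibrant, any cofibration $C\to X$ in $C/\mathcal{C}$ has cofibrant target, and the previous argument goes through verbatim.
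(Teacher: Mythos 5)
Your construction of the induced adjunction and your verification of the Quillen adjunction half are correct, and you have put your finger on the genuine difficulty in the Quillen equivalence half: a cofibrant object $C\rightarrowtail X$ of $C/\mathcal{C}$ has cofibrant underlying object $X$ only when $C$ itself is cofibrant. (The paper states this lemma without proof; in its only applications, $C=\Delta^0$ in $sSet$, which is cofibrant, so there every cofibrant object of $C/\mathcal{C}$ is a composite of cofibrations $\emptyset\rightarrowtail C\rightarrowtail X$, the underlying $X$ is cofibrant, and the Quillen equivalence criterion for $(F,G)$ applies directly — your detour through $QX$ is then unnecessary.)

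The gap is in your final reduction to cofibrant $C$. A cofibrant replacement $QC\stackrel{\sim}{\to}C$ does induce (for $\mathcal{C}$ left proper, say) a Quillen equivalence $QC/\mathcal{C}\rightleftarrows C/\mathcal{C}$, but it replaces the target category $F(C)/\mathcal{D}$ by $F(QC)/\mathcal{D}$, and the comparison between these is induced by $F(QC)\to F(C)$ — a map that is \emph{not} a weak equivalence in general, precisely because $F$ only preserves weak equivalences between cofibrant objects. So the square of Quillen adjunctions you would need to play off against one another has a non-invertible edge, and the reduction does not close. In fact it cannot close, because the Quillen equivalence statement is false for non-cofibrant $C$: applied to the cofibrant initial object $(C\stackrel{=}{\to}C)$ of $C/\mathcal{C}$ and to a fibrant $(F(C)\to Y)$, it would force the underived unit $C\to G(Y)$ to detect weak equivalences at an arbitrary object $C$. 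For a concrete failure, take the Quillen equivalence $-\otimes_A B:Mod_A\rightleftarrows Mod_B:\mathrm{res}$ associated to a surjective quasi-isomorphism of dg algebras $A\to B$ (e.g. $A=k[x]\otimes\Lambda(y)$ with $|y|=1$, $dy=x$, and $B=k$), and let $C=A/xA$; then $C\otimes_AB=k$ while $C\otimes_A^{\mathbb{L}}B\simeq C$ has homology in two degrees, so the identity of $F(C)$ is a weak equivalence whose adjoint $C\to GF(C)$ is not. The correct statement — and the one the paper actually uses — carries the hypothesis that $C$ is cofibrant, under which your argument, stripped of the $QC$ step, is complete.
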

\begin{cor}
(1) The geometric realization functor and the singular complex functor induce a Quillen equivalence
\[
|-|:sSet_*\rightleftarrows Top_*:Sing_{\bullet}(-)
\]
between pointed topological spaces and pointed simplicial sets.

(2) Sullivan's realization functor and functor of piecewise linear forms \cite{Sul} induce a Quillen adjunction
\[
A_{PL}:sSet_*\rightleftarrows (CDGA_{\mathbb{K}}^{aug})^{op}:<->
\]
between pointed simplicial sets and the opposite category of augmented commutative differential graded algebras.
\end{cor}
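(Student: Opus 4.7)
The plan is to obtain both statements as direct consequences of Lemma 6.3, applied to two well-known unpointed Quillen adjunctions, by identifying pointed categories as appropriate comma categories.

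For part (1), I would begin with the classical Quillen equivalence $|-|:sSet \rightleftarrows Top:Sing_{\bullet}$, where $sSet$ carries the Kan--Quillen model structure and $Top$ the Serre model structure. Choose $C = \Delta^0$ in $sSet$. Tautologically, $\Delta^0/sSet = sSet_*$, since a map $\Delta^0 \to X$ is exactly the choice of a basepoint in $X_0$. Moreover, $|\Delta^0|$ is the one-point topological space $*$, so the image comma category is $*/Top = Top_*$. Applying the Quillen-equivalence half of Lemma 6.3 to the pair $(|-|, Sing_{\bullet})$ at the object $\Delta^0$ yields the desired Quillen equivalence $|-|:sSet_* \rightleftarrows Top_*:Sing_{\bullet}$.

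For part (2), I would invoke the Bousfield--Gugenheim Quillen adjunction $A_{PL}:sSet \rightleftarrows (CDGA_{\mathbb{K}})^{op}:\langle - \rangle$, valid in characteristic zero. Again take $C = \Delta^0$. The essential input is the computation $A_{PL}(\Delta^0) \cong \mathbb{K}$ (the ground field as a cdga, concentrated in degree zero with trivial differential), which follows directly from the definition of the polynomial de Rham complex. The comma category $\mathbb{K}/(CDGA_{\mathbb{K}})^{op}$ consists by definition of objects $A$ of $CDGA_{\mathbb{K}}$ equipped with a morphism $\mathbb{K} \to A$ in $(CDGA_{\mathbb{K}})^{op}$, that is, with an augmentation $A \to \mathbb{K}$ in $CDGA_{\mathbb{K}}$. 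This category is therefore canonically isomorphic to $(CDGA_{\mathbb{K}}^{aug})^{op}$. Applying the Quillen-adjunction half of Lemma 6.3 then produces the advertised adjunction $A_{PL}:sSet_* \rightleftarrows (CDGA_{\mathbb{K}}^{aug})^{op}:\langle - \rangle$, with right adjoint sending an augmented cdga to $\langle A \rangle$ pointed at the vertex corresponding to the augmentation.

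The only genuine step to check, and what I would regard as the main (though very mild) obstacle, is the compatibility of the model structure produced by Lemma 6.3 on a comma category $C/\mathcal{M}$ with the standard pointed (resp. augmented) model structures used in the paper. In both cases this reduces to the fact that weak equivalences, fibrations, and cofibrations in $C/\mathcal{M}$ are by construction detected by the forgetful functor to $\mathcal{M}$, which matches the standard definitions of the pointed Kan--Quillen, pointed Serre, and augmented $CDGA_{\mathbb{K}}$ model structures. Careful handling of the opposite-category convention in (2) is the only bookkeeping required; no further homotopical input beyond Lemma 6.3 is needed.
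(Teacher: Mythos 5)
Your proposal is correct and follows exactly the paper's own argument: both parts are obtained by applying Lemma 6.3 at $C=\Delta^0$, using $|\Delta^0|=*$ and $A_{PL}(\Delta^0)=\mathbb{K}$, and identifying the resulting comma categories with $Top_*$ and $(CDGA_{\mathbb{K}}^{aug})^{op}$ respectively. The extra remarks on matching the comma-category model structures with the standard pointed/augmented ones are a reasonable bit of diligence that the paper leaves implicit.
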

\begin{proof}
(1) The geometric realization sends the standard $0$-simplex to the point.

(2) In rational homotopy theory, there is a Quillen adjunction
\[
A_{PL}:sSet\rightleftarrows CDGA_{\mathbb{K}}^{op}:<->
\]
(see Section 8 of \cite{BG}). The functor $A_{PL}$ sends the standard $0$-simplex to the base field $\mathbb{K}$,
so we get a Quillen adjunction
\[
A_{PL}:sSet_*\rightleftarrows CDGA_{\mathbb{K}}^{op}/\mathbb{K}:<->.
\]
The right-side category is nothing but the opposite category of the category of augmented
cdgas.
\end{proof}

Using (1) we get:
\begin{lem}
For every pointed simplicial set $K$ and every integer $n$, the topological spaces $|Map_{sSet_*}(\partial\Delta^{n+1},
K)|$ and $\Omega^n|K|$ (the iterated loop space of the pointed topological space $|K|$) have the same homotopy type.
\end{lem}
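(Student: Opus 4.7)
The plan is to identify $\partial\Delta^{n+1}$ as a simplicial model for the $n$-sphere $S^n$ in $sSet_*$, and then transport the pointed mapping space computation across the Quillen equivalence of Corollary 6.4(1). First I would fix basepoints: take the canonical basepoint of $\partial\Delta^{n+1}$ coming from the vertex $0$, so that $|\partial\Delta^{n+1}|\cong S^n$ as pointed spaces. Since $\partial\Delta^{n+1}$ is cofibrant in pointed simplicial sets and $K$ can be replaced by a fibrant pointed simplicial set (a Kan complex) without changing the homotopy type of $Map_{sSet_*}(\partial\Delta^{n+1},K)$, the mapping space on the left-hand side has the correct homotopy type of a derived pointed mapping space.

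Next I would invoke the Quillen equivalence $|-|\dashv Sing_\bullet$ of Corollary 6.4(1). For a cofibrant pointed simplicial set $L$ and a fibrant pointed simplicial set $K$, this equivalence produces a natural weak equivalence
\[
|Map_{sSet_*}(L,K)| \stackrel{\sim}{\longrightarrow} Map_{Top_*}(|L|,|K|),
\]
obtained by taking adjoints of the canonical evaluation map and using that both sides compute the derived pointed mapping space (this is the pointed analogue of the standard fact that geometric realization is compatible with simplicial mapping spaces; it follows from the SM7/pushout-product axiom applied to the adjunction together with the fact that the forgetful passage between pointed and unpointed mapping spaces is a homotopy fiber). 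Apply this to $L=\partial\Delta^{n+1}$ to obtain
\[
|Map_{sSet_*}(\partial\Delta^{n+1},K)| \simeq Map_{Top_*}(S^n,|K|).
\]

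Finally, by the standard adjunction between smash product and pointed mapping spaces, $Map_{Top_*}(S^n,|K|) = \Omega^n|K|$ by iterated use of $\Omega X = Map_{Top_*}(S^1,X)$ (and the identification $S^n \cong S^1 \wedge \cdots \wedge S^1$), giving the claimed homotopy equivalence.

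The main obstacle I anticipate is the compatibility of geometric realization with pointed mapping spaces. In the unpointed setting this is a classical consequence of $|-|$ being strong symmetric monoidal together with the fact that it is a left Quillen functor between Cartesian model categories. In the pointed setting one has to be a bit careful: one can either argue via the cofiber sequence $\ast \to L \to L/\ast$ relating pointed and unpointed mapping spaces, or invoke the compatibility of the adjoint pair $(|-|,Sing_\bullet)$ with the comma-category construction of Lemma~6.3 to reduce to the unpointed case. Once this compatibility is in place, the rest of the argument is a formal manipulation of standard identities.
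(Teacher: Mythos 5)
Your proposal is correct and follows essentially the same route as the paper: replace $K$ by a Kan complex, use the Quillen equivalence $(|-|,Sing_{\bullet})$ of Corollary 6.4(1) (via the comma-category Lemma 6.3) to transport the pointed mapping space, identify $|\partial\Delta^{n+1}|$ with $S^n$, and conclude with the loop-space adjunction. The only cosmetic difference is that the paper keeps everything at the level of simplicial mapping spaces (inserting $Sing_{\bullet}(|K|)$ and applying the adjunction isomorphism of Proposition 2.1) and realizes only at the very end, whereas you realize first and compare topological mapping spaces; the content is the same.
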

\begin{proof}
We have
\begin{eqnarray*}
Map_{sSet_*}(\partial\Delta^{n+1},K) & \simeq & Map_{sSet_*}(\partial\Delta^{n+1},Sing_{\bullet}(|K|)) \\
 & \simeq & Map_{Top_*}(|\partial\Delta^{n+1}|,|K|) \\
 & \simeq & Map_{Top_*}(S^n,|K|) \\
\end{eqnarray*}
hence
\[
|Map_{sSet_*}(\partial\Delta^{n+1},K)|\simeq |Map_{Top_*}(S^n,|K|)|\sim \Omega^n|K|.
\]
\end{proof}

Using (2) we get:
\begin{prop}
Let $A$ be a perfect cdga and $x:C\rightarrow A$ be a cdga over $A$.
For every integer $n$ there is a homotopy equivalence
\[
\Omega_x^n|Map_{CDGA_{\mathbb{K}}}(C,A)| \simeq |Map_{CDGA_{\mathbb{K}}/A}(C,A\oplus A[n])|
\]
where $A\oplus A[n]$ is the square-zero extension of $A$ by the $A$-module $A[n]$.
\end{prop}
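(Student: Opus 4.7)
The plan is to combine Lemma 6.5 with the Quillen adjunction $A_{PL}\dashv\langle-\rangle$ of Corollary 6.4(2), tensored over $A$, together with the formality of the $n$-sphere. Concretely, I would proceed in three steps.

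\textbf{Step 1 (reduce to a pointed mapping space).} Apply Lemma 6.5 to the Kan complex $Map_{CDGA_{\mathbb{K}}}(C,A)$ pointed at $x$ to get a weak equivalence
\[
\Omega_x^n |Map_{CDGA_{\mathbb{K}}}(C,A)|\simeq |Map_{sSet_*}(\partial\Delta^{n+1},Map_{CDGA_{\mathbb{K}}}(C,A))|.
\]
This is a direct invocation of the already-proved lemma; all the work is packed into interpreting the right-hand side as an algebraic object.

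\textbf{Step 2 (relative cotensor identification).} Establish, for an arbitrary pointed simplicial set $K$, a natural weak equivalence
\[
Map_{sSet_*}(K,Map_{CDGA_{\mathbb{K}}}(C,A))\simeq Map_{CDGA_{\mathbb{K}}/A}(C,A\otimes A_{PL}(K)),
\]
where $A\otimes A_{PL}(K)\to A$ is the augmentation induced by the basepoint of $K$ and $C\to A$ is the morphism $x$. Using the cosimplicial frame $A\otimes\Omega_{\bullet}$ of $A$, one shows that a simplicial map $K\to Mor_{CDGA}(C,A\otimes\Omega_{\bullet})$ sending the basepoint to $x$ is the same data as a cdga morphism $C\to A\otimes A_{PL}(K)$ compatible with the projection to $A$. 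At the derived level this is the over-$A$ version of the Quillen adjunction of Corollary 6.4(2), obtained by tensoring that adjunction with $A$ (using that $A_{PL}$ is symmetric monoidal up to quasi-isomorphism) and passing to comma categories via Lemma 6.3.

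\textbf{Step 3 (formality of the sphere).} Specialize to $K=\partial\Delta^{n+1}\simeq S^n$. The sphere is formal, so there is a zigzag of quasi-isomorphisms of augmented cdgas
\[
A_{PL}(\partial\Delta^{n+1})\simeq H^*(S^n;\mathbb{K})\cong\mathbb{K}\oplus\mathbb{K}[-n]
\]
where $\mathbb{K}[-n]$ carries the trivial (square-zero) product. Since $\mathbb{K}$ is a field, tensoring with $A$ preserves quasi-isomorphisms, so
\[
A\otimes A_{PL}(\partial\Delta^{n+1})\simeq A\oplus A[n]
\]
as augmented cdgas over $A$, the square-zero extension on the right matching the trivial product above. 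As every cdga is fibrant, $Map_{CDGA_{\mathbb{K}}/A}(C,-)$ preserves this weak equivalence (after taking a cofibrant replacement of $C$), and Steps 1--3 chain together to give the claimed equivalence.

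The main obstacle is Step 2: one must check that the Quillen adjunction of Corollary 6.4(2) really extends to an adjunction between pointed simplicial sets and augmented cdgas over $A$ in a way compatible with the functorial simplicial frames used to define both mapping spaces. This requires verifying that the basepoint of $K$ and the fixed morphism $x$ interact correctly, so that pointed maps into the frame $Mor_{CDGA}(C,A\otimes\Omega_{\bullet})$ biject with morphisms in $CDGA_{\mathbb{K}}/A$ into $A\otimes A_{PL}(K)$, and that the derived versions of both sides agree under the Reedy model structure on cosimplicial objects. Once this adjunction-level bookkeeping is done, Steps 1 and 3 are formal.
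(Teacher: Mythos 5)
Your skeleton matches the paper's: reduce to $Map_{sSet_*}(\partial\Delta^{n+1},-)$ via Lemma 6.5, convert to an algebraic mapping space via the $A_{PL}\dashv\langle-\rangle$ adjunction, and finish with formality of the sphere and the identification $H^*(S^n)\otimes A=A\oplus A[n]$. The genuine divergence is in your Step 2, and it is exactly the point where your argument is only gestured at while the paper gives a complete proof by a different mechanism. The paper never constructs a ``relative cotensor adjunction'' directly: it first uses perfectness of $A$ to write $Map_{CDGA_{\mathbb{K}}}(C,A)\simeq\langle C\otimes A^{\vee}\rangle$ with the augmentation of $C\otimes A^{\vee}$ induced by $x$, applies the already-established pointed adjunction of Corollary 6.4(2) to get $Map_{CDGA^{aug}_{\mathbb{K}}}(C\otimes A^{\vee},A_{PL}(\partial\Delta^{n+1}))$, and only at the end transports back to $CDGA_{\mathbb{K}}/A$ through a second Quillen adjunction $-\otimes A^{\vee}:CDGA_{\mathbb{K}}/A\rightleftarrows CDGA^{aug}_{\mathbb{K}}:-\otimes A$. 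This detour is precisely what lets perfectness of $A$ do all the work (it is used twice, once for the dualization and once for that second adjunction), whereas in your write-up it is unclear where perfectness enters at all. If you insist on proving Step 2 directly, beware of two real issues you have swept into ``bookkeeping'': (i) the identification $Mor_{sSet}(K,A\otimes\Omega_{\bullet})\cong A\otimes A_{PL}(K)$ requires commuting $-\otimes A$ past a limit, which again needs $K$ finite or $A$ dualizable; and (ii) the simplicial enrichment on the left is governed by $A_{PL}(K\times\Delta^{\bullet})$, which is only quasi-isomorphic, not isomorphic, to $A_{PL}(K)\otimes\Omega_{\bullet}$, so identifying the two \emph{derived} mapping spaces forces you to verify a simplicial-frame comparison in $CDGA_{\mathbb{K}}/A$ from scratch. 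Your route can be completed, but the paper's factorization through $C\otimes A^{\vee}$ is exactly the device that replaces this unproven comparison by two standard Quillen adjunctions; you should either carry out that verification or adopt the dualization argument.
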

\begin{proof}
We have
\[
\Omega_x^n|Map_{CDGA_{\mathbb{K}}}(C,A)| \simeq |Map_{sSet_*}(\partial\Delta^{n+1},(Map_{CDGA_{\mathbb{K}}}(C,A),x))|
\]
and
\begin{eqnarray*}
Map_{CDGA_{\mathbb{K}}}(C,A) & \simeq & Map_{CDGA_{\mathbb{K}}}(C\otimes A^{\vee},\mathbb{K}) \\
 & = & <C\otimes A^{\vee}>
\end{eqnarray*}
since we assume $A$ to be perfect. Moreover, the cdga $C\otimes A^{\vee}$ is endowed with an augmentation
$C\otimes A^{\vee}\rightarrow \mathbb{K}$ induced by  the base point $x:C\rightarrow A$.
We deduce
\begin{eqnarray*}
Map_{sSet_*}(\partial\Delta^{n+1},(Map_{CDGA_{\mathbb{K}}}(C,A),x)) & \simeq &
Map_{sSet_*}(\partial\Delta^{n+1},<C\otimes A^{\vee}>) \\
 & \cong & Map_{CDGA_{\mathbb{K}}^{aug}}(C\otimes A^{\vee}, A_{PL}(\partial\Delta^{n+1})).
\end{eqnarray*}
The space $\partial\Delta^{n+1}$ is formal (see Definition 2.85 in \cite{FOT}), so there is a zigzag of quasi-isomorphisms of cdgas
\[
A_{PL}(\partial\Delta^{n+1})\stackrel{\sim}{\leftarrow}M_{\partial\Delta^{n+1}}\stackrel{\sim}{\rightarrow} H^*(|\partial\Delta^{n+1}|,\mathbb{K})
\]
where $M_{\partial\Delta^{n+1}}$ is the minimal model of $\partial\Delta^{n+1}$, that is, the minimal model of $A_{PL}(\partial\Delta^{n+1})$ in $CDGA_{\mathbb{K}}$
(see Definition 2.30 in \cite{FOT}).
Alternatively, one could use that the manifold $S^n$ is formal, and then that the homotopy equivalence between $|\partial\Delta^{n+1}|$ and $S^n$ induces a homotopy
equivalence of cdgas between $A_{PL}(\partial\Delta^{n+1})$ and $A_{PL}(S^n)$.
Hence we get
\[
Map_{sSet_*}(\partial\Delta^{n+1},(Map_{CDGA_{\mathbb{K}}}(C,A),x))
\sim Map_{CDGA_{\mathbb{K}}^{aug}}(C\otimes A^{\vee}, H^*S^n).
\]
The fact that $A$ is perfect implies an adjunction
\[
-\otimes A^{\vee}:CDGA_{\mathbb{K}}/A\rightleftarrows CDGA_{\mathbb{K}}^{aug}:-\otimes  A.
\]
Weak equivalences and fibrations of augmented cdgas are quasi-isomorphisms and surjections at the level
of cochain complexes, and the dg tensor product preserves such kind of maps so $-\otimes A$ is a right
Quillen functor. Thus we have a Quillen adjunction $(-\otimes A^{\vee},-\otimes A)$ and natural isomorphisms
of simplicial sets
\[
Map_{CDGA_{\mathbb{K}}^{aug}}(C\otimes A^{\vee}, H^*S^n)\cong Map_{CDGA_{\mathbb{K}}/A}(C, H^*S^n\otimes A).
\]
Finally let us observe that $H^*S^n=\mathbb{K}[t]/(t^2)$ with $|t|=n$, hence $H^*S^n\otimes A=A\oplus A[n]$
is the square-zero extension of $A$ by $A[n]$.
\end{proof}

\begin{proof}[Proof of Theorem 6.2]
(1) We use the previous results to get the following sequence of isomorphisms:
\begin{eqnarray*}
\pi_*\mathbb{D}er_{\underline{Map}(P_{\infty},Q)}(\mathbb{R}\underline{Spec}_A,A[n]) & = & \pi_*Map_{\mathbb{R}\underline{Spec}_A/St}
(\mathbb{R}\underline{Spec}_{A\oplus A[n]},\underline{Map}(P_{\infty},Q)) \\
 & \cong & \pi_*Map_{\mathbb{R}\underline{Spec}_A/St}
(\mathbb{R}\underline{Spec}_{A\oplus A[n]},\mathbb{R}\underline{Spec}_{C(P_{\infty},Q)}) \\
& \cong & \pi_*Map_{CDGA_{\mathbb{K}}/A}(C(P_{\infty},Q),A\oplus A[n]) \\
& \cong & \pi_*\Omega_{x_{\varphi}}^n|Map_{CDGA_{\mathbb{K}}}(C(P_{\infty},Q),A)| \\
& \cong & \pi_{*+n}(Map_{CDGA_{\mathbb{K}}}(C(P_{\infty},Q),A),x_{\varphi}) \\
& \cong & \pi_{*+n}(\underline{Map}(P_{\infty},Q)(A),\varphi\otimes_e id_A) \\
&\cong & H^{-*-n+1}Hom_{\Sigma}(\overline{C},Q\otimes_e A)^{\varphi\otimes_e id_A} \\
& \cong & H^{-*-n+1}(Hom_{\Sigma}(\overline{C},Q)^{\varphi}\otimes A).
\end{eqnarray*}

(2) We have
\begin{eqnarray*}
\pi_*\mathbb{D}er(\mathbb{R}\underline{Spec}_A,A[n]) & \cong & H^{-*}(\mathbb{T}_{\underline{Map}(P_{\infty},Q),x_{\varphi}}
\otimes_A A[n]) \\
 & \cong & H^{-*+n}(\mathbb{T}_{\underline{Map}(P_{\infty},Q),x_{\varphi}})
\end{eqnarray*}
hence
\[
H^{-*}(\mathbb{T}_{\underline{Map}(P_{\infty},Q),x_{\varphi}}) \cong
H^{-*+1}(Hom_{\Sigma}(\overline{C},Q)^{\varphi}\otimes A)
\]
for $n=0$ and $*\geq 1$.
\end{proof}

\subsection{Obstruction theory}

For every integer $n$, there is a homotopy pullback
\[
\xymatrix{
\mathbb{K}[t]/(t^{n+1})\ar[r]\ar[d] & \mathbb{K}[t]/(t^n)\ar[d] \\
\mathbb{K} \ar[r] & \mathbb{K}\oplus\mathbb{K}[1]
}
\]
inducing a fiber sequence of pointed simplicial sets
\begin{eqnarray*}
Map_{\mathbb{R}\underline{Spec}_A/St}(\mathbb{R}\underline{Spec}_{\mathbb{K}[t]/(t^{n+1})},\underline{Map}(X_{\infty},Y)) & \rightarrow  & \\
Map_{\mathbb{R}\underline{Spec}_A/St}(\mathbb{R}\underline{Spec}_{\mathbb{K}[t]/(t^n)},\underline{Map}(X_{\infty},Y)) & \rightarrow  & \\
Map_{\mathbb{R}\underline{Spec}_A/St}(\mathbb{R}\underline{Spec}_{\mathbb{K}\oplus \mathbb{K}[1]},\underline{Map}(X_{\infty},Y)) & & \\
\end{eqnarray*}
hence a long exact sequence
\begin{eqnarray*}
... & \rightarrow & \pi_0Map_{\mathbb{R}\underline{Spec}_A/St}(\mathbb{R}\underline{Spec}_{\mathbb{K}[t]/(t^{n+1})},\underline{Map}(X_{\infty},Y)) \\
 & \stackrel{p_n}{\rightarrow} & \pi_0Map_{\mathbb{R}\underline{Spec}_A/St}(\mathbb{R}\underline{Spec}_{\mathbb{K}[t]/(t^n)},\underline{Map}(X_{\infty},Y)) \\
 & \rightarrow & \pi_0Map_{\mathbb{R}\underline{Spec}_A/St}(\mathbb{R}\underline{Spec}_{\mathbb{K}\oplus \mathbb{K}[1]},\underline{Map}(X_{\infty},Y)) \\
  & \cong & H^1(\mathbb{T}_{\underline{Map}(X_{\infty},Y),x_{\varphi}}).
\end{eqnarray*}
where the last isomorphism follows from Proposition 6.1.
Consequently the first cohomology group of the deformation complex is the obstruction group for formal
deformations:
\begin{prop}
If $H^1(\mathbb{T}_{\underline{Map}(X_{\infty},Y),x_{\varphi}})=0$ then for every integer $n$, the
map $p_n$ is surjective, that is, every deformation of order $n$ lifts to a deformation of order $n+1$.
Thus any infinitesimal deformation of $\varphi$ can be extended to a formal deformation.
\end{prop}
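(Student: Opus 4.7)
The plan is to read off the conclusion directly from the long exact sequence that the paper has just set up, and then iterate. The homotopy pullback square relating $\mathbb{K}[t]/(t^{n+1})$, $\mathbb{K}[t]/(t^n)$, $\mathbb{K}$ and the square-zero extension $\mathbb{K}\oplus\mathbb{K}[1]$ is sent by $\underline{Map}(X_{\infty},Y)$ to a homotopy pullback square of mapping spaces under $\mathbb{R}\underline{Spec}_A$ (since $\underline{Map}(X_{\infty},Y)$ is a stack, hence preserves homotopy limits of affines), and therefore to a fiber sequence of pointed simplicial sets. Taking the associated long exact sequence of homotopy groups gives exactly the displayed sequence ending in the connecting map
\[
\partial_n:\pi_0 Map_{\mathbb{R}\underline{Spec}_A/St}(\mathbb{R}\underline{Spec}_{\mathbb{K}[t]/(t^n)},\underline{Map}(X_{\infty},Y))\longrightarrow H^1(\mathbb{T}_{\underline{Map}(X_{\infty},Y),x_{\varphi}}),
\]
where the identification of the target uses Proposition 6.1 applied to $M=A[1]$ (which is dualizable), together with the definition of the tangent complex as the dual of the cotangent complex. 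Exactness at $\pi_0 Map(\mathbb{R}\underline{Spec}_{\mathbb{K}[t]/(t^n)},-)$ says that a deformation of order $n$ lifts to order $n+1$ if and only if its image under $\partial_n$ vanishes; in other words, $\partial_n$ is precisely the obstruction class. Under the hypothesis $H^1(\mathbb{T}_{\underline{Map}(X_{\infty},Y),x_{\varphi}})=0$ this obstruction group is trivial, so $\partial_n=0$ and $p_n$ is surjective for every $n$.

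To upgrade order-by-order liftability to the existence of a formal deformation, I would argue as follows. Given an infinitesimal deformation $\varphi_1$ of order $1$, choose inductively, using the surjectivity of each $p_n$, a compatible tower $(\varphi_n)_{n\geq 1}$ with $\varphi_n$ of order $n$ and $p_n(\varphi_{n+1})=\varphi_n$. Since $\mathbb{K}[[t]]=\lim_n\mathbb{K}[t]/(t^n)$, we have a homotopy equivalence
\[
Map_{\mathbb{R}\underline{Spec}_A/St}(\mathbb{R}\underline{Spec}_{\mathbb{K}[[t]]},\underline{Map}(X_{\infty},Y))\simeq \operatorname{holim}_n Map_{\mathbb{R}\underline{Spec}_A/St}(\mathbb{R}\underline{Spec}_{\mathbb{K}[t]/(t^n)},\underline{Map}(X_{\infty},Y))
\]
so the compatible tower $(\varphi_n)$ assembles into a formal deformation, i.e.\ a vertex of the left-hand space projecting to $\varphi_1$.

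The only subtle point, and the one I expect to be the main technical obstacle, is the passage from compatible systems to an actual element of the inverse limit: one needs to know that the Milnor $\lim^1$ obstruction on the tower of simplicial sets $\{\pi_1 Map_{\mathbb{R}\underline{Spec}_A/St}(\mathbb{R}\underline{Spec}_{\mathbb{K}[t]/(t^n)},\underline{Map}(X_{\infty},Y))\}_n$ does not contribute a lim$^1$ term that would prevent surjectivity of $\pi_0(\operatorname{holim})\to \lim_n \pi_0(-)$. In the present setting the $\pi_1$ groups are given by $H^0(Hom_{\Sigma}(\overline{C},Q)^{\varphi}\otimes\mathbb{K}[t]/(t^n))$-type expressions which form a Mittag-Leffler tower (each transition map $\mathbb{K}[t]/(t^{n+1})\twoheadrightarrow\mathbb{K}[t]/(t^n)$ is surjective on coefficients), so $\lim^1$ vanishes and the Milnor exact sequence collapses, giving the required surjectivity. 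This yields the desired formal deformation extending $\varphi_1$.
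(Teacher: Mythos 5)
Your argument is correct and is essentially the paper's own: the proposition is read off from exactness of the long exact sequence at $\pi_0 Map_{\mathbb{R}\underline{Spec}_A/St}(\mathbb{R}\underline{Spec}_{\mathbb{K}[t]/(t^n)},\underline{Map}(X_{\infty},Y))$, whose target $H^1(\mathbb{T}_{\underline{Map}(X_{\infty},Y),x_{\varphi}})$ vanishes by hypothesis, and the formal statement follows by assembling the tower of lifts; you actually supply more detail on this last step than the paper does.

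Two small corrections to your justifications. First, being a stack does not by itself imply that $\underline{Map}(X_{\infty},Y)$ sends the square-zero-extension homotopy pullback of cdgas to a homotopy pullback of simplicial sets: the descent condition only concerns finite homotopy products and hypercovers, and this square is neither. What makes the fiber sequence exist is that $\underline{Map}(X_{\infty},Y)$ is \emph{representable} (Theorem 4.14), so that its value on a cdga $B$ is $Map_{CDGA_{\mathbb{K}}}(C(X_{\infty},Y),B)$, and mapping out of a fixed cofibrant cdga preserves homotopy pullbacks; this is precisely why the paper restricts to representable stacks, which carry an obstruction theory in the sense of Toen-Vezzosi. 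Second, in the Milnor sequence
\[
\ast\rightarrow \lim^1\pi_1\rightarrow \pi_0(\operatorname{holim}_n)\rightarrow \lim_n\pi_0\rightarrow \ast
\]
the $\lim^1$ term obstructs injectivity, not surjectivity, of $\pi_0(\operatorname{holim}_n)\rightarrow\lim_n\pi_0$; a compatible tower of components therefore always lifts to a vertex of the homotopy limit, so your Mittag-Leffler observation, while true, is needed only to control uniqueness of the resulting formal deformation, not its existence.
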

Unicity results for such lifts follows from Proposition 1.4.2.5 of \cite{TV2}:
\begin{prop}
Suppose that a given deformation of order $n$ lifts to a deformation of order $n+1$. Then the set of such lifts forms a torsor under the action
of the cohomology group $H^0(\mathbb{T}_{\underline{Map}(X_{\infty},Y),x_{\varphi}})$. In particular, if
$H^0(\mathbb{T}_{\underline{Map}(X_{\infty},Y),x_{\varphi}})=0$ then such a lift is unique up to equivalence.
\end{prop}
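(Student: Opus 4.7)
The plan is to extract a longer portion of the long exact sequence associated to the homotopy fiber sequence already built in Proposition 6.6, and to identify the next homotopy group on the base with $H^0$ of the tangent complex. I would first extend the long exact sequence one step to the left, obtaining the exact portion
\[
\pi_1\mathrm{Map}(\mathbb{R}\underline{\mathrm{Spec}}_{\mathbb{K}\oplus\mathbb{K}[1]},\underline{\mathrm{Map}}(X_\infty,Y))\to\pi_0\mathrm{Map}(\mathbb{R}\underline{\mathrm{Spec}}_{\mathbb{K}[t]/(t^{n+1})},\underline{\mathrm{Map}}(X_\infty,Y))\stackrel{p_n}{\to}\pi_0\mathrm{Map}(\mathbb{R}\underline{\mathrm{Spec}}_{\mathbb{K}[t]/(t^n)},\underline{\mathrm{Map}}(X_\infty,Y))
\]
where all the mapping spaces are taken in the comma category $\mathbb{R}\underline{\mathrm{Spec}}_A/St$ based at the point $x_{\varphi}$.

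Second, I would identify the leftmost term with $H^0$ of the tangent complex. Indeed, $\mathrm{Map}_{\mathbb{R}\underline{\mathrm{Spec}}_A/St}(\mathbb{R}\underline{\mathrm{Spec}}_{\mathbb{K}\oplus\mathbb{K}[1]},\underline{\mathrm{Map}}(X_\infty,Y))$ is by definition $\mathbb{D}\mathrm{er}_{\underline{\mathrm{Map}}(X_\infty,Y)}(\mathbb{R}\underline{\mathrm{Spec}}_A,A[1])$, and Proposition 6.1 gives
\[
\pi_1\mathbb{D}\mathrm{er}(\mathbb{R}\underline{\mathrm{Spec}}_A,A[1])\cong H^{-1}(\mathbb{T}_{\underline{\mathrm{Map}}(X_\infty,Y),x_\varphi}\otimes_A A[1])\cong H^0(\mathbb{T}_{\underline{\mathrm{Map}}(X_\infty,Y),x_\varphi}).
\]

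Third, I would extract the torsor structure from the exactness. Assume a deformation $\tilde{\varphi}_n$ of order $n$ lifts to a deformation of order $n+1$. The exactness of the long sequence at $\pi_0\mathrm{Map}(\mathbb{R}\underline{\mathrm{Spec}}_{\mathbb{K}[t]/(t^{n+1})},\underline{\mathrm{Map}}(X_\infty,Y))$ shows that the fiber of $p_n$ over $\tilde{\varphi}_n$ is nonempty and is precisely an orbit of the natural action of the group $\pi_1$ of the base, this action being the standard one of $\pi_1$ of the base of a homotopy fibration on the connected components of its fiber. This orbit is in fact a torsor because two lifts differ by a unique class in the image of the connecting map, and this image is a quotient of $\pi_1$ of the base by the image of the next arrow in the long exact sequence; a standard diagram chase shows that when one works with Kan-complex-valued presheaves, the action is free on the fiber over a fixed lift. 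Combined with the identification in step two, this gives a torsor under $H^0(\mathbb{T}_{\underline{\mathrm{Map}}(X_\infty,Y),x_\varphi})$.

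The main obstacle is step three, namely carefully extracting the simply transitive nature of the $\pi_1$-action on the set of lifts from a long exact sequence of homotopy groups of pointed Kan complexes (the long exact sequence only yields exactness, hence transitivity, but not freeness a priori). The cleanest way around this is to import directly the general statement of Proposition 1.4.2.5 of \cite{TV2}, whose hypotheses are satisfied here because $\underline{\mathrm{Map}}(X_\infty,Y)$ is a representable stack with a cotangent complex (Theorem 0.6), and the square-zero extensions $\mathbb{K}[t]/(t^{n+1})\to\mathbb{K}[t]/(t^n)$ by $\mathbb{K}[1]$ are exactly the kind of extensions treated in loc. cit. The particular case $H^0=0$ then follows trivially, since a torsor under the trivial group is a singleton.
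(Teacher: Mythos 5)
Your proposal is correct and ultimately lands on exactly the paper's argument: the paper's entire proof of this statement is the citation of Proposition 1.4.2.5 of \cite{TV2}, applied to the representable stack $\underline{Map}(X_{\infty},Y)$ and the square-zero extension $\mathbb{K}[t]/(t^{n+1})\rightarrow\mathbb{K}[t]/(t^n)$, together with the identification $\pi_1\mathbb{D}er(\mathbb{R}\underline{Spec}_A,A[1])\cong H^0(\mathbb{T}_{\underline{Map}(X_{\infty},Y),x_{\varphi}})$ from Proposition 6.1. Your preliminary long-exact-sequence discussion is a reasonable supplement, and you correctly diagnose why it only yields transitivity and not freeness before falling back on the same citation.
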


\begin{rem}
These results work as well for $A$-deformations, where $A$ is a perfect cdga.
\end{rem}

\subsection{Zariski open immersions}

In classical algebraic geometry, Zariski open immersions are open immersions for the Zariski topology on the category of affine schemes $Aff$.
A morphism of affine schemes $Spec_B\rightarrow Spec_A$ is a Zariski open immersion if the corresponding morphism of commutative algebras $A\rightarrow B$
is a flat morphism of finite presentation such that $B$ is canonically isomorphic to $B\otimes_A B$.
Schemes are then defined as sheaves over $Aff^{op}$ for the Zariski topology, that is, by gluing affine schemes along Zariski open immersions.
In derived algebraic geometry, a morphism of simplicial algebras $A\rightarrow B$ is a Zariski open immersion if the induced map
\[
Spec_{\pi_0B}\rightarrow Spec_{\pi_0A}
\]
is a Zariski open immersion in the classical sense and the canonical map
\[
\pi_*A\otimes_{\pi_0A}\pi_0B\rightarrow \pi_*B
\]
is an isomorphism.

There is a general notion of Zariski open immersion for affine stacks in any HAG context. However, an explicit description can be more difficult to obtain than
in the derived setting: this is what happens for instance in the complicial context, where there is no simple description in terms of homotopy groups anymore.
We refer the reader to Chapter 1.2 of \cite{TV2} for the definitions of finitely presented morphisms, flat morphisms, epimorphisms, formal Zariski open immersions
and Zariski open immersions for monoids in a given HA context.
An interesting feature of these open immersions is that, intuitively, local properties in a neighborhood of a point in the source are transposed to local
properties in a neighborhood of its image in the target. In particular, if $f:X\rightarrow Y$ is an open immersion and $x:\underline{\mathbb{R}Spec}_R\rightarrow X$
an $R$-point of $X$ for a given commutative monoid $R$, then the cotangent complexes $\mathbb{L}_{X,x}$ and $\mathbb{L}_{Y,f(x)}$ are quasi-isomorphic.

In complicial algebraic geometry, we provide the following conditions under which a morphism of cofibrant cdgas is a formal Zariski open immersion:
\begin{prop}
Let $f:C_2\rightarrow C_1$ be a morphism of cofibrant cdgas. Let us suppose that

(1) For every cdga $A$, the induced map
\[
\pi_0f^*:\pi_0\underline{\mathbb{R}Spec}_{C_1}(A)\hookrightarrow \pi_0\underline{\mathbb{R}Spec}_{C_2}(A)
\]
is injective;

(2) For every cdga morphism $\varphi:C_1\rightarrow A$, the induced map
\[
\pi_{*+1}(\underline{\mathbb{R}Spec}_{C_1}(A),\varphi)\stackrel{\cong}{\rightarrow}\pi_{*+1}(\underline{\mathbb{R}Spec}_{C_1}(A),\varphi\circ f)
\]
is an isomorphism.

(3) The map $f$ is finitely presented.

Then $f$ is a Zariski open immersion.
\end{prop}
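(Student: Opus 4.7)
The plan is to verify the two conditions defining a formal Zariski open immersion in the complicial HAG context of \cite{TV2} — namely that $f$ is an epimorphism of cdgas in the homotopy category and that its relative cotangent complex vanishes — and then to combine these with hypothesis (3) to promote the conclusion to a genuine Zariski open immersion.

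First, I would reformulate (1) and (2) as a statement about stacks. Taken together, the two assumptions say that for every cdga $A$ the map $f^*:\underline{\mathbb{R}Spec}_{C_1}(A)\to \underline{\mathbb{R}Spec}_{C_2}(A)$ is a homotopy monomorphism of simplicial sets (injective on $\pi_0$ and bijective on higher homotopy groups at every basepoint of the source). By the model-theoretic Yoneda lemma for affine stacks (Corollary 1.3.2.5 of \cite{TV2}, already invoked in Section 4.2 to identify representable stacks with their functors of points), this forces the induced morphism $\underline{\mathbb{R}Spec}_{C_1}\to \underline{\mathbb{R}Spec}_{C_2}$ to be a monomorphism in $Ho(Aff_{\mathcal{C}}^{\sim,\tau})$. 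Dually, this is equivalent to $f$ being an epimorphism in $Ho(CDGA_{\mathbb{K}})$, which in turn I would reformulate as the assertion that the codiagonal $C_1\otimes^{\mathbb{L}}_{C_2}C_1\to C_1$ is a quasi-isomorphism (the two canonical maps $C_1\rightrightarrows C_1\otimes^{\mathbb{L}}_{C_2}C_1$ must become equal after post-composing with $f$, which by the epimorphism property already forces them to coincide up to homotopy).

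Next, from the equivalence $C_1\otimes^{\mathbb{L}}_{C_2}C_1\simeq C_1$, I would deduce the vanishing of the relative cotangent complex $\mathbb{L}_{C_1/C_2}$ by combining the transitivity fiber sequence for the composite $C_2\to C_1\to C_1\otimes^{\mathbb{L}}_{C_2}C_1$ with the base change formula $\mathbb{L}_{C_1\otimes^{\mathbb{L}}_{C_2}C_1/C_1}\simeq \mathbb{L}_{C_1/C_2}\otimes^{\mathbb{L}}_{C_2}C_1$. Since the multiplication map is a weak equivalence, the left-hand side is contractible, and faithful flatness of $C_1$ over itself then forces $\mathbb{L}_{C_1/C_2}\simeq 0$. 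This gives formal \'etaleness. Combined with the epimorphism property, this is precisely the definition of a formal Zariski open immersion in the HAG context, and adding the finite presentation hypothesis (3) yields a Zariski open immersion as defined in Chapter 1.2 of \cite{TV2}.

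The main obstacle I expect is reconciling the mapping-space criterion of (1) and (2) with TV2's intrinsic definition of a monomorphism of stacks. One must check that the homotopy-group condition, with basepoints ranging over the image of $\pi_0 f^*$, is genuinely equivalent to the diagonal $\underline{\mathbb{R}Spec}_{C_1}\to \underline{\mathbb{R}Spec}_{C_1}\times^h_{\underline{\mathbb{R}Spec}_{C_2}}\underline{\mathbb{R}Spec}_{C_1}$ being an isomorphism in the homotopy category of stacks; this requires a small diagram chase to make sure no components or higher homotopy classes are missed when $A$ varies. A secondary subtle point, specific to the complicial setting as opposed to derived algebraic geometry, is that flatness cannot be read off homotopy groups of simplicial algebras; one must instead work cohomologically and rely on the transitivity and base change triangles for cotangent complexes to bridge the gap between vanishing of $\mathbb{L}_{C_1/C_2}$ and the flatness property that enters TV2's definition of a formal Zariski open immersion.
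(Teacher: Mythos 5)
Your overall target is the right one (an epimorphism which is flat is a formal Zariski open immersion, and hypothesis (3) then upgrades it), but there are two genuine problems in the middle. First, your route to the epimorphism property passes through ``monomorphism in $Ho(Aff_{\mathcal{C}}^{\sim,\tau})$'' and hence ``epimorphism in $Ho(CDGA_{\mathbb{K}})$''. That is a $1$-categorical condition in the homotopy category, strictly weaker than the homotopy epimorphism condition of Definition 1.2.6.1 of Toen--Vezzosi (emptiness or contractibility of the homotopy fibers of $f^*:Map(C_1,A)\rightarrow Map(C_2,A)$ for all $A$), and it does not by itself give that the codiagonal $C_1\otimes^{\mathbb{L}}_{C_2}C_1\rightarrow C_1$ is an equivalence; the sentence you offer to justify that step is essentially circular (it invokes ``the epimorphism property'' to prove it). The clean argument --- and the one the paper uses --- is to fix $\varphi:C_2\rightarrow A$, identify the homotopy fiber of $f^*$ over $\varphi$ with $Map_{C_2-Alg}(C_1,A)$, and read off from the long exact sequence of the fibration that hypotheses (1) and (2) make this fiber empty or contractible. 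That is already, verbatim, the definition of an epimorphism in TV2; no passage through the codiagonal or the Yoneda embedding is needed.

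Second, your cotangent-complex detour does not close the gap you yourself flag at the end: a formal Zariski open immersion in TV2 is a \emph{flat} epimorphism, and vanishing of $\mathbb{L}_{C_1/C_2}$ is a consequence of, not a substitute for, flatness. The ingredient that makes the proposition work in the complicial context is much simpler and is missing from your argument: unbounded cochain complexes form a \emph{stable} model category, so every morphism of commutative monoids is automatically flat, and hence every epimorphism is a formal Zariski open immersion (Corollary 1.2.6.6 of TV2). Once you have that, hypothesis (3) gives the conclusion by the definition of a Zariski open immersion as a finitely presented formal one. I would drop the transitivity/base-change computation entirely and replace it with this one-line observation.
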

\begin{proof}
Let $f:C_2\rightarrow C_1$ be a morphism of cdgas. It induces a morphism of Kan complexes
\[
f^*=(-\circ f):\underline{\mathbb{R}Spec}_{C_1}(A)=Map_{CDGA_{\mathbb{K}}}(C_1,A)\rightarrow Map_{CDGA_{\mathbb{K}}}(C_2,A)=\underline{\mathbb{R}Spec}_{C_2}(A).
\]
Let us fix a morphism $\varphi:C_2\rightarrow A$ and consider the homotopy fiber
\[
hofib_{\varphi}(f^*)\rightarrow Map_{CDGA_{\mathbb{K}}}(C_1,A)\rightarrow Map_{CDGA_{\mathbb{K}}}(C_2,A).
\]
This homotopy fiber is the Kan subcomplex of $Map_{CDGA_{\mathbb{K}}}(C_1,A)$ whose vertices are the maps $\psi:C_1\rightarrow A$ such that $\psi\circ f=\varphi$,
that is, the mapping space $Map_{C_2\setminus CDGA_{\mathbb{K}}}(C_1,A)$ of morphisms between $f:C_2\rightarrow C_1$ and $\varphi:C_2\rightarrow A$.
This is nothing but the mapping space of $C_2$-algebra morphisms
\[
hofib_{\varphi}(f^*)=Map_{C_2-Alg}(C_1,A).
\]
Using the long exact sequence of homotopy groups associated to this homotopy fiber sequence, assumptions (1) and (2) imply that the homotopy fiber
$hofib_{\varphi}(f^*)$ is contractible. According to Definition 1.2.6.1 of \cite{TV2}, the fact that $Map_{C_2-Alg}(C_1,A)$ is contractible means that
$f$ is an epimorphism. Unbounded cochain complexes form a stable model category, so every morphism is flat and we proved that $f$ is
a formal Zariski open immersion (see Corollary 1.2.6.6 of \cite{TV2}).
By definition, a finitely presented formal Zariski open immersion is a Zariski open immersion (Definition 1.2.6.7 of \cite{TV2}).
\end{proof}
This applies to the affine stacks $\underline{Map}_{T-Alg}(X_{\infty},Y)$: under finitess conditions on a $T$-algebra morphism $f:X_{\infty}\rightarrow X_{\infty}'$,
conditions (1) and (2) of Proposition 6.11 about homotopy groups of mapping spaces imply that the deformation complex of any $\varphi:X_{\infty}\rightarrow Y$
is quasi-isomorphic to the deformation complex of $\varphi\circ f$.
For instance, let $T$ be the monad encoding props and $Y=End_X$ be the endomorphism prop of a cochain complex $X$.
Let $O_{\infty}\rightarrow P_{\infty}$ be a morphism of cofibrant props.
Suppose that $X$ is a $P_{\infty}$-algebra. Proposition 6.11 gives homotopical conditions under which
the deformation complex of $X$ as $P_{\infty}$-algebra is quasi-isomorphic to the deformation complex of $X$ as $O_{\infty}$-algebra.
This can be seen as a partial generalization of Theorem 0.1 (homotopical deformation theory controls algebraic deformation theory).

\section{Another description}

This short section is devoted to the proof of Theorem 0.14. For this, we use the following general results about Lie theory
of profinite complete $L_{\infty}$-algebras. The first is about the affineness of such stacks:
\begin{lem}
Let $g$ be a profinite complete $L_{\infty}$-algebra. Then the Maurer-Cartan stack $\underline{MC}_{\bullet}(g)$
is isomorphic to $\mathbb{R}Spec_{C^*(g)}$, where $C^*(g)$ is the Chevalley-Eilenberg algebra of $g$.
\end{lem}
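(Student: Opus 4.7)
The plan is to produce a natural isomorphism of simplicial presheaves between $\underline{MC}_{\bullet}(g)$ and $\mathbb{R}Spec_{C^*(g)}$ by reducing, for each test cdga $B = A\otimes\Omega_n$, to the classical $L_\infty$-cdga duality
\[
Mor_{CDGA_{\mathbb{K}}}(C^*(g),B) \; \cong \; MC(g\,\hat\otimes\, B),
\]
and then checking that this bijection respects the simplicial structure induced by $\Omega_{\bullet}$ as well as the presheaf structure in $A$.

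First I would handle the finite-dimensional case. If $g$ has finite-dimensional components, then $C^*(g) = (\hat S(g^{\vee}[-1]),d_{CE})$, where $d_{CE}$ is dual to the sum of brackets $l_k$. For $\tau\in (g\otimes B)^1$, sending each generator $x^{\vee}\in g^{\vee}[-1]$ to $\langle x^{\vee},\tau\rangle\in B$ extends uniquely to a morphism of graded commutative algebras $\hat S(g^{\vee}[-1])\to B$. A direct computation, unwinding how $d_{CE}$ encodes the $l_k$, shows that this morphism commutes with the differentials precisely when $\tau$ satisfies the Maurer-Cartan equation $\sum_{k\ge 1}\tfrac{1}{k!}l_k(\tau^{\wedge k})=0$. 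This gives the desired bijection, and naturality in $B$ is immediate.

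Second I would globalize. By assumption $g\cong \lim_r g/F_r g$ with each quotient profinite, and one defines $C^*(g)=\mathrm{colim}_r C^*(g/F_r g)$. For any cdga $B$ one computes
\begin{align*}
Mor_{CDGA_{\mathbb{K}}}(C^*(g),B)
 &\cong \lim_r Mor_{CDGA_{\mathbb{K}}}(C^*(g/F_r g),B) \\
 &\cong \lim_r MC((g/F_r g)\otimes B) \\
 &\cong MC\bigl(\lim_r (g/F_r g)\,\hat\otimes\, B\bigr) \\
 &= MC(g\,\hat\otimes\, B),
\end{align*}
where the penultimate identification uses that the Maurer-Cartan functor commutes with the limits defining the completion (the equation is preserved termwise along the tower $\{(g/F_rg)\otimes B\}_r$), and the last equality uses completeness of $g$. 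Specialising to $B=A\otimes\Omega_n$ and letting $n$ vary gives the required isomorphism of simplicial sets, since $A\otimes\Omega_{\bullet}$ is a simplicial frame on $A$ in $CDGA_{\mathbb{K}}$ and hence computes $\mathbb{R}Spec_{C^*(g)}(A)$ on the one hand, and $g\,\hat\otimes\,A\,\hat\otimes\,\Omega_{\bullet}$ computes $\underline{MC}_{\bullet}(g)(A)$ on the other. Naturality of the bijection in $B$ translates into the equivariance of the isomorphism under the cdga structure maps of $\Omega_{\bullet}$, so the faces, degeneracies and presheaf functoriality all match.

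The main obstacle I expect is the compatibility between completions and the various tensor products: one must justify that $g\,\hat\otimes\,(A\otimes\Omega_n)$ coincides with the completion of $(g\otimes A)\otimes\Omega_n$ with respect to the filtration induced from $g$, and that the Maurer-Cartan functor genuinely commutes with the defining inverse limit. This relies on the profinite hypothesis (each graded piece $g/F_r g$ has finite-dimensional components, so ordinary tensor product and dualisation commute with the quotient), together with the fact that the Maurer-Cartan equation is a finite sum once one works modulo $F_r$. A secondary point is to verify that $C^*(g)$, as defined via the filtered colimit, is a sensible object in the complicial geometry context—in particular that it is cofibrant enough for $\mathbb{R}Spec_{C^*(g)}$ to coincide on the nose with the hom-space $Mor_{CDGA_{\mathbb{K}}}(C^*(g),A\otimes\Omega_{\bullet})$, which follows from its quasi-free nature as $\hat S$ of a complex.
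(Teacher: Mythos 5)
Your proposal is correct and follows essentially the same route as the paper: the paper's entire proof consists of invoking Berglund's natural bijection $Mor_{cdga}(C^*(g),B)\cong MC(g\,\hat\otimes\,B)$ (his Lemma 2.3) and specialising $B$ to $A\otimes\Omega_{\bullet}$, exactly as you do. The only difference is that you sketch a proof of that bijection via the profinite tower instead of citing it; if you carry that out, the one step needing genuine care is the claim that an assignment on generators extends to a cdga morphism out of the \emph{completed} symmetric algebra $\hat S(g^{\vee}[-1])$ (infinite sums must make sense in $B$), which is precisely where the profiniteness and the nilpotence of the quotients $g/F_rg$ enter and which is the actual content of the cited lemma.
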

\begin{proof}
According to Lemma 2.3 of \cite{Ber}, for every cdga $A$ there is an isomorphism
\[
Mor_{cdga}(C^*(g),A) \cong MC(g\hat{\otimes} A)
\]
which is natural in $A$, hence
\begin{eqnarray*}
\underline{MC}_{\bullet}(g)(A) & = & MC_{\bullet}(g\hat{\otimes}A) \\
 & = & MC((g\hat{\otimes}A)\hat{\otimes}\Omega_{\bullet}) \\
 & = & MC(g\hat{\otimes}(A\otimes\Omega_{\bullet})) \\
 & \cong & Mor_{cdga}(C^*(g),A\otimes\Omega_{\bullet}) \\
 & = & \mathbb{R}Spec_{C^*(g)}(A)
\end{eqnarray*}
where the isomorphism preserves the simplicial structure by naturality.
\end{proof}
The second is about the cohomology of their tangent complexes:
\begin{lem}
Let $g$ be a profinite complete $L_{\infty}$-algebra and $\varphi$ be a Maurer-Cartan element of $g$, that is,
a $\mathbb{K}$-point of $\mathbb{R}Spec_{C^*(g)}$. Then the tangent cohomology of the Maurer-Cartan stack
is given by
\[
H^*(\mathbb{T}_{\underline{MC}_{\bullet}(g),\varphi}[-1]) \cong H^*g^{\varphi}.
\]
\end{lem}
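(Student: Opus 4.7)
The plan is to combine Lemma 7.1 with a direct computation of the cotangent complex of $C^*(g)$ at $x_\varphi$. By Lemma 7.1 the stack $\underline{MC}_\bullet(g)$ is isomorphic to $\mathbb{R}\mathrm{Spec}_{C^*(g)}$, and under this identification the $\mathbb{K}$-point $\varphi$ corresponds via Lemma 2.3 of \cite{Ber} to a cdga morphism $x_\varphi : C^*(g) \to \mathbb{K}$. Consequently it suffices to compute
\[
\mathbb{T}_{\mathbb{R}\mathrm{Spec}_{C^*(g)}, x_\varphi} \simeq \mathbb{R}\underline{\mathrm{Hom}}_{C^*(g)}(\mathbb{L}_{C^*(g)}, \mathbb{K}_\varphi),
\]
where $\mathbb{K}_\varphi$ is $\mathbb{K}$ endowed with the $C^*(g)$-module structure given by $x_\varphi$.

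I would exploit the semi-free presentation of $C^*(g)$ obtained by dualizing the cofree cocommutative coalgebra $\Lambda^{\bullet\geq 1}sg$: as a graded commutative algebra $C^*(g)$ is the completed symmetric algebra on $(sg)^\vee$, with differential $d_{CE}$ extending $\sum_{k\geq 1}\tfrac{1}{k!}l_k^\vee$. Since $C^*(g)$ is semi-free, its cotangent complex is represented by the module of Kähler differentials $\Omega^1_{C^*(g)/\mathbb{K}}$, which is the completed free $C^*(g)$-module generated by $(sg)^\vee$. This module is free as a graded $C^*(g)$-module, so the derived base change along $x_\varphi$ coincides with the ordinary one and yields a complex with $(sg)^\vee$ as underlying graded space and differential induced from $d_{CE}$ by reduction along $x_\varphi$.

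The heart of the argument is the identification of this induced differential with the dual of the twisted $L_\infty$-differential $\sum_{i\geq 0}\tfrac{1}{i!}l_{1+i}(\varphi^{\wedge i},-)$ on $g^\varphi$. Each higher bracket $l_k$ contributes a component to $d_{CE}$ which, once evaluated on symmetric powers of generators along $x_\varphi$, assembles into the $(k-1)$-st summand of the twisted differential; the Maurer--Cartan equation on $\varphi$ is exactly what ensures this differential squares to zero. Thus
\[
\mathbb{L}_{C^*(g)}\otimes^{\mathbb{L}}_{C^*(g)}\mathbb{K}_\varphi \simeq (g^\varphi)^\vee[-1].
\]

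Dualizing and invoking the profinite completeness of $g$, which is preserved under twisting by $\varphi$ and guarantees $((g^\varphi)^\vee)^\vee \simeq g^\varphi$, one obtains $\mathbb{T}_{\underline{MC}_\bullet(g),\varphi} \simeq g^\varphi[1]$, from which $H^*(\mathbb{T}_{\underline{MC}_\bullet(g),\varphi}[-1]) \cong H^*(g^\varphi)$ follows immediately. The main obstacle is the third paragraph: carefully tracking the signs and degree shifts coming from the suspension $s$, the dualization $(-)^\vee$, the Maurer--Cartan twisting, and the final shift $[-1]$, while verifying that the higher $L_\infty$-brackets of $g$ do indeed assemble via $d_{CE}$ and $x_\varphi$ into the twisted differential of $g^\varphi$ on the nose. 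The profinite assumption is essential for the concluding dualization to recover $g^\varphi$ rather than only its double dual.
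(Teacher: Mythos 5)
Your overall strategy (identify the stack with $\mathbb{R}\underline{Spec}_{C^*(g)}$ via Lemma 7.1, compute the cotangent complex at the augmentation $x_\varphi$, identify the linearized Chevalley--Eilenberg differential with the dual of the twisted differential, and dualize using profiniteness) is the natural direct computation and lands on the right answer, but it has a genuine gap at its central step: the assertion that ``since $C^*(g)$ is semi-free, its cotangent complex is represented by the module of K\"ahler differentials.'' The Chevalley--Eilenberg algebra of a profinite complete $L_\infty$-algebra is a (completed) symmetric algebra on $(sg)^\vee$ living in \emph{unbounded} cochain complexes, and being quasi-free as a graded commutative algebra does not make it cofibrant in $CDGA_{\mathbb{K}}$: one needs an exhaustive filtration of the generators along which the differential strictly decreases (a Sullivan-type condition), and the completeness axiom $[F_r g, g,\dots,g]\subseteq F_r g$ only gives $d(W_r)\subseteq \Lambda(W_r)$ for the dual filtration, not $d(W_r)\subseteq \Lambda(W_{r-1})$. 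Without cofibrancy, $\Omega^1_{C^*(g)}\otimes_{C^*(g)}\mathbb{K}_\varphi$ computes only the na\"ive (underived) K\"ahler differentials, not $\mathbb{L}_{C^*(g)}\otimes^{\mathbb{L}}_{C^*(g)}\mathbb{K}_\varphi$, so your third paragraph is not yet a computation of the tangent complex.

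This is exactly the point the paper's proof is built to handle. It replaces the missing cofibrancy argument by Lazarev's Quillen equivalence $L \dashv C^*$ between connected formal cdgas and dg Lie algebras with pronilpotent homology: the Andr\'e--Quillen cohomology of $C^*(g)$ with coefficients in $\mathbb{K}_\varphi$ (i.e.\ the cohomology of the correctly derived cotangent complex) is identified, up to a degree shift, with the Harrison complex $C^*_{Har}(C^*(g),\mathbb{K}_\varphi) = LC^*(g)^{\xi}$, and then the counit quasi-isomorphism $\eta(g)\colon LC^*(g)\xrightarrow{\sim} g$, which survives twisting by Proposition 3.8 of \cite{Yal3}, carries $LC^*(g)^{\xi}$ to $g^{\varphi}$. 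To repair your argument you would either have to prove that $C^*(g)$ (or a quasi-isomorphic quasi-free model with the same linear part of the differential at $\varphi$) is cofibrant in $CDGA_{\mathbb{K}}$, or route the computation through a bar--cobar/Harrison-type resolution as the paper does; the sign- and shift-bookkeeping you flag as the ``main obstacle'' is in fact the easy part.
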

\begin{proof}
Recall that we have a bijection between $MC(g)$ and $Mor_{cdga}(C^*(g),\mathbb{K})$, so that
Maurer-Cartan elements of $g$ correspond to augmentations of $C^*(g)$. We still note $\varphi$ the augmentation associated
to the Maurer-Cartan element $\varphi$. Lemma 7.1 gives the isomorphisms
\[
\mathbb{L}_{\underline{MC}_{\bullet}(g),\varphi} \cong \mathbb{L}_{C^*(g),\varphi} \cong
\mathbb{L}_{C^*(g)/\mathbb{K}_{\varphi}}.
\]
The last complex is the relative cotangent complex of $C^*(g)$ with respect to $\mathbb{K}_{\varphi}$,
which denotes $\mathbb{K}$ equipped with the $C^*(g)$-algebra structure defined by the augmentation $\varphi$.
Theorem 4.1. of \cite{Laz} states a Quillen equivalence between the category $CGDA^{c,f}$ of connected formal cdgas
and the category $DGLA^{pro}$ of dg Lie algebras (dgla for short) with pronilpotent (i.e. complete) homology, for which the Chevalley-Eilenberg algebra functor $C^*(-)$ is the right adjoint. The left adjoint is noted $L$. This is a dualized version of the classical bar-cobar equivalence between augmented cdgas and conilpotent dg Lie coalgebras. Then, given a dgla $g$ and a formal cdga $A$,
Lazarev defines a relative Maurer-Cartan set $MC(g,A)$ such that
\[
MC(L(A),B) \cong Mor_{CDGA^{c,f}}(A,B)
\]
for every formal cdgas $A$ and $B$.
Let us note $\xi$ the Maurer-Cartan element of $MC(LC^*(g),\mathbb{K})$ corresponding to the augmentation $\varphi$.
Then, according to Section 7 of \cite{Laz}, the Harrison complex of $C^*(g)$ with coefficients in $\mathbb{K}_{\varphi}$
is given by
\[
C^*_{Har}(C^*(g),\mathbb{K}_{\varphi}) = LC^*(g)^{\xi}
\]
where the dg Lie algebra $LC^*(g)$ is twisted by $\xi$. Since $L$ and $C$ define a Quillen equivalence, the counit of the
adjunction $\eta:LC\rightarrow Id$ gives a quasi-isomorphism of complete dg Lie algebras $\eta(g):LC^*(g)\stackrel{\sim}{\rightarrow}g$.
By Proposition 3.8 of \cite{Yal3} this remains a quasi-isomorphism under twisting
\[
LC^*(g)^{\xi}\stackrel{\sim}{\rightarrow}g^{\eta(g)(\xi)}.
\]
By construction, the last twisting is nothing but $\eta(g)(\xi)=\varphi$. This induces an isomorphism between
the Harrison cohomology $Har^*(C^*(g),\mathbb{K}_{\varphi})$ and $H^*g^{\varphi}$.
The Harrison cohomology with degree shifted by one $Har^{*+1}(C^*(g),\mathbb{K}_{\varphi})$, in turn,
is isomorphic to the André-Quillen cohomology of $C^*(g)$ with coefficients in $\mathbb{K}_{\varphi}$
(see Chapter 12 of \cite{LV} for instance),
which is nothing but the cohomology of the tangent complex of $C^*(g)$ at $\varphi$. We finally get
\[
H^*(\mathbb{T}_{\underline{MC}_{\bullet}(g),\varphi}) \cong H^{*+1}g^{varphi}.
\]
\end{proof}
\begin{rem}
Let us note that an alternative proof consists in comparing $Har^*(C^*(g),\mathbb{K}_{\varphi})$ with
$Har^*(C^*(g^{\varphi}),\mathbb{K})$ (the Harrison complex of $C^*(g^{\varphi})$ with trivial coefficients)
by spectral sequence arguments for filtered complexes, then using the usual bar-cobar Quillen equivalence
which gives a quasi-isomorphism between $Har^*(C^*(g^{\varphi}),\mathbb{K})$ and $g^{\varphi}$.
\end{rem}

The proof of Theorem 0.14 then follows by using again the filtration defined in the proof of Proposition 15 of \cite{Mar3}.
We already established in the proof of Theorem 2.17 that such a filtration makes $Hom_{\Sigma}(\overline{C},Q)$
into a complete dg Lie algebra. Recall that this filtration is defined by a decomposition
\[
Hom_{\Sigma}(\overline{C},Q) = \prod_{l\geq 1} Hom_{\Sigma}(\overline{C},Q)_l
\]
and
\[
F_rHom_{\Sigma}(\overline{C},Q) = \prod_{l\geq r} Hom_{\Sigma}(\overline{C},Q)_l.
\]
The assumptions of Theorem 0.14 implies that the product indexed by $m,n\in \mathbb{N}$
defining each $Hom_{\Sigma}(\overline{C},Q)_l$ (see Proposition 15 of \cite{Mar3}) is finite (hence a finite direct sum)
and that its components are of finite dimension. Then each
\[
Hom_{\Sigma}(\overline{C},Q)/F_rHom_{\Sigma}(\overline{C},Q)\cong \prod_{l=1}^{r-1}Hom_{\Sigma}(\overline{C},Q)_l
\]
is a finite dimensional dg Lie algebra, making $Hom_{\Sigma}(\overline{C},Q)$ into a profinite complete dg Lie algebra.

\section{Perspectives}

\subsection{Deformation theory for morphisms of algebras over polynomial monads}

Under the assumptions of Theorem 4.14 we have
\[
\underline{Map}_{T-Alg}(X_{\infty},Y) \sim \mathbb{R}\underline{Spec}_{C(X_{\infty},Y)},
\]
and for a given $A$-point $x_{\varphi}:\mathbb{R}\underline{Spec}_A\rightarrow \underline{Map}_{T-Alg}(X_{\infty},Y)$
corresponding to a morphism $\varphi\otimes A:X_{\infty}\otimes_e A\rightarrow Y\otimes_e A$,
the tangent complex is
\begin{eqnarray*}
\mathbb{T}_{\underline{Map}(X_{\infty},Y),x_{\varphi}} & = & \mathbb{R}\underline{Hom}_{A-Mod}
(\mathbb{L}_{C(X_{\infty},Y)/A},A) \\
& \cong & Der_A(C(X_{\infty},Y),A)
\end{eqnarray*}
where $\mathbb{R}\underline{Hom}_{A-Mod}$ is the derived hom of the internal hom in the category of
dg $A$-modules.
The complex $\mathbb{T}_{\underline{Map}(X_{\infty},Y),x_{\varphi}}$ is actually a dg Lie algebra, hence
the following definition:
\begin{defn}
The $A$-deformation complex of the $T$-algebras morphism $X_{\infty}\rightarrow Y$ is
$\mathbb{T}_{\underline{Map}(X_{\infty},Y),x_{\varphi}}$.
\end{defn}
In the particular case where $I=\mathbb{S}$, we consider $T$-algebras in the category of $\Sigma$-biobjects of
$\mathcal{C}$. When all the objects of $\mathcal{C}$ are fibrant, all the assumptions of Theorem 4.18 are
satisfied. Consequently, we have a well-defined and meaningful notion of deformation complex of any morphism
of $T$-algebras with cofibrant source. This gives a deformation complex, for instance, for morphisms of
cyclic operads, modular operads, wheeled properads...more generally, morphisms of poynomial monads as defined
in \cite{BB}. In the situation where it makes sense to define an algebraic structure via a morphism towards
an "endomorphism object" (operads, properads, props, cyclic and modular operads, wheeled prop...) this gives
a deformation complex of algebraic structures.

As a perspective for a future work, a better understanding of model categories of algebras over polynomial monads could be used
to obtain results similar to those proven in this paper for properads:
\begin{itemize}
\item for every $\varphi:X_{\infty}\rightarrow Y$, a quasi-isomorphism
\[
\mathbb{T}_{\underline{Map}(X_{\infty},Y),x_{\varphi}} \sim Der_{\varphi}(X_{\infty},Y)
\]
inducing, by transfer, a $L_{\infty}$ algebra structure on the derivations complex;
\item A homotopy equivalence
\[
Map(X_{\infty},Y) \sim MC_{\bullet}(Der_{\varphi}(X_{\infty},Y))
\]
giving as corollaries the group isomorphisms
\[
\pi_{n+1}(Map(X_{\infty},Y),\varphi)\cong H^nDer_{\varphi}(X_{\infty},Y)^{\varphi}
\]
for every integer $n>1$ and the isomorphism of pronilpotent Lie algebras
\[
H^0Der_{\varphi}(X_{\infty},Y)^{\varphi} \cong Lie(\pi_1(Map(X_{\infty},Y),\varphi));
\]
\item if $T$-algebras encode a certain kind of algebraic structure, and the notion of "endomorphism object" of a complex $C$ makes sense,
study the cohomology theory given by $Der_{\varphi}(X_{\infty},End_C)$;
\item the obstruction theory of the previous section.
\end{itemize}
The proofs of this paper are quite general, and the author thinks two essential ingredients to transpose them to this even more general setting are
a good understanding of the notion of cofibrant resolution for algebras over polynomial monads and the subsequent analysis of Maurer-Cartan elements in
$Der_{\varphi}(X_{\infty},Y)$.

\subsection{Shifted symplectic structures}

We describe briefly how shifted symplectic and Poisson structures are defined in the affine case, and refer
the reader to \cite{PTVV} for more details about these notions for derived Artin stacks.
Let $A$ be a cdga. For simplicity we suppose that $A$ is cofibrant (the derived spectrum is defined with a cofibrant
resolution). We denote its $A$ dg module of Kähler differentials by $\Omega^1_A$.
For every integer $i\geq 0$, the exterior power in $A$-modules $\Omega_A^i=\Lambda_A^i\Omega_A^1$ has a de Rham
differential $d_R:\Omega_A^i\rightarrow\Omega_A^{i+1}$. The complex of closed $p$-forms is defined by
\[
\mathcal{A}^{p,cl}(A)=\prod_{i\geq 0}\Omega^{p+i}_A[-i]
\]
with the differential of degree $1$ given by $D(\{\omega_i\})=\{d_R(\omega_{i-1})+d(\omega_i)\}$, where $d$ is the
differential of $\Omega^{p+i}_A$. A closed $p$-form of degree $n$ is then a cohomology class of degree $n$ in
$H^n\mathcal{A}^{p,cl}(A)$. This is a series of elements $\omega_i$ satisfying relations corresponding to the fact
of being a cocycle. To any closed $p$-form $\omega=\{\omega_i\}$
of degree $n$ corresponds a $p$-form, actually the term $\omega_0$. For $p=2$,
it is equivalent to a morphism $\theta_{\omega}:\mathbb{T}_A\rightarrow\mathbb{L}_A[n]$ in the homotopy
category of $A$-modules $Ho(Mod_A)$, where
$\mathbb{L}_A$ is the cotangent complex of $A$ and $\mathbb{T}_A=Hom_{Mod_A}(\mathbb{L}_A,A)$ its tangent complex.
An $n$-shifted symplectic structure on the affine stack $\underline{\mathbb{R}Spec}_A$ is a closed $2$-form $\omega$
of degree $n$ such that the associated map $\theta_{\omega}$ is an isomorphism in $Ho(Mod_A)$.

One can define an algebra of shifted polyvector fields for any derived Artin stack locally of finite presentation.
In the affine case, it is defined by
\[
Pol(A,n)=\Lambda^{\bullet}_A(\mathbb{T}_A[-1-n])
\]
and equipped with a Schouten-Nijenhuis bracket, so that $Pol(A,n)[n+1]$ is a graded dg Lie algebra, with a
cohomological grading and a weight grading (associated to the exterior powers). The space of $n$-shifted Poisson structures
on $\underline{\mathbb{R}Spec}_A$ is the mapping space of graded dg Lie algebra morphisms
\[
Pois(A,n)=Map_{dgLie^{gr}}(\mathbb{K}(2)[1],Pol(A,n)[n+1])
\]
where $\mathbb{K}(2)[1]$ is concentrated in cohomological degree $1$ and weight $2$ and equipped with the zero bracket.
Equivalently, Poisson structures are Maurer-Cartan elements of weight $2$ (``bivectors'') in
$Pol(A,n)[n+1]\otimes t\mathbb{K}[[t]]$.

It is strongly conjectured, but not proved in full generality yet, that an $n$-shifted symplectic structure
induces an $n$-shifted Poisson structure. The $n$-quantization process then consists in a one-parameter formal deformation of
the dg category of quasi-coherent complexes on a derived Artin stack, viewed as an $E_n$-monoidal dg category.
This relies on a higher formality conjecture, providing a quasi-isomorphism of dg Lie algebras between
shifted polyvector fields on a derived Artin stack and higher Hochschild homology of its dg category of quasi-coherent complexes.

In a future work we intend to study the existence of shifted symplectic structures on $\underline{Map}_{T-Alg}(X_{\infty},Y)$
for a monad $T$ general enough. We would like to include in particular the case of polynomial monads \cite{BB},
whose algebras are operads, properads, props and their cyclic, modular, wheeled versions. When a $X_{\infty}$-algebra
structure on a complex $C$ can be encoded by morphisms of the form $X_{\infty}\rightarrow End_C$, this gives
a shifted symplectic structure on the moduli stack of $X_{\infty}$-algebra structures on $C$.
By this way, the program started in \cite{PTVV} could lead to a general theory of deformation quantization of algebraic
structures. We hope to discover new deformation quantizations as well as recover the known cases (like quantization
morphisms of Lie bialgebras), and use this approach to get new properties of these.
New deformation quantizations include, for instance, the quantization of the moduli stack of complex structures on a formal
manifold, which are described in \cite{Mil2} as algebras over a certain cofibrant operad.


\begin{thebibliography}{10}

\bibitem{AR}J. Adámek, J. Rosický, \textit{Locally presentable and accessible categories}, London Math. Society Lecture Notes Series 189, ambridge University Press, 1994.

\bibitem{BB}M. Batatin, C. Berger, \textit{Homotopy theory for algebras over polynomial monads}, preprint arXiv:1305.0086.

\bibitem{BM}C. Berger, I. Moerdijk, \textit{Axiomatic homotopy theory for operads}, Comment. Math. Helv. 78 (2003), 805-831.

\bibitem{BM3}C. Berger, I. Moerdijk, \textit{Resolution of coloured operads and rectification of homotopy algebras}, Categories in algebra, geometry and mathematical physics, 31–58,
Contemp. Math., 431, Amer. Math. Soc., Providence, RI, 2007.

\bibitem{Ber}A. Berglund, \textit{Rational homotopy theory of mapping spaces via Lie theory for $L_{\infty}$-algebras}, preprint arXiv:1110.6145.

\bibitem{Bor}F. Borceux, \textit{Handbook of categorical algebra 2}, Encyclopedia of Mathematics and its Applications 51, Cambridge University Press, 1994.

\bibitem{BoM}D. V. Borisov, Y. I. Manin, \textit{Generalized operads and their inner cohomomorphisms},
Geometry and dynamics of groups and spaces, Volume 265 of Progr. Math., Birkhäuser, 2008, 247-308.

\bibitem{BG}A. K. Bousfield, V. K. A. M. Guggenheim, \textit{On PL de Rham theory and rational homotopy theory},
Mem. Amer. Math. Soc. No. 179 (1976).

\bibitem{CMW}R. Campos, S. Merkulov, T. Willwacher, \textit{The Frobenius properad is Koszul}, preprint arXiv:1402.4048.

\bibitem{CS1}M. Chas, D. Sullivan, \textit{String Topology}, preprint arXiv:math/9911159.

\bibitem{CS2}M. Chas, D. Sullivan, \textit{Closed string operators in topology leading to Lie bialgebras and higher string algebra}, The legacy of Niels Henrik Abel, 771–784, Springer, Berlin, 2004.

\bibitem{Dri1}V. G. Drinfeld, \textit{Quantum groups}, Proceedings of the International Congress of Mathematicians, Vol. 1, 2 (Berkeley, Calif., 1986), 798–820, Amer. Math. Soc., Providence, RI, 1987.

\bibitem{Dri2}V. G. Drinfeld, \textit{On quasitriangular quasi-Hopf algebras and on a group that is closely connected with $Gal(\overline{\mathbb{Q}}/\mathbb{Q})$}, (Russian) Algebra i Analiz 2 (1990), no. 4, 149--181; translation in Leningrad Math. J. 2 (1991), no. 4, 829–860.

\bibitem{DR}A. V. Dolgushev, C. L. Rogers, \textit{Notes on algebraic operads, graph complexes, and Willwacher's construction}, Mathematical aspects of quantization, 25–145,
Contemp. Math., 583, Amer. Math. Soc., Providence, RI, 2012.

\bibitem{DK3}W. G. Dwyer, D. M. Kan, \textit{Function complexes in homotopical algebra}, Topology 19 (1980),
pp. 427-440.

\bibitem{EE}B. Enriquez, P. Etingof, \textit{On the invertibility of quantization functors}, J. Algebra 289 (2005), no. 2, 321–345.

\bibitem{EK1}P. Etingof, D. Kazhdan, \textit{Quantization of Lie bialgebras. I}, Selecta Math. (N.S.) 2 (1996), no. 1, 1–41.

\bibitem{EK2}P. Etingof, D. Kazhdan, \textit{Quantization of Lie bialgebras. II, III}, Selecta Math. (N.S.) 4 (1998), no. 2, 213–231, 233–269.

\bibitem{FOT}Y. Félix, J. Oprea, D. Tanré, \textit{Algebraic Models in Geometry}, Oxford Graduate Texts in Mathematics, Oxford University Press, 2008.

\bibitem{FFRS}J. Fjelstad, J. Fuchs,I. Runkel, C. Schweigert, \textit{Topological and conformal field theory as Frobenius algebras}, Categories in algebra, geometry and mathematical physics, 225–247, Contemp. Math., 431, Amer. Math. Soc., Providence, RI, 2007.

\bibitem{Fre1}B. Fresse, \textit{Operadic cobar constructions, cylinder objects and homotopy morphisms of algebras over operads}, in "Alpine perspectives on algebraic topology (Arolla, 2008)", Contemp. Math. 504, Amer. Math. Soc. (2009), 125-189.

\bibitem{Fre2}B. Fresse, \textit{Modules over operads and functors}, Lecture Notes in Mathematics 1967,
Springer-Verlag (2009).

\bibitem{Fre3}B. Fresse, \textit{Props in model categories and homotopy invariance of structures}, Georgian Math. J. 17 (2010), pp. 79-160.

\bibitem{Get}Ezra Getzler, \textit{Lie theory for nilpotent $L_{\infty}$-algebras}, Ann. of Math. (2) 170 (2009), no. 1, 271–301.

\bibitem{GK1}E. Getzler, M. Kapranov, \textit{Cyclic operads and cyclic homology}, in “Geometry, topology and physics,” International Press, Cambridge, MA, 1995, pp. 167-201

\bibitem{GK2}E. Getzler, M. Kapranov, \textit{Modular operads}, Compositio Math. 110, 65–126. 1998.

\bibitem{GJ}P. G. Goerss, J. F. Jardine, \textit{Simplicial homotopy theory}, Progress in Mathematics 174, Birkhäuser, 1999.

\bibitem{GS}M. Gerstenhaber, S. Schack, \textit{Algebras, bialgebras, quantum groups, and algebraic deformations},
Deformation theory and quantum groups with applications to mathematical physics (Amherst, MA, 1990), 51–92,
Contemp. Math., 134, Amer. Math. Soc., Providence, RI, 1992.

\bibitem{Gom}T. L. Gómez, \textit{Algebraic stacks}, Proc. Indian Acad. Sci. Math. Sci. 111 (2001), no. 1, 1–31.

\bibitem{Hir}Philip S. Hirschhorn, \textit{Model categories and their localizations},
Mathematical Surveys and Monographs volume 99, AMS (2003).

\bibitem{Hov}Mark Hovey, \textit{Model categories}, Mathematical Surveys and Monographs volume 63, AMS (1999).

\bibitem{JY}Mark W. Johnson, Donald Yau, \textit{On homotopy invariance for algebras over colored PROPs}, J. Homotopy Relat. Struct. 4 (2009), 275-315.

\bibitem{KRT}C. Kassel, M. Rosso, V. Turaev, \textit{Quantum groups and knot invariants}, Panoramas et Synthèses 5, Société Mathématique de France, Paris, 1997. vi+115 pp.

\bibitem{Koc}J. Kock, \textit{Frobenius algebras and 2D topological quantum field theories},
London Mathematical Society Student Texts No. 59, 2003.

\bibitem{Ko1}M. Kontsevich, \textit{Operads and motives in deformation quantization}, Moshé Flato (1937-1998), Lett. Math. Phys. 48 (1999),
no. 1, 35-72.

\bibitem{Ko2}M. Kontsevich, \textit{Deformation quantization of Poisson manifolds}, Lett. Math. Phys. 66 (2003), no. 3, 157-216.

\bibitem{LS}P. Lambrechts, D. Stanley, \textit{Poincare duality and commutative graded differential algebras},  Ann. Sci. Éc. Norm. Supér. (4) 41 (2008), no. 4, 495-509.

\bibitem{LP}A. D. Lauda, H. Pfeiffer, \textit{Open-closed strings: two-dimensional extended TQFTs and Frobenius algebras}, Topology and its applications 155 (2008), 623-666.

\bibitem{Laz}A. Lazarev, \textit{Maurer-Cartan moduli and models for function spaces},  Adv. Math. 235 (2013), 296–320.

\bibitem{LV}J-L. Loday, B. Vallette, \textit{Algebraic operads},
Grundlehren Math. Wiss. 346, Springer, Heidelberg, 2012.

\bibitem{Lur}J. Lurie, \textit{Higher Topos Theory}, Annals of Mathematics Studies 170, Princeton University Press,
    2009.

\bibitem{Lur2}J. Lurie, \textit{The "DAG" series}, Available at the author’s home page http://www.math.harvard.edu/~lurie/

\bibitem{MLa}Saunders MacLane, \textit{Categorical algebra}, Bull. Amer. Math. Soc. Volume 71, Number 1 (1965),
40-106.

\bibitem{Man}M. Manetti, \textit{Deformation theory via differential graded Lie algebras},
Algebraic Geometry Seminars, 1998–1999 (Italian) (Pisa), 21–48, Scuola Norm. Sup., Pisa, 1999.

\bibitem{Man2}M. Manetti, \textit{Lectures on deformations of complex manifolds (deformations from differential graded viewpoint)}, Rend. Mat. Appl. (7) 24 (2004), no. 1, 1–183.

\bibitem{MSS}M. Markl, S. Schnider, J. Stasheff, \textit{Operads in algebra, topology and physics}, Mathematical Surveys and Monographs 96, American Mathematical Society, Providence, RI, 2002. x+349 pp.

\bibitem{Mar1}M. Markl, \textit{Homotopy algebras are homotopy algebras}, Forum Math. 16 (2004), 129-160.

\bibitem{Mar2}M. Markl, \textit{A resolution (minimal model) of the PROP for bialgebras}, J. Pure Appl. Algebra 205 (2006), no. 2, 341–374.

\bibitem{Mar3}M. Markl, \textit{Intrinsic brackets and the L-infinity deformation theory of bialgebras},
J. Homotopy Relat. Struct. 5 (2010), no. 1, 177–212.

\bibitem{MMS}M. Markl, S. Merkulov, S. Shadrin, \textit{Wheeled PROPs, graph complexes and master equation}, J. Pure Appl. Algebra 213 (2009), 496–535.

\bibitem{Mer1}S. Merkulov, \textit{PROP profile of Poisson geometry}, Comm. Math. Phys. 262 (2006), no. 1, 117–135.

\bibitem{Mer2}S. Merkulov, \textit{Quantization of strongly homotopy Lie bialgebras}, preprint arXiv:math/0612431.

\bibitem{Mer3}S. Merkulov, \textit{Lectures on PROPs, Poisson geometry and deformation quantization}, Poisson geometry in mathematics and physics, 223–257,
Contemp. Math., 450, Amer. Math. Soc., Providence, RI, 2008.

\bibitem{MV1}S. Merkulov, B. Vallette, \textit{Deformation theory of representation of prop(erad)s I}, J. Reine Angew. Math. 634 (2009), 51-106.

\bibitem{MV2}S. Merkulov, B. Vallette, \textit{Deformation theory of representation of prop(erad)s II}, J. Reine Angew. Math. 636 (2009), 123-174.

\bibitem{Mer4}S. Merkulov, \textit{Graph complexes with loops and wheels}, Algebra, arithmetic, and geometry: in honor of Yu. I. Manin. Vol. II, 311–354,
Progr. Math., 270, Birkhäuser Boston, Inc., Boston, MA, 2009.

\bibitem{Mil}J. Millès, \textit{André-Quillen cohomology of algebras over an operad}, Adv. Math. 226 (2011), no. 6, 5120–5164.

\bibitem{Mil2}J. Millès, \textit{A complex manifold as a family of homotopy algebras}, arxiv.org/abs/1409.3604.

\bibitem{Mur}F. Muro, \textit{Moduli spaces of algebras over nonsymmetric operads}, Algebr. Geom. Topol. 14 (2014) 1489–1539.

\bibitem{PTVV}T. Pantev, B. Toen, M. Vaquié, G. Vezzosi, \textit{Shifted symplectic structures}, Inst. Hautes Études Sci. Publ. Math. 117 (2013), 271–328.

\bibitem{Pri}J. P. Pridham, \textit{Presenting higher stacks as simplicial schemes}, Adv. Math. 238 (2013), 184–245.

\bibitem{Rez}C. W. Rezk, \textit{Spaces of algebra structures and cohomology of operads}, Thesis, MIT, 1996.

\bibitem{RFFS}I. Runkel, J. Fjelstad, J. Fuchs, C. Schweigert, \textit{Topological and conformal field theory as Frobenius algebras}, Categories in Algebra, Geometry and Mathematical Physics, 225-247,
Contemp. Math., 431, Amer. Math. Soc., Providence, RI, 2007.

\bibitem{Sch}M. Schlessinger, \textit{Functors of Artin rings}, Trans. Amer. Math. Soc. 130 (1968) 208-222.

\bibitem{SS}S. Schwede, B. Shipley, \textit{Algebras and modules in monoidal model categories},
Proc. London Math. Soc. 80 (2000), 491-511.

\bibitem{Sul}D. Sullivan, \textit{Infinitesimal computations in topology}, Inst. Hautes Études Sci. Publ. Math. 47 (1977), 269–331.

\bibitem{Tam}D. Tamarkin, \textit{Formality of chain operad of little discs},  Lett. Math. Phys. 66 (2003), 65–72.

\bibitem{TV1}B. Toen, G. Vezzosi, \textit{Homotopical algebraic geometry. I. Topos theory},
 Adv. Math. 193 (2005), no. 2, 257–372.

\bibitem{TV2}B. Toen, G. Vezzosi, \textit{Homotopical algebraic geometry. II. Geometric stacks and applications},
Mem. Amer. Math. Soc. 193 (2008), no. 902, x+224 pp.

\bibitem{Toe2}B. Toen, \textit{Derived Algebraic Geometry}, EMS Surv. Math. Sci. 1 (2014), no. 2, 153–240.

\bibitem{Val}B. Vallette, \textit{A Koszul duality for props}, Trans. Amer. Math. Soc. 359 (2007), 4865-4943.

\bibitem{vdL}P. van der Laan, \textit{Coloured Koszul duality and strongly homotopy operads}, preprint arXiv:math/0312147.

\bibitem{Vez}G. Vezzosi, \textit{A note on the cotangent complex in derived algebraic geometry}, preprint arXiv:1008.0601.

\bibitem{Yal1}S. Yalin, \textit{Classifying spaces of algebras over a prop}, Algebr. Geom. Topol. 14, issue 5 (2014), 2561--2593.

\bibitem{Yal2}S. Yalin, \textit{Simplicial localization of homotopy algebras over a prop}, Math. Proc. Cambridge Philos. Soc.   157 (2014), no. 3, 457–468.

\bibitem{Yal3}S. Yalin, \textit{Maurer-Cartan spaces of filtered $L_{\infty}$ algebras}, J. Homotopy Relat. Struct. (2015), published online, DOI 10.1007/s40062-015-0108-9.

\end{thebibliography}
\end{document}